\def\rr{{\mathbb R}}
\def\zz{{\mathbb Z}}
\def\nn{{\mathbb N}}
\def\ch{{\mathcal H}}
\def\ce{{\mathcal E}}
\def\cl{{\mathcal L}}
\def\cm{{\mathcal M}}
\def\car{{\mathcal R}}
\def\ccc{{\mathcal C}}
\def\cn{{\mathcal N}}
\def\fz{\infty}
\def\az{\alpha}
\def\std{{\mathop\mathrm{\,std\,}}}
\def\dist{{\mathop\mathrm{\,dist\,}}}
\def\lz{\lambda}
\def\dz{\delta}
\def\ez{\epsilon}
\def\kz{\kappa}
\def\bz{\beta}
\def\gz{{\gamma}}
\def\tz{\theta}
\def\sz{\sigma}
\def\pa{\partial}
\def\wz{\widetilde}
\def\Gz{\Gamma}
\def\Lz{\Lambda}
\def\bint{{\ifinner\rlap{\bf\kern.35em--}
\int\else\rlap{\bf\kern.45em--}\int\fi}\ignorespaces}
\def\bbint{{\ifinner\rlap{\bf\kern.35em--}
\hspace{0.078cm}\int\else\rlap{\bf\kern.45em--}\int\fi}\ignorespaces}
\def\minm{{\mathop\mathrm{\,minmax\,}}}
\def\Span{{\mathop\mathrm{\,span\,}}}
\def\Seq{{\mathop\mathrm{\,Seq\,}}}
\def\diam{{\mathop\mathrm{\,diam\,}}}
\def\la{\langle}
\def\ra{\rangle}
\newtheorem{thm}{Theorem}[section]
\newtheorem{lem}[thm]{Lemma}
\newtheorem{prop}[thm]{Proposition}
\newtheorem{rem}[thm]{Remark}
\newtheorem{cor}[thm]{Corollary}
\newtheorem{defn}[thm]{Definition}
\newtheorem{eg}[thm]{Example}
\numberwithin{equation}{section}
\begin{document}

\arraycolsep=1pt

\title{\Large\bf
One Dimensional Asymptotic Plateau Problem in $n$-Dimensional Asymptotically Conical  Manifolds
 \footnotetext{\hspace{-0.35cm}
\endgraf 2020 {\it Mathematics Subject Classification:} Primary 53A04 $\cdot$ 58E10; Secondary 53C21.
\endgraf Data sharing not applicable to this article as no datasets were generated or analysed during the current study.
\endgraf {\it Keywords:} Geodesic, Asymptotically Conical Manifold, Asymptotically Flat Manifold, Ideal Boundary}
}
\author{Jiayin Liu, Shijin Zhang and Yuan Zhou}
\date{}

\maketitle

\begin{center}
\begin{minipage}{13.5cm}\small
{\noindent{\bf Abstract.}\quad
Let  $(M,g)$ be an asymptotically conical Riemannian manifold having
dimension $n\ge 2$, opening angle $\az \in (0,\pi/2) \setminus \{\arcsin \frac{1}{2k+1}\}_{k \in \nn}$ and positive asymptotic rate.
Under the assumption that the exponential map is proper at each point,
we give a solution to the one dimensional asymptotic Plateau problem on  $M$.
Precisely, for any pair of antipodal points  in the ideal boundary $\pa_\fz M = \mathbb S^{n-1}$, we prove the existence of  a geodesic line   with asymptotic prescribed boundaries
and the Morse index $\le n-1$.
This extends the remarkable result of  Carlotto-De Lellis 2019 in dimension 2 and opening angle $\alpha\in (0,\pi/2)$ under the nonnegative Gaussian curvature assumption.

\quad\quad  Our proof relies on a min-max approach similar to Carlotto-De Lellis. However, to run the min-max argument, there are several new essential difficulties
caused by dimension and non-Gaussian curvature assumptions.
We are able to overcome these difficulties, but the proof is rather involved. In particular, to run the min-max argument,
we construct a nontrivial homotopy class via the geometry of geodesic  segments of standard cones serving as sweep-outs around the reference point $o$, and then bound the min-max values in a quantitative manner.
By deforming the gradient flow of certain truncation of the energy functional,
we are able to obtain a sequence of min-max geodesic segments
$\{\Gz_j\}$.
To obtain the geodesic line $\Gz$ with the morse index $\le n-1$ from
$\{\Gz_j\}$,
we establish a uniform distance estimate of $\{\Gz_j\}$ from $o$
and also a local uniform length upper bound via some blowing down arguments.   Finally, we establish  a  non-twisting property for
 $\{\Gz_j\}$ by comparing the behaviors of $\{\Gz_j\}$ with those of the standard cone and then conclude that $\Gz$ admits the prescribed antipodal ideal boundaries.

%
%
}
\end{minipage}
\end{center}

 \tableofcontents
 \contentsline {section}{\numberline { } References}{53}{section.6}%

\section{Introduction}\label{s1}
The classical Plateau problem in a Riemannian manifold $M$,
 as originated from J. Plateau \cite{p73}  when he was studying the shape of soap films,
 aims to seek for minimal surfaces $S$ with a prescribed boundary $\Sigma \subset M$.
In a non-compact
 Riemannian manifold  $M$ with ideal boundary  $\pa_\fz M$ (also known as boundary at infinity),
a counterpart problem (called  as the asymptotic Plateau problem)  is  to show the existence of unbounded minimal surfaces $S$ with a prescribed ideal boundary $\Sigma \subset \pa_\fz M$.

In   Euclidean spaces, the asymptotic Plateau problem is closely related to the  problem  posed by Bernstein \cite{b14}:  Determine all the complete minimal hypersurfaces $S \subset \rr^n$ which can be seen as a graph of a function defined in $\rr^{n-1}$.
For major progress to the Bernstein problem, we refer the readers to Bernstein \cite{b14,b27}, Moser \cite{m61}, Fleming \cite{f62}, De Giorgi \cite{d65}, Almgren \cite{a66}, Simons \cite{s68}, Bombieri-De Giorgi-Giusti \cite{bdg69} and Chern-Osserman \cite{co67}.
In  hyperbolic spaces,
the study of the asymptotic Plateau problem was initiated by Anderson \cite{a82} in 1982; see \cite{c13} for a nice survey.
We also refer to \cite{a83,bl96,l03} and references therein for  the asymptotic Plateau problem on manifolds with negative sectional/scalar curvature as generalisations of the hyperbolic space.

We are interested in noncompact manifolds possessing
certain product structure, including  the  asymptotically conical Riemannian manifolds
(see  Section \ref{s23} for the definition).
The study of asymptotic Plateau problems  in such manifolds also received  several attentions; see \cite{b81,b812,b813,bl03,r02,adr06,rss13,c16,ck18,bt20,mr22,cd19} and references therein.
To the best of the authors' knowledge,
all  these  results   focus  on finding unbounded
 minimal hypersurfaces,  that is, minimal surfaces  with codimension $1$.
 When the ambient manifold has dimension $ n\ge3$, it remains widely open to find   unbound minimal surfaces with   codimension   $\ge 2$, equivalently, with dimension $1\le k\le n-2$.

Towards this, the first step is to   find unbounded minimal  surfaces with dimension 1,
 that is,  geodesic lines.  
Throughout the paper, by a geodesic line $\gz$ (resp. ray, segment) on a Riemannian manifold $(M,g)$, we mean under its arc-length parametrisation with respect to $g$, $\gz$ is a local isometry from $(-\fz,+\fz)$ (resp. an interval $[a,+\fz)$, $[a,b] \subset \rr$) to $(M,g)$. In this sense, a geodesic line (ray, segment) is always a critical point of the length functional,  and is locally length-minimising,
but is not necessarily length-minimising on all subintervals
   $[s,t] \subset \rr$ (resp. $[s,t] \subset [a,+\fz)$, $[s,t] \subset [a,b]$).
  And in the following, we will sometimes parametrise a geodesic $\gz$ proportional to its arc-length.

 In the special case that the ambient manifold has dimension $2$,
   geodesic lines coincide with
    minimal hypersurfaces and also  minimal surface with codimension 1.  Early contributions to find geodesic lines are due to Bangert \cite{b81,b812,b813} in 1980s (see also \cite[Prop 6.1]{bl03} for a partially sharpened result   by Bonk-Lang) as mentioned above.  The
     following  remarkable contribution was made by Carlotto-De Lellis \cite{cd19} recently.

\begin{thm}
\label{cd}
  Let $(M,g)$ be an asymptotically conical surface with nonnegative Gaussian curvature. Then, for any pair of antipodal points $a,-a \in \pa_\fz M = \mathbb S^{1}$, there exists an embedded geodesic line $\Gz$ with asymptotic prescribed boundary $\pa_\fz \Gz = \{a,-a\}$ and Morse index $\le 1$.
\end{thm}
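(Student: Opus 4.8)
The plan is to obtain $\Gz$ as a one-parameter min-max critical point of the length functional, realized as a subsequential limit of min-max geodesic \emph{segments} on an exhaustion $\{B_R(o)\}_{R>0}$ of $M$, with the curvature sign $K\ge0$ supplying compactness, the prescribed asymptotics, the index bound and embeddedness. Fix a reference point $o$. Away from a compact set, $M$ is uniformly close to the standard cone $C_\az$ of opening angle $\az$, whose metric in polar coordinates reads $dr^2+\sin^2\az\,r^2\,d\tz^2$ with $\tz\in\rr/2\pi\zz$. Unrolling $C_\az$ to a flat sector of total angle $2\pi\sin\az<2\pi$ straightens its geodesics, and one computes that a straight geodesic \emph{line} subtends an angular gap $\pi/\sin\az$ between its two ideal ends; this equals $\pi \pmod{2\pi}$ exactly when $\sin\az=1/(2k+1)$. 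Thus, away from these degenerate values, no straight model geodesic joins antipodal ideal points, so the line realizing $\pa_\fz\Gz=\{a,-a\}$ cannot be read off and must be produced variationally.

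Next I would set up the sweep-out on each ball. Let $p_R^{\pm}\in\pa B_R(o)$ be the points in the directions $\pm a$. Since $a,-a$ are \emph{antipodal}, $p_R^{+}$ and $p_R^{-}$ are joined by two symmetric minimizing geodesics $c_R^{\pm}$ passing on either side of $o$; I consider the one-parameter family of curves interpolating from $c_R^{+}$ to $c_R^{-}$ across $o$, i.e. a sweep-out \emph{around} $o$ modeled on the geodesic segments of $C_\az$. This family is homotopically nontrivial relative to $o$, and the explicit cone segments, used as competitors, give two-sided control of the min-max width $\oz_R:=\inf_{\{\sz_t\}}\sup_{t}L(\sz_t)$, with $\oz_R$ comparable to $R$. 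Deforming the negative gradient flow of a suitable truncation of the energy functional---the truncation confining slices to $B_R$ and restoring a Palais--Smale condition---produces, for each $R$, a geodesic segment $\Gz_R\subset\overline{B_R(o)}$ with $L(\Gz_R)=\oz_R$, endpoints $p_R^{\pm}$, and Morse index $\le1$, the sweep-out being one-parameter.

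I would then let $R\to\fz$, which is where $K\ge0$ does the essential work. Gauss--Bonnet gives the total-curvature identity $\int_M K=2\pi(1-\sin\az)\in(0,2\pi)$, bounding the total turning of each $\Gz_R$. Because the sweep-out is nontrivial around $o$, the saddle $\Gz_R$ must link $o$, so its central portion meets a fixed neighborhood of $o$; a blow-down comparison with $C_\az$ under $K\ge0$ upgrades this to a \emph{uniform distance estimate} and a \emph{local uniform length upper bound}, so the $\Gz_R$ neither collapse to a point nor escape to infinity. A diagonal subsequence then converges smoothly on compact sets to a complete geodesic $\Gz=\lim_j\Gz_{R_j}$, and lower semicontinuity of the index under smooth limits preserves $\mathrm{ind}(\Gz)\le1$. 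The asymptotics $\pa_\fz\Gz=\{a,-a\}$ follow from a \emph{non-twisting} property: comparing each end of $\Gz_{R_j}$ with the straight geodesics of $C_\az$ and invoking $K\ge0$ to exclude oscillation of the asymptotic angle shows that the two ends limit onto the directions $a$ and $-a$ with no extra winding. Embeddedness follows from the index bound together with $K\ge0$, since a transverse self-crossing would bound a geodesic loop enclosing a disk of positive total curvature and could be removed by a length-decreasing surgery, contradicting the min-max value.

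The principal obstacle is exactly this passage to the limit under non-compactness. Since the desired object has infinite length, it cannot be obtained by direct minimization---indeed the symmetric minimizers $c_R^{\pm}$ recede from $o$ as $R\to\fz$ and leave no limit line---so one must instead guarantee, uniformly in $R$, that the min-max segments $\{\Gz_R\}$ stay near $o$ and that the surviving limit is a genuine bi-infinite line hitting the \emph{prescribed} antipodal ends rather than some other pair. The hypothesis $K\ge0$ is the linchpin: through $\int_M K=2\pi(1-\sin\az)$ it simultaneously delivers the uniform distance estimate, the blow-down length bound, the non-twisting asymptotics, and embeddedness. It is precisely the removal of this sign condition, together with the increase in dimension, that forces the substantial new difficulties addressed in the present paper.
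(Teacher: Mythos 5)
Your overall min-max architecture -- a one-parameter sweep-out of curves joining $p_R^{\pm}\in\pa B(o,R)$ that passes ``around'' $o$, a width $\oz_R$ comparable to $R$ bounded above and below by cone competitors, a gradient-flow deformation producing index-$\le 1$ critical segments $\Gz_R$, a uniform distance estimate and local length bound via blow-down comparison with $\mathcal C_\az^2$, and a non-twisting argument for the ideal boundary -- is essentially the Carlotto--De Lellis strategy as this paper describes it (note the paper does not reprove Theorem \ref{cd}; it cites \cite{cd19} and discusses its mechanism in Remarks \ref{rmk}, \ref{cl2014} and \ref{az}). Two of your choices are actually features of the \emph{present} paper's higher-dimensional argument rather than of \cite{cd19}: the truncated energy functional is unnecessary in dimension $2$ if, as you set it up, the sweep-out endpoints $c_R^{\pm}$ are $g$-geodesics (they are then already equilibria of the plain energy flow, which is exactly why \cite{cd19} can use $\nabla\ce$ directly); and your statement that nontriviality of the sweep-out forces ``the saddle $\Gz_R$ to link $o$'' conflates a property of the family (some slice must hit $o$, which gives the lower bound $\oz_R\gtrsim R$) with a property of the critical point itself -- the uniform distance estimate for $\Gz_R$ is a separate contradiction/blow-down argument resting on the classification of cone geodesics between antipodal points, not a linking statement.

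The genuine gap is embeddedness. Your argument -- ``a transverse self-crossing would bound a geodesic loop enclosing a disk of positive total curvature and could be removed by a length-decreasing surgery, contradicting the min-max value'' -- does not close. Gauss--Bonnet gives $\int_D K=2\pi-\ez>\pi$ for a geodesic monogon with exterior angle $\ez\in(-\pi,\pi)$, while the global budget is $\int_M K=2\pi(1-\sin\az)$; these are only incompatible when $\sin\az\ge 1/2$, i.e.\ $\az\ge\pi/6$. For $\az<\pi/6$ the model cone's own geodesic lines wind around the apex and self-intersect, so no curvature-counting argument can rule out self-intersections of $\Gz_R$, and a length-decreasing surgery at a crossing produces a non-smooth competitor that need not contradict the min-max characterization without further work. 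As the paper emphasizes in Remark \ref{rmk}(iv) and Remark \ref{az}, Carlotto--De Lellis obtain embeddedness by replacing homotopies with \emph{isotopies} via the Chambers--Liokumovich theorem \cite{cl14}, running the min-max within the class of embedded curves, and then using the two-dimensional fact that a $C^1$-limit of curves without transverse self-intersections has none; this is precisely the ingredient that fails in dimension $\ge 3$ and forces the present paper to settle for immersed lines. Your proposal needs this (or an equivalent) input to deliver the word ``embedded'' in the statement.
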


Partially motivated by this and also \cite{b81,b812,b813},
in this paper, we   establish the following existence of geodesic lines in
  higher dimensional asymptotically conical Riemannian manifolds.
See  Section \ref{s23} for  the related concepts appeared in the statement.

\begin{thm}\label{ndimanti}
  Let $(M,g)$ be an asymptotically conical Riemannian manifold with opening angle $\az \in (0,\pi/2) \setminus \{\arcsin \frac{1}{2k+1}\}_{k \in \nn}$ and positive asymptotic rate. Moreover, assume that,  at each point, the exponential map is proper. Then, for any pair of antipodal points $a,-a \in \pa_\fz M = \mathbb S^{n-1}$, there exists a geodesic line $\Gz$ with asymptotic prescribed boundary $\pa_\fz \Gz = \{a,-a\}$ and Morse index $\le n-1$.

\end{thm}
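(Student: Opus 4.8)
The plan is to realize $\Gz$ as a limit of min-max geodesic segments and to run the whole argument by comparison with the standard cone of opening angle $\az$. First I would fix the reference point $o$ and, for a sequence of radii $R_j \to \fz$, set up a min-max problem among curves whose two endpoints lie on the geodesic sphere $\pa B_{R_j}(o)$ and sit (angularly) near the prescribed ideal points $a$ and $-a$. To produce a nontrivial min-max class, I would import the explicit geodesics of the standard cone: since $\az \in (0,\pi/2)$ and $\az \notin \{\arcsin\frac{1}{2k+1}\}$, the cone geodesic joining a pair of antipodal ideal points is nondegenerate (it neither runs off to infinity nor passes through the vertex in a degenerate way), so translating and rotating this model geodesic around $o$ gives a genuine sweep-out $\Phi : X \to \{\text{curves}\}$. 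The parameter space $X$ is $(n-1)$-dimensional --- encoding the directions transverse to the $a,-a$ axis, equivalently the way the sweep-out links $o$ --- and its nontriviality is what forces a critical curve to exist; the dimension $n-1$ of $X$ is exactly what yields the Morse index bound $\le n-1$.

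Next I would extract the segments. Because the length/energy functional is badly behaved at infinity, I would work with a suitable truncation of the energy and deform the associated (pseudo-)gradient flow to push the sweep-out down to its min-max value $W_j$. The properness of the exponential map is the ingredient that restores the compactness (a Palais--Smale type condition) needed for this deformation to converge, producing for each $j$ a min-max geodesic segment $\Gz_j$ with endpoints on $\pa B_{R_j}(o)$ and with Morse index $\le n-1$. Simultaneously I would prove a two-sided, quantitative bound on $W_j$ in terms of $R_j$ and the cone geometry (essentially $W_j = 2R_j + O(1)$, with the correction governed by the model geodesic); this quantitative control is what later prevents the segments from degenerating in the limit.

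I would then pass to the limit $j \to \fz$. Two estimates drive the compactness. The first is a uniform distance estimate $d(o, \Gz_j) \le C$, obtained from the width bound together with the fact that on the cone a geodesic connecting antipodal ideal points must dip a bounded amount toward the vertex; this keeps the segments from escaping and guarantees a nonempty limit. The second is a local uniform length bound, $\mathcal H^1(\Gz_j \cap B_\rho(o)) \le C(\rho)$ for each fixed $\rho$, which I would establish by a blowing-down argument: rescaling $M$ at large scale makes it converge to the cone, the rescaled segments converge to cone geodesics whose length in any ball is controlled, and transferring this back gives the bound at unit scale. With these two estimates, Arzel\`a--Ascoli and stability of the geodesic equation yield a subsequence converging in $C^\infty_{\loc}$ to a geodesic line $\Gz$, and upper semicontinuity of the index under smooth convergence preserves the bound $\le n-1$.

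The final and hardest step is to identify the ideal boundary $\pa_\fz \Gz = \{a,-a\}$, i.e. to rule out that the limit twists or spirals and loses the prescribed asymptotics. Here I would prove a non-twisting property for $\{\Gz_j\}$ by direct comparison with the cone: on the model the antipodal geodesic makes monotone, quantitatively controlled angular progress --- and this is precisely where $\az \notin \{\arcsin\frac{1}{2k+1}\}$ is used, since at the excluded angles the model geodesic degenerates and the angular control fails. Transferring this monotonicity to $\Gz_j$ for $j$ large (again via the blow-down/comparison) forces each end of $\Gz$ to converge to a single ideal point, namely $a$ and $-a$. I expect this non-twisting analysis, intertwined with the blow-down compactness of the previous step, to be the main obstacle: in the $2$-dimensional, nonnegatively curved setting of Carlotto--De Lellis both the compactness and the angular control come almost for free from curve-shortening and Gauss--Bonnet, whereas here, with no curvature sign and in codimension $n-1$, they must be recovered quantitatively from the cone model alone.
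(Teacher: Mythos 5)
Your high-level architecture --- min-max over a cone-modelled sweep-out, a truncated energy with a deformation flow, a uniform distance estimate plus a local length bound, and a final non-twisting comparison --- does match the paper's. But there is a genuine gap at the very first step, and it is exactly the point the paper singles out as the main new difficulty in dimension $n\ge 3$. You propose to get a nontrivial class by ``translating and rotating the model geodesic around $o$,'' with parameter space ``encoding the directions transverse to the $a,-a$ axis.'' Those transverse directions form an $\mathbb S^{n-2}$, and the resulting family of length-minimising cone geodesics does \emph{not} link $o$: since $M\setminus\{o\}$ is simply connected for $n\ge3$, a sweep-out whose boundary data merely lies in this family can be chosen so that no curve in it ever meets $o$ (the paper constructs such a homotopy explicitly in Example \ref{o}), in which case the min-max value collapses to $4\rho^2\sin^2(\frac{\pi}{2}\sin\az)$ and no new geodesic is produced. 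The missing ingredient is the orientation-reversing reflection $I$: the admissible maps are $H:\mathbb S^{n-2}\times[0,1]\to\ccc_{pq}$ with $H(\xi,0)=L_\xi$ and $H(\xi,1)=L_{I(\xi)}$, and since $\cl^T_{p,q}=\bigcup_\xi L_\xi([0,1])$ represents a generator $e$ of $\pi_{n-1}(M\setminus\{o\})\cong\zz$ while its reflection represents $-e$, every admissible $H$ must pass through $o$; this is what forces the lower bound $\Lambda_{pq}\ge(1-\ez)^2 4\rho^2$ and hence a critical point distinct from the minimisers. Without some device of this kind your class is trivial and the whole construction stalls.

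Two further points. The heuristic you give for the uniform distance estimate --- that on the cone a geodesic joining antipodal ideal points ``must dip a bounded amount toward the vertex'' --- is false: the minimising cone geodesic between antipodal points at radius $\rho$ stays at distance $\tfrac12\rho\pi\sin\az$ from the apex, which tends to infinity (this is precisely why minimisation fails and min-max is needed). The actual proof (Lemma \ref{unif}) is a two-regime blow-down contradiction, and the excluded angles $\arcsin\frac{1}{2k+1}$ are used there, to guarantee $\mathbf{K}_\az<\pi$ in the intermediate regime $\hat\rho_j\to\fz$ with $\hat\rho_j/\rho_j\to0$, not in the final non-twisting step where you place them. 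Also, properness of the exponential map is not what gives the Palais--Smale condition (that holds for the energy functional on a complete manifold regardless); it is used in Lemma \ref{geoflow} to rule out geodesics accumulating unbounded length inside a fixed ball --- a scenario your blow-down argument for the local length bound cannot detect, because rescaling by $r_j\to\fz$ collapses any fixed ball to a point.
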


One of the motivations for establishing Theorem \ref{ndimanti}
  comes from seeking for geodesic lines in
 asymptotically  flat manifolds,  corresponding to asymptotically
 conical manifolds  with opening angle $\az=\pi/2$,
  which
  has a close relation with the study of unbounded orbits of systems of physics and celestial mechanics,
such as the Schwarzschild and Kerr metric which are the most fundamental classes of solutions to the Einstein field equation characterising black holes in general relativity.
Meanwhile, in celestial mechanics, the Jacobi-Maupertuis metric of a positive energy level from the Newtonian $N$-body system is asymptotically Euclidean away from the collisions. It is well-known that particles moving in these spaces are precisely along the geodesic lines of the corresponding asymptotically Euclidean/flat metrics (see for example \cite{o75,c03}). Thus characterising geodesic lines in these manifolds will provide understanding to the behavior of the orbit of particle as time goes to infinite past and infinite future. In particular, showing the existence of a geodesic line with prescribed ideal boundary on these manifolds will imply that there exists orbits of particles in these physical/mechanical models which escape to infinity as time goes to both infinite past and infinite future.
We emphasis that the existence of such escaping geodesic lines are rarely investigated in the literature. Especially, this is a well-known open problem for the Jacobi-Maupertuis metric (\cite[Section 5]{mv20}) while, on the contrary, escaping geodesic rays (with the prescribed ideal boundary as a point) of the Jacobi-Maupertuis metric are recently shown to be widely existing in \cite{mv20,lyz23,pt24} using completely different methods.
We hope that the approach  to showing Theorem \ref{ndimanti}    will open a way
for these studies.

\medskip

To prove Theorem \ref{ndimanti}, we will borrow some ideas from the proof of Theorem \ref{cd} in \cite{cd19}. In particular,
min-max theory was employed by Carlotto-De Lellis  \cite{cd19} to  establish  a one-dimensional mountain-pass theorem,  and is also used here to construct  a
similar but complicated $(n-1)$-dimensional  mountain-pass theorem.   As already noticed by Carlotto-De Lellis in \cite{cd19},  it   may be effortless to use the minimisation approach to find  geodesic lines.
For example, if an $n$-dimension manifold with nonnegative Ricci curvature does not split as a Riemannian product, then, thanks to the splitting theorem by Cheeger-Gromoll \cite{cg71} it  can not contain a length-minimising geodesic line.
It should be remarked that the idea of using min-max theory to study the existence of closed geodesics   has a long history. Indeed,  it is dated back to the work Birkhoff  \cite{b17}  who showed that a two-sphere with any Riemannian metric admits a closed geodesic which
answers a question posed by Poincar\'{e} \cite{p05}.
Later on, a series of works are devoted to showing the existence of closed geodesics; see \cite{l47,ls47,g89,j89,t92,k92}. For the existence result of geodesics rays, we recommend to consult \cite{s94,sst03} for a comprehensive study.
 The min-max theory also showed its powerful strength for finding  closed minimal hypersurfaces in a closed manifold under various settings. Remarkably and far from exhausted, by employing and developing Almgren-Pitts min-max method \cite{p81}, Marques-Neves \cite{mn14} resolved the Willmore conjecture, Agol-Marques-Neves \cite{amn16} proved the Freedman conjecture on the energy of links, Zhou \cite{z20} solved the multiplicity one conjecture (see also \cite{mn21}) and Song \cite{s23} confirmed the Yau conjecture on minimal hypersurfaces. Besides, the min-max theory, in recent years, also turned out to be successful to seeking for closed minimal hypersurfaces in non-compact manifolds. We refer to the works \cite{m16,chmr17,cl20,c21,s24} and references therein.

However, 
  to run the min-max argument and then to show the existence of the resulting geodesic segments, 
there are  several  new and essential difficulties  compared with the
 proof of  Theorem \ref{cd} by  Carlotto-De Lellis  \cite{cd19}. In particular, the proof of Theorem \ref{cd}   heavily relies on the
 nonnegative Gaussian curvature and dimension 2 assumptions for $M$, which, indeed, allow the authors to employ  Gauss-Bonnet formula and so to control the behavior of geodesics on a surface repeatedly.
 However, there is no such formula for curves in higher dimensions and we do not assume nonnegative Gaussian curvature in Theorem \ref{ndimanti}, either.
  Moreover, with the help of a topological result in \cite{cl14},  to prove Theorem \ref{cd},
   one only needs to work with embedded geodesic segments in dimension $2$ as did  in   \cite{cd19}   (see also Remark \ref{cl2014} below).
  However,  it was already pointed out in \cite{cl14} that such topological result does not hold in dimension $3$ and only hold   in dimension $\ge 4$ in a generic sense.
  Thus to prove Theorem \ref{ndimanti}, we have  to treat immersed geodesic segments, which causes extra obstacle.
  To overcome these difficulties  so to get Theorem \ref{ndimanti},  new ideas are definitely needed.

We outline the proof of     Theorem \ref{ndimanti}  in Section 1.1 and clarify our new ideas and
   novelty  briefly in Remark 1.4 and 1.5 therein.
   The details to the proof of    Theorem \ref{ndimanti} will be given in Section 4 and Section 5. Indeed,   Section 4 is devoted to   a min-max construction of the geodesic segments and  Section 5  gives the desired geodesic line via some approximating and contradicting arguments.
We refer to Section 2 for several  basic facts about the geometry of the standard circular cone $\mathcal C_\az^n$, and Section 3  for the definition and properties of asymptotically conical manifolds and also a  series of useful lemmas. All of these are needed in Section 4 and Section 5.
 For the readers' convenience, we collect  in Section 1.2 several notions and notation appear  repeatedly.

Finally, we list several remarks about   Theorem \ref{ndimanti}.
 \begin{rem} \label{rmk}\rm
(i) The range  $\{\arcsin \frac{1}{2k+1}\}_{k \in \nn}$ is missing from  the assumption
of    Theorem \ref{ndimanti}
for the opening angle. This is caused by our approach, and will be   explained  in Remark \ref{az} in Section 1.1.
We also explain why this range is allowed in dimension 2 as in Theorem \ref{cd}. See Remark \ref{az}.

 (ii)    There is a wide class of manifolds enjoying
 the  properness assumption of   exponential maps  made in Theorem \ref{ndimanti}.
By \cite[Theorem 4]{gm69},   each non-compact manifold of positive sectional curvature fulfils such   properness assumption.  Moreover, this class contains certain manifolds which can have negative or non-positive curvatures.
In Example \ref{egnon}, we construct a manifold with certain vanishing sectional curvatures.

The properness assumption of exponential maps is required by our proof.
It is not clear whether Theorem \ref{ndimanti} holds for asymptotically conical manifolds which has nonnegative sectional curvature but
exponential maps are not necessarily proper.
 In the proof of Theorem \ref{cd} for the $2$-dimensional case, the nonnegative Gaussian curvature assumption can be applied in accordance with the Gauss-Bonnet formula. See also Remark \ref{cl2014} (iii) below.

 (iii) The Morse index bound $\le n-1$ in Theorem \ref{ndimanti} comes directly from the fact that our homotopy class in our mountain-pass theorem (see Proposition \ref{p14} below) consists of $(n-1)$-dimensional families of curves. Recall that the Morse index bound $\le 1$ in Theorem \ref{cd} is directly from the fact that the homotopy class in \cite[Proposition 14]{cd19} consists of one-dimensional families of curves.

   (iv)  We remark that with the aid of a delicate topological result in \cite{cl14} by Chambers-Liokumovich, Carlotto-De Lellis were able to substitute the homotopies by isotopies in \cite[Proposition 14]{cd19}, which results in that the final geodesic line obtained in Theorem \ref{cd} is embedded. There, they use the fact that if a sequence of curves in a surface with no transverse self-intersections has a limit in $C^1$-norm, then the limit curve also has no transverse self-intersections.
However, this fact can be easily seen to be false for curves in dimension $\ge 3$. Thus we are not able to guarantee the immersed geodesic line in Theorem \ref{ndimanti} is embedded.

  (v)  The asymptotically conical manifolds considered in Theorem \ref{ndimanti} are always assumed to be diffeomorphic to $\rr^n$; see Definition 3.1. In particular, they are contractible and have no boundary.
  These two properties cause essential differences, compared with  the case that $M$ has nontrivial topology or $M$ has boundary.

 For instance, if $(M,g)$ is homeomorphic to $\mathbb S^{n-1}\times \rr$ and $g$ is asymptotically flat to the standard metric on $\mathbb S^{n-1}\times \rr$, then we know $\pa_\fz M$ consists of a disjoint union of two $(n-1)$-spheres $\mathbb S^{n-1}$ and $\bar {\mathbb S}^{n-1}$. For any $a \in \mathbb S^{n-1}$ and $b \in \bar { \mathbb S}^{n-1}$, it is not hard to show the existence of a geodesic line $\Gz_{a,b}$ with prescribed ideal boundary $\pa_\fz \Gz_{a,b} =\{a,b\}$, which can be realized as a limit of the sequence of length-minimising geodesic segments $\Gz_{a,b}^i$ joining $a_i$ and $b_i$ with $\lim_{i \to \fz} a_i = a$ and $\lim_{i \to \fz} b_i = b$. See the left of Figure \ref{inpic0} for an illustration.

Furthermore, when $M$ has boundary, for instance, if $M$ is the shaded region in $\rr^2$ in the right of Figure \ref{inpic0} with the length metric induce by Euclidean metric, then $\pa_\fz M$ is $\mathbb S^{1}\setminus \{a,-a\}$ where $a,-a$ are a pair of antipodal points of $\mathbb S^1$. Thus $\pa_\fz M$ is a disjoint union of two open intervals $\mathbb S^1_- \cup \mathbb S^1_+$. In this case, it is also straightforward to show the existence of a geodesic line $\Gz_{b,c}$ with prescribed ideal boundary $b \in \mathbb S^1_-$ and $c \in \mathbb S^1_+$, again by considering an exhaustion sequence of length-minimising geodesic segments.

\vspace*{1pt}
\begin{figure}[h]
\centering
\includegraphics[width=13cm]{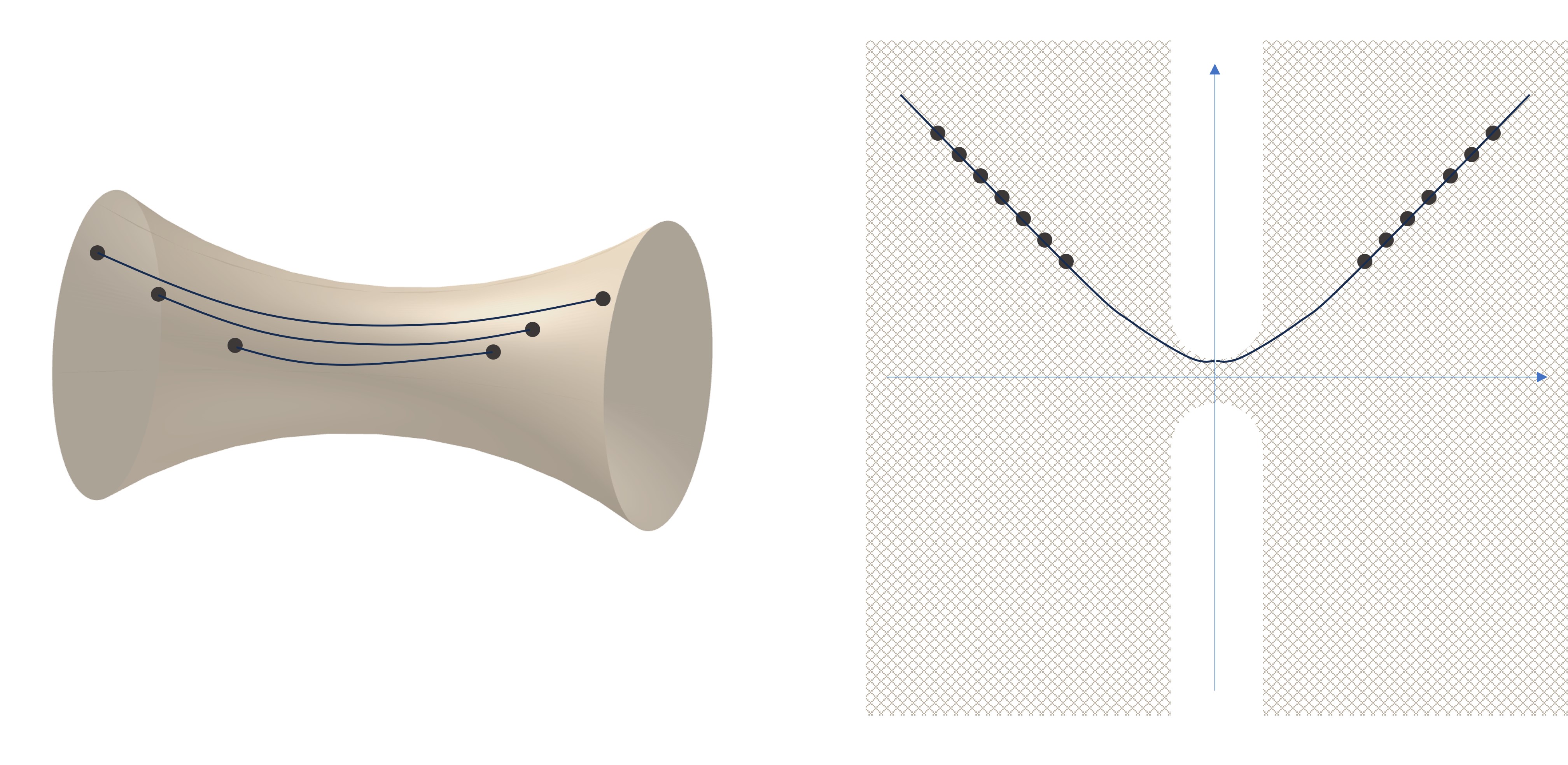}
\caption  {$M$ has nontrivial topology or $M$ has boundary.}
\label{inpic0}
\end{figure}


 \end{rem}

\subsection{Ideas of  the proof of of Theorem \ref{ndimanti}} \label{s11}

 Let $M=(\mathbb R^n,g)$ 
be an asymptotically conical manifold as in Theorem \ref{ndimanti} with reference point $ o $.
Given a pair of ideal antipodal boundaries $\{\tz,\tz^\star\} \subset \pa_\fz M =\mathbb S^{n-1}$,
we aim to construct a geodesic  line $\Gz$ with $\pa_\fz \Gz = \{\tz,\tz^\star\}$ so to prove Theorem \ref{ndimanti}. 
As it is always the case, $\Gz$ will be constructed as the limit
 in certain sense of  a suitably chosen sequence of geodesic segments $\Gz_j$ joining the
 pair $p_j=(\rho_j,\tz)$ and $q_j=(\rho_j,\tz^\star)$ of  antipodal points in $\mathbb R^n$, where $\lim_{j \to \fz}\rho_j = +\fz$.  Here,  $(\rho,\tz)$ is  the standard polar coordinate of $\rr^n$.
To guarantee the desired convergence, 
 it is necessary and also sufficient to require   that
 $\{ \Gz_j\}$ (up to some subsequence)   are close to $o$ uniformly, that is,
\begin{equation}\label{unifbdd}
  \Gz_j \cap B(o,C) \ne \emptyset \mbox{ for all   $  {j\in\mathbb N} $ 
  }
\end{equation}
for some constant $C>0$, and also 
have  bounded local length uniformly, that is,
  \begin{equation}\label{Ascolli2}
    \sup_{j \in \nn}\{\ell_g(\Gz_j\cap B(o,r))\}< \fz\ \mbox{ for any $r>C$}.
    \end{equation}

To choose a stuaible sequence $\{\Gamma_j\}$ satisfying  \eqref{unifbdd} and \eqref {Ascolli2},
 we must turn to  geodesic segments which are not length-minimising for the reason that
  any length-minimising geodesic segment joining $p_j$ and $q_j$    will escape every geodesic ball as $j\to\infty$
  that is,
  that is, for any $C>0$,   \eqref{unifbdd} fails for all sufficiently large $j$.

To choose  non-length-minimising geodesics correctly,   we are motivated by a geometric observation in  the standard (circular) cone $\mathcal C_\az^n=(\mathbb R^n,g_{\mathcal C_\az^n})$ with the apex $o$.  Indeed,   denote by $G_{p_j}$ (resp. $G_{q_j}$)  the generatrix segment joining $p_j$ (resp. $ q_j$) and   $o$ in $\mathcal C_\az^n$, which is a geodesic segment therein.
The concatenation $G_j$ of $G_{p_j}$ and $G_{q_j}$ has length $2\rho_j$ and contains $o$.
%
 Although  $G_j$ is not a geodesic segment in  $\mathcal C_\az^n$ (not length-minimising around apex $o$),
 we still wish to find some geodesic segment $ \Gamma_j$
in $M$ joining $p_j, q_j$  and playing the role of  $ G_j$ in  $\mathcal C_\az^n$, in particular, satisfying \eqref{unifbdd} for all $j$,
 and having length around  $2\rho_j$.
See Figure \ref{inpic1} for an illustration.

\vspace*{1pt}
\begin{figure}[h]
\centering
\includegraphics[width=16cm]{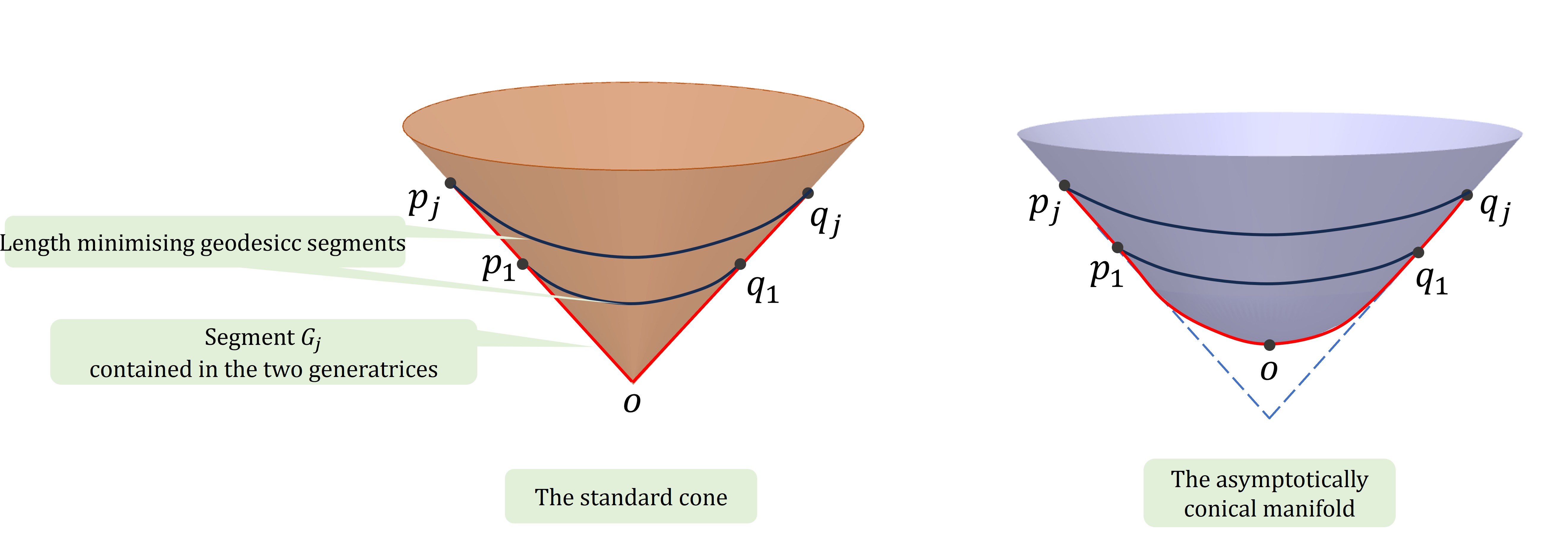}
\caption  {Geodesic segments joining $p_j$ and $q_j$.}
\label{inpic1}
\end{figure}


Surprisingly,  we do find  the desired geodesic line-segments $\{ \Gamma_j\}$
   with the help of the min-max theory built up  in the following Step 1 and Step 2,
   and then, from them  we conclude  a
geodesic line $\Gz$ with morse index $\le n-1$ in Step 3 and show
 the ideal bounardies
 $\partial \Gamma =\{\theta,\tz^\star\}$ in Step 4.
Here we only consider the case $n\ge3$ since the case $n=2$ is easier (See Remark \ref{cl2014} for the reason why we need to treat the case $n=2$ and cases $n\ge 3$ separately).
Without loss of generality, we may assume that $\theta$ and $\tz^\star$ are contained in the first coordinate $\Span\{e_1\}$ and $\tz =e_1$.
Write the class
 $$  \ccc_j:=\{\gz \in W^{1,2}([0,1],M) : \gz(0)=p_j, \ \gz(1) =q_j\},  $$
and  the energy
  $$\ce(\sz):= \int_{0}^1 |\dot \sz(t)|_g
 ^2 \, dt, \quad \forall \sz \in W^{1,2}([0,1],M).$$

{\it Step 1.    Construct a homotopy class $\{\mathcal M_j\}$ and  bound the corresponding min-max value $ \{\Lambda_j\}$. }

To  construct a suitable homotopy class, firstly we  give  a full characterisation for length-minimising geodesic segments joining
two antipodal points   of the standard (circular) cone $\mathcal C_\az^n:=(\mathbb R^n,g_{\mathcal C_\az^n})$ with opening angle $\az$;
see Lemma \ref{nmingeo} in Section \ref{s22}.
Indeed,   there exists an $(n-2)$-dimensional family of length-minimising geodesic segments:
$$\cl_{p_j,q_j}:=\{L \in C^\fz([0,1], \mathcal C_\az^n): L(0)=p_j, \ L(1)=q_j, \mbox{ $L$ is a length-minimising geodesic segment} \}.$$
Note that the length 
of each $L$ under $g_{\mathcal C_\az^n}$ is $ 2\rho \sin(\frac{\pi}2 \sin\az)$ and the metric space $(\cl_{p_j,q_j},\|\cdot\|_{C^0([0,1])})$ is homeomorphic to $\mathbb S^{n-2}$.
We parametrise $\cl_{p_j,q_j}$ as
$$\cl_{p_j,q_j}=\{L_\bz \in  C^\fz([0,1], \mathcal C_\az^n) : \bz \in \mathbb S^{n-2}\}.$$

 Next, viewing $\mathbb S^{n-2}$ as the unit sphere of $(\Span\{e_1\})^\perp=\Span \{e_2, \cdots, e_{n}\}$,
 we   identify each $L_\bz \in \cl_{p_j,q_j}$ as a point therein, 
in particular, a point in $\rr^n$.
  Consider the reflection $I$ of $\rr^{n}$ with respect to $(\Span\{e_{n}\})^\perp$, i.e.
  \begin{equation}\label{refi}
     I(x)=I((x_1,x_2,\cdots,x_{n-1},x_n)):= (x_1,x_2,\cdots,x_{n-1},-x_{n})
  \end{equation}
 where $I(x)=x$ for all $x \in \Span\{e_1,\cdots,e_{n-1}\} = \{x \in \rr^n: x_n=0\}$.  In particular, $I(p_j)=p_j$ and $I(q_j)=q_j$.
 We have the important correspondence,
 $$  I(L_\bz)= L_{I(\bz)}, \quad \forall \bz \in \mathbb S^{n-2}.  $$
 See Section 4 for this fact.

Using the above correspondence $ I$, we   define the desired class of homotopies as
 \begin{equation}\label{homotopy0}
   \cm_j:=\{H \in C^0(\mathbb S^{n-2} \times [0,1],\ccc_j) :  H(\bz,0)=  L_\bz, \ H(\bz,1) =(L_{I(\bz)}), \ \forall \bz \in \mathbb S^{n-2}\}
 \end{equation}
and the corresponding min-max value
$$  \Lz_{j}:= \inf_{H \in \cm} \max_{(\bz,s) \in \mathbb S^{n-2} \times [0,1]} \ce(H(\bz,s)).$$

Recalling that the expected length for the desired geodesic segment $\Gz_j$ is approximately $2\rho_j$ and noting that $\ce(\gz)= \ell_g(\gz)^2$ for any $\gz\in \ccc$, we know $\Lz_j$ should satisfy
\begin{equation}\label{sweep}
  \Lz _{j} \approx 4\rho_j^2.
\end{equation}
To obtain the estimate \eqref{sweep}, each homotopy $H \in \cm_j$ should be a ``sweep-out'' around $o$, which, in particular, implies that $o \in H(\mathbb S^{n-2}\times [0,1])$. This is shown in Section 4.1 and Section 4.2.

\medskip
{\it Step 2.   Find a sequence $\{\Gamma_j\}$ of min-max geodesic segments  with respect to $g$ 
 by constructing a deformation map. }

 For each $j\in \nn$,  from the definition of $\Lambda_j$,  clearly we could find a min-max sequence in $\Seq(\ccc)$ with limit $\Gamma_j$ joining $p_j,q_j$ and the energy $\ce(\Gamma_j)=\Lambda_j$.
To  show that $\{\Gamma_j\}$ are  geodesic  segments with respect to $g$,
that is, critical point of $\ce$,
firstly we  construct a suitable  deformation map in Section 4.3 (the idea is sketched as below),
and then   exploit an argument by contradiction in Proposition \ref{p14} in Section 4.4.

Generally, a deformation map $\Phi: \ccc_j \times [0,\dz] \to [0,\fz)$ is a continuous map for some small $\dz>0$ which satisfies
\begin{enumerate}
  \item[(i)]  $\Phi(\gz,\cdot)$ is strictly decreasing in $[0,\dz]$ if $\gz \in \ccc_j$ is not a critical point of $\ce$;
  \item[(ii)] $\Phi(\gz,\cdot)$ is constant if $\gz \in \ccc_j$ is a critical point of $\ce$.
\end{enumerate}

We choose the deformation map  as the gradient flow of a carefully chosen truncated energy functional $\ce_\ast: = \eta \circ \ce$,
  where $\eta \in C^\fz([0,\fz))$ is some cut-off function
(See Section 4.3).
Roughly speaking, $\ce_\ast=\ce$ for curves in $\ccc_j$ of length $\ell_g(\gz)$ around $2\rho_j$ and $\ce_\ast=0$ for curves in $\ccc_j$
of length $\ell_g(\gz)$ around $2\rho_j \sin(\frac{\pi}2 \sin\az)$.
To see the reason why we can not use the energy functional $ \mathcal E$ directly, see Remark \ref{cl2014} (ii) below.

The corresponding gradient flow $\Phi(\gz,\tau)$ with respect to $\ce_\ast$ is the solution of the ODE
\begin{equation}\label{gflow0}
 \left \{ \begin{array}{l} \frac{d}{d\tau}\Phi(\gz,\tau) = -\nabla \ce_\ast(\Phi(\gz,\tau)),
  \\
  \Phi(\gz,0)=\gz
\end{array}
\right.
\end{equation}
which is well-defined by the $C^2$-Hilbert structure on $\ccc_j$; see Section 4.3.
An important property is that for $\tau>0$, if $H \in \cm_j$, then
 $  \Phi(H,\tau) : \mathbb S^{n-2} \times [0,1] \to \ccc_j $
is also a homotopy in the class $\cm_j$.

\medskip
\emph{Step 3. Show that    $\{\Gamma_j\}$   satisfying  \eqref{unifbdd} and \eqref{Ascolli2}.}



 To this end,  in the standard cone $\mathcal C_\az^n$,  we build   a full characterisation  for geodesic segments joining two antipodal points via winding number with respect to $o$.
Moreover, we clarify the limit behaviour of geodesic lines in $\mathcal C_\az^n$, and also the
 limit behaviour of some family of geodesic lines and segments.  For details see Section \ref{s21} and Section \ref{s22}.

The verification of \eqref{Ascolli2} is then achieved by utilising the property of the properness of the exponential map and
 some blow-down arguments, and the limit behaviour of some family of geodesic segments in $\mathcal C_\az^n$.
See Lemma \ref{geoflow}, where the properness assumption of exponential maps is used.

We  prove
\eqref{unifbdd} by contradiction and by using some blow-down arguments repeatedly;  see Lemma \ref{unif} in Section 5. 
The point is that, assuming the failure of \eqref{unifbdd},  we will show that the family
  $\{\Gz_j\}_{j \in \nn}$ after blowing down will converge to a geodesic segments in $\mathcal C_\az^n$ and the behaviors of the blowing down sequence
  would violate the characterisation of geodesic segment in $\mathcal C_\az^n$ above.  The range $\alpha\in(0,\pi)\setminus
   \{\arcsin\frac{1}{2k+1}\}_{k \in \nn}$ is required to get the desired contradiction.

\medskip
\emph{Step 4. Construct a geodesic line $\Gz$ from  $\{\Gamma_j\}$
and show $\partial\Gz=\{\tz,\tz^\star\}$.}

Given a sequence  $\{  r_i\}$ with $r_i\to\infty$,
for all large $j$ denote by $t^-_{j,r_i}$ (resp. $t^+_{i,r_i}$) the first (resp. last) time
that $\Gz_j$ enters into (resp. exits from) $\overline{B(o,r_i)}$.
We show that,
 as $j\to\infty$ (up to some subsequence), the restriction
$\Gz_{j}|_{[t^-_{j,r_i},t^+_{j,r_i}]}$ will converge to some geodesic segment
$\Gamma_i^\ast$ with the property that $\Gamma_i^\ast \subset \Gamma_{i+1}^\ast$ for $i \in \nn$.
Then
the limit of certain reparametrization of $\Gz^\ast_i$ gives a geodesic line $\Gz$.

 It remains to show $\Gz$ having antipodal ideal boundaries $\{\tz,\tz^\star\}$.
 The  core is to establish the non-twisting property for the min-max geodesic segments
 $\{\Gz_j\}_j$, in other words,
  outside any given ball, $\{\Gz_j\}_j$  asymptotes to antipodal  directions $\{\tz,\tz^\star\}$ in a uniform sense.
  The proof is highly nontrivial.
 To this end, we  compare the behaviors of geodesic segments of the standard cone with $\Gz_j$ based on several blow down procedures; see  Section 5.2 for details.

This finishes the outline of the proof of Theorem \ref{ndimanti}.

\medskip
 Below  we clarify  our new ideas and novelty in the proof of Theorem \ref{ndimanti}, and also list several key differences against  the proof of Theorem \ref{cd} by Carlotto-De Lellis  \cite{cd19} in the $2$-dimensional case.

\begin{rem} \label{cl2014} \rm

(i)  We emphasis that the homotopy class $\cm_j$ in \eqref{homotopy0} is a non-trivial higher dimensional analogue of the one in \cite{cd19}. Indeed, for the above two antipodal points $p_j$ and $q_j$, Carlotto-De Lellis constructed the homotopy $\cn_j$ as
\begin{equation}\label{nj}
  \cn_j :=\{H \in C^0([0,1],\ccc_j) :  H(0)= \Gz_{j,1}^{g}, \ H(1) =\Gz_{j,2}^{g}\}
\end{equation}
where $\Gz_{j,i}^{g}$ is a geodesic segments joining $p_j$ and $q_j$, $i=1,2$ with respect to the asymptotically conical metric $g$, and $\Gz_{j,1}^{g}\cup\Gz_{j,2}^{g}$ is topologically an $\mathbb S^1$ for each $j \in \nn$. The existence of two geodesic segments joining $p_j$ and $q_j$ with respect to $g$ is shown in \cite[Lemma 18]{cd19}. Furthermore, let $D_j$ be the bounded component of $M(\approx \rr^2) \setminus [\Gz_{j,1}^{g}\cup\Gz_{j,2}^{g}]$, then $o \in D_j$ for all $j \in \nn$. This property guarantees that for any homotopy $H$ in $\cn_j$, $o \in H(s)$ for some $s \in [0,1]$, since in dimension $2$, $D_j \setminus \{o\}$ is not simply-connected.

However, in an asymptotically conical manifold $(M,g)$ with dimension $n\ge 3$, it may not be possible to always find an $(n-2)$-dimensional family of geodesic segments $\{\Gz_{j,\bz}^g\}_{\bz \in \mathbb S^{n-2}}$ joining $p_j$ and $q_j$ with respect to $g$ to construct the desired homotopy.
 Our new idea here is  to use the length-minimising geodesic segments $L_\bz$ of the standard cone $\mathcal C_\az^n$ in the construction of our homotopy class $\cm_j$;
 as explained in Step 1, the family $\cl_{p_j,q_j}$ of length-minimising geodesic segments joining $p_j$ and $q_j$ forms an $(n-2)$-dimensional family of curves. Moreover,
 \begin{equation}\label{bv1}
   \bigcup_{L_\bz \in \cl_{p_j,q_j} }L_\bz \subset M \approx \rr^{n}  \mbox{ is topologically an $\mathbb S^{n-1}$},
 \end{equation}
and letting $D_j$ be the bounded component of $M \setminus [\cup_{L_\bz \in \cl_{p_j,q_j} }L_\bz]$, then
\begin{equation}\label{bv2}
  o \in D_j, \quad \forall j \in \nn.
\end{equation}
Thus, the $(n-2)$-dimensional family $\cl_{p_j,q_j}$ of length-minimising geodesic segments of the standard cone $\mathcal C_\az^n$ is a nice substitute for $\Gz_{j,i}^{g}$ in dimension $2$.

Nonetheless, unlike the case in dimension $2$, in dimensions $n \ge 3$, the above two properties \eqref{bv1} and \eqref{bv2} are not enough to guarantee the following crucial property:  for a general homotopy $H: \mathbb S^{n-2}\times [0,1] \to \ccc_j$
with boundary value
$$ H(\mathbb S^{n-2},0)\cup H(\mathbb S^{n-2},1) \subset \cl_{p_j,q_j}, $$
there always exists $(\bz,s) \in \mathbb S^{n-2}\times [0,1]$ such that
$$o \in H(\bz,s).$$
This is because in dimensions $n \ge 3$, $M \setminus \{o\}$ is still simply-connected. Indeed, in Section 4, we provide an example of such homotopy $H$ without passing though $o$ (See Example \ref{o} in Section 4.1).

By the above discussion, the reflection map $I$ is indispensable in the definition of the homotopy class $\cm_j$, which guarantees that $o \in H(\mathbb S^{n-2}\times [0,1])$ for all $H \in \cm_j$ and thus enables us to conclude the desired estimate \eqref{sweep}.



(ii)   The above different choices of homotopy class also lead to different choices of deformation maps. To run the min-max argument successfully, an important property for deformation map $\Phi: \ccc \times [0,\dz] \to [0,\fz)$ is that $\Phi(\gz,\cdot)$ should be constant when $\gz$ is the boundary value of a homotopy $H$ in the homotopy class $\cm_j$ (resp. $\cn_j$), that is,
\begin{equation}\label{value}
  \Phi(\gz,\tau) \equiv \Phi(\gz,0), \ \tau \in [0,\dz] \mbox{ if $\gz \in \{H(\bz,0), H(\bz,1)\}_{\bz\in \mathbb S^{n-2}}$ (resp. $\gz \in \{H(0), H(1)\}$) .}
\end{equation}
In \cite{cd19}, Carlotto-De Lellis could directly use the gradient flow $\Phi$ of the energy functional $\ce$ with respect to $g$, since by their choice of homotopy in \eqref{nj}, the boundary values $H(0)$ and $H(1)$ are geodesic segments with respect to $g$, thus $H(0)$ and $H(1)$ are critical points of $\ce$, which are equilibriums of the flow $\Phi$ and \eqref{value} holds under this choice.

In our case, the boundary values of $H$ are elements in $\cl_{p_j,q_j}$ consisting of geodesic segments respect to the standard cone $\mathcal C_\az^n$, thus they may not be equilibriums of the gradient flow of the energy functional $\ce$ with respect to $g$ and consequently \eqref{value} may not hold for $\gz \in \cl_{p_j,q_j}$. However, noticing that $\gz \in \cl_{p_j,q_j}$ always has length $\ell_g(\gz)$ approximately $2\rho_j \sin(\frac{\pi}2 \sin\az)$, which is strictly smaller than the min-max value $\Lz_j \approx 2\rho_j$, our new idea is to find a cut-off function $\eta$ to truncate the energy functional $\ce$ between these two values and so to consider the functional $$
\ce_\ast=\eta\circ\mathcal E.$$
The point is that  $\gz \in \cl_{p_j,q_j}$ will be  equilibriums of the truncated gradient flow $\Phi$ of $\ce_\ast$
and \eqref{value} holds under this choice.
See also Remark \ref{flowast} for the role of property \eqref{value} in the min-max argument Proposition \ref{p14}.

(iii)  It is  also our novelty  to prove  \eqref{unifbdd} and \eqref{Ascolli2} in Step 3 under  the  properness assumption of exponential maps.
Our  verifications of \eqref{unifbdd} and \eqref{Ascolli2}  are totally different from the proof of Theorem \ref{cd} by Carlotto-De Lellis, where they applied Gauss-Bonnet theorem multiple times for showing both \eqref{unifbdd} and \eqref{Ascolli2} combined with the information of embeddedness of $\Gz_j$ and the nonnegative curvature condition.
Since there is no Gauss-Bonnet formula for geodesics in higher dimensional manifolds $M$ and we do not have the embeddedness of $\Gz_j$, nor assume $M$ has nonnegative curvature,
 the chance to adapt their approach into higher dimensions is slim.

 (iv)   In Step 4, it is also our novelty  to  figure out a new argument to build up the desired uniform non-twisting phenomenon.  Recall
  that in dimension 2,
Carlotto-De Lellis \cite{cd19} first showed the uniform non-twisting phenomenon
 so to conclude  $\partial \Gz=\{\theta,\theta^\star\}.
 $  To this end, they compared the behaviors of length-minimising geodesic rays with $\Gz_j$ and, again, employed Gauss-Bonnet formula.  However, in dimensions $n \ge 3$, an analogous argument is unavailable by the same reasons above.
\end{rem}

The following remark  explains the reason why the values $\{ \arcsin\frac{1}{2k+1}\}_{k \in \nn}$ are missing from
the assumption for the opening angles in Theorem \ref{ndimanti}.

\begin{rem} \label{az} \rm
In Step 3, we mention that our approach could not deal with the case that the opening angle $\az \in\{ \arcsin\frac{1}{2k+1}\}_{k \in \nn}$.
Here is an explanation. For each $\az \in [\arcsin \frac{1}{2k+1},\arcsin \frac{1}{2k-1})$, all the geodesic lines on the standard $2$-dimensional circular cone $ \mathcal C_\az^2$ with opening angle $\az$ has winding number $k-1$ with respect to the apex $o$. Thus the values $\{\arcsin \frac{1}{2k+1}\}_{k \in \nn}$ are the thresholds where this winding number decreases as $\az$ increases (see Lemma \ref{antigeo}).
   Moreover, geodesic segments $\Gz_\std$ of the standard circular cone with winding number $>0$ have self-intersections which means that they are immersed, not embedded curves. Since we can not restricted ourselves to embedded geodesic segments $\Gz$ on the asymptotically conical manifold $M$ as mentioned in Remark \ref{rmk} (iii) and in the third paragraph below Theorem \ref{ndimanti}, in the proof of Lemma \ref{unif} in Section 5, we need compare the behaviour of  $\Gz$ with an immersed $\Gz_\std$. The presence of self-intersections will lead to an inefficiency in
   the proof of Lemma \ref{unif} for the values $\{\arcsin \frac{1}{2k+1}\}_{k \in \nn}$.

  On the contrary, in dimension $2$, Carlotto-De Lellis could  focus on embedded geodesic segments $\Gz$ on $M$ where they only need to
compare the behaviour of $\Gz$ with the embedded geodesic segments $\Gz_\std$ of the standard circular cone. This reduction enables them to obtain Theorem \ref{cd} for all $\az \in (0,\pi/2)$.


 \end{rem}


\subsection{Notations}\label{s13}
In this paper, unless otherwise stated, we will always work in the canonical Cartesian coordinate $(x_1,\cdots,x_n)$ and the polar coordinate $(\rho,\tz)=(\rho,\tz_1,\cdots,\tz_{n-1}) \in [0,+\fz) \times [0,2\pi)\times [-\pi/2,\pi/2]^{n-2}$ in $\rr^n$.
For any $p=(p_1,\cdots,p_n) \in \rr^n \setminus \{o\}$, we define its antipodal point $q$ by $q=(-p_1,\cdots,-p_n)$. In the polar coordinate, if $p=(\rho, \tz)$, then we denote by $q:=(\rho,\tz^\star)$, i.e., we define $\tz^\star$ as the antipodal direction of $\tz$.

For any two points $p_1,p_2 \in \rr^n\setminus \{o\}$, $\angle(p_1,p_2)$ is the Euclidean angle and when $p_i \in \mathcal{T}_{x_i}\rr^n$ for some $x_i \in \rr^n$, we identify $\mathcal{T}_{x_i}\rr^n$ as $\rr^n$ in the natural way and $\angle(p_1,p_2)$ still denotes the Euclidean angle between $p_1$ and $p_2$.

For $\az \in (0,\pi/2]$, we define the wedge domain
\begin{equation*}
  W_{2\pi\sin \az}= \{(\rho,\phi): \rho\in [0,\fz), \ \phi \in [-\pi\sin \az,\pi\sin \az) (= \rr \mod 2\pi\sin \az)\} \subset \rr^2.
\end{equation*}
We use the notation $\bar p, \bar q$ to denote the points in the wedge domain.

Finally, we gather the notations that are repeatedly appeared in this paper.
\begin{longtable}{ll}
$\mathcal C_\az^n=(\rr^n,g_{\mathcal C_\az^n})$  & $n$-dimensional circular cone with Riemannian metric $g_{\mathcal C_\az^n}$ in \eqref{stdncone}   \\
$(M,g)$ & $n$-dimensional manifold with an asymptotically conical metric $g$  \\
$\mathcal{T}_pM$ (resp. $\mathcal{S}_pM$) & Tangent (resp. Unit tangent) space at $p \in M$  \\
$|\cdot|_{\mathcal C_\az^n}$ (resp. $|\cdot|_{g}$, $|\cdot|$) \qquad {} & Norm on $\mathcal{T}_pM$ induced by $g_{\mathcal C_\az^n}$ (resp. $g$, the Euclidean metric)\\
$d_{\mathcal C_\az^n}$ (resp. $d_{g}$) \qquad {} & Distance function induced by $g_{\mathcal C_\az^n}$ (resp. $g$)\\
$\dist(\cdot,\cdot)=|\cdot-\cdot|$  \qquad {} &  Euclidean distance \\
$ \ell_{\mathcal C_\az^n}$ (resp. $\ell_{g}$, $\ell$)  &  Length functional induced by $g_{\mathcal C_\az^n}$ (resp. $g$, the Euclidean metric)  \\
$B_g(x,r)$ & Open geodesic ball with center $x$ and radius $r$ induced by $g$ \\
(resp. $B(x,r)$) & (resp. the Euclidean metric) \\
$\exp^{\mathcal C_\az^n}$ (resp. $\exp^g$)  & Exponential map induced by $g_{\mathcal C_\az^n}$ (resp. $g$) \\
\end{longtable}

\section{Geometry of  standard cones}\label{s2}

We start with the definition of $n$-dimensional circular cone with $ n\ge2$.
Write $\mathcal C_\az^n$ as the (upper) standard $n$-dimensional circular cone with opening angle $\az \in (0,\pi/2)$, i.e.
\begin{equation}\label{subset}
  \mathcal C_\az^n=\left\{(x_1, \cdots ,x_{n+1})\in\rr^{n+1} \times [0,\fz): \sum_{i=1}^n x_i^2= (x_{n+1}\tan\az)^2\right\}
\end{equation}
with the metric inherited from $\rr^{n+1}$.
Notice that $\mathcal C_\az^n \setminus \{o=(0,\cdots,0)\}$ is a smooth Riemannian manifold homeomorphic to $\rr^n \setminus \{o\}$ with Riemannian metric
\begin{equation}\label{stdncone}
  g_{\mathcal C_\az^n}(\rho,\tz)= d\rho \otimes d\rho + \rho^2 (\sin^2 \az ) g_{\mathbb S^{n-1}}(\tz),
\end{equation}
 where we parameterise $\mathcal C_\az^n$ in the standard $n$-dimensional polar coordinate as
 \begin{equation}\label{pan1}
   \{(\rho,\tz):\rho\in [0,\fz), \ \tz=(\tz^1, \cdots, \tz^{n-1}) \in [0,2\pi)\times [-\pi/2,\pi/2]^{n-2}  \}
 \end{equation}
  and
  $g_{\mathbb S^{n-1}}$ is the $(n-1)$-dimensional spherical metric.
Sometimes we  also write $g_{\mathcal C_\az^n}$ in the Cartesian coordinate as follows:
\begin{equation}\label{stdcar}
  g_{\mathcal C_\az^n}(x)= \sum_{i,j=1}^{n}g_{ij}^{\mathcal C_\az^n}(x) \, dx_i \otimes dx_j,\quad   \forall x=(x_1,\cdots,x_n)\in \rr^n\setminus\{o\}.
\end{equation}
Note that there exists constant $C$ only depending on $n$ and $\az$ such that
\begin{equation}\label{compare}
  |v|_{g_{\mathcal C_\az^n}}= \sqrt{\la g_{\mathcal C_\az^n}(x)  \xi,\xi\ra} \in [\frac{1}{C}|\xi|,C|\xi|], \quad \forall \xi \in \mathcal{T}_x\mathcal C_\az^n \mbox{ and } x \in \mathcal C_\az^n\setminus\{o\}.
\end{equation}

  In the following of this paper, the notation $\mathcal C_\az^n$ always refers to
$$ \mathcal C_\az^n = (\rr^n, g_{\mathcal C_\az^n}), $$
which is understood in the sense that $\mathcal C_\az^n$ is the metric completion of the (non-complete) $n$-dimensional manifold
$(\rr^n \setminus \{o\},g_{\mathcal C_\az^n})$.
Thus $\rr^n$ naturally forms a global chart of $\mathcal C_\az^n$ with the origin $o$ the only non-smooth point. We call $o$ the apex of $\mathcal C_\az^n$.
Writing $d_{\mathcal C_\az^n}$ as the distance function induced from $g_{\mathcal C_\az^n}$, directly from \eqref{stdncone}, we have
\begin{equation}\label{do}
  d_{\mathcal C_\az^n}(x,o) = |x-o|= |x|, \quad x \in \rr^n,
\end{equation}
where $|x-o|$ is the Euclidean distance between $x$ and $o$ and $|x|$ is the Euclidean norm of $x$. Denote by $\ell_{\mathcal C^n_\alpha}(\gamma)$
the length of a   curve $\gamma:[a,b]\to\mathbb R^n$ with respect to the metric
  $g_{\mathcal C_\az^n}$.

We call $G$ a generatrix of $\mathcal C_\az^n$ if
$$ G = \{a t : t \ge 0 \} \mbox{ for some } a \in \mathbb S^{n-1}.$$
Note that $\mathcal C_\az^n$ is the union of its generatrices, i.e.,
  $$\mathcal C_\az^n= \bigcup_{a \in \mathbb S^{n-1}} \{a t : t \ge 0 \}.$$

Here is the definition for antipodal generatrices and points.
  \begin{defn}  \rm
 (i) We say that two generatrices $G_1$ and $G_2$ of $\mathcal C_\az^n$ are antipodal if $G_1 \cup G_2$ form a Euclidean line.

 (ii)   We say two points $p$ and $q$ of $\mathcal C_\az^n$ are antipodal if they are contained in a pair of antipodal generatrices respectively and have same distance to the origin. In the Cartesian chart, $p=-q$.  In the polar coordinate
   if $p=(\rho,\tz)=(\rho,\tz_1,\tz_2,\cdots,\tz_{n-1})$, then
   $$q=(\rho,\tz_1+\pi(\mod 2\pi),-\tz_2,\cdots,-\tz_{n-1})  =:(\rho,\tz^\star),
  $$
 \end{defn}


Below we give  several useful facts for the $2$-dimensional and $n$-dimensional cone in Subsection \ref{s21} and \ref{s22} respectively.
Mostly we focus on the behavior of geodesic segments, rays and lines on the standard cone. 

\subsection{Geometry of   $ \mathcal C_\az^2$}\label{s21}

 In the 2-dimensional cone $ \mathcal C_\az^2= (\rr^2, g_{ \mathcal C_\az^2})$,
  we  write \eqref{stdncone} explicitly as
 \begin{equation}\label{pa1}
g_{ \mathcal C_\az^2}(\rho,\tz)= d\rho \otimes d\rho + \rho^2 \sin^2 \az \, d\tz\otimes d\tz.
 \end{equation}

 To understand the behaviour of geodesic segment (resp. ray, line) in
 $ \mathcal C_\az^2$, we consider the     unfolding of $ \mathcal C_\az^2$ to some wedge domain. To be precise,
 by cutting $ \mathcal C_\az^2$ along a generatrix, say $\{\tz=-\pi\}$, we   unfold $ \mathcal C_\az^2$ to the wedge domain
\begin{equation}\label{wedge}
  W_{2\pi\sin\az}= \{(\rho,\phi): \rho\in [0,\fz), \ \phi \in [-\pi\sin \az,\pi\sin \az) (= \rr \mod 2\pi\sin \az)\} \subset \rr^2
\end{equation}
under the map  $  (\rho,\tz) \mapsto  (\rho,\phi(\tz,\az))$
where $(\rho,\tz) \in [0,\fz) \times [-\pi,\pi)$ is the standard polar coordinate of $\rr^2$ and
\begin{equation}\label{angleast}
  \phi(\tz,\az) = \sin (\az)\tz.
\end{equation}
Note that  $W_{2\pi\sin\az}$ is equipped with the planar Euclidean metric.
It is well-known that the unfolding map is an isometry, we know $ \mathcal C_\az^2$ is isometric to the planar wedge $W_{2\pi\sin\az}$ by identifying $\{\tz=-\pi\sin \az\}$ and $\{\tz= \pi\sin \az\}$.

 To distinguish notations, we  always use   $p=(\rho ,\tz)$ for the parametrisation \eqref{pa1} of $ \mathcal C_\az^2$ and  $\bar p =(\rho ,\phi)$
for the parametrisation \eqref{wedge} of the wedge domain $W_{2\pi\sin\az}$.
For any two points $p,q \in  \mathcal C_\az^2 = (\rr^2,g_{ \mathcal C_\az^2})$,
write $\angle(p,q)$ as the Euclidean angle between the vector $op$ and $oq$,
 and $\angle (\bar p,\bar q)$ as the corresponding
 Euclidean angle betwee the vector $\bar o\bar p$ and $\bar o\bar q$ 
  in  $W_{2\pi\sin\az}$.
One has
\begin{equation}\label{wed}
  \angle(\bar p,\bar q) = \angle(p,q) \sin\az.
\end{equation}
Since  the  unfolding map $p\mapsto\bar p$ is an isometry,
the length of any curve   $\Gamma$   with respect to $g_{\mathcal C^2_\alpha}$
coincide with Euclidean length of  the image $\bar \Gamma$ of $ \Gamma $ under the  unfolding map. Moreover, a curve
  $\Gamma$ is a geodesic segment (resp. line, ray) in $\mathcal C^2_\alpha$ if and only if $\bar \Gz$ is a  geodesic segment (resp. line, ray) in   $W_{2\pi\sin\az}$ or in Euclidean sense.
In particular,  each
  generatrix  $G$ (given by $\rr_+(1,\theta) $ for some $ \theta\in[0,2\pi)$
  of $ \mathcal C_\az^2$ is a geodesic ray   in  $\mathcal C^2_\alpha$  since
  the induced
  $\bar G$ (given by $\rr_+(1,\phi ) $  with
   $\phi= (\sin\alpha) \theta$)    is a Euclidean  ray passing through the origin $o$
   (a geodesic ray   in the wedge domain).

\begin{rem}
  \rm We remark that in the Cartesian chart $\rr^2$ and $p,q \in \rr^2$, the angle $\angle poq$ under the Euclidean metric and the angle $\angle_{g_{ \mathcal C_\az^2}} poq$ induced by the metric completion of $g_{ \mathcal C_\az^2}$  are different. The reason to use Euclidean angle throughout the paper is that in the following proofs, for curves $\gz \in C^1([0,1],\rr^2)$, we need to estimate the angles such as $\angle(\dot\gz(t_0),\dot\gz(t_1))$ for some $t_0,t_1 \in [0,1]$. Note that the notion $\angle(\dot\gz(t_0),\dot\gz(t_1))$ is naturally defined for Euclidean angles (under the standard identification $\mathcal{T}_{\gz(t_i)}\rr^2=\rr^2$, $i=0,1$). However, since $\dot\gz(t_i) \in \mathcal{T}_{\gz(t_i)}\mathcal C_\az^2$ are contained in two distinct tangents spaces, $i=0,1$ (if $\gz(t_0)\ne \gz(t_1)$), $\angle_{g_{ \mathcal C_\az^2}}(\dot\gz(t_0),\dot\gz(t_1))$ is not well-defined.
\end{rem}

\begin{lem} \label{coneabc}
  Let $
  p,q \in  \mathcal C_\az^2$.

 (i) There exists a length-minimising geodesic segment $\Gz_A$ joining $p$ and $q$ with length
  $$\ell_{\mathcal C_\az^2}(\Gz_A) = 2\rho \sin(\frac{\angle(p,q)}2 \sin\az).$$

  (ii) There exists a piecewise geodesic segment $\Gz_B$ joining $p$ and $q$ passing through $o$ with length $\ell_{\mathcal C_\az^2}(\Gz_B) = 2\rho$.

(iii)  If $\az \in (0,\pi/6)$, or  if $\az \in [\pi/6,\pi/2)$ and
     $\angle(p,q) \in (2\pi-\frac{\pi}{\sin \az},\pi]$, then
     there exists another  geodesic segment $\Gz_C$ joining $p$ and $q$ with length $$\ell_{\mathcal C_\az^2}(\Gz_C) = 2\rho \sin(\frac{2\pi-\angle(p,q)}2 \sin\az). $$
\end{lem}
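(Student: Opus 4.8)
The plan is to transfer everything to planar Euclidean geometry through the unfolding isometry $p\mapsto\bar p$ of $\mathcal C_\az^2$ onto the wedge $W_{2\pi\sin\az}$. I will use three facts established above: the unfolding preserves lengths; a curve $\Gz$ is a geodesic segment in $\mathcal C_\az^2$ if and only if its image $\bar\Gz$ is a Euclidean straight segment in $W_{2\pi\sin\az}$; and, by \eqref{wed}, the two radii $op$ and $oq$ of common length $\rho=|p|=|q|$ (which I take as implicit in the stated formulas) open up in the wedge to the angle $\angle(\bar p,\bar q)=\angle(p,q)\sin\az$. Since a straight chord joining two points at Euclidean distance $\rho$ from $\bar o$ subtending an angle $\phi\le\pi$ has length $2\rho\sin(\phi/2)$ by the law of cosines, the three length formulas will follow once the corresponding chords are identified and shown to be genuine geodesics.

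For (i) I would cut $\mathcal C_\az^2$ along a generatrix meeting neither $op$ nor $oq$, so that $\bar p,\bar q$ lie in the wedge subtending the angle $\angle(p,q)\sin\az$. As $\angle(p,q)\le\pi$ and $\sin\az<1$, this angle is strictly below $\pi$, so the straight segment from $\bar p$ to $\bar q$ stays inside the wedge, avoids the apex $\bar o$, and hence pulls back to a smooth geodesic $\Gz_A$ of length $2\rho\sin(\frac{\angle(p,q)}{2}\sin\az)$. Because this value is below $2\rho$ and below the length of the competing chord produced in (iii), $\Gz_A$ realises the shorter of the two available chords and is the global length-minimiser. For (ii) I would take $\Gz_B=G_p\cup G_q$, the concatenation of the generatrices joining $p$ and $q$ to $o$; each is a geodesic ray of length $\rho$ (a ray through $\bar o$ in the wedge), so $\ell_{\mathcal C_\az^2}(\Gz_B)=2\rho$, and smoothness fails only at the apex, giving a piecewise geodesic through $o$.

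For (iii) I would instead develop the cone so that $\bar q$ is reached from $\bar p$ by sweeping the \emph{other} way around the apex; the relevant angle is then $(2\pi-\angle(p,q))\sin\az$, and the straight chord for this development pulls back to a geodesic $\Gz_C$ distinct from $\Gz_A$, of length $2\rho\sin(\frac{2\pi-\angle(p,q)}{2}\sin\az)$ by the same law-of-cosines computation. The point requiring care is that this developed chord be a genuine smooth geodesic, i.e. that it miss the apex; this holds exactly when the swept angle is below $\pi$, that is $(2\pi-\angle(p,q))\sin\az<\pi$, equivalently $\angle(p,q)>2\pi-\frac{\pi}{\sin\az}$. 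If $\az\in(0,\pi/6)$ then $\sin\az<1/2$, so $2\pi-\frac{\pi}{\sin\az}<0$ and the condition holds for all admissible $p,q$; if $\az\in[\pi/6,\pi/2)$ then $2\pi-\frac{\pi}{\sin\az}\ge0$ and, together with $\angle(p,q)\le\pi$, the condition is precisely $\angle(p,q)\in(2\pi-\frac{\pi}{\sin\az},\pi]$, which is the stated hypothesis.

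I expect the main obstacle to be part (iii): one must make precise that ``going around the other side'' corresponds to a legitimate development of $\mathcal C_\az^2$ onto the plane along which the straight chord represents a smooth geodesic disjoint from the apex, and must verify that the threshold $\angle(p,q)=2\pi-\frac{\pi}{\sin\az}$ is exactly where this chord degenerates into a path through $o$, so that $\Gz_C$ ceases to exist as a smooth geodesic beyond it. Parts (i) and (ii) are then routine consequences of the unfolding isometry and the law of cosines.
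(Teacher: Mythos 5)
Your proposal is correct and follows essentially the same route as the paper: unfold $\mathcal C_\az^2$ isometrically onto the wedge $W_{2\pi\sin\az}$, realise $\Gz_A$ and $\Gz_C$ as the two straight chords subtending the angles $\angle(p,q)\sin\az$ and $(2\pi-\angle(p,q))\sin\az$ at the apex (each a smooth geodesic precisely when its subtended angle is below $\pi$, which yields the stated threshold for (iii)), and take $\Gz_B$ to be the concatenation of the two generatrix segments. The only cosmetic difference is that you state the existence criterion for $\Gz_C$ analytically as $(2\pi-\angle(p,q))\sin\az<\pi$ while the paper phrases it via the picture of the wedge being more or less than a half-disk; these are the same condition.
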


\begin{proof}
By the rotational symmetry of $ \mathcal C_\az^2$, we may assume that
 $p=(\rho,0)$ and $q=(\rho,\theta)$ for some $ \tz\in(0,\pi]$. Then the angle
  $ \angle(p,q)=\theta$.
   Note that $ p,q $ correspond to
  $\bar p=(p,0)$ and $\bar q=(\rho,\phi)$  with $\phi = \tz  \sin\az$ in the wedge domain $W_{2\pi\sin\az}$.  To get the desired $  \Gamma_A$, $  \Gamma_B$ and $\Gamma_C$ in $\mathcal C^2_\alpha$,
it suffices to find
$ \bar \Gamma_A$, $\bar \Gamma_B$ and $\bar
\Gamma_C$  in the wedge domain $W_{2\pi\sin\az}$ joining $\bar p$ and $ \bar q$
and fulfilling our requirements respectively. We refer to Figure \ref{ABC} for an illustration for the following proof.

 (i) Since $W_{2\pi\sin\az} \subset \rr^2$ is in the Euclidean plane,  the line segment joining $\bar p$ and $\bar q$ gives  the desired length-minimising geodesic $\bar \Gz_A$.
 By the planar geometry and recalling \eqref{wed},  the Euclidean length
 of $\bar \Gz_A$ is   $$
 2\rho \sin(\frac{\phi }2  )
 = 2\rho \sin(\frac{\tz }2 \sin\az).
 $$

(ii)
%
 Write $\bar \Gamma_B$ as the union of
 $\{(t,0) \}_{t\in[0,\rho]}$  and $\{(t,\theta) \}_{t\in[0,\rho]}$.
Then $\bar \Gz_B$ is a piecewise geodesic segment joining $\bar p$ and $\bar q$ with  Euclidean length $ 2\rho$.

  \vspace*{1pt}
\begin{figure}[h]
\centering
\includegraphics[width=16cm]{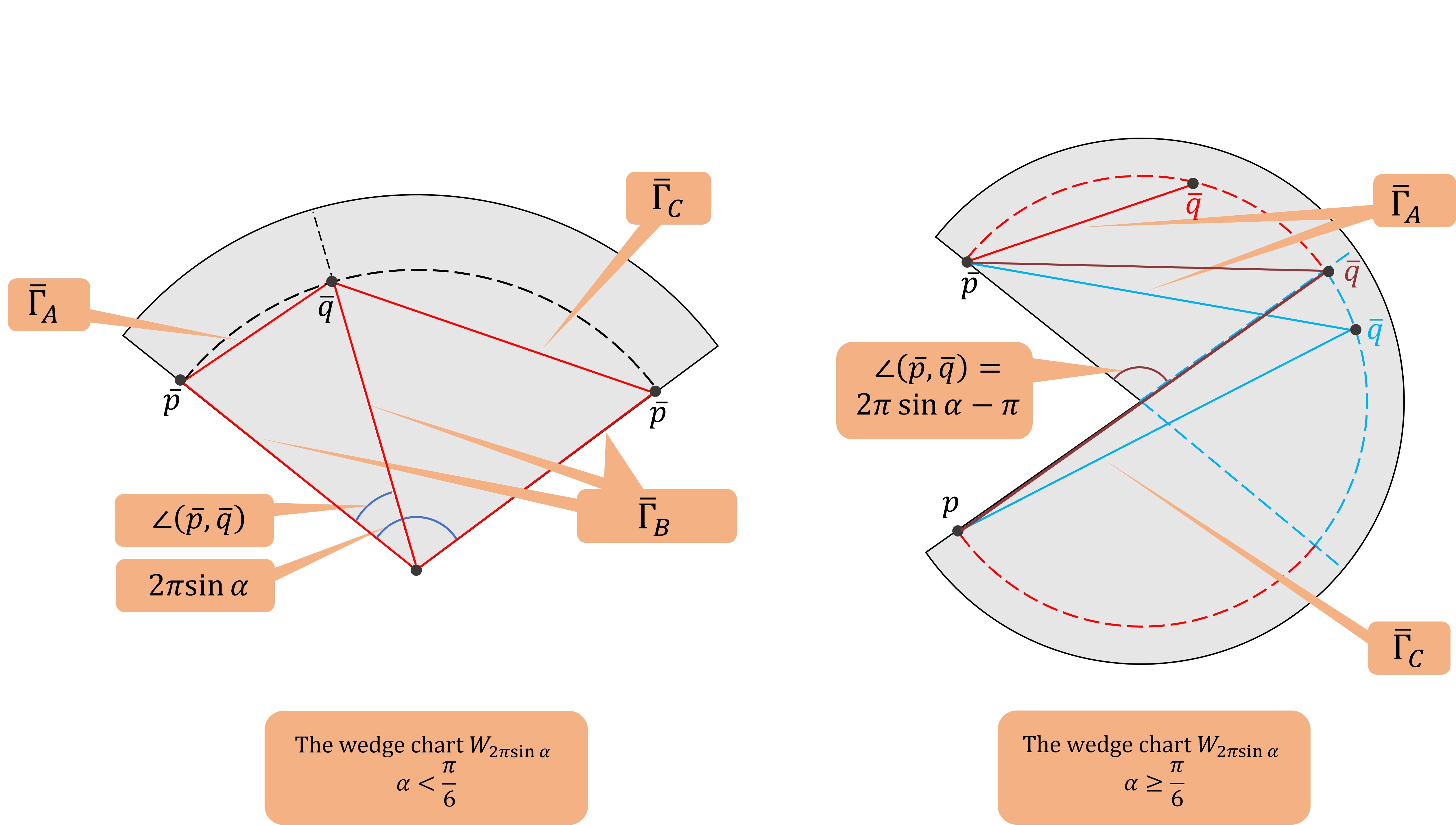}
\caption  {An illustration of $\Gz_A$, $\Gz_B$ and $\Gz_C$.}
\label{ABC}
\end{figure}

 (iii) If $\sin \az \le 1/2$, i.e. $\az \le \pi/6$, then the wedge domain $W_{2\pi\sin\az}$ is less than a half-disk.  There is a   line-segment
 $\bar \Gz_C$ joining $\bar p$ and $\bar q$ in $W_{2\pi\sin\az}$, which gives the desired
  $\Gz_C$ in   $ \mathcal C_\az^2$. See the left of Figure \ref{ABC}.

  If $\sin \az > 1/2$, i.e. $\az > \pi/6$, then the wedge domain $W_{2\pi\sin\az}$ is more than a half-disk as depicted in the right of Figure \ref{ABC}. Observe that
  there exists a line-segment   $\bar \Gz_C$ joining $\bar p$ and $\bar q$
  different with $\bar \Gamma _A$  if and only if
  $q$ lies in the blue arc,  that is
    $$ \angle(\bar p, \bar q)  \in (2\pi\sin \az-\pi,\pi],\mbox{ or equivalently},
   \angle( p,  q) \in (2\pi-\frac{\pi}{\sin \az},\pi]  .$$

   In both cases, the Euclidean length of $\bar \Gz_C$  is
    $$    2\rho \sin(\frac{2\pi\sin\alpha-\angle(\bar p, \bar q)}2)=
     2\rho \sin(\frac{2\pi-\angle(p,q)}2 \sin\az).$$
     We omit the details.
  \end{proof}

%

Recall that for any planar curve $\gz\in C^0([0,1],\rr^2)$,
 the  winding number $k \in \zz$ with respect to some point $w \in \rr^2$ can be defined as the unique integer $[\wz \gz] \in \zz=\pi_1(\rr^2 \setminus \{w\})$ of the first homotopy group of $\rr^2 \setminus \{w\}$ where $\wz \gz$ is a closed curve such that $\wz \gz$ and $\gz$ are homotopic in $\rr^2 \setminus \{w\}$ and $[\wz \gz]$ represents the equivalent class of $\wz \gz$.
Note  that $\gamma$ is homotopic to $\eta$ in $\rr^2 \setminus \{w\}$ if and only if they have
the
 same winding number with respect to $w$.

Now we focus on the geodesic segments joining two antipodal points.

\begin{lem} \label{antigeo}
  Let $p=(\rho, \tz), q = (\rho, \tz+\pi) \in  \mathcal C_\az^2$ be two antipodal points.
    For  each  $ k \in [0,\frac{1}{2\sin\az}-\frac12) \cap \nn$, there exists
a pair $\Gamma_k^\pm$ of smooth geodesic segments joining $p$ and $q$ and having winding numbers $\pm k $ with respect to the apex $o$.
 Moreover,
  \begin{equation}\label{gp1}
     \angle ( -p, \dot \Gz^\pm_k(0))= \frac{\pi-(2k-1)\pi\sin\az}{2\sin\alpha}
  \end{equation}
  and
  \begin{equation}\label{gp2}
    \ell_{\mathcal C_\az^2}(\Gz)  =
     2\rho \sin\frac{(2k-1)\pi\sin\az}{2}
  \end{equation}




%
\end{lem}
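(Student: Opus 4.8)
The plan is to reduce everything to planar Euclidean geometry by unfolding the cone, exactly as in the proof of Lemma \ref{coneabc}, but now passing to the universal cover of the punctured cone so as to accommodate geodesics that wind around the apex. By the rotational symmetry of $\mathcal C_\az^2$ we may assume $p=(\rho,0)$ and $q=(\rho,\pi)$. Recall from \eqref{wedge}--\eqref{angleast} that cutting along a single generatrix unfolds $\mathcal C_\az^2$ to the wedge $W_{2\pi\sin\az}$ via $\tz\mapsto\phi=(\sin\az)\tz$; iterating this, or equivalently unwrapping the angular coordinate $\tz$ to all of $\rr$, identifies the universal cover of $(\mathcal C_\az^2\setminus\{o\})$ isometrically with the flat model $(\rho,\psi)\in(0,\fz)\times\rr$ carrying the metric $d\rho\otimes d\rho+\rho^2\,d\psi\otimes d\psi$, where $\psi=(\sin\az)\tz$. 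A curve is a geodesic of $\mathcal C_\az^2$ missing the apex if and only if its lift is a Euclidean straight segment in this model; this is the higher-winding analogue of the observation used for $\bar\Gz_A$ in Lemma \ref{coneabc}(i).

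First I would classify the lifts of the endpoints. The point $p$ lifts to $\psi=0$, while the antipodal point $q$ lifts to the discrete family $\tz=(2m-1)\pi$, i.e. $\psi=(2m-1)\pi\sin\az$, $m\in\zz$. A geodesic segment from $p$ to $q$ with winding number $k$ around $o$ corresponds precisely to the straight chord joining the lift of $p$ to the appropriate lift of $q$; the sign of $k$ records whether the chord sweeps $\psi$ in the positive or negative direction, and by the reflection symmetry $\tz\mapsto-\tz$ (an isometry of $\mathcal C_\az^2$ fixing $p$ and $q$) the two members $\Gz_k^{\pm}$ of each pair are mirror images, hence have equal length and opposite winding $\pm k$. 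Thus it suffices to treat $\Gz_k^{+}$, whose lift is the chord from $(\rho,\psi=0)$ to $(\rho,\psi=\Delta_k)$ for the relevant angular span $\Delta_k$ (an odd multiple of $\pi\sin\az$).

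The three assertions then follow from elementary trigonometry in the isosceles triangle with apex $o$, two equal sides of length $\rho$, and apex angle $\Delta_k$. For existence I would impose that this chord avoid the apex, which holds exactly when $\Delta_k<\pi$; translating this through $\Delta_k=(\text{odd})\cdot\pi\sin\az$ yields the admissible range $k\in[0,\tfrac{1}{2\sin\az}-\tfrac12)\cap\nn$ and pins down which lift of $q$ is used for each $k$. The length \eqref{gp2} is then just the chord length $2\rho\sin(\Delta_k/2)$, since the unfolding is a length isometry. For the initial-direction formula \eqref{gp1} I would read off the base angle $\tfrac{\pi-\Delta_k}{2}$ of the isosceles triangle at the vertex lifting $p$, which is the angle between the chord and the inward radial direction, and then convert this wedge angle back to the Euclidean angle $\angle(-p,\dot\Gz_k^{\pm}(0))$ in the Cartesian chart using the angle-rescaling relation \eqref{wed} between cone and wedge angles; the factor $1/\sin\az$ in \eqref{gp1} is produced by this conversion.

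The main obstacle is the bookkeeping of the winding-number/lift correspondence together with the three competing angle conventions (Euclidean in the Cartesian chart, the intrinsic cone metric, and the flat wedge coordinate $\psi$). Establishing $\Delta_k$ as the correct odd multiple of $\pi\sin\az$ attached to winding number $k$ --- so that the existence threshold $\Delta_k<\pi$ reproduces exactly the stated range and the thresholds $\az=\arcsin\frac{1}{2k+1}$ of Remark \ref{az} --- requires care, as does verifying that the monotonicity of $\tz$ along the chord (valid precisely because $\Delta_k<\pi$) makes the winding number well defined. The smoothness of $\Gz_k^{\pm}$ is immediate once the chord is shown to miss the apex, since the covering projection is a local isometry away from $o$.
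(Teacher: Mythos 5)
Your proposal is correct and follows essentially the same route as the paper: the paper likewise unfolds $\mathcal C_\az^2\setminus\{o\}$ onto adjacent copies of the wedge $W_{2\pi\sin\az}$ (a locally isometric covering, i.e.\ your universal-cover model), identifies geodesic segments from $\bar p$ to the various lifts of $\bar q$ with Euclidean chords, and reads off \eqref{gp1} and \eqref{gp2} from the resulting isosceles triangle with apex angle an odd multiple of $\pi\sin\az$. The bookkeeping points you flag (which odd multiple corresponds to winding number $k$, and the conversion of the wedge base angle back to the Euclidean angle in the Cartesian chart via the $1/\sin\az$ rescaling) are handled in the paper at exactly the same level of detail, by reading them off the figure.
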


\begin{proof}
Note that the adjacent wedge domains (without the origin) in Figure \ref{gammapq} can be seen as a covering map of $ \mathcal C_\az^2\setminus \{o\}$ which is locally isometric. Thus all the smooth geodesic segments joining $p$ and $q$ are Euclidean line segments in the (union of) adjacent wedge domains. In Figure \ref{gammapq}, there are four adjacent wedge domains and then there are three pairs of geodesic segments joining $\bar p$ and $\bar q$, among which the red one is the length-minimising one.
   \vspace*{1pt}
\begin{figure}[h]
\centering
\includegraphics[width=8cm]{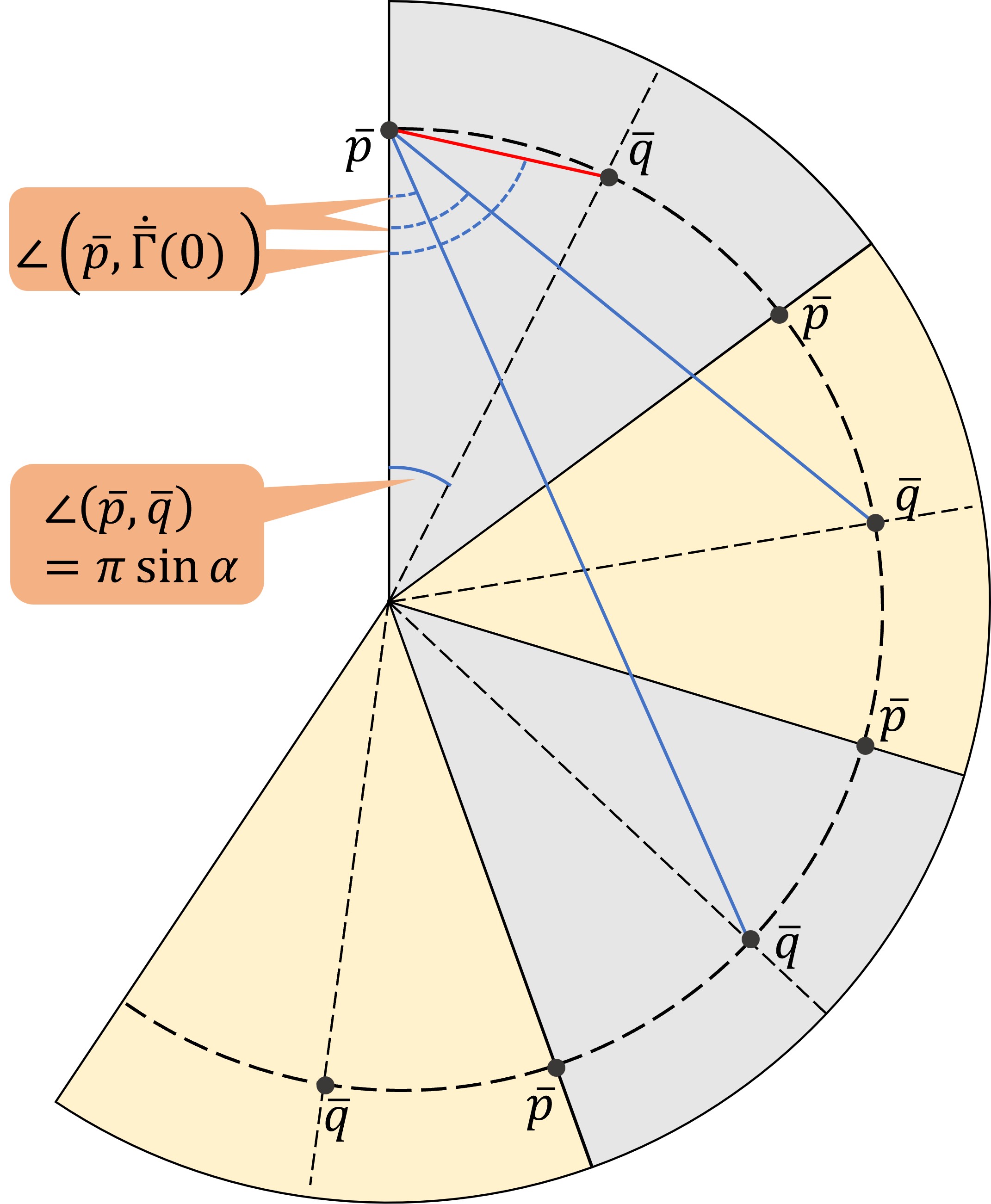}
\caption  {An illustration of $\Gz \in \Gz_{p,q}$.}
\label{gammapq}
\end{figure}
From Figure \ref{gammapq}, one could see that for each $k \in \nn$ such that $(2k-1)\sin\az \le 1$, there exists a pair $\bar \Gamma^\pm_k$ of geodesic segments joining $\bar p$ and $\bar q$, which
 leads to   geodesic segments $ \Gamma^\pm_k$  joining $ p$ and $  q$ and having
 winding number $k$.

 From Figure \ref{gammapq},  is clear that
the Euclidean angle between
 $ \bar p $ and the line segment $\bar  \Gamma^\pm_k$ (and also $\dot {\bar
  \Gamma}^ \pm_k(0)$) is
 $$\pm \frac{\pi-(2k-1)\pi\sin\az}{2} $$
 and
 the Euclidean length is
  $$2\rho \cos\{\pm \frac{\pi-(2k-1)\pi\sin\az}{2}\}=  2\rho \sin\frac{(2k-1)\pi\sin\az}{2}.$$
 This leads to   \eqref{gp1} and \eqref{gp2}.
\end{proof}

\begin{rem} \label{pi/2} \rm
   The above proof also works for two nonantipodal points $a=(\rho,\tz)$ and $b = (\rho,\tz)$. It is not hard to check that for any geodesic segment $\gz:[0,1] \to \ccc_\az^2$ with $\gz(0)=1$ and $\gz(1)=b$ parametrised by a constant multiple of arc-length (including the piecewise smooth generatrix segment), we always have that
   $\gz|_{(0,1)}$ is contained in the ball $B(o,\rho)$ and
   $$  \angle(\dot\gz(0),a) > \frac{\pi}{2}, \quad \angle(\dot\gz(1),b) < \frac{\pi}{2}.$$
   The precise values of the above angles are not important. However, the fact that they do not equal to $\pi/2$ is useful in the proof of Lemma \ref{delta} in Section 5.2.
\end{rem}

\begin{lem} \label{ab}
   Let $a,b \in  \pa B(o,1)$ with $\angle(a,b)>0$ and $\eta \in [0,1]$. Then for any geodesic segment $\gz:[0,1] \to \ccc_\az^2$ of $\ccc_\az^2$ joining $a$ and $\eta b$ such that
   $$   \gz \cap B(o,\eta) = \emptyset, $$
   we have
   \begin{equation}\label{angleab0}
     \angle(\dot\gz(1),-b) \ge \angle(a,b).
   \end{equation}
\end{lem}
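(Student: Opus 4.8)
The plan is to unfold $\mathcal C_\az^2$ onto the planar wedge $W_{2\pi\sin\az}$ and reduce the whole statement to elementary planar trigonometry. First, by the rotational symmetry of $\mathcal C_\az^2$ I may place $b$ along a fixed generatrix, so that $b$ sits at polar angle $\tz_b$ and $\eta b=(\eta,\tz_b)$; then $\dot\gz(1)$ and $-b$ both live in $\mathcal T_{\eta b}\rr^2$ and $-b$ is exactly the inward radial direction there. Under the isometric unfolding $(\rho,\tz)\mapsto(\rho,\sin\az\,\tz)$ of \eqref{angleast}, the geodesic $\gz$ becomes a Euclidean straight segment $\bar\gz$ in $W_{2\pi\sin\az}$ joining $\bar a$ (at Euclidean distance $1$ from the apex $\bar o$) to $\overline{\eta b}$ (at distance $\eta$), and the hypothesis $\gz\cap B(o,\eta)=\emptyset$ becomes the condition that the segment $\bar\gz$ avoids the open disk $B(\bar o,\eta)$ while its endpoint $\overline{\eta b}$ lies on $\pa B(\bar o,\eta)$.

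Next I would analyze the planar triangle $\bar o\,\bar a\,\overline{\eta b}$. Its two sides issuing from $\bar o$ have lengths $1$ and $\eta$, and the included angle at $\bar o$ equals $\Phi:=\sin\az\,\angle(a,b)$ by \eqref{wed} (a winding geodesic only enlarges this angle, so the minimal-winding segment is the worst case). Since the Euclidean distance to $\bar o$ is convex along a straight segment and equals $\eta$ at the endpoint, the avoidance hypothesis forces this distance to decrease monotonically along $\bar\gz$; in particular the triangle angle $\bar\delta$ at $\overline{\eta b}$ satisfies $\bar\delta\ge\pi/2$, which is equivalent to the constraint $\cos\Phi\ge\eta$. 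Writing $\bar\psi:=\pi-\bar\delta$ for the \emph{wedge} Euclidean angle between the forward velocity $\dot{\bar\gz}(1)$ and the inward radial direction at $\overline{\eta b}$, the law of sines gives the clean identity $\tan\bar\psi=\sin\Phi/(\cos\Phi-\eta)$.

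Finally I would transfer the angle back to the cone chart. Because the unfolding stretches the angular coordinate by the factor $1/\sin\az$ (as $\tz=\phi/\sin\az$) while fixing the radial coordinate, the Cartesian angular component of $\dot\gz(1)$ in $\mathcal T_{\eta b}\rr^2$ is $1/\sin\az$ times the corresponding wedge component, whence
$$\tan\angle(\dot\gz(1),-b)=\frac{1}{\sin\az}\,\tan\bar\psi=\frac{1}{\sin\az}\cdot\frac{\sin\Phi}{\cos\Phi-\eta},\qquad \Phi=\sin\az\,\angle(a,b).$$
The statement then reduces to the trigonometric inequality $\tfrac{1}{\sin\az}\,\tfrac{\sin(\sin\az\,\beta)}{\cos(\sin\az\,\beta)-\eta}\ge\tan\beta$ with $\beta=\angle(a,b)$, which I would attack by monotonicity in $\eta$ on the admissible range $\eta\in[0,\cos\Phi]$ dictated by the constraint. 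I expect \textbf{this last inequality to be the main obstacle}: the angular‑stretching factor $1/\sin\az>1$ raises the cone‑chart angle above the wedge angle, and one must verify that this gain, together with the coupling between $\eta$ and $\beta$ imposed by the avoidance hypothesis $\cos(\sin\az\,\beta)\ge\eta$, dominates the loss from the compression $\Phi=\sin\az\,\beta$ of the opening angle. The delicate regime is small $\eta$ (where the segment nearly plunges radially), and there I would use the constraint to control how large $\beta$ can be relative to $\eta$, with the flat limit $\az=\pi/2$ serving as the extremal reference case where the identity collapses to the sharp bound $\tan\angle(\dot\gz(1),-b)=\sin\beta/(\cos\beta-\eta)\ge\tan\beta$.
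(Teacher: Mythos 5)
Your setup is correct and in fact more careful than the paper's own proof: the unfolding, the identity $\tan\bar\psi=\sin\Phi/(\cos\Phi-\eta)$ together with the admissibility constraint $\eta\le\cos\Phi$, and especially the transformation law $\tan\angle(\dot\gz(1),-b)=\frac{1}{\sin\az}\tan\bar\psi$ are all right. (The paper instead transfers the angle back to the cone chart by invoking \eqref{wed}, which is the scaling law for angles \emph{at the apex} between position vectors, not for angles between tangent vectors at the point $\eta b$; your tangent-scaling formula is the correct one.) The genuine gap is that the terminal inequality you flag as the main obstacle is false, and there is no coupling between $\eta$ and $\beta$ that rescues it: for every $\beta$ with $\Phi=\beta\sin\az<\pi/2$ and every $\eta\in(0,\cos\Phi)$ the minimizing wedge segment is admissible, the left-hand side is increasing in $\eta$, so its infimum over the admissible range is the value at $\eta=0$, namely $\frac{1}{\sin\az}\tan(\beta\sin\az)$; and since $s\mapsto\tan(s\beta)-s\tan\beta$ is strictly convex in $s$ with zeros at $s=0$ and $s=1$, one has $\frac{1}{\sin\az}\tan(\beta\sin\az)<\tan\beta$ whenever $\sin\az<1$. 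Hence the inequality fails for all small $\eta>0$. Concretely, with $\sin\az=\tfrac12$, $\beta=\tfrac\pi3$, $\eta=0.1$, your own formula gives $\angle(\dot\gz(1),-b)=\arctan(1.305)\approx 52.6^\circ<60^\circ=\angle(a,b)$; and for $\beta>\pi/2$ the conclusion is hopeless outright, since the avoidance hypothesis forces the radial component of $\dot\gz(1)$ to be nonpositive and therefore $\angle(\dot\gz(1),-b)\le\pi/2$.

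What your computation actually reveals is that the statement, read with the paper's convention that $\angle$ is the Euclidean angle in the Cartesian chart, cannot be proved because it is not true as written. It becomes true if all angles are measured in the cone metric (where it is exactly your $\bar\psi\ge\Phi$, immediate from $\tan\bar\psi\ge\tan\Phi$), or if the conclusion is weakened to $\angle(\dot\gz(1),-b)\ge\arctan\bigl(\tfrac{1}{\sin\az}\tan(\sin\az\,\angle(a,b))\bigr)>0$, which your identity proves in one line and which is all that the application in Lemma \ref{nontwist} (a contradiction with an angle tending to $0$) actually requires. So rather than hunting for the trigonometric inequality in your last display, you should either switch to cone-metric angles or prove and use the weakened positive lower bound.
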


\begin{proof}
   We work in the wedge domain $W_{2\pi \sin\az}$. From Figure \ref{angleab},
    \vspace*{1pt}
\begin{figure}[h]
\centering
\includegraphics[width=12cm]{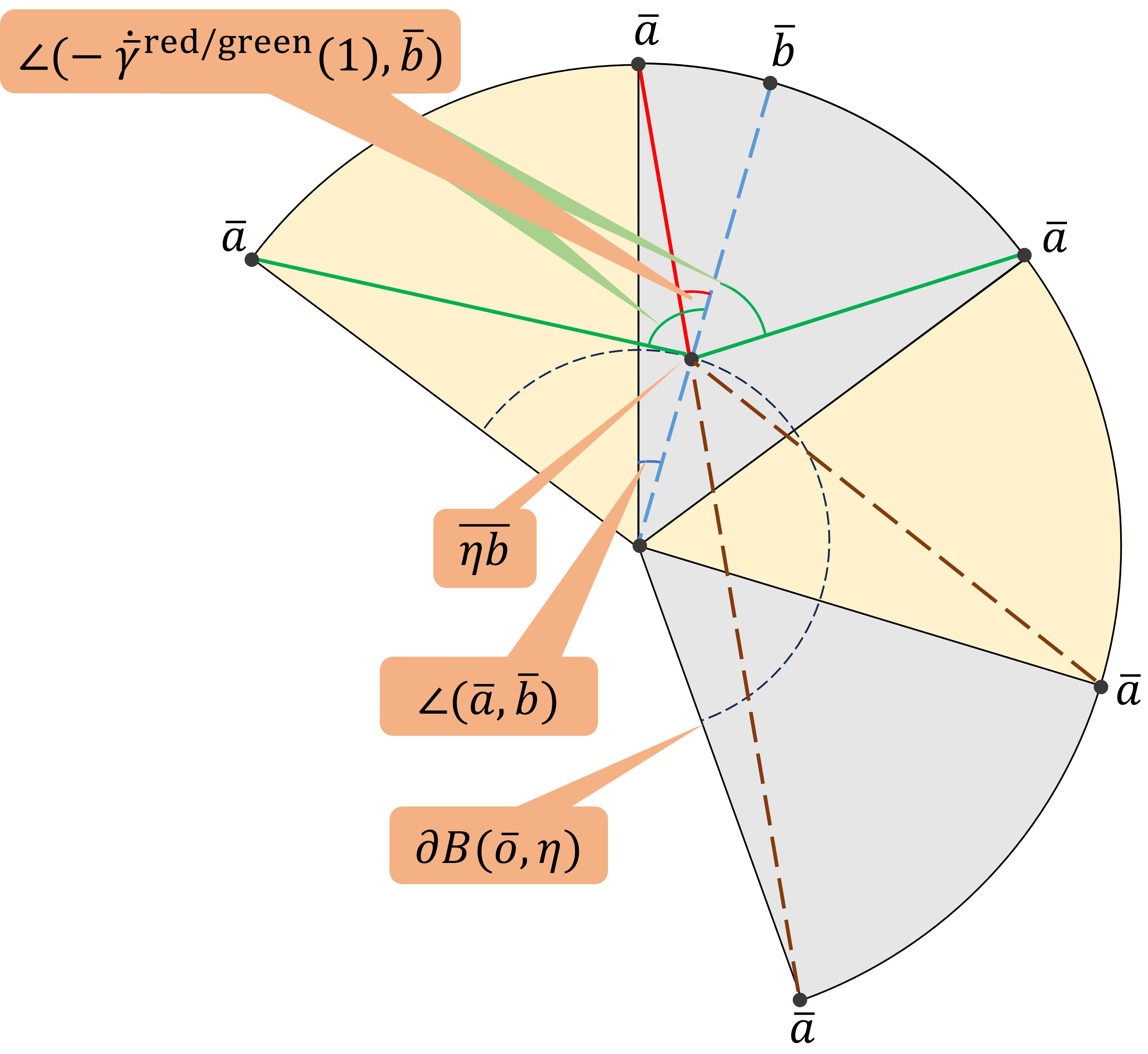}
\caption  {$\angle(-\dot{\bar \gz}(1),\bar{b})$ and $\angle(\bar{a},\bar{b})$.}
\label{angleab}
\end{figure}
we see that all the possible geodesic segment joining $\overline{\eta b}$ and $\bar a$ are the red line segment $\bar \gz^{\rm red}$ and two green line segments $\bar \gz^{\rm green}$ in the adjacent wedge domains without intersecting $B(\bar{o},\eta)$ where $\bar \gz^{\rm red}$ is the length minimising one (the brown dash line segments intersecting $B(\bar{o},\eta)$ are not satisfying the assumption of the lemma). Also from Figure \ref{angleab}, it is straightforward to see that for the length minimising line segment $\gz^{\rm red}$, one has
   $$ \angle(-\dot{\bar \gz}^{\rm red}(1),\bar{b}) \ge \angle(\bar a,\bar b).$$
   Moreover, note that for any non length minimising segment $\bar \gz^{\rm green}$,
   $$ \angle(-\dot{\bar \gz}^{\rm green},\bar{b}) \ge \angle(-\dot{\bar \gz}^{\rm red}(1),\bar b).$$
    Thus by \eqref{wed}, \eqref{angleab0} holds.
\end{proof}

\begin{rem} \label{geomin} \rm
Let $p=(\rho, \tz), q = (\rho, \tz+\pi) \in  \mathcal C_\az^2$ be two antipodal points.

 (i) Write
  $$\Gz_{p,q}=\left \{\Gamma_k^\pm : k\in [0,\frac{1}{2\sin\az}-\frac12) \cap \nn\right\} $$
  and
  $$ \mathscr{A}_\az:=\left\{ \frac{\pi-(2k-1)\pi\sin\az}{2} : k=1, \cdots, \lfloor\frac{1}{2\sin\az}-\frac12\rfloor  \right\}.$$
    If $k\ne j$, then 
$\Gz^\pm_k $ and $\Gz^\pm_j$ are not homopotic in $ \mathcal C_\az^2 \setminus\{o\}$.

  (ii)
 Observe that $\Gz^\pm_0
 $  are the length-minimising geodesic segments joining $p$ and $q$ and having
 winding number $0$ (in particular,  having no self-intersection).

 Given any $\xi\in \mathbb S^1$ with $\xi\perp G=G_p\cup G_q$,
 If $p\xi q$ are ordered clockwise, we set $L_\xi=\Gamma^+_0$;
  otherwise set  $L_\xi=\Gamma^-_0$.

  (iii) Recall $p =(\rho,\tz)$ and $q=(\rho,\tz^\star)$. It is easy to see $$ d_{\ccc_\az^2}(o,L_{\xi}) = \frac12 \rho \pi \sin\az.  $$
  Thus $\lim_{\rho \to \fz} d_{\ccc_\az^2}(o,L_{\xi}) = \fz$.
\end{rem}

Next, we focus on the geodesic lines of the standard cone $ \mathcal C_\az^2$.
Observe that for each $\az \in (0,\frac{\pi}{2})$, there is a unique integer
$$ K_\az \in (\frac1{2\sin\az}-\frac12,\frac1{2\sin\az}+\frac12]$$
such that
 $$0\le |2\pi K_\az \sin \az - \pi| \le \pi \sin \az.$$
 Write
 $${\bf K}_\alpha:= \frac{|2\pi K_\az \sin \az - \pi|}{\sin \az}\in[0, \pi].$$
\begin{lem}\label{line1}
  Let $\Gz: (-\fz,\fz)\to  \mathcal C_\az^2\setminus\{o\}$ be a geodesic line. Then
  $$
 \lim_{t \to +\fz}\angle(\Gz(-t),\Gz(t))=\lim_{t \to +\fz}\angle(\dot\Gz(-t), \dot\Gz(t))= \mathbf{K}_\az.$$
   \end{lem}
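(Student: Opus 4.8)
The plan is to flatten the cone by developing it and to recognise $\Gz$ as a Euclidean straight line. Since $\Gz$ avoids the apex $o$, I would lift it through the local isometry used to unfold $\mathcal C_\az^2$: writing $\phi=(\sin\az)\tz$ as in \eqref{angleast} turns the metric $d\rho\otimes d\rho+\rho^2\sin^2\az\,d\tz\otimes d\tz$ into the flat metric $d\rho\otimes d\rho+\rho^2\,d\phi\otimes d\phi$ on $(0,\fz)\times\rr$, i.e. into (the universal cover of) the punctured Euclidean plane in polar coordinates $(\rho,\phi)$. A complete geodesic of $\mathcal C_\az^2\setminus\{o\}$ therefore develops to a complete geodesic of this flat surface, namely a full bi-infinite straight line $\bar\Gz$ whose distance to the origin is positive (as $\Gz$ misses $o$, so that $\min_t d_{\mathcal C_\az^2}(o,\Gz(t))>0$). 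First I would record these reductions and check that $\bar\Gz$ is genuinely a straight line missing $\bar o$.

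The key computation concerns the position vectors. A straight line in the plane at positive distance from the origin subtends an arc of directions of length exactly $\pi$: its developed polar angle $\phi(\bar\Gz(t))$ is strictly monotone and satisfies $\phi(\bar\Gz(t))\to\phi_\pm$ as $t\to\pm\fz$ with $\phi_+-\phi_-=\pi$. Passing back through $\tz=\phi/\sin\az$, the cone polar angles obey $\tz(\pm t)\to\tz_\pm$ with $\tz_+-\tz_-=\pi/\sin\az$. Hence, in the Cartesian chart of $\mathcal C_\az^2$, the Euclidean angle $\angle(\Gz(-t),\Gz(t))$ between the two position vectors converges to the reduction of $\pi/\sin\az$ modulo $2\pi$ into $[0,\pi]$, that is to $\min_{k\in\zz}|\pi/\sin\az-2\pi k|$. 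By the very definition of $K_\az$ as the nearest integer to $\tfrac{1}{2\sin\az}=\tfrac{\pi/\sin\az}{2\pi}$ (this is what the interval $(\tfrac1{2\sin\az}-\tfrac12,\tfrac1{2\sin\az}+\tfrac12]$ encodes, while the constraint $|2\pi K_\az\sin\az-\pi|\le\pi\sin\az$ guarantees the result lies in $[0,\pi]$), this limit is exactly $\mathbf{K}_\az=\tfrac{|2\pi K_\az\sin\az-\pi|}{\sin\az}$.

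For the velocity vectors I would exploit that $\Gz$ becomes asymptotically radial. Writing $\Gz$ with unit $g_{\mathcal C_\az^2}$-speed, the relation $\dot\rho^2+\rho^2\sin^2\az\,\dot\tz^2=1$ together with $\rho\to\fz$ and the straightness of $\bar\Gz$ force $\rho\dot\tz\to0$ and $|\dot\rho|\to1$; consequently, from $\dot x=\dot\rho\cos\tz-\rho\dot\tz\sin\tz$ and $\dot y=\dot\rho\sin\tz+\rho\dot\tz\cos\tz$, the velocity direction in the Cartesian chart converges to the radial direction at each end. Therefore $\angle(\dot\Gz(-t),\dot\Gz(t))$ has the same limit $\mathbf{K}_\az$ as $\angle(\Gz(-t),\Gz(t))$. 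The delicate point is the bookkeeping of orientation: at the $t\to+\fz$ end the parametrised velocity points outward along $\tz_+$, whereas at the $t\to-\fz$ end it points inward, so one must track the correct sign (equivalently, compare the two asymptotic \emph{outgoing} directions) in order that the angular difference read off is $\tz_+-\tz_-=\pi/\sin\az$ and not its supplement $\pi-\mathbf{K}_\az$. This orientation accounting, rather than any hard geometry, is the main obstacle; the remainder is the elementary plane geometry of a line missing the origin together with the modular reduction identifying the limit with $\mathbf{K}_\az$.
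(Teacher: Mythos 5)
Your proposal follows essentially the same route as the paper's proof: unfold $\mathcal C_\az^2\setminus\{o\}$ into adjacent copies of the wedge, recognise $\Gz$ as a Euclidean straight line at positive distance from the origin (positive because a non-radial complete geodesic has nonzero angular momentum, while a radial one would hit $o$), read off that its developed polar angle sweeps exactly $\pi$, and translate back via $\tz=\phi/\sin\az$. Your identification of the limit with $\min_{k\in\zz}|\pi/\sin\az-2\pi k|=\mathbf{K}_\az$ via the nearest-integer characterisation of $K_\az$ is correct and is in fact more explicit than the paper, which reads the angle $|2\pi K_\az\sin\az-\pi|$ off Figure \ref{kaz} and divides by $\sin\az$ using \eqref{wed}.

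The one point you should settle rather than merely flag is the velocity part. Your own asymptotics ($\dot\rho\to 1$, $\rho\dot\tz\to 0$ as $t\to+\fz$) give $\dot\Gz(t)\to e^{i\tz_+}$, the outward radial direction, while at the other end $\dot\rho\to-1$ gives $\dot\Gz(-t)\to-e^{i\tz_-}$, the \emph{inward} radial direction. Since $\angle(-u,v)=\pi-\angle(u,v)$, the quantity $\angle(\dot\Gz(-t),\dot\Gz(t))$ as literally written therefore converges to $\pi-\mathbf{K}_\az$, not $\mathbf{K}_\az$ (test case: $\sin\az=1/2$, where both ends escape along the same generatrix, $\mathbf{K}_\az=0$, yet the two velocity vectors become antipodal). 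The identity with $\mathbf{K}_\az$ holds for $\angle(-\dot\Gz(-t),\dot\Gz(t))$, the angle between the two \emph{outgoing} asymptotic directions, and that is the form in which the lemma is actually invoked later (see \eqref{angleest1} and Claim I in the proof of Lemma \ref{unif}, where the angle estimated is $\angle(-\dot\Gz_{\rho_j}(t^-_{\rho_j,N_0}),\dot\Gz_{\rho_j}(t^+_{\rho_j,N_0}))$). So the ``orientation bookkeeping'' you describe is not an obstacle to be negotiated but a sign correction to be committed to; once you compare outgoing directions, your argument is complete. The paper's proof does not address this either --- it reads both limits off the figure --- so your computation is, if anything, the more careful one.
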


\begin{proof}
Let $ \Gamma$ be a geodesic line in $\mathcal C^2_\alpha$, which corresponds to
the geodesic line $\bar \Gz$  in the adjacent wedge domains.
By the same reason as in the proof of Lemma \ref{antigeo},
we know that   $\bar \Gz$ is a straight line in the adjacent wedge domains.
 See Figure \ref{kaz} where the red line is $\bar \Gz$ and we also draw $\bar \Gz$ in the upper grey wedge domain (the red dash lines together with the part of red line in the upper grey wedge domain).
\begin{figure}[h]
\centering
\includegraphics[width=8cm]{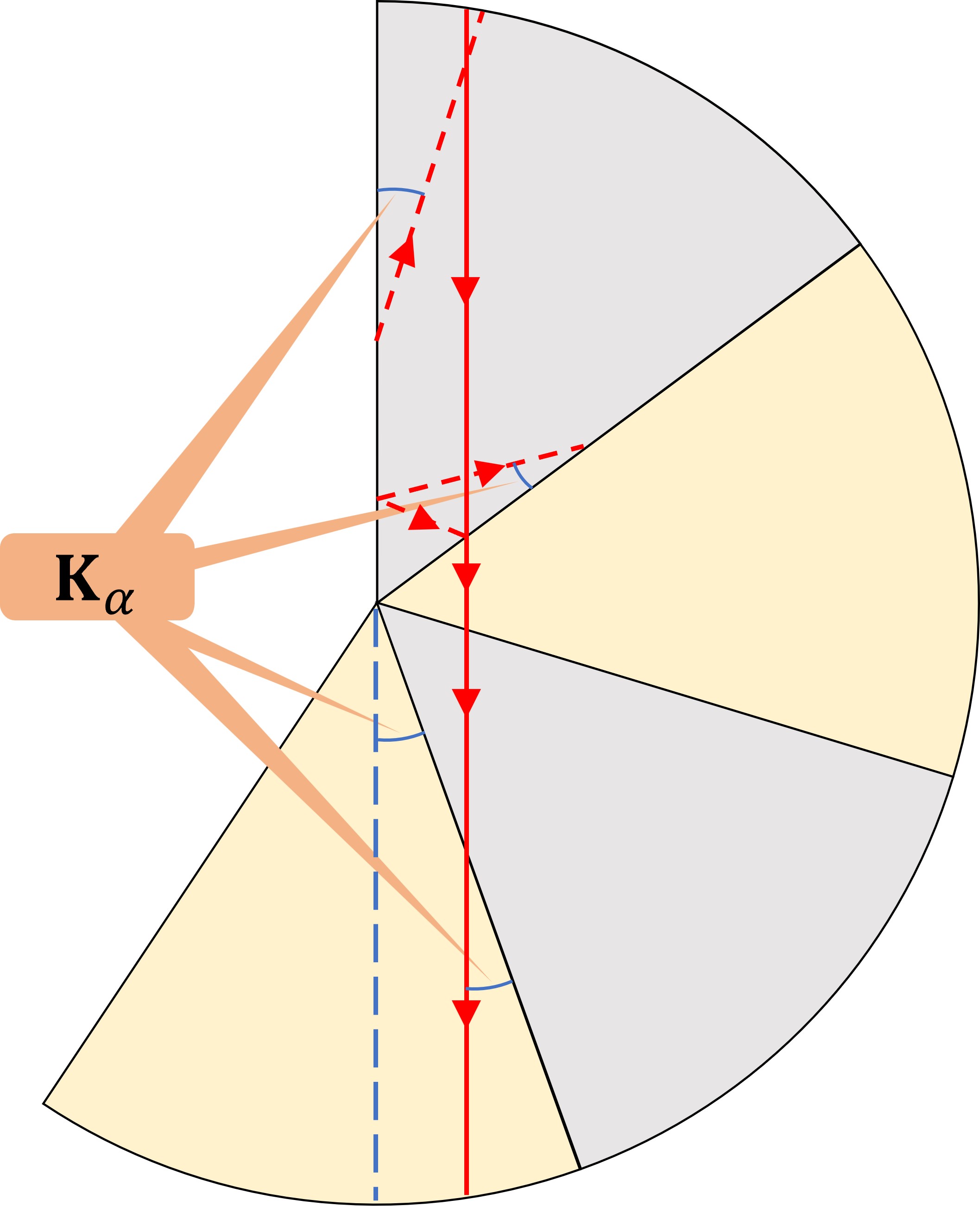}
\caption  {An illustration of the geodesic line $\Gz$.}
\label{kaz}
\end{figure}
From Figure \ref{kaz}, in the wedge domain  $W_{2\pi \sin \az}$ one see that the angles
$$\lim_{t \to +\fz}\angle(  \bar \Gz(-t), \bar \Gz(t) )\ \mbox{ and} \ \lim_{t \to +\fz} \angle (\dot{\bar \Gz}(-t), \dot{\bar \Gz}(t) )\ \mbox{ exist and coincide}$$
and  are given by
$  |2\pi K_\az \sin \az - \pi|.$
  Therefore,  in $\mathcal C^2_\alpha$, it follows that
$$\lim_{t \to +\fz}\angle(\Gz(-t),\Gz(t))=\lim_{t \to +\fz}\angle(\dot\Gz(-t), \dot\Gz(t))={\bf K}_\alpha$$
 as desired.
 \end{proof}

\begin{rem} \rm
   We remark that one could further investigate the relation of ${\bf K}_\alpha$ with the winding number of the geodesic line $\Gz$. However, since this is not needed in the following proofs and in higher dimensions, winding number for a curve is not defined, we will not make further investigation. The key point for the following proofs is that, Lemma \ref{line1} still holds for geodesic lines in higher dimensions cones $\ccc_\az^n$ with same ${\bf K}_\alpha$. See Lemma \ref{line2}.
\end{rem}

\subsection{  Geometry of $\mathcal C_\az^n$ with $n \ge 3$} \label{s22}

In this subsection, we work with the cone  $\mathcal C_\az^n$ with $n \ge 3$.
We start with the following lemma.

\begin{lem} \label{totalg}
(i) Given  any  $2$-dimensional subspace $P\subset\mathbb R^n$,
 $\mathcal C^2_{\alpha,P}:=(P, g_{C_n^\az}|_P)$ is isometric to the $2$-dimensional cone $ \mathcal C_\az^2$.
 Moreover, $\mathcal C^2_{\alpha,P}\setminus\{o\}$ is a  totally geodesic submanifold of $\mathcal C_\az^n\setminus\{o\}$, that is,   any geodesic in   $\mathcal C^2_{\alpha,P}\setminus \{o\}$ is also a geodesic in $\mathcal C_\az^n\setminus\{o\}$.

 (ii)   Any geodesic $\gamma$  in  $\mathcal C_\az^n\setminus\{0\}$ must be a
geodesic in $\mathcal C^2_{\alpha,P}\setminus\{0\}$
for  some 2-dimensional subspace $P$ of
$\mathbb R^n$.  Moreover, if $\gamma$ is contained in some generatrix, then $P$
is any plane containing $\gamma$; otherwise,
$P=\Span\{\gamma(t), \dot \gamma(t)\}$ for any possible $t$.
\end{lem}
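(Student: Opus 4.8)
The plan is to exploit the rotational symmetry of $\mathcal C_\az^n$. From the explicit form \eqref{stdncone}, $g_{\mathcal C_\az^n}$ depends only on $\rho=|x|$ and on the round metric $g_{\mathbb S^{n-1}}$, both of which are invariant under the Euclidean orthogonal group $O(n)$. Hence every $A\in O(n)$, viewed as a linear map of $\rr^n$, is an isometry of $\mathcal C_\az^n$ fixing the apex $o$. This single observation drives both parts.

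For part (i), fix a Euclidean-orthonormal basis $\{v_1,v_2\}$ of $P$ and let $\Psi\colon\rr^2\to P$ be the linear map $\Psi(\rho\cos\theta,\rho\sin\theta)=\rho(\cos\theta\,v_1+\sin\theta\,v_2)$. Writing $\omega(\theta)=\cos\theta\,v_1+\sin\theta\,v_2$, I would note $|\Psi(\rho,\theta)|=\rho$ and that $\theta\mapsto\omega(\theta)$ is a unit-speed great circle in $\mathbb S^{n-1}$, so $\omega^*g_{\mathbb S^{n-1}}=d\theta\otimes d\theta$. Pulling back \eqref{stdncone} then gives $\Psi^*(g_{\mathcal C_\az^n}|_P)=d\rho\otimes d\rho+\rho^2\sin^2\az\,d\theta\otimes d\theta$, which is exactly $g_{\mathcal C_\az^2}$ by \eqref{pa1}; thus $\Psi$ is an isometry onto $\mathcal C^2_{\az,P}$. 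For the totally geodesic assertion, let $R=R_P\in O(n)$ be the reflection fixing $P$ pointwise and acting as $-\mathrm{Id}$ on $P^\perp$. By the previous observation $R$ is an isometry of $\mathcal C_\az^n$, and its fixed-point set in the smooth part is exactly $P\setminus\{o\}$. Since the fixed-point set of an isometry is totally geodesic --- a geodesic issuing from a fixed point with velocity tangent to $\mathrm{Fix}(R)$ is carried to itself by $R$, hence stays in $\mathrm{Fix}(R)$ by uniqueness --- we conclude $\mathcal C^2_{\az,P}\setminus\{o\}$ is totally geodesic in $\mathcal C_\az^n\setminus\{o\}$.

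For part (ii), let $\gz$ be a geodesic in $\mathcal C_\az^n\setminus\{o\}$. I would first record the dichotomy governing $\gz$: if $\dot\gz(t_0)$ is parallel to $\gz(t_0)$ (a radial velocity) at even one parameter $t_0$, then by uniqueness $\gz$ coincides with the radial geodesic through $\gz(t_0)$, i.e.\ $\gz$ lies in a generatrix; otherwise $\gz(t)$ and $\dot\gz(t)$ are linearly independent for every $t$. In the first case any $2$-plane $P$ containing the generatrix works, since a generatrix is a geodesic of $\mathcal C^2_{\az,P}$. In the second case, set $P=\Span\{\gz(t_0),\dot\gz(t_0)\}$ and apply the reflection $R_P$ exactly as in part (i): it is an isometry fixing both $\gz(t_0)$ and $\dot\gz(t_0)$, so $R_P\circ\gz$ is a geodesic sharing the initial data of $\gz$ at $t_0$, whence $R_P\circ\gz=\gz$ and $\gz\subset\mathrm{Fix}(R_P)=P$. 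As $\gz$ lies in the totally geodesic $\mathcal C^2_{\az,P}$, it is a geodesic there; and since $\gz(t),\dot\gz(t)$ are independent vectors of the $2$-plane $P$ for every $t$, they span $P$, giving $P=\Span\{\gz(t),\dot\gz(t)\}$ for all such $t$.

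The routine computation (the pullback in part (i)) is straightforward, and the structural facts --- that $O(n)$ acts isometrically and that fixed-point sets of isometries are totally geodesic --- are standard. The only point demanding care is the apex: all arguments must be run on the smooth locus $\mathcal C_\az^n\setminus\{o\}$, where geodesics and their uniqueness are available, and one must check that the geodesics under consideration never meet $o$ (which holds by hypothesis). I expect the main subtlety, rather than a genuine obstacle, to be the clean verification of the radial-velocity dichotomy in part (ii), ensuring that ``not contained in a generatrix'' forces $\gz(t),\dot\gz(t)$ to be independent at every parameter.
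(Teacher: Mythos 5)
Your proposal is correct and follows essentially the same route as the paper: both prove (i) via the reflection $I_P$ fixing $P$ pointwise together with the fact that fixed-point sets of isometries are totally geodesic (the paper cites Klingenberg for this, you sketch the uniqueness argument), and both handle (ii) by the generatrix/non-generatrix dichotomy plus uniqueness of geodesics. The only cosmetic difference is that in the non-generatrix case the paper constructs the geodesic $\eta$ inside $\mathcal C^2_{\az,P}$ with matching initial data and identifies it with $\gamma$, whereas you apply the reflection once more to conclude $\gamma\subset\mathrm{Fix}(R_P)=P$ directly; the two arguments are interchangeable.
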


 \begin{proof}
(i)  It is obvious that
 $\mathcal C^2_{\alpha,P}$ is isometric to  $ \mathcal C_\az^2$.
Note that   $\mathcal C^2_{\alpha,P} \setminus \{o\}$
is the fixed set  of the reflection  $I_{P}$ with respect to  $P$,
which is defined  by
$ I_P(x_P+x_{P^\perp})=x_P-x_{P^\perp}$ for any $ x_P+x_{P^\perp}\in P\oplus P^\perp=\mathbb R^n$
and  obviously is an isometry of   $(\mathcal C_\az^n \setminus \{o\},g_{\mathcal C_\az^n})$.
 By \cite[Theorem 1.10.15]{k95},
 $\mathcal C^2_{\alpha,P}\setminus\{o\}$ is a  totally geodesic submanifold of $\mathcal C_\az^n\setminus\{o\}$.

 (ii)  Assume that
  $\gamma$ is contained in some generatrix $G_a$ of $\mathcal C^n_\alpha$.
  Then $G_a$ is a generatrix  in $\mathcal C^2_{\alpha,P}$ for any 2-dimensional subspace $P$ containing this generatrix.
  Thus  $\gamma$ is a geodesic in $\mathcal C^2_{\alpha,P}$.

If $\gamma$ is not contained in any generatrix,
we know that $\gamma(t_0)$ and $\dot\gamma(t_0)$ is linearly independent for any $t_0$.
Without loss of generality, we may assume that $t_0=0$.
Write $P=\Span\{\gamma(0),\dot \gamma(0)\}$ and
denote by   $\exp^{\mathcal C_{\az,P }^2 }$
 the exponential map on   $ \mathcal C_{\az,P }^2$.
  The geodesic flow $t\mapsto \eta(t)= \exp_x^{ \mathcal C^2_{\alpha,P}}(tv)$
   is not a generatrix, hence  is well defined for all $t \in \rr$
   and  does not passing through $o$ for any $t \in \rr$.
   Since  $ \mathcal C_{\az,P }^2\setminus\{o\}$ is totally geodesic submanifold of $ \mathcal C_{\az}^n\setminus\{o\}$,
  $\eta$  is also a geodesic of $\mathcal C_\az^n$ with $ \eta(0)=x$ and $\eta^\prime(0)=v$.
  By the uniqueness of geodesics, $\gamma $ must be part of $\eta$ and hence $\gamma$ is a geodesic in $ \mathcal C^2_{\alpha,P}\setminus\{o\}$.
  Consequentely,  $P=\Span\{\gamma(t),\gamma^\prime(t)\}$ for all possible $t$.
 \end{proof}

 Below
  we always fix any pair of antipodal points $p=(\rho,\theta)$ and $q=(\rho,\tz^\star)$ in $\mathcal C_\az^n\setminus\{o\}$.
Denote by $\Gz_{p,q}$ all geodesic segments
   joining $p$ and $q$ in $\mathcal C^n_\alpha$.
   With the help of Lemma \ref{totalg},
  we  are able to fully characterize $\Gz_{p,q}$ and $\mathcal L_{p,q}$.

   To this end,  we first
decompose   $\mathcal C_\az^n$  as the uion of a family of $2$-dimensional cones.
Denote by $G_p$ and $G_q  $   the pair of two generatrices $p$ and $q$ lie in.
Then
 $G:=G_p\cup G_q
  $ is a line passing through the origin $o$.
Denote by $\mathbb S^{n-2}_G $  the unit sphere of the $(n-1)$-dimensional subspace
$G^{\perp}$ of  $\mathbb R^n$, that is, unit vectors in  $\mathbb R^n$ which   are
perpendicular to $G$.
   For each $\xi \in \mathbb S^{n-2}_G$,  write  $P_\xi$  as
   the  $2$-dimensional subspace of $\rr^n$ spanned by $ \xi $ and $ G$, that is,
 $P_\xi:=G\oplus \mathbb R\xi
  $ 
and set
  $$ \mathcal C_{\az,\xi}^2:= ({P_\xi}, g_{C_n^\az}|_{P_\xi}). $$
Note that  $\mathcal C^2_{\az,\xi} $ is isometric to the $2$-dimensional cone $ \mathcal C_\az^2$, and by Lemma \ref{totalg},
$\mathcal C^2_{\az,\xi} \setminus\{o\}$   is a totally geodesic submanifold of $\mathcal C_\az^n \setminus \{o\}$.
 Obvisouly,
  $$  \mathcal C_\az^n = \bigcup_{\xi
 \in \mathbb S^{n-2}_G} \mathcal C_{\az,\xi}^2  $$
  and
  $$  \mathcal  C_{\az,\xi}^2  \cap \mathcal C_{\az,\zeta}^2 = G   \quad \mbox{ if } \xi \ne \pm\zeta.  $$

   For each $\xi\in  \mathbb S^{n-2}_G$,
  write $\Gz_{p,q}(\xi)$ as the  collection  of all geodesic segments
   joining $p$ and $q$ in $\mathcal C^2_{\alpha,\xi} \setminus\{o\}$.
   As a direct consequence of Lemma \ref{totalg}, one has
   \begin{equation*}
     \Gz_{p,q}=\bigcup_{\xi\in  \mathbb S^{n-2}_G}\Gz_{p,q}(\xi)
   \end{equation*}
   and
   $$  \Gz_{p,q}(\xi
   )\cap \Gz_{p,q}(\zeta) =\emptyset  \quad \mbox{ if } \xi \ne \pm\zeta .$$
 Recall from Lemma \ref{antigeo} that
   for each $\Gz \in \Gz_{p,q}$, the $\ccc_\az^n$-length of $\Gz$ satisfies
 \begin{equation}\label{nantigeo}
    \ell_{\ccc_\az^n} (\Gz) = \ell_{\mathcal C_\az^2}(\Gz) \in \left\{
     2\rho \sin\frac{(2k-1)\pi\sin\az}{2}:  k \in [0,\frac{1}{2\sin\az}-\frac12 ) \cap \nn \right\}.
   \end{equation}

   Moreover, for any $\xi \in \mathbb S^{n-2}_G $,
by Remark 2.5 (iii), there are  two  length-minimising geodesic segments $\Gamma^\pm_{0,\xi}$    joining $p$ and $q$, which are contained in
 $\mathcal C_{\az,\xi}^{2}$.
 We write $L_\xi$ the one which has  the same orientation as
 $p,\xi,q$ (intersecting the generatrix $\rr_+\xi$)   and $L_{-\xi}$ the one which has  the same orientation as
 $p,-\xi,q$ (intersecting the generatrix $\rr_+(-\xi)$).
 Recall that $$
  \ell_{\mathcal C_{\az }^n}(L _{\pm\xi})=
  \ell_{\mathcal C_{\az,\xi}^2 }(L _\xi^\pm)= 2\rho \sin(\frac{\pi}2 \sin\az)=d_{\mathcal C^n_\alpha}(p,q).$$
  We  reparametrize $  L_\xi$ to get $L^\star_\xi:[0,1]\to \mathcal C^n_\alpha$ with constant speed
$2\rho \sin(\frac{\pi}2 \sin\az)$. By abuse of notation, we write $L_\xi^\star$ as $L_\xi$

  Write $ \mathcal L_{p,q}$  as  the collection of all
 length minimizing geodesic segments joining $p,q$ in $\mathcal C^n_\alpha$  reparameterized to
 have constant speed $d_{\mathcal C^n_\alpha}(p,q)$. 
 Equip $\cl _{p,q}$ with $C^0([0,1])$-distance.
\begin{lem}\label{nmingeo}
 The map $\tau:\xi\mapsto L_\xi$ 
 is a homeomorphism from
$\mathbb S^{n-2}_{G}$   to $\cl_{p,q}$. In particular, $\cl_{p,q}=\{L_\xi: \xi \in  \mathbb S^{n-2}_G\}$ and
  $\cl_{p,q}  $ is homeomorphic to the standard sphere $\mathbb S^{n-2}$.
\end{lem}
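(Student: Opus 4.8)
The plan is to exhibit $\tau$ as a continuous bijection from the compact space $\mathbb S^{n-2}_G$ onto the Hausdorff space $(\cl_{p,q},\|\cdot\|_{C^0([0,1])})$; since a continuous bijection from a compact space to a Hausdorff space is automatically a homeomorphism, continuity of $\tau^{-1}$ will come for free, and the real work reduces to (a) continuity of $\tau$ and (b) bijectivity. As $\mathbb S^{n-2}_G$ is the unit sphere of the $(n-1)$-dimensional space $G^\perp$, it is a round $(n-2)$-sphere, which will yield the final identification $\cl_{p,q}\cong\mathbb S^{n-2}$.

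For continuity I would first record that the cone metric $g_{\mathcal C_\az^n}=d\rho\otimes d\rho+\rho^2\sin^2\az\,g_{\mathbb S^{n-1}}$ is invariant under the linear $O(n)$-action on $\rr^n$; in particular every rotation $R$ fixing the line $G$ pointwise is an isometry of $\mathcal C_\az^n$ fixing both $p$ and $q$ and sending the generatrix $\rr_+\xi$ to $\rr_+(R\xi)$, hence carrying $L_\xi$ to $L_{R\xi}$. Fixing $e:=p/\rho$ and writing each $L_\xi$ in the orthonormal frame $\{e,\xi\}$ of $P_\xi$, this equivariance (equivalently, the explicit planar unfolding behind Lemma \ref{antigeo}) shows that $L_\xi(t)=a(t)\,e+b(t)\,\xi$ for functions $a,b:[0,1]\to\rr$ depending only on $\rho$ and $\az$ and not on $\xi$, with $b\ge 0$. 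Thus $\xi\mapsto L_\xi(t)$ is linear, hence continuous (indeed smooth) in $\xi$ uniformly in $t$, giving continuity of $\tau$ in the $C^0$-norm; the uniform constant-speed reparametrisation to speed $d_{\mathcal C^n_\az}(p,q)$ is likewise $\xi$-independent and causes no trouble.

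For bijectivity I would argue as follows. \emph{Surjectivity:} let $L\in\cl_{p,q}$. Since the concatenation of generatrices through $o$ has length $2\rho$ while $d_{\mathcal C^n_\az}(p,q)=2\rho\sin(\tfrac\pi2\sin\az)<2\rho$, the minimiser $L$ cannot pass through $o$, nor can it lie in a single generatrix (which could not join the antipodal $p,q$ off $o$). By Lemma \ref{totalg}(ii), $L$ is a geodesic in $\mathcal C^2_{\az,P}$ with $P=\Span\{L(0),\dot L(0)\}$; as $p=L(0)\in P$ we get $G=\rr p\subset P$, whence $P=P_\xi$ for some $\xi\in\mathbb S^{n-2}_G$. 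Being length-minimising inside the totally geodesic $\mathcal C^2_{\az,\xi}$, $L$ is one of the two minimisers of Remark \ref{geomin}(ii), i.e.\ $L\in\{L_\xi,L_{-\xi}\}$. \emph{Injectivity:} if $L_\xi=L_\zeta$ as maps, the common image is not contained in a generatrix, so the uniqueness clause of Lemma \ref{totalg}(ii) forces $P_\xi=P_\zeta$, hence $\zeta=\pm\xi$; the orientation convention---$L_\xi$ meets $\rr_+\xi$ whereas $L_{-\xi}$ meets $\rr_+(-\xi)$---excludes $\zeta=-\xi$, so $\zeta=\xi$.

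Combining these, $\tau$ is a continuous bijection from the compact $\mathbb S^{n-2}_G$ onto $\cl_{p,q}$, hence a homeomorphism, and $\cl_{p,q}\cong\mathbb S^{n-2}$. The step I expect to require the most care is the continuity of $\tau$: one must produce the $\xi$-independent profile $(a(t),b(t))$ and control the reparametrisation uniformly in $\xi$, which is exactly what the $O(n)$-equivariance of the cone metric (or the planar unfolding of Lemma \ref{antigeo}) delivers. The bijectivity, by contrast, is essentially bookkeeping on top of Lemma \ref{totalg}, with the only subtlety being the orientation argument that separates $L_\xi$ from $L_{-\xi}$.
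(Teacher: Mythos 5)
Your proposal is correct and follows the same skeleton as the paper's proof: injectivity from the construction, surjectivity via Lemma \ref{totalg}, and continuity of $\tau$, after which the homeomorphism conclusion follows. The one step you handle by a genuinely different (and more explicit) route is continuity: the paper simply invokes continuous dependence of the exponential map $\exp_p(t\xi)$ on $(t,\xi)$, whereas you use the $O(n)$-equivariance of $g_{\mathcal C_\az^n}$ under rotations fixing $G$ to write $L_\xi(t)=a(t)e+b(t)\xi$ with a $\xi$-independent profile, which makes $\tau$ visibly Lipschitz in the $C^0$-norm (and is consistent with the explicit distance formula recorded in the remark following the lemma). You also supply two details the paper leaves implicit: the compact-to-Hausdorff argument giving continuity of $\tau^{-1}$, and the length comparison $2\rho\sin(\frac\pi2\sin\az)<2\rho$ showing a minimiser avoids $o$ so that Lemma \ref{totalg}(ii) applies; both are correct and tighten the argument rather than change it.
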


 \begin{proof}
 Our construction of $\tau$ implies that  $\tau$ is  injective. As a consequence of Lemma \ref{totalg},  $\tau$ is surjective.
By the continuous dependence of exponential map $\exp_p(t\xi)$ in both $t \in \rr$ and $\xi \in \mathcal{T}_p \ccc_{\az}^n$, we know $\tau$ is continuous. Thus $\tau$ is a homeomorphism.
  \end{proof}

  \begin{rem} \rm
    Indeed, for any $\xi,\zeta \in \mathcal{S}_G^{n-2}$, one could check that the $C^0$-distance $$\|L_\xi-L_\zeta\|_{C^0([0,1])}= \rho\pi \sin \az \sin (\angle(\xi,\zeta)\sin \az),   $$
    which equals to the $\ccc_{\az}^n$-distance between the middle points of $L_\xi$ and $L_\zeta$ and is comparable to $|\xi-\zeta|$. This quantity is not important in the following proofs. However, the fact that $\tau$ is a homeomorphism is crucial for constructing of the homotopy class in Section 4.
  \end{rem}

For each $\xi \in \mathbb S^{n-2}_G$, write $ L_\xi([0,1]) \subset \rr^{n}$ as the image of $L_\xi: [0,1] \to \ccc_\az^n$($=\rr^{n}$ as sets).
Then
 $$ L_\xi([0,1])
 \cap L_\zeta([0,1]) = \{p,q\} \quad \forall \zeta \ne\zeta\in \mathbb S^{n-2}.  $$
 Set
  $$\cl^T_{p,q}:=\bigcup_{\xi \in \mathbb S^{n-2}_G } L_\xi([0,1]) \subset \rr^n.$$
and equip it with   the topology induced by the Euclidean topology.
We have the following.
\begin{lem}\label{nmingeo2}
 $\cl^T_{p,q}$   is homeomorphic to $\mathbb S^{n-1}$.
In particular, the apex $o$ belongs to the bounded component of $\rr^n \setminus \cl^T_{p,q}$.
\end{lem}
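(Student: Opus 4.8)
The plan is to exhibit $\cl^T_{p,q}$ as the topological suspension of $\mathbb S^{n-2}_G$. I would consider the map
$$\Phi:\mathbb S^{n-2}_G\times[0,1]\to\rr^n,\qquad \Phi(\xi,t):=L_\xi(t),$$
where each $L_\xi$ is parametrised on $[0,1]$ with $L_\xi(0)=p$ and $L_\xi(1)=q$. Since $\Phi(\xi,0)=p$ and $\Phi(\xi,1)=q$ for every $\xi$, the map $\Phi$ factors through the quotient collapsing $\mathbb S^{n-2}_G\times\{0\}$ to $p$ and $\mathbb S^{n-2}_G\times\{1\}$ to $q$, i.e. through the unreduced suspension $\Sigma\mathbb S^{n-2}_G$, which is homeomorphic to $\mathbb S^{n-1}$. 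Writing $\bar\Phi:\Sigma\mathbb S^{n-2}_G\to\cl^T_{p,q}$ for the induced map, the goal is to show $\bar\Phi$ is a homeomorphism.

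First I would verify that $\bar\Phi$ is a continuous bijection. Joint continuity of $(\xi,t)\mapsto L_\xi(t)$ follows from Lemma \ref{nmingeo}, which gives continuity of $\xi\mapsto L_\xi$ in the $C^0$-topology, combined with continuity of each $L_\xi$ in $t$; hence $\bar\Phi$ is continuous. Surjectivity is just the definition $\cl^T_{p,q}=\bigcup_\xi L_\xi([0,1])$. For injectivity, suppose $L_\xi(t)=L_\zeta(s)=:x$. Each $L_\xi$ is an embedded length-minimising arc, so $x\in\{p,q\}$ only when $t,s\in\{0,1\}$, in which case the two representatives coincide in $\Sigma\mathbb S^{n-2}_G$; and if $x\notin\{p,q\}$ then $t,s\in(0,1)$ and the disjointness relation $L_\xi([0,1])\cap L_\zeta([0,1])=\{p,q\}$ for $\xi\ne\zeta$ (which is used also for $\zeta=-\xi$) forces $\xi=\zeta$, whence $t=s$ by injectivity of $L_\xi$. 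A continuous bijection from the compact space $\Sigma\mathbb S^{n-2}_G$ onto the Hausdorff subspace $\cl^T_{p,q}\subset\rr^n$ is a homeomorphism, so $\cl^T_{p,q}\cong\mathbb S^{n-1}$.

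For the location of $o$, I would prove the stronger statement that $\cl^T_{p,q}$ is a radial graph over the unit sphere, i.e. that the radial projection $r(x)=x/|x|$ restricts to a bijection $\cl^T_{p,q}\to\mathbb S^{n-1}$. Writing $\hat p=p/|p|$, the only points of $\cl^T_{p,q}$ lying on $G=\rr\hat p$ are $p$ and $q$, since each interior arc $L_\xi((0,1))$ avoids $G$ (its cone angle lies strictly in $(0,\pi)$). For a direction $v\ne\pm\hat p$, decomposing $v$ along $G\oplus G^\perp$ singles out a unique $\xi\in\mathbb S^{n-2}_G$ with $v\in P_\xi$ on the $\xi$-side; in the unfolded wedge of $\mathcal C^2_{\alpha,\xi}$ the minimiser $L_\xi$ is the straight chord from $\bar p$ to $\bar q$ whose polar angle, hence whose cone angle $\theta$, increases strictly and monotonically from $0$ to $\pi$ (Lemma \ref{coneabc}, Lemma \ref{antigeo}). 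Thus the ray $\rr_{>0}v$ meets $L_\xi$ exactly once and meets no other $L_\zeta$, giving the claimed bijection. Consequently $\cl^T_{p,q}=\{f(v)\,v:v\in\mathbb S^{n-1}\}$ for a continuous $f:\mathbb S^{n-1}\to[c,\rho]$ with $c>0$ (positivity from $d_{\mathcal C^n_\alpha}(o,L_\xi)=\tfrac12\rho\pi\sin\alpha>0$ in Remark \ref{geomin}(iii), and $f\le\rho$ since each $L_\xi\subset\overline{B(o,\rho)}$ by Remark \ref{pi/2}). Then $U:=\{o\}\cup\{x\ne o:|x|<f(x/|x|)\}$ is open, bounded and star-shaped about $o$, while $V:=\{x:|x|>f(x/|x|)\}$ is open and unbounded; both are connected and together exhaust $\rr^n\setminus\cl^T_{p,q}$, so they are precisely the two components of the complement (as also guaranteed by Jordan--Brouwer), and $o\in U$ because $f>0$.

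The hardest step will be the bijectivity of the radial projection: establishing strict monotonicity of the cone angle along each length-minimising chord $L_\xi$ and correctly treating the poles $p,q$, where all arcs coalesce. Everything else is formal, namely the suspension bookkeeping and the compact-to-Hausdorff upgrade.
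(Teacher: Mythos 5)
Your proof is correct, but it is organised quite differently from the paper's. The paper argues by a rotational sweep: for each $\xi\in\mathbb S^{n-2}_G$ the union $L_\xi([0,1])\cup L_{-\xi}([0,1])$ is a topological circle in the plane $P_\xi$ with $o$ in its bounded complementary component, and by the rotational symmetry of $\mathcal C_\az^n$ this circle rotates about the axis $G$ as $\xi$ varies, sweeping out a topological $\mathbb S^{n-1}$ enclosing $o$; the paper leaves both the ``sweeps out a sphere'' step and the enclosure of $o$ at this level of description. You instead identify $\cl^T_{p,q}$ with the unreduced suspension $\Sigma\mathbb S^{n-2}_G$ (using the pairwise intersection relation $L_\xi([0,1])\cap L_\zeta([0,1])=\{p,q\}$ for injectivity and the compact-to-Hausdorff upgrade for the homeomorphism), and then locate $o$ by showing $\cl^T_{p,q}$ is a radial graph over $\mathbb S^{n-1}$, which rests on the strict monotonicity of the polar angle along the straight chord representing $L_\xi$ in the unfolded wedge. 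Both arguments ultimately use the same geometric inputs (Lemma \ref{nmingeo}, the disjointness of the $L_\xi$ away from $p,q$, and $d_{\mathcal C^n_\az}(o,L_\xi)>0$ from Remark \ref{geomin}(iii)), but your version is more explicit and self-contained: the suspension bookkeeping pins down the homeomorphism type without appeal to an informal sweep, and the radial-graph description gives the two complementary components $U$ and $V$ directly, with $o\in U$, rather than inferring it plane by plane. As a minor remark, the radial-graph part alone already yields both conclusions of the lemma (a continuous positive radial graph over $\mathbb S^{n-1}$ is homeomorphic to $\mathbb S^{n-1}$ via radial projection), so the suspension step, while correct, is not strictly needed.
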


\begin{proof}

For each $\xi\in \mathbb \mathbb S^{n-1}_G$,
  $L_\xi$ and $L_{-\xi}$ are embedded curves contained in $C_{\az,\xi}^2 $ with winding number zero with respect to $o$.
  This implies that
$\mathbb S^1_{\xi}=L_\xi([0,1]) \cup L_\zeta([0,1])$
  is topologically an $\mathbb S^1$ and  $o$ bolongs to the bouded component of $P_\xi\setminus  \mathbb S^1_\xi$.
   By the rotational symmetry of $\mathcal C_{\az}^n$, as $\xi$ varies in $\mathbb S^{n-2}$,
   we know that $\mathbb S^1_{\xi}$ rotates in $\rr^n$ with respect to the axis $G $, thus
  $\cl^T_{p,q}$ forms an $(n-1)$-dimensional subset in $\rr^n$ homeomorphic to $\mathbb S^{n-1}$.
\end{proof}

%
%

The next lemma is parallel to Lemma \ref{line1}.
\begin{lem}\label{line2}
  Let $\Gz: (-\fz,\fz)\to \mathcal C^n_\az\setminus\{o\}$ be a geodesic line. Then,
  $$
  \mathbf{K}_\az:=\lim_{t \to +\fz}\angle(\Gz(-t),\Gz(t))=\lim_{t \to +\fz}\angle(\dot\Gz(-t), \dot\Gz(t))= \frac{|2\pi K_\az \sin \az - \pi|}{\sin \az}$$
   where $K_\az$ is the unique integer such that $\mathbf{K}_\az \in [0,\pi]$.
   Moreover,
   $$ \lim_{t \to +\fz} \angle(\Gz(-t),-\dot\Gz(-t)) = \lim_{t \to \fz} \angle(\Gz(t),\dot\Gz(t)) = 0. $$
\end{lem}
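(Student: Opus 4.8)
The plan is to reduce the entire statement to its two-dimensional counterpart, Lemma \ref{line1}, by exploiting the totally geodesic structure recorded in Lemma \ref{totalg}(ii). First I would observe that a geodesic line $\Gz$ avoiding the apex cannot be contained in a generatrix: a generatrix is only a ray, and continuing it as a full line would force $\Gz$ to pass through $o$, which is excluded. Hence $\Gz(t)$ and $\dot\Gz(t)$ are linearly independent for every $t$, and by Lemma \ref{totalg}(ii) the whole line lies in a single totally geodesic two-dimensional cone $\mathcal C^2_{\az,P}$ with $P=\Span\{\Gz(t),\dot\Gz(t)\}$ \emph{independent} of $t$. Consequently all vectors appearing in the statement — the position vectors $\Gz(\pm t)$ and the velocities $\dot\Gz(\pm t)$ (under the identification $\mathcal T_x\rr^n=\rr^n$) — lie in the fixed plane $P$. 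The crucial point is that the isometry $\Lambda:\mathcal C^2_{\az,P}\to\mathcal C_\az^2$ of Lemma \ref{totalg}(i) is not merely a Riemannian isometry of the cone metrics but a genuine linear Euclidean isometry of $P$ onto $\rr^2$: writing $g_{\mathcal C_\az^n}|_P=d\rho^2+\rho^2\sin^2\az\,d\psi^2$ in polar coordinates on $P$, where $\psi$ is the planar angle inside $P$, one sees that $\Lambda$ simply matches $(\rho,\psi)$ with the Cartesian polar coordinates $(\rho,\tz)$ of $\rr^2$. Thus $\Lambda$ preserves the Euclidean angle $\angle$ used throughout the paper, both between position vectors and between tangent vectors, and $\Lambda\circ\Gz$ is a geodesic line in $\mathcal C_\az^2$. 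The first equality of Lemma \ref{line2} then follows verbatim from Lemma \ref{line1} applied to $\Lambda\circ\Gz$.

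For the ``Moreover'' statement I would pass to the wedge domain $W_{2\pi\sin\az}$, in which $\Gz$ unfolds to a Euclidean straight line $\bar\Gz(s)=A+sv$ with $|v|=1$. The idea is that the position vector and the velocity both become radial at each end. Quantitatively, the scalar cross product $\bar\Gz\times\dot{\bar\Gz}=A\times v$ is constant along a straight line, which in polar coordinates $(\rho,\phi)$ of the wedge reads $\rho^2\dot\phi=\mathrm{const}$; since $\rho\to\fz$ as $s\to\pm\fz$, this forces $\rho\dot\phi\to0$, and because $\phi=\tz\sin\az$ we also get $\rho\dot\tz\to0$, while the arc-length parametrization gives $\dot\rho\to\pm1$. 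Feeding this into the Cartesian angle formula $\cos\angle(\Gz,\dot\Gz)=\dot\rho/\sqrt{\dot\rho^2+\rho^2\dot\tz^2}$ yields $\angle(\Gz(t),\dot\Gz(t))\to0$ as $t\to+\fz$ (where $\dot\rho\to+1$), and $\angle(\Gz(-t),\dot\Gz(-t))\to\pi$, i.e. $\angle(\Gz(-t),-\dot\Gz(-t))\to0$, as $t\to+\fz$ (where $\dot\rho\to-1$). Geometrically the geodesic asymptotes to a generatrix at each end, with position and velocity aligned at $+\fz$ and anti-aligned at $-\fz$.

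The hard part will be the ``Moreover'' statement, together with a genuine subtlety in comparing angles. The Euclidean angle between a position vector and a velocity at the same point is distorted by the factor $\sin\az$ when one passes between the Cartesian chart and the wedge — the Cartesian formula involves $\rho\dot\tz$ whereas the wedge formula involves $\rho\dot\phi=\rho\dot\tz\sin\az$ — so one cannot simply invoke ``$\Lambda$ preserves angles'' as in the first equality. What rescues the argument is that the limiting values $0$ and $\pi$ are governed solely by the vanishing of the angular velocity $\rho\dot\tz$, which is insensitive to the constant factor $\sin\az$. By contrast, the first equality is a clean reduction once the linear-isometry observation is in place; the only points there requiring care are verifying that $\Gz$ is not a generatrix and that $P$ is genuinely independent of $t$, both of which are supplied by Lemma \ref{totalg}(ii).
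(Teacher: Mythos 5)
Your proof is correct and follows exactly the route the paper intends (the paper in fact omits the proof, remarking only that the lemma is ``parallel to Lemma \ref{line1}''): reduce to a fixed totally geodesic plane via Lemma \ref{totalg}(ii), apply Lemma \ref{line1}, and verify the ``Moreover'' part by the straight-line computation in the development. Your observation that the unfolding distorts the position--velocity angle by the factor $\sin\az$ but that the limits $0$ and $\pi$ survive because they are controlled by the vanishing of $\rho\dot\tz$ is a worthwhile point of care that the paper leaves implicit.
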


Directly from Lemma \ref{line2}, we have the following corollary.

\begin{cor}\label{line3}
  Let $\{\Gz^N: [0,1]\to \overline {B(o,N)} \}_{N \in \nn, N \ge 2}$ be a sequence of geodesic segments of $\mathcal C_\az^n$ parametrised proportional to arc-length such that
  $$ \Gz^N(0)\cup \Gz^N(1) \subset \pa B(o, N) $$
  and
  \begin{equation}\label{tangent}
    \min\{r>0: \Gz^N \cap \pa B(o, r) \ne \emptyset\} =1, \quad \forall N \ge 2.
  \end{equation}
   Then,
   \begin{equation}\label{second}
     \lim_{N \to +\fz}\angle(\Gz^N(0),\Gz^N(1))=\mathbf{K}_\az \mbox{ and }\lim_{N \to +\fz}\angle(\dot\Gz^N(0), \dot\Gz^N(1))=\mathbf{K}_\az.
   \end{equation}
\end{cor}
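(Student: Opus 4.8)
The plan is to realise each $\Gz^N$ as a sub-arc of one fixed geodesic line inside a single $2$-dimensional totally geodesic cone, and then to quote Lemma \ref{line2}. First I would record that both angles in \eqref{second} are invariant under $O(n)$: every element of $O(n)$ is an isometry of $\mathcal C_\az^n$ fixing $o$, and it preserves Euclidean angles both between position vectors and, under the identification $\mathcal{T}_x\rr^n=\rr^n$, between tangent vectors. By Lemma \ref{totalg}(ii), each $\Gz^N$ is a geodesic of some totally geodesic plane-cone $\mathcal C^2_{\az,P_N}$; applying a suitable $R_N\in O(n)$ I may therefore assume every $\Gz^N$ lies in one fixed copy $\mathcal C_\az^2$, without altering the quantities in \eqref{second}.

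Next I would analyse $\Gz^N$ inside $\mathcal C_\az^2$ via the unfolding onto the wedge $W_{2\pi\sin\az}$ (and its adjacent copies). Condition \eqref{tangent} says the distance to $o$ along $\Gz^N$ attains the value $1$, and since the endpoints sit at distance $N>1$ this minimum is interior; in the unfolded picture $\Gz^N$ becomes a straight segment whose unique closest point to $\bar o$ is the foot of the perpendicular, at Euclidean distance $1$. A geodesic segment of $\mathcal C_\az^2$ avoiding $o$ extends to a complete geodesic line with the same (unique) closest point, and in the unfolded picture a geodesic line with closest distance $1$ is a straight line tangent to $\partial B(\bar o,1)$, hence determined up to a rotation about $\bar o$; such rotations are realised by isometries of $\mathcal C_\az^2$. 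Thus, after a further rotation bringing the foot to a fixed point of $\partial B(o,1)$, all the $\Gz^N$ are sub-arcs of a single fixed geodesic line $\Gz^\infty$ of $\mathcal C_\az^2$. Reparametrising proportional to arc-length with the closest point at parameter $0$, the two endpoints at distance $N$ sit symmetrically at arc-length $\pm b_N$ with $b_N=\sqrt{N^2-1}\to\infty$, so that $\{\Gz^N(0),\Gz^N(1)\}=\{\Gz^\infty(-b_N),\Gz^\infty(b_N)\}$ (up to orientation).

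Finally I would apply Lemma \ref{line2} to $\Gz^\infty$: as $b_N\to\infty$,
$$\angle(\Gz^\infty(-b_N),\Gz^\infty(b_N))\to\mathbf{K}_\az,\qquad \angle(\dot\Gz^\infty(-b_N),\dot\Gz^\infty(b_N))\to\mathbf{K}_\az,$$
and since the rotations $R_N$ above preserve both angles, \eqref{second} follows. The point needing the most care—the main obstacle—is the reduction step rather than the limit itself: I must confirm that the closest point to $o$ is genuinely unique and interior (so that $\Gz^N$ really contains the foot and extends symmetrically to both endpoints at distance $N$), and that the unoriented distance-$1$ geodesic line of $\mathcal C_\az^2$ is unique modulo rotation, so that one $\Gz^\infty$ serves every $N$. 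Both facts are transparent from the straight-line picture in the unfolded wedge but should be stated explicitly. As a robustness check, even without the uniqueness claim one could argue by subsequences: any subsequential limit direction of $\Gz^N(\pm b_N)$ and $\dot\Gz^N(\pm b_N)$ belongs to a geodesic line, whose two ideal ends are intrinsic and independent of the parametrisation centre, so Lemma \ref{line2} forces every subsequential limit of the two angles to equal $\mathbf{K}_\az$, giving the full limit in \eqref{second}.
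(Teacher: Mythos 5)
Your argument is correct and is essentially the paper's own proof: both reduce, via rotations in $O(n)$ (isometries of $\mathcal C_\az^n$ fixing $o$ that preserve both angles in \eqref{second}), to realising all the $\Gz^N$ as nested sub-arcs of a single geodesic line at distance $1$ from $o$, and then invoke Lemma \ref{line2}. The only cosmetic difference is that the paper aligns the segments by matching the point of tangency to $\partial B(o,1)$ together with the unit tangent vector there and appealing to uniqueness of geodesics with given initial data, whereas you classify the distance-$1$ geodesic lines up to rotation via the $2$-dimensional reduction and the unfolded wedge picture; both mechanisms produce the same single line to which Lemma \ref{line2} is applied.
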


\begin{proof}

Reparametrize  $ \Gamma^N$ to the geodesic segment
$ \Gamma^N_\star:[t_N^-,t^+_N]$ under arc-length of $\ccc_\az^n$ with $ \Gamma^N_\star (0)\in  \pa B(o, 1)$.  Note that $t^+_N$, $-t^-_N\ge N-1$.
By abuse of notation we write $\Gamma^N_\star$ as $\Gamma^N$.
By condition \eqref{tangent}, we know that
$$ \dot \Gamma^N (0) \mbox{ is tangent to } \pa B(o, 1), \quad \forall N \ge 2. $$
By the rotational symmetry of $\ccc_\az^n$, we know
there  is  a  rotation  $\car^N \in O(n)$ which sends
$   \Gz^N(0)   $ to
    $   \Gz^{2}(0)  $
    and  the tangential
    $ \dot \Gz^{N }(0)$ to     $\dot \Gz^{2}(0)$ for each $N \ge 3$.
    By the uniqueness of geodesics starting at $x_0$ of a given initial velocity, we know that
  $\Gz^{2}$ is the restriction of $\car^N (\Gz^{N})|_{[t^-_2,t^+_2]}$.
  Moreover,  for any $j\le N-1$,
$  \car^j(\Gz^j) $ is the restriction of $\car^N(\Gz^{N})|_{[t^-_j,t^+_j]}$.

 Therefore $\Gz:= \Gz^2 \cup [\cup_{N \ge 3} \car^N(\Gz^N)]$ is a geodesic line of $\mathcal C_\az^n$.
    Applying Lemma \ref{line2}, we have
    $$
\lim_{N \to +\fz}\angle(\car^N(\Gz^N(0)),\car^N(\Gz^N(1)))=\lim_{N \to +\fz}\angle(\Gz(t_N^-), \Gz(t_N^+))=\mathbf{K}_\az.$$
Since $\car^N$ is an isometry of $\mathcal C_\az^n$, we have
 $$
\lim_{N \to +\fz}\angle(\Gz^N(t_N^-),\Gz^N(t^+_N))=\lim_{N \to +\fz}\angle(\car^N(\Gz^N(0)),\car^N(\Gz^N(1)))=\mathbf{K}_\az.$$
The second equality in \eqref{second} can be similarly proven. The proof is complete.
\end{proof}

\section{Asymptotically conical manifolds}\label{s23}


Let $(M, g, o) = (\rr^n,g,o)$   be  a complete $C^{2}$  non-compact manifold with reference point $o$.
Assume  that $M$ is an
\emph{asymptotically conical manifold} with asymptotical angle  $\az \in (0,\pi/2)$
and  asymptotic decay rate $\mu>0$   if
  the difference    $A :=g  -g_{\mathcal C_\az^n} $
 is a $C^{2}$ symmetric $(0,2)$-tensor  aways from  $o$,
  and has  asymptotic decay rate $\mu$.   Here, recall that $g_{\mathcal C_\az^n}$ is the Riemannian metric on the standard cone $\mathcal C_\az^n \setminus  \{o\}$ with the open  angle $\alpha$. Write
 the  (0,2)-tensor  $A$ as
  \begin{equation}\label{apolar}
 A(\rho, \tz)
  =   a_{\rho\rho}(\rho, \tz)\,d\rho\otimes\,d \rho + \rho^2 \sum_{i,j=1}^{n-1}a_{\tz_i\tz_j}(\rho, \tz)\,d \tz_i \otimes\,d \tz_j + 2  \sum_{i}^{n-1}\rho a_{\rho\tz_i}(\rho, \tz)\,d\rho\otimes\,d\tz_i
 \end{equation}
 under the standard polar coordinate $ (\rho, \tz) $ of $\rr^n$, that is,
$$(\rho, \tz) =(\rho, \tz_1,\cdots,\tz_{n-1})
\in(0,\infty) \times [-\pi,\pi)\times[-\pi/2,\pi/2)^{n-2}.$$
  We  say $A$
  has the asymptotic decay rate $\mu>0$ if
for any $a\in \{a_{\rho\rho},  a_{\rho\tz_j} ,a_{\tz_i\tz_j}\}$ and
  any multi-index  $I=(I_\rho,I_{\tz_1},\cdots,I_{\tz_{n-1}})$ with length $ |I|\le 2$,  one has $$ |\pa^I a(\rho, \tz)|= O(\rho^{-\mu- I_{\rho}})\ \mbox{as $ \rho \to \infty$.} $$
Note that, under the  Cartesian coordinate $x= (x_1,\cdots,x_n)$      the metric $g$ and also the  difference $A$ are written as
\begin{equation}\label{gcar}
   g (x) =\sum_{i,j=1}^n g_{ij}(x) \, dx_i \otimes dx_j = g_{\mathcal C_\az^n}(x) + A(x)
\end{equation}
and
\begin{equation}\label{acar}
  A(x)= \sum_{i,j=1}^{n}a_{ij}(x) \, dx_i \otimes dx_j, \quad \forall x=(x_1,\cdots,x_n)\in \rr^n\setminus\{o\}.
\end{equation}
The asymptotical decay rate $\mu$ of $A $ means that for multi-index $I=(I_1,\cdots,I_{n})$ with length $ \le2$, we have
\begin{equation*}
 |\partial^I_x a_{ij}(x)|= O (|x|^{-\mu-|I|}) \ \mbox{as $x\to\infty$}.
\end{equation*}

\begin{rem}\rm
(i) We remark that
 $M$ is always assumed to be homeomorphic  to $\mathbb R^n$ in this paper.
 One could easily check that   $M$ is always quasi-isometric to the Euclidean space $\mathbb R^n$.   Since quasi-isometric spaces has the  same ideal boundary, we know $\pa_\fz M=\pa_\fz \rr^n = \mathbb S^{n-1}$ and we could identify $\pa_\fz M$ as the unit sphere with the round metric. We refer to \cite{bs07} for this fact.

(ii)  Below,  \begin{equation}\label{afz}
  \|A\|_\fz:=   \max_{1\le i,j \le n}\|a_{ij}\|_{L^\infty(\mathbb R^n)}<\infty. 
 \end{equation}
Consequently,  the Euclidean ball $B(y,r)$ and $g$-geodesic ball $B_g(y,r)$ are always comparable for any $y\in M$ and $r>0$.

 \end{rem}

For  any $\ez\in(0,1/2)$,
set $$R_\ez:=\min\left\{R>0 :  \max_{1\le i,j,k \le n}\{|a_{ij}(x)|\} \le \ez  \quad \forall x \in \rr^{n} \setminus B(o,R ).\right\}
$$
Then 
\begin{equation}\label{rez}
  R_\ez \ge C\ez^{-1/\mu}
\end{equation}
Set
\begin{equation}\label{mez}
  M_{\le \ez}:=  \rr^{n} \setminus B(o,R_\ez).
\end{equation}
One   has
 $$  \max_{1\le i,j,k \le n}\{|a_{ij}(x)|\} \le \ez, \quad \forall x \in M_{\le \ez}. $$
In particular,
\begin{equation}\label{compare2a}
\mbox{$(1-C\ez)|\xi|_{g_{\mathcal C_\az^n}} \le |\xi|_g \le (1+C\ez)|\xi|_{g_{\mathcal C_\az^n}} $
 for all  $x \in M_{\le \ez}$ and $\xi \in \mathcal{T}_xM$ }
 \end{equation}
 and
  \begin{equation}\label{compare2} (1-C \ez) \ell_{\mathcal C_\az^n}(\gz) \le \ell_{g}(\gz) \le  (1+C \ez) \ell_{\mathcal C_\az^n}(\gz)\ \mbox{for any curve $\gamma\subset M_{\le \epsilon}$}    \end{equation}
  for some absolute constant $C >0$ depending only on $n$  and $\az$.

%

We also need the fact  from \cite[Lemma 2.15]{bfm22}.
\begin{lem}
\label{bfm}
 One has
  $$  \lim_{|x| \to \fz}\frac{d_g(o,x)}{|x|} =  \lim_{d_g(o,x) \to \fz}\frac{d_g(o,x)}{|x|} =1. $$
\end{lem}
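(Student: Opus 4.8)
The plan is to compare $d_g(o,x)$ with $|x|=d_{\mathcal C_\az^n}(o,x)$ (recall \eqref{do}), exploiting that outside the ball $B(o,R_\ez)$ the metrics $g$ and $g_{\mathcal C_\az^n}$ differ only by the factor $1\pm C\ez$, as recorded in \eqref{compare2}. First I would note that the two limits are equivalent: since $\|A\|_\fz<\fz$ (see \eqref{afz}), the $g$-distance and the Euclidean distance to $o$ are globally comparable, so there is $\Lz\ge1$ with $\Lz^{-1}|x|\le d_g(o,x)\le\Lz|x|$ for all $x$; hence $|x|\to\fz$ if and only if $d_g(o,x)\to\fz$, and it suffices to prove $\lim_{|x|\to\fz}d_g(o,x)/|x|=1$. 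I then fix $\ez\in(0,1/2)$ and restrict to $|x|>R_\ez$, aiming to produce matching upper and lower bounds and finally let $\ez\to0$.

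For the upper bound I would use the radial generatrix. Let $y=R_\ez\,x/|x|\in\pa B(o,R_\ez)$ and concatenate a curve from $o$ to $y$ with the generatrix segment from $y$ to $x$. The latter lies in $M_{\le\ez}$ (see \eqref{mez}) and has $\mathcal C_\az^n$-length $|x|-R_\ez$, so by the right-hand inequality of \eqref{compare2} its $g$-length is at most $(1+C\ez)(|x|-R_\ez)$; the former contributes at most $d_g(o,y)\le\Lz R_\ez$. This yields $d_g(o,x)\le\Lz R_\ez+(1+C\ez)|x|$.

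The lower bound carries the real content. Let $\gz:[0,1]\to M$ be any curve from $o$ to $x$, and set $t_0:=\sup\{t:|\gz(t)|=R_\ez\}$, which exists by the intermediate value theorem since $|\gz(0)|=0<R_\ez<|x|=|\gz(1)|$. By maximality of $t_0$, the subcurve $\gz|_{[t_0,1]}$ stays in $M_{\le\ez}=\rr^n\setminus B(o,R_\ez)$, so the left-hand inequality of \eqref{compare2} together with length $\ge$ distance gives
\[
\ell_g(\gz)\ge\ell_g(\gz|_{[t_0,1]})\ge(1-C\ez)\,\ell_{\mathcal C_\az^n}(\gz|_{[t_0,1]})\ge(1-C\ez)\,d_{\mathcal C_\az^n}(\gz(t_0),x).
\]
Since $d_{\mathcal C_\az^n}(\gz(t_0),x)\ge|x|-d_{\mathcal C_\az^n}(o,\gz(t_0))=|x|-R_\ez$ by the triangle inequality and \eqref{do}, taking the infimum over all $\gz$ produces $d_g(o,x)\ge(1-C\ez)(|x|-R_\ez)$.

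Combining the two estimates,
\[
(1-C\ez)\Big(1-\tfrac{R_\ez}{|x|}\Big)\le\frac{d_g(o,x)}{|x|}\le(1+C\ez)+\frac{\Lz R_\ez}{|x|},
\]
so letting $|x|\to\fz$ with $\ez$ fixed gives $1-C\ez\le\liminf\le\limsup\le1+C\ez$, and letting $\ez\to0$ concludes. I expect the only delicate point to be the lower bound, and within it the choice of the \emph{last} exit time $t_0$ from $B(o,R_\ez)$: this is precisely what prevents a competitor curve from cheapening its length by dipping back toward the apex, where $g$ and $g_{\mathcal C_\az^n}$ are no longer close. Everything else reduces to routine applications of \eqref{compare2} and the global comparability coming from \eqref{afz}.
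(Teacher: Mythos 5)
Your proof is correct, but note that the paper does not actually prove this lemma: it is imported verbatim as \cite[Lemma 2.15]{bfm22}, so there is no internal argument to compare against. Your self-contained derivation is a perfectly sound substitute. The two estimates are exactly the right ones: the upper bound via the concatenation of a path from $o$ to $\pa B(o,R_\ez)$ with the radial generatrix segment (whose $\mathcal C_\az^n$-length is $|x|-R_\ez$ by \eqref{stdncone}), and the lower bound via the last exit time $t_0$ from $B(o,R_\ez)$, which guarantees $\gz|_{[t_0,1]}\subset M_{\le\ez}$ so that \eqref{compare2} applies and the cone's singularity at the apex never enters the estimate. Combined with $d_{\mathcal C_\az^n}(o,\cdot)=|\cdot|$ from \eqref{do} and the fact that the constant $C$ in \eqref{compare2} is independent of $\ez$, the squeeze $(1-C\ez)(1-R_\ez/|x|)\le d_g(o,x)/|x|\le(1+C\ez)+\Lz R_\ez/|x|$ closes the argument. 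One small point worth tightening: the global two-sided comparability $\Lz^{-1}|x|\le d_g(o,x)\le\Lz|x|$ does not follow from $\|A\|_\fz<\fz$ alone (boundedness of $A$ only caps $|\cdot|_g$ from above); you also need the lower bound on $|\cdot|_g$, which comes from \eqref{compare2a} on $M_{\le\ez}$ together with positive-definiteness and continuity of $g$ on the compact core $\overline{B(o,R_\ez)}$. The paper asserts this comparability right after \eqref{afz}, and in any case your own two displayed estimates already imply the equivalence of the two limits, so nothing essential is at stake.
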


 For $\lz>0$,   recall the scaling map $\mathcal{D}_\lz$
\begin{equation}\label{slz}
   \mathcal{D}_\lz(x) = \lz x, \quad x \in\rr^n
\end{equation}
and
consider the rescaled  metric
  \begin{equation}\label{scale}
    g^{(\lz)}(x) = \lz^{-2} \mathcal{D}_{\lz}^\ast g(x), \quad x \in \rr^n , \ j \in \nn.
  \end{equation}
We denote by $M^{(\lz)}=(\mathbb R^n , g^{(\lz)})$.
The following comes from the definition directly; see for example \cite[Lemma 2.14]{bfm22}.

\begin{lem}  \label{convg}
 As $\lz \to \fz$, the Riemannian manifolds $ M^{(\lz)}\setminus\{o\}$ converges
(locally uniformly in the $C^{2}$-topology) to the standard cone $(\mathcal C_\az^n\setminus \{o\}, g_{\mathcal C_\az^n})$.
\end{lem}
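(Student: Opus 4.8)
The plan is to reduce the statement to a direct computation in Cartesian coordinates, where the only two ingredients are the scale-invariance of the cone metric and the decay hypothesis on $A$. First I would unwind the definition of the rescaled metric from \eqref{scale}. Writing $y=\mathcal{D}_\lz(x)=\lz x$, the differential of $\mathcal{D}_\lz$ is multiplication by $\lz$, so in the Cartesian chart $(\mathcal{D}_\lz^\ast g)_{ij}(x)=\lz^2 g_{ij}(\lz x)$, and hence
$$ g^{(\lz)}_{ij}(x)=\lz^{-2}(\mathcal{D}_\lz^\ast g)_{ij}(x)=g_{ij}(\lz x). $$

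Next I would exploit the invariance of the cone metric $g_{\mathcal C_\az^n}$ under $\mathcal{D}_\lz$: in Cartesian coordinates its components $g_{ij}^{\mathcal C_\az^n}$ are homogeneous of degree $0$ (they depend only on the direction $x/|x|$), so $g_{ij}^{\mathcal C_\az^n}(\lz x)=g_{ij}^{\mathcal C_\az^n}(x)$. This is precisely what makes the normalization factor $\lz^{-2}$ the correct one. Combining this with the decomposition $g=g_{\mathcal C_\az^n}+A$ from \eqref{gcar}, I obtain
$$ g^{(\lz)}_{ij}(x)-g_{ij}^{\mathcal C_\az^n}(x)=a_{ij}(\lz x), $$
so that establishing locally uniform $C^2$ convergence on $\rn\setminus\{o\}$ amounts to showing $A(\lz\,\cdot)\to 0$ in $C^2$ on every compact set $K\subset\rn\setminus\{o\}$.

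For this I would invoke the decay rate of $A$. By the chain rule each Cartesian derivative produces a factor $\lz$, so for any multi-index $I$ with $|I|\le 2$ one has $\partial_x^I[a_{ij}(\lz x)]=\lz^{|I|}(\partial^I a_{ij})(\lz x)$, and the decay estimate $|\partial^I a_{ij}(y)|=O(|y|^{-\mu-|I|})$ then gives
$$ \big|\partial_x^I[a_{ij}(\lz x)]\big|=O\!\big(\lz^{-\mu}\,|x|^{-\mu-|I|}\big). $$
On a fixed compact $K\subset\rn\setminus\{o\}$ we have $|x|\ge c>0$, so the right-hand side is $O(\lz^{-\mu})$ uniformly on $K$ and tends to $0$ as $\lz\to\fz$, which is exactly the asserted $C^2_{loc}$ convergence.

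I do not expect a genuine obstacle here: the statement is essentially a bookkeeping exercise, and the only points worth emphasizing are the degree-$0$ homogeneity of the cone metric, which removes the cone part from the difference, and the positive rate $\mu>0$, which then does the rest. The one thing to stay mildly careful about is that the convergence is only claimed away from the apex $o$, consistent with the fact that the estimates above blow up as $|x|\to 0$; this is why the lemma is phrased for $M^{(\lz)}\setminus\{o\}$ rather than all of $M^{(\lz)}$.
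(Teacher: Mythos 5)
Your proof is correct and is exactly the direct computation the paper has in mind: the paper offers no argument of its own, stating only that the lemma ``comes from the definition directly'' and citing \cite[Lemma 2.14]{bfm22}, and your chain $g^{(\lz)}_{ij}(x)=g_{ij}(\lz x)$, degree-$0$ homogeneity of $g^{\mathcal C_\az^n}_{ij}$, and the decay bound $|\partial_x^I[a_{ij}(\lz x)]|=O(\lz^{-\mu}|x|^{-\mu-|I|})$ on compacta away from $o$ is precisely that verification. Nothing is missing.
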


  For any $\lz>1$, the region $M^{(\lz)}_{\le \ez}$ can be similarly defined as in \eqref{mez}.
For all $\ez \in(0,1/2)$, it holds that
\begin{align} \label{(lz)}
 M^{(\lz)}_{\le \ez} = \mathcal{D}_{\lz^{-1}}(M_{\le \ez})
  \end{align}

\subsection{A key convergence lemma under properness of  exponential map }

Denote by $\exp^M$  the exponential map  of  $M$.
We call $\exp^M $   is   proper  at  a point $x\in M$ if
   the preimage
 $(\exp_x^M )^{-1}(K) 
 $
of   any compact set $K\subset M$ is  compact in  $\mathcal \mathcal{T}_xM$.

\begin{lem}\label{inj}
  The exponential map $\exp ^M$ is proper at each point if and only if for each $x \in M$ and $r>0$, there exists $T_{x,r}>0$ such that  $\exp_x^M(t\xi)\notin \overline{B(0,r)}$
  for any $\xi\in \mathcal{T}_xM$ and $ t>T_{x,r}/|\xi|$.
\end{lem}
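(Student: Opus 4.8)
The plan is to prove the pointwise equivalence: for a fixed $x \in M$, the map $\exp_x^M$ is proper if and only if for every $r>0$ there is $T_{x,r}>0$ with $\exp_x^M(t\xi)\notin \overline{B(o,r)}$ whenever $t|\xi|>T_{x,r}$; the statement for all $x$ then follows at once. Throughout I would use three structural facts available here. First, $M$ is complete with underlying space $\rr^n$ and a metric comparable to the Euclidean one (cf. \eqref{afz}), so every closed ball $\overline{B(o,r)}$ is compact and every compact $K\subset M$ satisfies $K\subset \overline{B(o,r)}$ for all sufficiently large $r$. Second, the tangent space $\mathcal{T}_xM$ is linearly isomorphic to $\rr^n$, so a subset of it is compact iff it is closed and bounded. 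Third, $\exp_x^M$ is continuous, so $(\exp_x^M)^{-1}(K)$ is automatically closed whenever $K$ is closed. Consequently, properness of $\exp_x^M$ is equivalent to the single requirement that for every $r>0$ the preimage $(\exp_x^M)^{-1}(\overline{B(o,r)})$ be \emph{bounded} in $\mathcal{T}_xM$.

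The observation that makes the equivalence transparent is a reformulation of the escape condition as a statement about a single vector $w=t\xi\in\mathcal{T}_xM$. First I would note the identity
$$ \{\, t\xi : \xi \in \mathcal{T}_x M \setminus \{0\},\ t > T/|\xi| \,\} = \{\, w \in \mathcal{T}_x M : |w| > T \,\}, $$
which holds because $t>T/|\xi|$ is exactly $|t\xi|=t|\xi|>T$, and conversely any $w$ with $|w|>T$ arises by taking $\xi=w$, $t=1$. Hence the escape condition ``$\exp_x^M(t\xi)\notin\overline{B(o,r)}$ whenever $t>T_{x,r}/|\xi|$'' says precisely that $(\exp_x^M)^{-1}(\overline{B(o,r)})$ is contained in the closed ball $\{\,w\in\mathcal{T}_xM:|w|\le T_{x,r}\,\}$, i.e. that this preimage is bounded, with explicit bound $T_{x,r}$.

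With this reformulation the two implications are immediate. If $\exp_x^M$ is proper, then for each $r$ the set $(\exp_x^M)^{-1}(\overline{B(o,r)})$ is compact, hence bounded, so it lies in some $\{\,w:|w|\le T_{x,r}\,\}$; by the identity above this is exactly the escape property. Conversely, assume the escape property, fix a compact $K$, and choose $r$ with $K\subset\overline{B(o,r)}$. Then $(\exp_x^M)^{-1}(K)\subset(\exp_x^M)^{-1}(\overline{B(o,r)})\subset\{\,w:|w|\le T_{x,r}\,\}$ is bounded, and being closed it is compact; this is properness at $x$.

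I do not anticipate a genuine obstacle: the content is entirely the translation in the middle paragraph, where one must verify that the direction-by-direction escape time $t>T_{x,r}/|\xi|$ assembles into a single uniform radius $T_{x,r}$ for the entire preimage, which is exactly the homogeneity identity $|t\xi|=t|\xi|$. The only points requiring care are invoking completeness and the comparability of $g$ with the Euclidean metric so that closed balls are compact and absorb an arbitrary compact $K$, and keeping the two ambient spaces $M$ and $\mathcal{T}_xM$ (both modeled on $\rr^n$) notationally distinct.
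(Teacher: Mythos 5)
Your proposal is correct and follows essentially the same route as the paper's proof: both reduce properness to boundedness of $(\exp_x^M)^{-1}(\overline{B(o,r)})$ via the homogeneity identity $|t\xi|=t|\xi|$, using continuity for closedness and the fact that compact sets are absorbed by closed balls. Your write-up just makes the translation between the direction-by-direction escape condition and the uniform radius slightly more explicit.
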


\begin{proof}

Assume that the exponential map $\exp^M $  is proper  at each  point $x\in M$.
Thus the preimage  $(\exp_x^M )^{-1}(\overline{B(0,r)})$ is compact, and hence contained in some ball in  $\mathcal{T}_xM$ with radius $T_{x,r}$. Therefore, for any $\xi\in \mathcal{S}_xM$
and for $t>\mathcal{T}_{x,r}$, we know   $t\xi\notin (\exp_x^M )^{-1}(\overline{B(0,r)})$, which is equivalent to $\exp_x(t\xi) \notin\overline{B(0,r)}$.

Conversely,  for each $x\in M$ and $r>0$, assume  that  there exists $T_{x,r}>0$ such that $\exp_x^M(t\xi)\notin \overline{B_g(0,r)}$
  for any $\xi\in \mathcal{T}_xM$ and $t>T_{x,r}/|\xi|$.
  Then
  $$(\exp_x^{M})^{-1}(\overline {B(o,r)})\subset 
   \{t\xi: \xi\in \mathcal{S}_xM, 0\le  t\le T_{x,r} \}.$$
Thus $ (\exp_x^{M})^{-1}(\overline {B(o,r)})$ is bounded and hence compact.
 \end{proof}

We need the following result on the uniform bound of the length of a sequence of geodesic rays.

\begin{lem} \label{geoflow}
Assume that
 the exponential map  is   proper  at  each point.
  Let $\{\gz_j:[0,\fz)\to M\}_{j \in \nn}$ be a sequence of geodesic rays of $(M,g)$ parameterised by arc-length of $g$ such that
  $$  (\gz_j(0),\dot \gz_j(0)) \in \pa B(o,r) \times \mathcal{S}_{\gz_j(0)}M  $$
  and
  \begin{equation}\label{initial}
     \lim_{j  \to \fz} (\gz_j(0),\dot \gz_j(0)) = (p,\xi)
  \end{equation}
  for some $r>0$, $p \in \pa B(o,r)$ and $\xi \in \mathcal{S}_{p}M$.
  Define
  $$  T_j:= \max\{t>0: |\gz_j(t)| =r \}. $$
  Then
  \begin{equation}\label{dj}
    \sup_{j}T_j= \sup_{j}\ell_g(\gz_j|_{[0,T_j]})<\infty.
  \end{equation}
\end{lem}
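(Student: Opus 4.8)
The plan is to argue by contradiction, reducing the uniform bound \eqref{dj} to two ingredients: an escape property of the limiting ray, coming from properness via Lemma \ref{inj}, and a radial monotonicity property for $g$-geodesics lying far from $o$, which encodes the asymptotic conicity. First I would record that each $T_j$ is genuinely finite and attained: since $\exp^M$ is proper at $\gz_j(0)$, Lemma \ref{inj} shows $\gz_j$ eventually leaves $\overline{B(o,r)}$ and never returns, so $\{t:|\gz_j(t)|=r\}$ is a nonempty bounded closed set containing $0$. As $\gz_j$ has unit $g$-speed, $T_j=\ell_g(\gz_j|_{[0,T_j]})$, which is the identity in \eqref{dj}. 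I then assume $\sup_j T_j=\fz$ and pass to a subsequence with $T_j\to\fz$.

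The heart of the matter is the following monotonicity lemma: there is $R_0>0$ such that every unit-speed $g$-geodesic $\gz$ with $|\gz(t_0)|\ge R_0$ and $\frac{d}{dt}|\gz(t)|^2\big|_{t_0}\ge 0$ has $|\gz(t)|$ non-decreasing for $t\ge t_0$, with $|\gz(t)|\to\fz$. To prove it I set $f(t):=|\gz(t)|^2$ in the global chart $\rr^n$ and compute, with Euclidean $|\cdot|$ and $\la\,,\ra$, that $f''=2|\dot\gz|^2+2\la\gz,\ddot\gz\ra$. Using the geodesic equation and writing the Christoffel symbols of $g=g_{\mathcal C_\az^n}+A$ as the conical part plus an error controlled by the decay rate $\mu$ of $A$ (so the error Christoffels are $O(|\gz|^{-\mu-1})$), and using the exact-cone identity $F_{\rm cone}(\gz,v)=2|v|_{g_{\mathcal C_\az^n}}^2$ — which follows because on $\mathcal C_\az^2$, hence by Lemma \ref{totalg} on $\mathcal C_\az^n$, a unit-speed geodesic unfolds to a Euclidean straight line, giving $|\gz(t)|^2=\rho_{\min}^2+(t-t_*)^2$ and thus $f''_{\rm cone}\equiv 2$ — I obtain $f''=2|\dot\gz|^2_{g_{\mathcal C_\az^n}}+O(|\gz|^{-\mu})=2+O(|\gz|^{-\mu})$, where $|\dot\gz|^2_{g_{\mathcal C_\az^n}}=1-A(\dot\gz,\dot\gz)=1+O(|\gz|^{-\mu})$ and the Euclidean norm of $\dot\gz$ is bounded by \eqref{compare}. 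Enlarging $R_0$ so the error is $\le 1$ gives $f''\ge 1$ whenever $|\gz|\ge R_0$. A bootstrap then yields the claim: while $|\gz|\ge R_0$ one has $f''\ge1$, so $f'\ge f'(t_0)\ge 0$ and $f$ is non-decreasing, keeping $|\gz|\ge R_0$; hence this persists for all $t\ge t_0$ and $f'(t)\ge t-t_0\to\fz$.

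Finally I combine the two ingredients. By properness of $\exp^M$ at $p$ and Lemma \ref{inj}, the limit ray $\gz_\fz(t):=\exp^M_p(t\xi)$ satisfies $|\gz_\fz(t)|\to\fz$; applying the monotonicity lemma to $\gz_\fz$ past its last crossing of radius $R_0$, I fix a single $s_0$ with $|\gz_\fz(s_0)|>R_0$ and $\frac{d}{dt}|\gz_\fz|^2\big|_{s_0}>0$. Continuous dependence of geodesics on their initial data together with \eqref{initial} gives $\gz_j\to\gz_\fz$ in $C^1([0,s_0])$, so for all large $j$ we have $|\gz_j(s_0)|>R_0$ and $\frac{d}{dt}|\gz_j|^2\big|_{s_0}>0$. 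The monotonicity lemma applied to $\gz_j$ then forces $|\gz_j(t)|>R_0>r$ for all $t\ge s_0$, whence $T_j\le s_0$. Since $s_0$ is a fixed finite number, this contradicts $T_j\to\fz$ and proves $\sup_j T_j<\fz$.

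The main obstacle is the monotonicity lemma, i.e.\ the uniform $f''$ estimate showing that the conical contribution $+2$ survives the $O(|\gz|^{-\mu})$ perturbation of the Christoffel symbols once $|\gz|\ge R_0$; this is exactly where the asymptotic conicity (equivalently, Lemma \ref{convg}) and the positivity of $\mu$ enter. The remaining steps — finiteness of each $T_j$, the reduction to the limit ray, and continuous dependence on initial conditions — are comparatively routine.
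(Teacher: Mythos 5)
Your proof is correct, but it takes a genuinely different route from the paper's. The paper argues by contradiction and compactness: assuming $\sup_j T_j=\infty$, it splits into the case where the segments $\gz_j|_{[0,T_j]}$ stay in a fixed ball (contradicted by locally uniform convergence to the limit ray, which escapes by properness) and the case where they reach arbitrarily large radii $D_j$, which is handled by rescaling by $D_j^{-1}$, extracting a $C^2$-limit via Arzel\`a--Ascoli and Lemma \ref{convg}, and observing that the limit would be either a geodesic loop of $\mathcal C_\az^n$ through the apex or an arbitrarily long cone geodesic trapped in $\overline{B(o,1)}$ and starting at $o$ --- both impossible since the only cone geodesics through $o$ are generatrices. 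You instead prove a quantitative radial-convexity estimate, $\frac{d^2}{dt^2}|\gz(t)|^2=2|\dot\gz|^2_{g_{\mathcal C_\az^n}}+O(|\gz|^{-\mu})\ge 1$ for $|\gz|\ge R_0$, using the exact-cone identity $f''_{\rm cone}=2|v|^2_{g_{\mathcal C_\az^n}}$ (valid by unfolding and Lemma \ref{totalg}) plus the $O(|x|^{-\mu-1})$ decay of the Christoffel-symbol error; properness then enters only to push the limit ray past radius $R_0$ with strictly positive radial derivative, after which continuous dependence and the monotonicity lemma force $T_j\le s_0$ for large $j$. Your argument is more elementary and more quantitative --- it avoids the blow-down dichotomy and the classification of apex geodesics, gives an explicit uniform bound, and yields the stronger structural fact that a $g$-geodesic moving outward at radius $\ge R_0$ never returns --- at the cost of a Christoffel computation that uses the decay of $\partial A$ rather than just the soft $C^2$-convergence of Lemma \ref{convg}. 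Two small points to make explicit when writing it up: enlarge $R_0$ so that $R_0\ge r$ before concluding $T_j\le s_0$, and phrase the bootstrap as an open-closed (continuity) argument on $\{t\ge t_0: |\gz|\ge R_0 \text{ on } [t_0,t]\}$ so that the persistence of $f''\ge 1$ is airtight.
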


\begin{proof}
Since  $ \gz_j(t)=\exp^M _{\gamma_j(0)}(t\dot \gz_j(0))$, by Lemma \ref{inj}, we know
  $T_j<\fz$ for all $j \in \nn$.
  Since $\gz_j$ is arc-length parametrised, we have $\ell_g(\gz_j|_{[0,t]})=t$ for all $t>0$. Thus, it suffices to show that    $ \sup_{j}T_j <\infty.$
   We consider the following two cases separately.

  \medskip
  \emph{Case 1. There exists $D_\ast>0$ such that $\gz_j|_{[0,T_j]} \subset B(o,D_\ast)$ for all $j \in \nn$.}
  We argue by contradiction.
  Assume there exists a subsequence $\nn_1 \subset \nn$ such that
  $$ \limsup_{\nn_1 \ni j \to \fz} T_j =\fz. $$

  Consider the geodesic ray $ \gz(t)=\exp^M _{x}(t\xi)$.
   For $r>0$, let
     \begin{equation}\label{taud0}
      \tau = \max\{ t>0: |\gz(t)| =D_\ast+1\}.
    \end{equation}
   Since $\gamma_j \to \gamma$ locally unfiormly,
   for sufficiently large $j$ we have
  $$ |\gamma_j(t)-\gamma(t)|\le \frac1{10}\quad\forall t<\tau  .$$
  For  sufficiently large $j$, one has $T_j>D_\ast+1$ and hence
   $$\gz_j|_{[0,\tau ]} \subset \gz_j|_{[0,T_j]} \subset B(o,D_\ast).$$
   From this we deduce that
    $$ \gamma|_{[0,\tau_{D_\ast+1}]} \subset B(o,D_\ast+ \frac{1}{10}),  $$
    in particular,
  $$ \gamma(\tau_{D_\ast+1}) \in B_g(o,D_\ast+ \frac{1}{10}).  $$
  This contradicts with $\gamma(\tau_{D_\ast+1})\in \partial B_g(o,D_\ast+ 1)$.

  \medskip
  \emph{Case 2.   There exists a subsequence $\mathbb N_2$
  and an increasing sequence $\{D_j\}_{j \in \nn_2}$ with $\lim_{\mathbb N_2\ni j \to \fz} D_j = +\fz$ such that
  $$\max\{|\gz_j(t)|: t \in [0,T_j]\} =D_j, \quad \forall j \in \nn_2.$$  }
   Up to some subsequence and we may assume that $\mathbb N_2=\mathbb N$.
   For each $j \in \nn $, let
  $$s_j=\max\{0<s<T_j: |\gz_j(s)|_g=D_j\}.$$
  We further consider two subcases.

  \medskip
  \emph{Subcase 2-1. There exists $N>0$ such that $T_j \le N D_j$ for all  $j \in \nn$.}
  Define $\bar \gz_j: [0,1] \to M$:
  $$ \bar \gz_j(t) := \mathcal{D}_{D_j^{-1}} \circ \gz_j(T_j t)  $$
  where we recall $\mathcal{D}_{\lz}: x \mapsto \lz x$ is the scaling map. Thus we know
  \begin{equation}\label{Bo1}
    \bar \gz_j \subset \overline{B(o,1)}, \quad  \bar \gz_j(T_j s_j) \in \pa B(o,1)
  \end{equation}
  and
  $$ |\dot{\bar \gz}_j(t)|_{g^{(D_j)}}= \frac{T_j}{D_j}|\dot \gz_j(t)|_{g} \equiv \frac{T_j}{D_j} , \quad \forall t \in [0,1], \ j \in \nn_2.  $$
  In particular,
  \begin{equation}\label{td}
    \ell_{g^{(D_j)}}(\bar \gz_j) = \frac{T_j}{D_j} \le N.
  \end{equation}
  Since $\lim_{j \to \fz} D_j  =\fz$, we know $g^{(D_j)}$ converges to $g_{\mathcal C_\az^n}$ and
  $$\lim_{j \to \fz}\bar \gz_j(0) = \lim_{j \to \fz} \frac{\gz_j(0)}{D_j} = \lim_{j \to \fz}\bar \gz_j(1) = \lim_{j \to \fz} \frac{\gz_j(t_j)}{D_j} = o.$$
Fom Arzela-Ascolli Theorem  it follows that, up to subsequences,
  $\{\bar \gz_j\}_{j \in \nn }$ converges in $C^2([0,1],M)$ to a geodesic segment $\gz_\fz: [0,T] \to \mathcal C_\az^n$ of the standard cone $\mathcal C_\az^n$ with $\gz_\fz(0)=\gz_\fz(1)=o$. Hence $\gz_\fz$ is a geodesic loop passing through the apex $o$. Moreover, by \eqref{Bo1}, we know
  $\gz_\fz \subset \overline{B(o,1)}$ with some point lies in $\pa B(o,1)$.
  This is impossible, since all the geodesics  in $\mathcal C_\az^n$  passing through $o$ are the generatrices and there is no geodesic loop of the standard cone passing through the apex $o$.

\medskip
  \emph{Subcase 2-2.   $\limsup_{  j \to \fz} T_j /D_j =\fz$.}
  Up to subsequence, we may assume that $\lim_{  j \to \fz} T_j /D_j =\fz$.
  Define $\hat \gz_j: [0,T_j /D_j] \to M$:
  $$ \hat \gz_j(t) := \mathcal{D}_{D_j^{-1}} \circ \gz_j(D_j t).  $$
   We have
  \begin{equation}\label{Bo6}
    \hat \gz_j \subset \overline{B(o,1)}, \quad  \hat \gz_j(T_j s_j) \in \pa B(o,1)
  \end{equation}
  and
  $$ |\dot{\hat \gz}_j(t)|_{g^{(D_j)}}= |\dot \gz_j(t)|_{g} \equiv 1, \quad \forall t \in [0,1], \ j \in \nn.  $$
  In particular, for each $N>0$,
  \begin{equation*}
    \ell_{g^{(D_j)}}(\hat \gz_j)|_{[0,N]} = N.
  \end{equation*}
  Since $\lim_{j \to \fz} D_j  =\fz$, we know $g^{(D_j)}$ converges to $g_{\mathcal C_\az^n}$ and
  $$\lim_{j \to \fz}\hat \gz_j(0) = \lim_{j \to \fz} \frac{\gz_j(0)}{D_j} = o.$$
  For $N >0$ sufficiently large, again by Lemma \ref{convg}, we know that, up to subsequence,
  $\{\hat \gz_j|_{[0,N]}\}_{j \in \nn}$ converges in $C^2([0,N],M)$ to a geodesic segment $\gz_\fz^N: [0,N] \to \mathcal C_\az^n$ of the standard cone $\mathcal C_\az^n$ with $\gz_\fz(0)=o$.
  By \eqref{Bo6},   $  \gz_\fz^N \subset \overline{B(o,1)}.  $
  Since $\ell_{g^{(D_j)}}(\hat \gz_j|_{[0,N]}) =N$, we know
  $$\ell_{\mathcal C_\az^n}(\gz_\fz^N)= \lim_{j \in \nn_3} \ell_{g^{(D_j)}}(\hat \gz_j|_{[0,N]})= N.$$
  This implies for every $N$ sufficiently large, there exists a geodesic segment $\gz_\fz^N$ of the standard cone $\mathcal C_\az^n$ contained in the ball $\overline{B(o,1)}$ of length $N$ starting from $o$. This is impossible, either, since all the geodesics passing through $o$ are the generatrices and any generatrix $G$ satisfies  $\ell_\std(G\cap \overline{B(o,1)}) =1$.

  As a consequence, both Subcase 2-1 and Subcase 2-2 could not happen, which implies Case 2 can not happen.  Thus Case 1 holds,  where we already showed $\sup_{j\in\nn}T_j < \infty$.
 The proof is complete.
\end{proof}

\begin{rem} \label{gen} \rm
In Section 5.1, we also need a slightly general version of Lemma \ref{geoflow}. Note that if all the exponential maps of $(M,g)$
  are proper, then for any sequence $\{\lz_j\}_{j \to\fz}$ with $\lim_{j \to \fz} \lz_j = \lz \in [1,\fz]$, the exponential maps of each $(M,g^{(\lz_j)})$ are also proper. If we only change the assumption that each $\gz_j$ is a geodesic ray of $(M,g^{(\lz_j)})$ in Lemma \ref{geoflow}, by a parallel proof, we also have the uniform length bound
  $$  \sup_{j}\ell_{g^{(\lz_j)}}(\gz_j|_{[0,T_j]}) < \sup_{j}\ell_{g^{(\lz)}}(\gz_j|_{[0,T_j]}) <\infty $$
  where in the case that $\lz =\fz$, $g^{(\lz)} = g_{\ccc_\az^n}$.
\end{rem}

Finally, we have further discussion for the properness assumption of
 the exponential map.

 \begin{rem}\rm
If a manifold $M$ has positive sectional curvature,   then  the exponential map  is always proper  at each point (see \cite[Theorem 4]{gm69}).
However the  positive sectional curvature  is not ncessary to
get properness of  the exponential map  at each point as indicated by the following
Example 3.6,
where  we construct   an asymptotically conical manifold $M$ with vanishing sectional curvature at some parts and the exponential map being proper at each point.
%


\begin{eg} \label{egnon} \rm
  Define a a $C^2$ Riemannian metric on $\rr^n$ by
  $$g(x):= \tilde g (x) + \chi_{\rr^n \setminus B(o,1)}(x) g_{\mathcal C_\az^n}(x), \quad x \in \rr^n$$
  where $\az= \pi/4$, $\chi$ is the indicator function and $\tilde g$ is a $C^2$ Riemannian metric on $B(o,1)$ such that
  \begin{enumerate}
      \item[(i)] $\tilde g$ has positive sectional curvature;
      \item[(ii)] the Riemannian metric $\tilde g|_{B(o,1-\ez)}$ on $B(o,1-\ez)$ has a $C^2$-extension to a Riemannian metric $\tilde g_\ez$ on $\rr^n$ such that $\tilde g_\ez$ has positive sectional curvature for all $\ez \ll 1$.
    \end{enumerate}
    First, $g$ can be easily realized by a rotational symmetric metric. Consider a $C^2$ function $f: [0,\fz) \to [0,\fz)$ such that $f(0)=f'(0)=0$, $f'(1)=1$, $f(s)=s+ f(1)-1$ for all $s \ge 1$ and $f''(s)>0$ for all $s \in [0,1)$. Then consider the graph $G(f)$ of $f$ as a curve in the plane spanned by $x_1$ and $x_{n+1}$ in $\rr^{n+1}$ and take $M$ as the $n$-dimensional manifold by revolution of the curve $G(f)$ with respect to the axis $x_{n+1}$.

    In addition, noticing that if $x \in \rr^n \setminus B(o,1)$, then any sectional curvature at $x$ containing the generatrix direction $\frac{x}{|x|}$ vanishes, since this is just the sectional curvature of the standard cone containing the generatrix direction.

    We check that the exponential map at each point is proper.
    By Lemma \ref{inj}, it suffices to show every geodesic ray $\Gz_{p,v}$ will escape to infinity as $t \to \fz$. Assume $\Gz_{p,v} \subset \rr^n \setminus B(o,1)$. Then $\Gz_{p,v}$ is a geodesic ray of the standard cone $\mathcal C_\az^n$, we know  $\Gz_{p,v}$ will escape to infinity. Now assume there exists a geodesic ray $\gz$ such that
    \begin{equation}\label{b01}
      \gz|_{[T,\fz)} \subset B(o,1).
    \end{equation}
    We first show that for any $\ez$ sufficiently small,
    \begin{equation}\label{b02}
      \gz|_{[T,\fz)} \nsubseteq B(o,1-\ez).
    \end{equation}
    By contradiction, assume for some $\ez_0>0$
    \begin{equation}\label{b03}
      \gz|_{[T,\fz)} \subset B(o,1-\ez_0).
    \end{equation}
    Then $\gz|_{[T,\fz)}$ is also a geodesic ray of the metric $\tilde g_{\ez_0}$, which has positive sectional curvature on $\rr^n$. In particular, $\ell_g(\gz \cap B(o,1-\ez_0)) =\fz$. By \cite[Lemma 6]{gm69}, we know this is impossible. Thus \eqref{b03} is false and \eqref{b02} holds.

    Combining \eqref{b01} and \eqref{b02}, we deduce that there exists a sequence of points $\{t_j\}_{j \in \nn}$ such that
    $$ \lim_{j \in \nn} (\gz(t_j), \dot \gz(t_j)) = (q,w) \in \pa B(o,1) \times \mathcal{S}_qM$$
    and
    $$ \angle (q,w) \le \frac{\pi}{2}.$$
    Consider the geodesic ray $\Gz_{q,w}$ starting from $p$ with velocity $w$. We know $\Gz_{q,w}$ is a geodesic ray of the standard cone $\mathcal C_\az^n$. Thus there exists $s_0$ such that
    $ \Gz_{q,w}(s_0) \in \pa B(o,2)$.
    This implies
    $$  d_g(\Gz_{q,w}(s_0), \Gz_{q,w}(0)) = d_\std(\Gz_{q,w}(t_0),q) = d_\std(\pa B(o,2),\pa B(o,1)) = 1.   $$
    On the other hand, letting $(q_j,w_j):=(\gz(t_j), \dot \gz(t_j))$ for $j \in \nn$, consider the geodesic rays $\Gz_{q_j,w_j}$. By the continuity dependence of the initial data of the geodesic flow of $(M,g)$, we know there exists $s_1> s_0$ such that for all $j> J(s_1)$ and all $s \in [0,s_1]$,
    $$  d_g(\Gz_{q,w}(s), \Gz_{q_j,w_j}(s)) < \frac12.   $$
    As a result $\Gz_{q,w}(s_0) \notin B(o,1)$ for all $j > J(s_1)$. However, $\Gz_{q,w} \subset \gz \subset B(o,1)$. This implies that \eqref{b01} can not hold. Thus all geodesic ray of $M$ will ultimately escape $B(o,1)$ and then becomes a geodesic ray of standard cone $\mathcal C_\az^n$ which will escape to infinity. By Lemma \ref{inj}, we know the exponential map of $M$ at every point is proper.
\end{eg}

 \end{rem}

 \begin{rem} \rm
In general, on  a manifold  $M$ diffeomorphic to $\rr^n$ with non-negative sectional curvature, it is not clear whether the exponential map   is proper at each point.
The behavior of geodesic rays on manifold $M$ with non-negative sectional curvature may be complicated. See \cite[Theorem 5.1]{cg71}.
 \end{rem}

\section{ Existence of min-max geodesic segments}
\label{s4}

Let $(M, g, o) = (\rr^n,g,o)$   be  an asymptotically conical manifold with asymptotical angle  $\az \in (0,\pi/2)$
and  asymptotic decay rate $\mu>0$ as in Section 3.
%
Given  any pair of antipodal points $p,q \in M$,
we plan to find some geodesic segments joining them  and close to $o$ uniformly in $p,q$
by using min-max method.

 Firstly we need to construct a useful homotopy class  based on the geometry of the standard cone $\mathcal C_\az^n$ given in Section 2.
Then we give min-max value and bound it from above and below.
Moreover, we introduce the truncated energy and consider its gradient flow.
By using such flow as the deformation map, we
  build up a mountain-pass theorem for geodesic segment.

Without loss  of generality,  we may assume that $p=(\rho,0,\cdots,0)$ and $ q=(\rho,\pi,0,\cdots,0)$ in the polar coordinate. Thus $p,q \in \Span \{e_1\}$ and
 $G=G_p\cup G_q=\mathbb Re_1$ where we recall $G_p$ and $G_q$ are the generatrices.
Recall that $\cl _{p,q}$ is the collection of  all length-minimising geodesic segments
in $\mathcal C^n_\alpha$ joining
$p,q$ paramatrized with constant speed
$$ |\dot\gz(t)| \equiv \ell_{\mathcal C^n_\alpha}(\gz)=d_{\mathcal C^n_\alpha}(p,q)=2\rho \sin(\frac{\pi}{2}\sin\az), \quad \forall t \in [0,1], \ \gz \in \cl _{p,q}.$$
and  $\cl^T _{p,q}$ is the  union of  images $ \gz([0,1])$ of all $\gz\in
 \cl _{p,q}$. Moreover,  set
 $$  \ccc_{pq}:=\{\gz \in W^{1,2}([0,1],M) : \gz(0)=p, \ \gz(1) =q, \ t\in[0,1]\}. $$

\subsection{ Construction of  homotopy class}


In dimension $n=2$,  recall from Remark \ref{geomin} (ii) that there are  exactly two elements $\Gz^\pm_0$ in $\cl_{p,q}$. 
 We define the class of homotopy
 $$ \cm_{pq}:=\{H \in C^0([0,1],\ccc_{pq}) : H(0)=\Gz^\pm_0, \ H(1) =\Gz^\mp_0\}. $$

  In dimension $n\ge3$, recall that $\mathbb S^{n-2}_G$ is  the unit sphere of $(\Span\{e_1\})^\perp=\Span \{e_2, \cdots, e_{n}\}$, and  is written as $\mathbb S^{n-2}$ for simplicity. By the discussion under \eqref{nantigeo}, for each $\xi \in \mathbb S^{n-2}$, there is a unique length-minimising geodesic segment   $L_\xi$   in $\mathcal C^n_\alpha$ joining $p$ and $q$ such that
   $$L_\xi([0,1])
    \subset 
    \Span\{e_1,\xi
   \}  \mbox{ intersecting the generatrix $\rr_+ \xi$}.$$
  Consider the reflection $I$ of $\rr^{n}$ with respect to $(\Span\{e_{n}\})^\perp$, i.e.
  \begin{equation}\label{ix}
    I(x)=I(x_1,x_2,\cdots,x_{n-1},x_n):= (x_1,x_2,\cdots,x_{n-1},-x_{n})\
    \forall x\in \mathbb R^n.
  \end{equation}
 Note that   $I(p)=p$ and $I(q)=q$.
  Set
 \begin{equation}\label{homotopy}
   \cm_{pq}:=\{H \in C^0(\mathbb S^{n-2} \times [0,1],\ccc_{pq}) : H(\xi,0)=L_\xi, \ H(\xi,1) = L_{I(\xi)},  \ \forall \xi \in \mathbb S^{n-2}\}.
 \end{equation}

The following basic
property is important for later use.

 \begin{lem}\label{lem4.1}
 (i) We  have  $
   I(L_\xi) = L_{I(\xi)}$ for all $\xi \in \mathbb S^{n-2} $ when $ n\ge3$
   and  $ I(\Gz^\pm_0) = \Gz^\mp_0 $ when $ n=2$.
 Hence $I(\cl_{p,q})=\cl_{p,q}$  and $I(\cl^T_{p,q})=\cl^T_{p,q}.$
Moreover, the mapping $I$ reverses the orientation of $\cl^T_{p,q}$, i.e., $\cl^T_{p,q}$ and $I(\cl^T_{p,q})$ have different orientations.

 (ii) The class $\cm_{pq}$ is not empty.
   For any $H \in \cm$, there exists $(\xi,s,t) \in \mathbb S^{n-2} \times [0,1]^2$ such that $H(\xi,s)(t) =o$.
 \end{lem}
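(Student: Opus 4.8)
The plan is to treat (i) as statements about the reflection $I$ viewed as an isometry, and to reduce the topological heart of (ii) to a degree (winding number) argument around $o$. The point I would exploit is that the two boundary families of any $H\in\cm_{pq}$ enclose the apex $o$ with \emph{opposite} signs, precisely because of the orientation reversal proved in (i); so any homotopy joining them must cross $o$.

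\textbf{Part (i).} First I would note that $I\in O(n)$ and that $g_{\mathcal C_\az^n}=d\rho\otimes d\rho+\rho^2\sin^2\az\,g_{\mathbb S^{n-1}}$ depends only on $\rho=|x|$ and on the round angular metric; hence $I$ preserves $g_{\mathcal C_\az^n}$ and fixes $o$, so $I$ is an isometry of $\mathcal C_\az^n$. Since $p,q\in\Span\{e_1\}$ have vanishing $x_n$-coordinate, $I(p)=p$ and $I(q)=q$, so $I$ carries length-minimising geodesics from $p$ to $q$ to such geodesics, preserving constant-speed parametrisations. As $L_\xi([0,1])\subset\Span\{e_1,\xi\}$ meets $\rr_+\xi$, its $I$-image lies in $\Span\{e_1,I(\xi)\}$ and meets $\rr_+(I(\xi))$; by the uniqueness in Lemma \ref{nmingeo} this forces $I(L_\xi)=L_{I(\xi)}$ for $n\ge3$ (and $I(\Gz^\pm_0)=\Gz^\mp_0$ for $n=2$, the two minimisers being exchanged). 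Because $\xi\mapsto I(\xi)$ is a bijection of $\mathbb S^{n-2}$, I then get $I(\cl_{p,q})=\cl_{p,q}$ and, on images, $I(\cl^T_{p,q})=\cl^T_{p,q}$.

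For the orientation claim I would argue homologically. By Lemma \ref{nmingeo2}, $\cl^T_{p,q}\cong\mathbb S^{n-1}$ bounds a region containing $o$, so the inclusion $\iota:\cl^T_{p,q}\hookrightarrow\rr^n\setminus\{o\}$ induces an isomorphism on $H_{n-1}(\cdot)\cong\zz$ (the fundamental class maps to a generator, being homologous to a small sphere about $o$). The restriction $I|_{\cl^T_{p,q}}$ is a self-homeomorphism commuting with $\iota$ and with $I$ on $\rr^n\setminus\{o\}$; since $I$ is a reflection it acts on $H_{n-1}(\rr^n\setminus\{o\})\cong H_{n-1}(\mathbb S^{n-1})$ by multiplication by $-1$. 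By naturality $I_\ast=-1$ on $H_{n-1}(\cl^T_{p,q})$, which is the asserted orientation reversal.

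\textbf{Part (ii).} For non-emptiness I would write down an explicit homotopy using $M=\rr^n$:
\[
H(\xi,s)(t):=(1-s)\,L_\xi(t)+s\,L_{I(\xi)}(t),\qquad (\xi,s,t)\in\mathbb S^{n-2}\times[0,1]\times[0,1],
\]
which has fixed endpoints $H(\xi,s)(0)=p$, $H(\xi,s)(1)=q$ and boundary values $H(\xi,0)=L_\xi$, $H(\xi,1)=L_{I(\xi)}$; continuity of $\xi\mapsto L_\xi$ into $W^{1,2}$ (continuous dependence of geodesics on initial data, cf. Lemma \ref{nmingeo}) makes $H\in\cm_{pq}$. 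For the sweep-out property, fix $H\in\cm_{pq}$, set $F(\xi,s,t):=H(\xi,s)(t)$, continuous by the embedding $W^{1,2}([0,1],\rr^n)\hookrightarrow C^0$, and suppose for contradiction that $F$ omits $o$. Since $F(\xi,s,0)\equiv p$ and $F(\xi,s,1)\equiv q$ are $\xi$-independent, each $(\xi,t)\mapsto F(\xi,s,t)$ descends to the suspension $\Sigma\mathbb S^{n-2}=\mathbb S^{n-1}$, giving a homotopy $\phi_s:\mathbb S^{n-1}\to\rr^n\setminus\{o\}$. Here $\phi_0$ is the suspension parametrisation of $\cl^T_{p,q}$ (a homeomorphism onto it, distinct minimisers meeting only at $p,q$), so $(\phi_0)_\ast[\mathbb S^{n-1}]$ is a generator of $H_{n-1}(\rr^n\setminus\{o\})$, while $\phi_1=I\circ\phi_0$ gives $(\phi_1)_\ast=-(\phi_0)_\ast$ by (i). Homotopy invariance forces $(\phi_0)_\ast=(\phi_1)_\ast$, hence a generator equals its own negative in $\zz$ --- impossible. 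Thus $F$ attains $o$, i.e.\ $H(\xi,s)(t)=o$ for some $(\xi,s,t)$; the case $n=2$ is identical with $\mathbb S^{n-2}=\mathbb S^0$ and $\cl^T_{p,q}=\Gz^+_0\cup\Gz^-_0$.

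\textbf{Main obstacle.} The delicate point is the orientation reversal in (i) and the resulting degree jump in (ii): it is exactly the reflection $I$ that makes the two boundary maps enclose $o$ with opposite signs, so that every homotopy must cross $o$. Without $I$ both degrees would agree, no contradiction would arise, and (as Example \ref{o} shows) a homotopy avoiding $o$ exists. Accordingly, the steps requiring the most care are justifying that $\iota_\ast$ is an isomorphism on $H_{n-1}$ and that $\phi_0$ is genuinely the degree-$(\pm1)$ suspension parametrisation of $\cl^T_{p,q}$.
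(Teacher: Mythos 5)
Your proposal is correct and follows essentially the same route as the paper: $I(L_\xi)=L_{I(\xi)}$ via the uniqueness of the minimiser in the plane $\Span\{e_1,\xi\}$, orientation reversal of the reflection $I$, and a degree/homotopy-class obstruction in $\rr^n\setminus\{o\}$ showing that any $H\in\cm_{pq}$ must sweep through $o$ (the paper phrases this with $\pi_{n-1}(M\setminus\{o\})\cong\zz$ rather than $H_{n-1}$, which is equivalent here). Your explicit linear homotopy $(1-s)L_\xi+sL_{I(\xi)}$ for non-emptiness is a slightly more concrete substitute for the paper's appeal to simple-connectedness, but this is a cosmetic difference.
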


 \begin{proof}
 (i) Let $\mathbb S^{n-3} \subset \mathbb S^{n-2}$ be the $(n-3)$-dimensional unit sphere of span$\{e_2, \cdots,e_{n-1}\}$.
 Then
 $$\mbox{$I(\xi) = \xi$ for all $\xi \in \mathbb S^{n-3}$ and $I(\xi) \ne  \pm \xi$ for all $\xi \in \mathbb S^{n-2} \setminus \mathbb S^{n-3}$.}$$
 Moreover, for any $\xi \in \mathbb S^{n-2}$, noticing that
 $$ I(\Span\{e_1\} \oplus \rr_+ \xi) = \Span\{e_1\} \oplus\rr_+ I(\xi), $$
 and recalling that $L_\xi \subset \Span\{e_1\} \oplus \rr_+ \xi$,
 we know
 $$I(L_\xi) \subset \Span\{e_1\} \oplus\rr_+ I(\xi).$$
 Since $L_{I(\xi)}$ is the unique length-minimising geodesic segment joining $p$ and $q$ such that
  $L_{I(\xi)}$ intersecting the generatrix $\rr_+ I(\bz)$,
 we deduce that
 \begin{equation}\label{refl}
   I(L_\xi) = L_{I(\xi)}, \quad \forall \xi \in \mathbb S^{n-2}.
 \end{equation}

Recall from Lemma \ref{nmingeo}(ii) that $\cl^T_{p,q}= \cup_{\xi \in \mathbb S^{n-2}} L_\xi
([0,1])$ is topologically an $(n-1)$-sphere in $\mathcal C_\az^n$. By \eqref{refl}, we know
$$I(\cl^T_{p,q})=\cl^T_{p,q}.$$
%
The mapping $I$ reverses the orientation of $\cl^T_{p,q}$, i.e., $\cl^T_{p,q}$ and $I(\cl^T_{p,q})$ have different orientations.
 Indeed, since the determinant of the differential det$DI$ of $I$ is $-1$ at all $x \in \rr^{n}$, $I$ is an orientation reversing map acting on any subsets of $\rr^{n}$ not contained in the fixed point set $\{x_n = 0\}$ of $I$. Since $\cl^T_{p,q}$ is not contained in the fixed point set of $I$, we get the desired conclusion.

 Observe that for any $H \in \cm_{pq}$ and $\xi\in \mathbb S^{n-2}$, $H(\xi,\cdot)$ is a homotopy from $L_\xi$ to $L_{I(\xi)}$.
 Since $M$ is simply-connected, we know the homotopy $H(\xi,\cdot)$ exists and can be chosen continuously depending on $\xi \in \mathbb S^{n-2}$. Thus $\cm_{pq}$ is not empty.


 (ii)  We argue by contradiction. Assume there an $H \in \cm$ such that
   $o \notin H(\bz,s)$ for all $(\bz,s) \in \mathbb S^{n-2} \times [0,1]$.
   Then $H(\mathbb S^{n-2},\cdot)$ is a homotopy from $\cl_{p,q}^T$ to $I(\cl_{p,q}^T)$ in $M \setminus \{o\}$.

   Note that the $(n-1)$-th homotopy group $\pi_{n-1}(M \setminus \{o\}) = \zz$ and $\cl_{p,q}^T$ belongs to a nontrivial element $e$ in $\pi_{n-1}(M \setminus \{o\}) = \zz$ (since by Lemma \ref{nmingeo2}, the origin $o$ is contained in the bounded component of $\rr^n\setminus \cl_{p,q}^T$).

   Thus the reversed orientation $I(\cl_{p,q}^T)$ belongs to the element $-e$ in $\pi_{n-1}(M \setminus \{o\}) = \zz$. This shows that $\cl_{p,q}^T$ and $I(\cl_{p,q}^T)$ cannot be homotopic, which leads to a contradiction.
 \end{proof}

 In the following of this section,  we  only focus on dimensions $n \ge 3$  while
  the dimension $n =2$ can  be treated in  a similar but easier way.


%
%

When $n\ge3$, we construct an example to show that
if  $H \notin \cm_{pq}$,  it may happen that $o$ is not contained in
the image of $H(\xi,s)([0,1])$ for any $(\xi,s)\in \mathbb S^{n-1}\times[0,1]$.
For the sake of simplicity, we only consider the case $n=3$ and for $n>3$, one could adapt the example correspondingly.
\begin{eg}\label{o}
   \rm When $n=3$,  the $(n-2)$-dimensional unit sphere $\mathbb S^{n-2}$ becomes
 the unit circle $\mathbb S^1$ of the $2$-dimsensinoal subspace $\Span\{e_2,e_3\} \subset \rr^3$.
   Consider the parametrization $\xi(\tz)=(0,\cos\tz,\sin\tz)$ with $\tz\in [0,2\pi)$ of $\mathbb S^1$.
  Note that
$$\xi(\tz+\pi)=  (0,\cos (\tz+\pi),  \sin (\tz+\pi))=(0,-\cos \tz, -\sin \tz)  $$
and hence
$L_{\xi(\tz+\pi)}\ne L_{I(\xi(\tz))}$ for all $\tz \in \rr$, where we recall from \eqref{ix} that
   $  I(\xi (\tz)) = I((0,\cos\tz,\sin\tz))=  (0,\cos \tz, -\sin \tz).  $
   Define $H$ by
   $$ H(\xi,s)= L_{\xi(\cdot+\pi s)} \in \cl_{p,q}, \quad (\xi,s) \in [0,2\pi) \times [0,1]. $$
Obviously $H \in C^0([0,2\pi) \times [0,1], \ccc)$, and for all $\xi \in \mathbb S^1$,
   $$ H(\xi,0)  = L_{\xi} \mbox{ and } H(\xi,1) = L_{\xi(\cdot+\pi)} \ne L_{I(\xi)},$$
which implies $H \notin \cm$.
   Moreover,  for each $\xi \in \mathbb S^1$ and each $s \in [0,1]$, $H(\xi,s)=L_{\xi(\cdot+\pi s)} $ is a length-minimising geodesic segment in $\mathcal C^n_\alpha$ which will never pass through $o$.
\end{eg}

\subsection{ Min-max value and lower/upper bound}

Define the min-max value
$$  \Lz_{pq}:= \inf_{H \in \cm_{pq}} \max_{(\xi,s) \in \mathbb S^{n-2} \times [0,1]} \ce(H(\xi,s))$$
where
$$  \ce(\gz):= \int_{0}^1 |\dot \gz(t)|_g^2 \, dt$$
is the standard energy functional of a curve $\gz$.
 The Euler-Lagrange equation of $ \mathcal E$ is geodesic equation,
whose solutions in $\ccc_{pq}$ consists of   geodesics with respect to $g$ parametrized by a constant multiple of the arc-length, are called as
the critical points  Crit$(\ce)$ for $\ce$.

Let $\ez >0$ be sufficiently small such that
\begin{equation}\label{std1}
 \sin^2(\frac{\pi}2 \sin\az ) < \frac{[1-5C_\star\ez]^2}{[1+5C_\star\ez]^2},
\end{equation}
where $C_\star$ denotes the  constant from \eqref{compare2}.
By Lemma \ref{bfm}, there exists $0<\tilde \kz<\ez$ such that
\begin{equation}\label{std2}
  (1-\ez)|x-o| \le d_g(o,x) \le (1+\ez)|x-o|, \quad \forall x \in M_{\le \tilde \kz}
\end{equation}
where we recall the region $M_{\le \tilde \kz}$ from \eqref{mez}.
Moreover,  
thanks to Remark \eqref{geomin} (iii),  we may choose  $ \kz<\tilde \kz$ so small such that for any pair of antipodal points $p,q \in M_{\le \kz}$,
 $$  \mathcal L^T_{p,q} \subset M_{\le \tilde\kz},$$
 and hence, by \eqref{std2},
 \begin{equation}\label{std2-a}
  (1-\ez)|\gz(t)-o| \le d_g(o,\gz(t)) \le (1+\ez)|\gz(t)-o|, \quad \forall \gz \in \mathcal L_{pq}\ t\in[0,1]
\end{equation}
 By Remark \ref{compare2}, this implies that for all $\gz \in \cl_{p,q}$, we have
\begin{equation}\label{std3}
 \ell_g(\gz) \le (1+C_\star\ez) \ell_{\mathcal C_\az^n}(\gz) = (1+C_\star\ez) 2\rho \sin(\frac{\pi}{2}\sin\az).
\end{equation}

Below  we always let $\ez$ and $\kz$ be as above and assume
the pair of antipodal points $p,q \in M_{\le \kz}$.
\begin{lem} \label{lbddlz}  One has
   $$(1-\ez)^2 4\rho^2\le \Lambda_{pq} \le (1+C_\star\ez)^2 4\rho^2.$$
\end{lem}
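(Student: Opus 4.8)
The plan is to prove the two inequalities separately, using throughout the elementary relation $\ell_g(\gz)^2\le\ce(\gz)$ coming from Cauchy--Schwarz, with equality exactly when $\gz$ is parametrised proportionally to its $g$-arclength. The lower bound is then a short consequence of the topological Lemma \ref{lem4.1}(ii), while the real work is in the upper bound, where I would exhibit one economical competitor in $\cm_{pq}$ and control the $g$-length of all of its curves.

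For the lower bound, fix any $H\in\cm_{pq}$. By Lemma \ref{lem4.1}(ii) there are $(\xi_0,s_0)$ and $t_0$ with $\gz(t_0)=o$, where $\gz:=H(\xi_0,s_0)$. Cutting $\gz$ at $t_0$ gives
$$\ell_g(\gz)\ge d_g(p,o)+d_g(o,q).$$
Since $p,q\in M_{\le\kz}\subset M_{\le\tilde\kz}$ and $|p-o|=|q-o|=\rho$, estimate \eqref{std2} yields $d_g(p,o),d_g(o,q)\ge(1-\ez)\rho$, so $\ell_g(\gz)\ge 2(1-\ez)\rho$ and hence $\ce(\gz)\ge\ell_g(\gz)^2\ge(1-\ez)^2 4\rho^2$. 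As $\ce(\gz)\le\max_{(\xi,s)}\ce(H(\xi,s))$ and $H$ was arbitrary, taking the infimum over $\cm_{pq}$ gives $\Lambda_{pq}\ge(1-\ez)^2 4\rho^2$.

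For the upper bound I would construct a single $H_\ast\in\cm_{pq}$ all of whose curves have $\mathcal C_\az^n$-length $\le 2\rho$, built slice-by-slice from the totally geodesic $2$-dimensional cones $P_\xi=\mathcal C^2_{\az,\xi}$ of Lemma \ref{totalg}. Let $\Gz_o$ be the generatrix concatenation $G_p\cup G_q$, i.e.\ the segment along $\Span\{e_1\}$ from $q$ through $o$ to $p$; it lies in every $P_\xi$ and has $\mathcal C_\az^n$-length $2\rho$. Unfolding $P_\xi$ to the wedge $W_{2\pi\sin\az}$ of \eqref{wedge}, $L_\xi$ becomes the chord $[\bar p,\bar q]$ and $\Gz_o$ becomes $[\bar p,\bar o]\cup[\bar o,\bar q]$. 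For $s\in[0,\tfrac12]$ I take $H_\ast(\xi,s)$ to be the broken chord $[\bar p,w(s)]\cup[w(s),\bar q]$ whose vertex $w(s)$ runs from the midpoint $\bar m$ of $[\bar p,\bar q]$ (at $s=0$) to $\bar o$ (at $s=\tfrac12$); since the region $\{w:|\bar p-w|+|w-\bar q|\le 2\rho\}$ is a convex elliptical set containing both $\bar m$ and $\bar o$, each such curve has $\mathcal C_\az^n$-length $\le 2\rho$. For $s\in[\tfrac12,1]$ I run the mirror deformation inside $P_{I(\xi)}$, opening $\Gz_o$ back up to $L_{I(\xi)}$. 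The two halves agree at $s=\tfrac12$ (both equal $\Gz_o$, which is independent of $\xi$), the family varies continuously through $\xi\mapsto P_\xi$ and $\xi\mapsto P_{I(\xi)}$, and $H_\ast(\xi,0)=L_\xi$, $H_\ast(\xi,1)=L_{I(\xi)}$; hence $H_\ast\in\cm_{pq}$, with every curve of $\mathcal C_\az^n$-length $\le 2\rho$. Reparametrising each $H_\ast(\xi,s)$ at constant $g$-speed, it then suffices to prove the $g$-length bound $\ell_g(H_\ast(\xi,s))\le(1+C_\star\ez)2\rho$, since this gives $\ce(H_\ast(\xi,s))=\ell_g(H_\ast(\xi,s))^2\le(1+C_\star\ez)^2 4\rho^2$ and thus $\Lambda_{pq}\le(1+C_\star\ez)^2 4\rho^2$.

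The hard part is precisely this passage from the $\mathcal C_\az^n$-length bound to the $g$-length bound: the comparison \eqref{compare2} is available only on $M_{\le\tilde\kz}$, whereas the sweep-out is forced by Lemma \ref{lem4.1}(ii) to cross the apex $o$, where $g$ and $g_{\mathcal C_\az^n}$ are only comparable up to a fixed constant. For curves of $H_\ast$ contained in $M_{\le\tilde\kz}$, \eqref{compare2} directly gives $\ell_g\le(1+C_\star\ez)\ell_{\mathcal C_\az^n}\le(1+C_\star\ez)2\rho$. The remaining curves, those with $s$ near $\tfrac12$, enter $B(o,R_{\tilde\kz})$; splitting such a curve into its part in $M_{\le\tilde\kz}$ and its part in $B(o,R_{\tilde\kz})$, the former contributes at most $(1+C_\star\ez)2\rho$ by \eqref{compare2}, while the latter consists of two chords of $B(o,R_{\tilde\kz})$ and so, by \eqref{afz}, has $g$-length at most a fixed multiple of $R_{\tilde\kz}$. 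Since $R_{\tilde\kz}$ is fixed once $\ez$ is chosen, whereas $\rho\ge R_\kz$ can be made arbitrarily large by shrinking $\kz$, this additive apex term is dominated by the $\sim C_\star\ez\,\rho$ slack in the target bound; choosing $\kz$ small enough absorbs it and completes the estimate. I expect the delicate points to be the continuity of $H_\ast$ across $s=\tfrac12$ and in $\xi$ (routine once phrased through the unfolding), and the apex length accounting just described, which is the genuine obstacle.
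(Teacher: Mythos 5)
Your argument is correct and follows essentially the same route as the paper: the lower bound via Lemma \ref{lem4.1}(ii) together with \eqref{std2} and Cauchy--Schwarz is identical, and your slice-by-slice sweep-out through the generatrix concatenation inside the half-planes $\Span\{e_1\}\oplus\rr_+\xi$ is exactly the competitor the paper sketches (deferring the length estimate to \cite[Lemma 20]{cd19}), with you supplying the omitted details of the deformation and of the apex accounting. Two small caveats worth fixing: reparametrising every curve at constant $g$-speed would replace $H_\ast(\xi,0)$ by a reparametrisation of $L_\xi$ and so take $H_\ast$ out of $\cm_{pq}$ as literally defined in \eqref{homotopy} (either estimate the energy in the original constant-$\mathcal C_\az^n$-speed parametrisation, or interpolate the reparametrisation away from $s=0,1$), and your absorption of the $O(R_{\tilde\kz})$ apex contribution strictly yields $(1+C'\ez)^2 4\rho^2$ for a possibly larger constant $C'$ once $\kz$ is small enough, which is harmless for every later use of the lemma.
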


\begin{proof}
We  first  show $(1-\ez)^2 4\rho^2\le \Lambda_{pq}$.
  Thanks to Lemma \ref{lem4.1} (ii),
  for any homotopy $H \in \cm$, there exists $(\xi _o,s_o,t_o)\in \mathbb S^{n-2} \times [0,1]^2$ such that $H(\xi _o,s_o,t_o)=o$.

  Let $\gz_{po}:[0,1] \to M$ be the length-minimising geodesic segments joining $p$ and $o$ and $\gz_{oq}:[0,1] \to M$ be the length-minimising geodesic segments joining $o$ and $q$ with respect to $g$.
  Using \eqref{std2}, we estimate
  \begin{align*}
     \ell_g(\gz_{po}) & = d_g(p,o) \ge (1-\ez)|p-o|
      \ge \rho(1- \ez)
  \end{align*}
  and similarly,
  \begin{align*}
     \ell_g(\gz_{oq}) &  \ge \rho(1- \ez).
  \end{align*}
  Since $o = H(\xi _o,s_o,t_o)$, we deduce that
  $$\ell_g(H(\xi _o,s_o,\cdot)|_{[0,t_o]}) \ge \ell_g(\gz_{po})\ge \rho(1- \ez) \mbox{ and }  \ell_g(H(\xi _o,s_o,\cdot)|_{[t_o,1]}) \ge \ell_g(\gz_{oq})\ge \rho(1- \ez), $$
  which implies
  $$ \ell_g(H(\xi _o,s_o,\cdot)) \ge \ell_g(\gz_{poq}) \ge 2\rho(1- \ez) $$
  where $\gz_{poq}= \gz_{po} \ast \gz_{oq}$ is the concatenation of $\gz_{po}$ and $\gz_{oq}$. Using Cauchy-Schwartz inequality, we deduce that
  $$ \frac{\ce(H(\xi _o,s_o,\cdot))}{\rho^2} \ge \frac{\ell_g(H(\xi _o,s_o,\cdot))^2}{\rho^2} \ge \frac{\ell_g(\gz_{poq})^2}{\rho^2} \ge 4 \frac{[\rho(1- \ez)]^2}{\rho^2}> 4(1-\ez)^2. $$
  By the arbitrariness of $H$, we arrive at the desired result.

To  see $\Lambda_{pq} \le (1+C_\star\ez)^2 4\rho^2$, it suffices to construct a homotopy $H \in \cm$ such that
  \begin{equation}\label{upbd3}
    \max_{(\xi ,s) \in \mathbb S^{n-2} \times [0,1]} \ce(H(\xi ,s))\le (1+C_\star
    \ez)^2 4\rho^2.
  \end{equation}
  Recalling that for each $\xi   \in \mathbb S^{n-1}$, $L_\xi  \in \cl_{p,q}$ is a length-minimising geodesic of $\mathcal C_\az^n$ joining $p$ and $q$ which lies in the half plane $\Span\{e_1\}\oplus \rr_+\xi  \subset \mathcal C_\az^n$, we will choose $H(\xi ,s)\in \ccc_{pq}$ to be a curve such that
\begin{itemize}
  \item $H(\xi ,s) \subset  \Span\{e_1\}\oplus \rr_+\xi  $ for all $s \in [0,\frac12]$;
  \item $H(\xi ,\frac12)$ is the unique piecewise geodesic segment consisting of two generatrices passing through $o$ joining $p$ and $q$.
  \item $H(\xi ,s) \subset  \Span\{e_1\}\oplus \rr_+I(\xi ) $ for all $s \in [\frac12,1]$.
\end{itemize}
We note that for all $s \in [0,1]$, $H(\xi ,s)$ is always a curve in either the half plane $\Span\{e_1\}\oplus \rr_+\xi  $  or the half plane $\Span\{e_1\}\oplus \rr_+I(\xi )$.
This enables us to
  define $H(\xi ,\cdot)$ similar to the one in \cite[Lemma 20]{cd19}. Then it is obvious to check $H$ is continuous and $H\in \cm$ for each $\xi  \in \mathbb S^{n-2}$. Also the estimate \eqref{upbd3} is obtained similarly as in \cite[Lemma 20]{cd19}. We omit the details.
\end{proof}

\subsection{  Truncated energy functional and its gradient flow}

Below  we always let $\ez$ and $\kz$ be as in Section 4.2 and assume
  that the pair of antipodal points $p,q \in M_{\le \kz}$.

Let $\eta \in C^\fz( [0,\fz),[0,1])$ be a cut-off function such that
$$ \eta(x)=0, \quad \forall x \in [0,(1+5C_\star\ez)^2 4\rho^2\sin^2(\frac{\pi}2 \sin\az )], $$
$$ \eta(x)=x, \quad \forall x \in [(1-5C_\star\ez)^2 4\rho^2, \fz), $$
and for all $x \in ((1+5C_\star\ez)^2 4\rho^2\sin^2(\frac{\pi}2 \sin\az ), (1-5C_\star\ez)^2 4\rho^2]$,
$$0< \eta'(x) \le \frac{3}{4\rho^2[(1-5C_\star\ez)^2-(1+5C_\star\ez)^2 \sin^2(\frac{\pi}2 \sin\az )]}.$$

Define the truncated energy functional 
$$ \ce_\ast(\gz):= \eta \circ \ce(\gz), \quad \forall \gz \in  W^{1,2}([0,1],M). $$

Here  recall that $\ccc_{pq}$ has a natural Hilbert manifold structure (see for example \cite[Section 2.1]{d17}).
 For each $\gz \in \ccc_{pq}$, define the tangent space $\mathcal{T}_\gz \ccc_{pq}$ of $\ccc_{pq}$ at $\gz$ as
 $$ \mathcal{T}_\gz\ccc_{pq}:=\{X \in W^{1,2}([0,1],\mathcal{T}_\gz M): X(t) \in \mathcal{T}_{\gz(t)}M \mbox{ for all $t \in [0,1]$ and } X(0)=X(1)=0\}.$$
 Thus $ \mathcal{T}_\gz\ccc_{pq}$ consists of $W^{1,2}$ sections of the tangent bundle $\mathcal{T}M$ supported on $\gz$ and vanishing at the endpoints.
 Equip with $\ccc_{pq}$ the Riemannian metric
 $$ g^{\ccc_{pq}}: \mathcal{T}_{\gz}\ccc_{pq} \times \mathcal{T}_{\gz}\ccc_{pq} \to \rr, \ g^{\ccc_{pq}}(X_1,X_2)=\int_0^1 [g(X_1(t),X_2(t))+ g(\dot X_1(t), \dot X_2(t))] \, dt $$
 where $\dot X(t) = \nabla_{\dot \gz(t)} X(t)$ for any $X \in \mathcal{T}_{\gz}\ccc_{pq}$. Recalling that $g^{\ccc_{pq}}$ is a $C^2$-Riemannian metric, we know
 $(\ccc_{pq}, g^{\ccc_{pq}})$ is a Hilbert manifold with a complete $C^2$-Riemannian metric.

 Since $({\ccc_{pq}},g^{\ccc_{pq}})$ is a $C^2$-Hilbert manifold, we know $\ce$ and $\ce_\ast$ are both $C^2$-function defined on ${\ccc_{pq}}$.
 Define  $\nabla \ce_\ast$ is defined in the way such that
 $$g^{\ccc_{pq}}(\nabla \ce_\ast(\gz), V ) := ( d\ce_\ast(\gz), V) = \eta'(\ce_\ast(\gz))\int_0^1 g( \dot\gz,  V ) \, dt \quad \forall V \in \mathcal{T}_{\gz} \ccc_{pq}. $$
Given any initial value  $\gz \in \ccc_{pq}$, we consider the gradient flow of $\ce_\ast$ :
\begin{equation}\label{gflow}
 \left \{ \begin{array}{l} \frac{d}{d\tau}\gz_\tau = -\nabla \ce_\ast(\gz_\tau),
  \\
  \gz_0=\gz.
\end{array}
\right.
\end{equation}
Since  $\ce_\ast$ is of class $C^2$, the flow $\gz_\tau$ is well-defined for all $\gz \in \ccc_{pq}$ and $\tau \ge 0$.
 For more details, we refer to \cite[Theorem 1.11.1]{d17}.

Note that
$$  \frac{d}{d\tau}\ce_\ast (\gz_\tau) = (d \ce_\ast(\gz_\tau), \frac{d}{d\tau} (\gz_\tau))= - \eta'(\ce_\ast(\gz))\|\nabla \ce_\ast(\gz_\tau)\|_{g^{\ccc_{pq}}}^2. $$
Thus for $\tau'' \ge \tau' \ge 0$, we have
\begin{equation}\label{flowtau}
  \ce_\ast(\gz_{\tau''}) - \ce_\ast(\gz_{\tau'}) = \int_{\tau'}^{\tau''} \frac{d}{d\tau}\ce_\ast (\gz_\tau) \, d\tau= - \eta'(\ce_\ast(\gz))\int_{\tau'}^{\tau''}  \|\nabla \ce_\ast(\gz_\tau)\|_{g^{\ccc_{pq}}}^2 \, d\tau .
\end{equation}
Moreover,
\begin{itemize}
  \item if $\ce (\gz) < (1+5C_\star\ez)^2 4\rho^2\sin^2(\frac{\pi}2 \sin\az )$, then in a small neighbourhood $U$ of $\gz$, $\ce_\ast|_{U} = \nabla\ce_\ast|_{U} \equiv 0$, which implies that
      $$ \gz_\tau \equiv \gz_0=\gz, \quad \forall \tau>0;$$
  \item if $\ce (\gz)> (1-5C_\star\ez)^2 4\rho^2$ and $\gz$ is not a critical point of $\ce$, by \eqref{flowtau}, whenever $\ce (\gz_\tau)> (1-5C_\star\ez)^2 4\rho^2$, we have
      $\eta'(\ce_\ast(\gz_\tau)) =1$ in \eqref{flowtau} and consequently,
$$\mbox{the maps}\ \tau \mapsto \ce_\ast(\gz_\tau) = \ce(\gz_\tau) \ \mbox{is strictly decreasing at $\tau$.}$$
\end{itemize}

Below we write
$$\ccc_{pq}^{\ge (1-5C_\star\ez)^2 4\rho^2}:= \{\gz\in{\ccc_{pq}}: \ce(\gz) \ge (1-5C_\star\ez)^2 4\rho^2\}.$$
Consider   the restriction  $\ce_\ast|_{\ccc_{pq}^{\ge (1-5C_\star\ez)^2 4\rho^2}}$
of $\ce_\ast$ in the  set $ \ccc_{pq}^{\ge (1-5C_\star\ez)^2 4\rho^2}$.
We have the following property.
\begin{cor} \label{psast}
The restriction
  $\ce_\ast|_{\ccc_{pq}^{\ge (1-5C_\star\ez)^2 4\rho^2}}$ satisfies
  the Palais-Smale
condition.
\end{cor}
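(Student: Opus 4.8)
The plan is to deduce the statement from the classical fact that the energy functional on the path space of a complete manifold satisfies the Palais--Smale condition, the only genuinely new ingredient being that a Palais--Smale sequence lying in $\ccc_{pq}^{\ge (1-5C_\star\ez)^2 4\rho^2}$ cannot escape to infinity. Recall that $\{\gz_k\} \subset \ccc_{pq}^{\ge (1-5C_\star\ez)^2 4\rho^2}$ is a Palais--Smale sequence for $\ce_\ast$ if $E_0 := \sup_k \ce_\ast(\gz_k) < \fz$ and $\|\nabla\ce_\ast(\gz_k)\|_{g^{\ccc_{pq}}} \to 0$, and we must extract a subsequence converging in $\ccc_{pq}$. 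First I would reduce $\ce_\ast$ to $\ce$: since $\eta$ is the identity on $[(1-5C_\star\ez)^2 4\rho^2,\fz)$, we have $\eta' \equiv 1$ there, whence $\ce_\ast = \ce$ and $\nabla\ce_\ast = \nabla\ce$ on the whole set $\ccc_{pq}^{\ge (1-5C_\star\ez)^2 4\rho^2}$; thus $\{\gz_k\}$ is a Palais--Smale sequence for $\ce$ with $\ce(\gz_k) \le E_0$.

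Next I would establish confinement and $C^0$-precompactness. By Cauchy--Schwarz $\ell_g(\gz_k) \le \ce(\gz_k)^{1/2} \le E_0^{1/2}$, and since $\gz_k(0) = p$ this gives $\gz_k([0,1]) \subset \overline{B_g(p,E_0^{1/2})} =: K$, which is compact because $(M,g)$ is complete (Hopf--Rinow). The same inequality on subintervals yields the uniform H\"older bound $d_g(\gz_k(s),\gz_k(t)) \le |t-s|^{1/2}E_0^{1/2}$, so $\{\gz_k\}$ is equicontinuous; by Arzel\`a--Ascoli a subsequence (not relabelled) converges in $C^0([0,1],M)$ to some $\gz_\fz$ taking values in $K$ with $\gz_\fz(0) = p$, $\gz_\fz(1) = q$. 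Because the energy is $W^{1,2}$-continuous and bounded below by $(1-5C_\star\ez)^2 4\rho^2$ along the sequence, the eventual $W^{1,2}$-limit will again lie in $\ccc_{pq}^{\ge (1-5C_\star\ez)^2 4\rho^2}$.

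The main obstacle is the final step, upgrading the $C^0$-convergence to convergence in $W^{1,2}([0,1],M) = \ccc_{pq}$, where the hypothesis $\|\nabla\ce(\gz_k)\|_{g^{\ccc_{pq}}}\to 0$ must be used. Here I would show that $\{\dot\gz_k\}$ is Cauchy in $L^2$. For $k,m$ large the curves $\gz_k,\gz_m$ are uniformly $C^0$-close and contained in the compact set $K$, so the comparison field $V_{k,m}(t) := (\exp^g_{\gz_k(t)})^{-1}(\gz_m(t))$ is a well-defined section in $\mathcal{T}_{\gz_k}\ccc_{pq}$ vanishing at the endpoints (where $\gz_k$ and $\gz_m$ agree), with $\|V_{k,m}\|_{g^{\ccc_{pq}}}$ bounded uniformly in $k,m$. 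Inserting $V_{k,m}$ into the defining relation for $\nabla\ce(\gz_k)$, inserting the symmetric field into that for $\nabla\ce(\gz_m)$, and subtracting, the principal part reproduces $\|\dot\gz_k - \dot\gz_m\|_{L^2}^2$ up to curvature corrections which are controlled by the $C^0$-smallness on $K$, while the right-hand sides are at most $(\|\nabla\ce(\gz_k)\|_{g^{\ccc_{pq}}} + \|\nabla\ce(\gz_m)\|_{g^{\ccc_{pq}}}) \sup_{k,m}\|V_{k,m}\|_{g^{\ccc_{pq}}} \to 0$. This forces $\{\dot\gz_k\}$ to be Cauchy in $L^2$, and combined with the $C^0$-convergence it gives $\gz_k \to \gz_\fz$ in $\ccc_{pq}$. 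Since the entire argument is localised in the compact region $K$, this last step is a routine adaptation of the standard Palais--Smale verification for the energy functional over a complete manifold (see \cite{d17}), to which I would defer for the remaining analytic details.
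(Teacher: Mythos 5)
Your proposal is correct and follows essentially the same route as the paper: the key observation in both is that $\eta$ is the identity on $[(1-5C_\star\ez)^2 4\rho^2,\fz)$, so $\ce_\ast=\ce$ and $\nabla\ce_\ast=\nabla\ce$ on $\ccc_{pq}^{\ge (1-5C_\star\ez)^2 4\rho^2}$, after which everything reduces to the classical Palais--Smale property of the energy functional $\ce$. The only difference is that the paper simply cites \cite[Theorem 2.25]{d17} for that classical fact, whereas you sketch its standard proof (confinement by Cauchy--Schwarz and Hopf--Rinow, Arzel\`a--Ascoli, and the comparison-field argument upgrading $C^0$- to $W^{1,2}$-convergence) before deferring to the same reference.
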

Here   recall the definition of   the Palais-Smale
condition; see for example, \cite{d17}.
\begin{defn}
\rm
Suppose that $\ch$ is a Hilbert manifold with a complete $C^2$-Riemannian metric. Then a $C^2$ function $f : \ch \to [0, \fz)$ is said to satisfy
the {\it Palais-Smale condition} if whenever $\{\xi_i\}$ is a sequence in $M$ such that
\begin{enumerate}
  \item[(i)] $\{f(\xi_i)\}$ is bounded,
  \item[(ii)] $\{\nabla f(\xi_i)\}$ is not bounded away from zero,
\end{enumerate}
then $\{\xi_i\}$ possesses a subsequence which converges to a critical point of $f$.
\end{defn}

Corollary \ref{psast} follows from
  the following theorem; see for example \cite[Theorem 2.25]{d17}.
\begin{thm} \label{psthm}
  $\ce$ satisfies Palais-Smale condition.
\end{thm}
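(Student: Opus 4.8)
The plan is to verify the Palais--Smale condition directly from the two hypotheses, using the completeness of $(M,g)$ to confine bounded--energy paths to a compact region and then upgrading weak $W^{1,2}$ convergence to strong convergence. Let $\{\gz_i\} \subset \ccc_{pq}$ satisfy $\ce(\gz_i) \le c$ for some $c>0$, and, after passing to the subsequence along which $\|\nabla \ce(\gz_i)\|_{g^{\ccc_{pq}}}$ approaches its infimum, assume $\|\nabla \ce(\gz_i)\|_{g^{\ccc_{pq}}} \to 0$. By Cauchy--Schwarz, $\ell_g(\gz_i) \le \ce(\gz_i)^{1/2} \le \sqrt c$, so every point of $\gz_i$ lies in $\overline{B_g(p,\sqrt c)}$, which is compact since $(M,g)$ is complete (Hopf--Rinow); the same bound yields the uniform H\"older estimate $d_g(\gz_i(s),\gz_i(t)) \le \sqrt c\,|s-t|^{1/2}$. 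By Arzel\`a--Ascoli a subsequence converges uniformly to a continuous path $\gz_\fz$ with $\gz_\fz(0)=p$ and $\gz_\fz(1)=q$, and since $\{\dot\gz_i\}$ is bounded in $L^2$ a further subsequence converges weakly in $W^{1,2}$ to $\gz_\fz \in \ccc_{pq}$.

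The crux is to promote this to strong $W^{1,2}$ convergence, that is $\|\dot\gz_i - \dot\gz_\fz\|_{L^2} \to 0$. I would subdivide $[0,1]$ into finitely many subintervals so that, for $i$ large, both $\gz_i$ and $\gz_\fz$ send each subinterval into a single normal coordinate chart on the compact set from the previous step. There $\ce$ reads $\int_0^1 g_{ab}(\gz)\dot\gz^a\dot\gz^b\,dt$, and the first variation $d\ce(\gz)[V]=2\int_0^1 g(\dot\gz,\nabla_{\dot\gz}V)\,dt$ splits into a principal part $-\tfrac{d}{dt}(g_{ab}\dot\gz^b)$ and lower--order terms depending only on $\gz$ and $\dot\gz$ with coefficients bounded on the compact region. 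Testing the first variations of $\ce$ at $\gz_i$ and at $\gz_\fz$ against the difference field $V_i:=(\exp^g_{\gz_\fz})^{-1}\!\circ\gz_i$ (well defined, $C^0$-small for large $i$, and with $\|V_i\|_{g^{\ccc_{pq}}}$ uniformly bounded), the uniform positive--definiteness of $(g_{ab})$ on the compact region bounds $\|\dot\gz_i-\dot\gz_\fz\|_{L^2}^2$ above by the principal part of $d\ce(\gz_i)[\,\cdot\,]-d\ce(\gz_\fz)[\,\cdot\,]$. The lower--order remainder of this difference is controlled by $\gz_i\to\gz_\fz$ in $C^0$ together with the weak $L^2$ convergence of the velocities, hence tends to $0$; meanwhile $|g^{\ccc_{pq}}(\nabla\ce(\gz_i),V_i)| \le \|\nabla\ce(\gz_i)\|_{g^{\ccc_{pq}}}\|V_i\|_{g^{\ccc_{pq}}}\to 0$ and $d\ce(\gz_\fz)$ tested against the $C^0$-small fields $V_i$ also vanishes. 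Together these force $\|\dot\gz_i-\dot\gz_\fz\|_{L^2}\to 0$, so $\gz_i \to \gz_\fz$ strongly in $W^{1,2}$.

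Finally, strong convergence and continuity of the gradient map on the $C^2$-Hilbert manifold $(\ccc_{pq},g^{\ccc_{pq}})$ give $\nabla\ce(\gz_\fz)=\lim_i\nabla\ce(\gz_i)=0$; equivalently, passing to the limit in $d\ce(\gz_i)[V]\to 0$ for every $V\in\mathcal{T}_{\gz_\fz}\ccc_{pq}$ shows $\gz_\fz\in\mathrm{Crit}(\ce)$, i.e. $\gz_\fz$ is a $g$-geodesic. I expect the main obstacle to be the strong--convergence step: one must carefully separate the coercive second--order part of the first variation from the connection and Christoffel terms, and verify that the latter, together with the curvature corrections introduced by transporting the difference field $V_i$ between $\gz_\fz$ and $\gz_i$ via $\exp^g$, form a compact perturbation that vanishes under the $C^0$ and weak--$L^2$ convergence already secured. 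By contrast, the non-compactness of $M$ causes no difficulty, since the fixed endpoints and the energy bound confine the entire sequence to a single compact set.
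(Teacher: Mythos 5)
Your argument is correct in outline, but note that the paper does not prove this theorem at all: it simply invokes \cite[Theorem 2.25]{d17}, so what you have written is a reconstruction of the classical textbook proof (going back to Palais) rather than an alternative to anything in the paper. Your route is the standard one, and all the right ingredients are there: the energy bound plus Cauchy--Schwarz confines the sequence to the compact ball $\overline{B_g(p,\sqrt c)}$ (this is exactly why the non-compactness of the asymptotically conical $M$ is harmless here --- the endpoints are fixed), the H\"older estimate gives equicontinuity, and the strong-convergence step is carried out by testing the first variation against the difference field and exploiting the uniform ellipticity of $g_{ab}$ on the compact region. What the citation buys the authors is brevity; what your direct proof buys is a self-contained verification that the hypotheses (completeness of $(M,g)$, fixed endpoints) are really all that is used, which is worth knowing since $M$ is non-compact.

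One small imprecision worth fixing: you claim that $d\ce(\gz_\fz)[V_i]\to 0$ because the fields $V_i$ are $C^0$-small. That is not sufficient, since $d\ce(\gz_\fz)[V]=2\int_0^1 g(\dot\gz_\fz,\nabla_{\dot\gz_\fz}V)\,dt$ involves $\dot V_i$, which is only bounded in $L^2$, not small. The correct justification is that $V_i\rightharpoonup 0$ weakly in $W^{1,2}$ (which follows from the weak convergence $\gz_i\rightharpoonup\gz_\fz$ you already established together with the uniform convergence) and that $d\ce(\gz_\fz)$ is a bounded linear functional on $\mathcal{T}_{\gz_\fz}\ccc_{pq}$; weak convergence then kills this term. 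With that repair the argument closes as you describe, and the final identification of $\gz_\fz$ as a critical point follows by passing to the limit in $d\ce(\gz_i)[V]$ for fixed smooth $V$.
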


\subsection{A mountain pass theorem for geodesic segments}
\label{s42}

Below  we always let $\ez$ and $\kz$ be as in Section 4.2 and assume
the pari of  antipodal points $p,q \in M_{\le \kz}$.
We have the following
 mountain pass theorem for geodesic segments.
For any set $A$, denote by $\Seq (A)$ the set of sequences $\{ a_i\}_{i\in\nn}\subset A$.
A sequence $\{H_j\}_{j \in \nn} \subset \Seq(\cm_{pq})$ is called to be
minimizing if
$$ \max_{(\xi,s) \in \mathbb S^{n-2} \times [0,1]} \ce(H_j(\xi,s))  \to \Lambda_{pq} \quad \text{as } j \to \fz.$$
Given a mminimzing sequence $ \{H_j\}_{j \in \nn}$ we further call a sequence $\{H_j(\xi_j,s_j)\}_{j \in \nn} $ as min-max if
$$ \ce(H_j(\xi_j,s_j))  \to  \Lambda_{pq} \quad \text{as } j \to \fz. $$

\begin{prop} \label{p14}
 There exists a geodesic segments $\Gz \in {\ccc_{pq}}$ such that
 $$  \ce(\Gz) = \Lambda_{pq}
 \ \mbox{and}\ \nabla \ce(\Gz)=0.$$
\end{prop}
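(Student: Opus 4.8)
The plan is to run a by-contradiction min-max argument, in the spirit of the classical mountain pass theorem, using the gradient flow of the \emph{truncated} energy $\ce_\ast$ from \eqref{gflow} as the deformation. Write $\Phi(\gz,\tau):=\gz_\tau$ for the solution of \eqref{gflow} with $\gz_0=\gz$; it is globally defined since $\ce_\ast\in C^2(\ccc_{pq})$ and $(\ccc_{pq},g^{\ccc_{pq}})$ is a complete $C^2$-Hilbert manifold. The first and decisive point I would record is that the truncation freezes the boundary data of every homotopy. Indeed, for each $\xi\in\mathbb S^{n-2}$ the curve $L_\xi\in\cl_{p,q}$ satisfies, by \eqref{std3}, $\ce(L_\xi)=\ell_g(L_\xi)^2\le(1+C_\star\ez)^2 4\rho^2\sin^2(\frac{\pi}{2}\sin\az)$, which is strictly below $(1+5C_\star\ez)^2 4\rho^2\sin^2(\frac{\pi}{2}\sin\az)$, so $L_\xi$ sits in the region $\{\eta\equiv0\}$ and hence $\nabla\ce_\ast(L_\xi)=0$; the same holds for $L_{I(\xi)}$. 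Thus $\Phi(L_\xi,\tau)=L_\xi$ and $\Phi(L_{I(\xi)},\tau)=L_{I(\xi)}$ for all $\tau\ge0$, so the deformed map $(\xi,s)\mapsto\Phi(H(\xi,s),\tau)$ again lies in $\cm_{pq}$ whenever $H\in\cm_{pq}$. This is precisely property \eqref{value}, and it is the reason one must flow by $\ce_\ast$ rather than by $\ce$.

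Next I would verify that the truncation is inactive near the min-max level. Since the comparison constant may be taken with $C_\star\ge1$, one has $5C_\star>1$ and hence $(1-\ez)^2>(1-5C_\star\ez)^2$; together with Lemma \ref{lbddlz} this yields $\Lambda_{pq}\ge(1-\ez)^2 4\rho^2>(1-5C_\star\ez)^2 4\rho^2$. Fixing $\epsilon>0$ small enough that $\Lambda_{pq}-\epsilon>(1-5C_\star\ez)^2 4\rho^2$, one has $\eta'\equiv1$, and therefore $\ce_\ast=\ce$ and $\nabla\ce_\ast=\nabla\ce$, on the whole band $\{\,|\ce-\Lambda_{pq}|\le\epsilon\,\}$. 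A short computation from \eqref{gflow} shows $\frac{d}{d\tau}\ce(\Phi(\gz,\tau))=-\eta'(\ce)\,\|\nabla\ce\|_{g^{\ccc_{pq}}}^2\le0$, so $\ce$ is non-increasing along $\Phi$, and on the band it decreases exactly like the gradient flow of $\ce$.

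Now assume, for contradiction, that no $\Gz\in\ccc_{pq}$ satisfies $\ce(\Gz)=\Lambda_{pq}$ and $\nabla\ce(\Gz)=0$, i.e.\ $\Lambda_{pq}$ is not a critical value of $\ce$. By Theorem \ref{psthm} (equivalently Corollary \ref{psast}), $\ce$ satisfies the Palais--Smale condition, so, after possibly shrinking $\epsilon$, there is $\sigma>0$ with $\|\nabla\ce(\gz)\|_{g^{\ccc_{pq}}}\ge\sigma$ for all $\gz$ with $|\ce(\gz)-\Lambda_{pq}|\le\epsilon$: otherwise curves $\gz_i$ with $\ce(\gz_i)\to\Lambda_{pq}$ and $\nabla\ce(\gz_i)\to0$ would, by Palais--Smale, subconverge to a critical point at level $\Lambda_{pq}$, contradicting the assumption. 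On the band $\ce_\ast=\ce$, so $\frac{d}{d\tau}\ce(\Phi(\gz,\tau))\le-\sigma^2$ while $\Phi(\gz,\tau)$ stays in it; hence with $T:=2\epsilon/\sigma^2$ any $\gz$ with $\ce(\gz)\le\Lambda_{pq}+\epsilon$ obeys $\ce(\Phi(\gz,T))\le\Lambda_{pq}-\epsilon$ (once below $\Lambda_{pq}-\epsilon$ it stays below, by monotonicity). Choosing a minimizing sequence $\{H_j\}\subset\cm_{pq}$ with $\max_{(\xi,s)}\ce(H_j(\xi,s))\le\Lambda_{pq}+\epsilon$ for large $j$ and applying $\Phi(\cdot,T)$, the boundary-freezing property gives $\Phi(H_j,T)\in\cm_{pq}$ with $\max_{(\xi,s)}\ce(\Phi(H_j(\xi,s),T))\le\Lambda_{pq}-\epsilon$, contradicting that $\Lambda_{pq}$ is the infimum over $\cm_{pq}$. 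This contradiction supplies the desired $\Gz$.

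I expect the main obstacle to lie not in the abstract deformation, which is classical once Palais--Smale holds, but in the interplay between the two functionals: one is forced to deform by $\ce_\ast$ in order to keep the boundary values $L_\xi,L_{I(\xi)}$ fixed and stay inside $\cm_{pq}$, yet the critical point extracted must be critical for the genuine energy $\ce$. The band estimate $\Lambda_{pq}-\epsilon>(1-5C_\star\ez)^2 4\rho^2$, which rests on \eqref{std1} and Lemma \ref{lbddlz}, is exactly the quantitative input that certifies $\ce_\ast=\ce$ throughout the relevant energy range, so that a critical point of $\ce_\ast$ at the level $\Lambda_{pq}$ is automatically one of $\ce$.
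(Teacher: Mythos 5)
Your proposal is correct, and it rests on exactly the same three pillars as the paper's proof: the contradiction hypothesis, the Palais--Smale condition for $\ce$, and the gradient flow of the truncated functional $\ce_\ast$, whose sole purpose (as you correctly identify) is to freeze the boundary data $L_\xi, L_{I(\xi)}$ so that the deformed families stay in $\cm_{pq}$, while the band estimate $\Lambda_{pq}-\epsilon>(1-5C_\star\ez)^2 4\rho^2$ guarantees that $\ce_\ast=\ce$ and $\nabla\ce_\ast=\nabla\ce$ near the min-max level. The one place where you genuinely diverge from the paper is the execution of the deformation step: you first upgrade Palais--Smale to a uniform gradient lower bound $\|\nabla\ce\|\ge\sigma$ on the whole band $\{|\ce-\Lambda_{pq}|\le\epsilon\}$ and then run the flow for the fixed time $T=2\epsilon/\sigma^2$ to push the entire minimizing homotopy below $\Lambda_{pq}-\epsilon$, i.e.\ the standard quantitative deformation lemma; the paper instead works with a min-max sequence $H_j(\xi_j,s_j)$, deforms for a fixed time $\wz\tau$, and splits into two cases according to whether the stopping times $\tau_j$ (at which the gradient norm first drops below a threshold along the flow) stay bounded away from zero (yielding a definite energy drop below $\Lambda_{pq}$) or tend to zero (yielding a Palais--Smale sequence at level $\Lambda_{pq}$). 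Your version is shorter and arguably cleaner, since the uniform band estimate subsumes both cases at once; the paper's version avoids having to quantify the band but pays for it with the case analysis. One cosmetic point: the identity $\ce(L_\xi)=\ell_g(L_\xi)^2$ is not exact because $L_\xi$ has constant speed with respect to $g_{\mathcal C_\az^n}$ rather than $g$, but the required inequality $\ce(L_\xi)\le(1+C_\star\ez)^2 4\rho^2\sin^2(\frac{\pi}{2}\sin\az)$ follows directly from the pointwise comparison \eqref{compare2a} applied to the integrand, so nothing is lost (the paper's own \eqref{elpq} has the same feature).
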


\begin{proof}  We prove by  contradiction. Assume the set
  \begin{equation}\label{counter0}
    \{\gz \in {\ccc_{pq}}: \ce(\gz) = \Lambda_{pq}, \ \nabla\ce(\gz)=0   \} = \emptyset.
  \end{equation}
  We will conculde some contradiction by the following 3 steps.

{\it Step 1.}  Fix any min-max sequence
  \begin{equation*}
   \left \{\gz_j= H_j(\xi_j,s_j)\right\}_{j \in \nn}\subset \Seq({\ccc_{pq}}),
  \end{equation*}
  consequently,
  \begin{equation*}
    \gz_j \in \ccc_{pq} \mbox{ with } \lim_{j \to \fz} \ce_\ast(\gz_j) =\lim_{j \to \fz} \ce(\gz_j)= \Lambda_{pq}.
  \end{equation*}
  We moreover require that
   \begin{equation}\label{seq1}
    \ce(H_j(\xi_j,s_j))= \max_{(\xi,s)\in S^{n-2}\times [0,1]}\ce(H_j(\xi,s)), \quad \forall j \in \nn.
  \end{equation}
Since $\ce$ satisfies the Palais-Smale
condition, we know
\begin{equation}\label{minmax1}
  \liminf_{j \to \fz}\|\nabla\ce(\gz_j)\|_{\ccc_{pq}} = \dz >0.
\end{equation}
  Otherwise, the Palais-Smale
condition would imply that, up to subsequence, there exists a $W^{1,2}$-limit $\gz_\ast$ of $\{\gz_j\}$ which is a critical point of $\ce$. In particular,
$$ \ce(\gz_\ast) = \lim_{j \to \fz} \ce(\gz_j) = \Lz_{pq}  \mbox{ and } \nabla\ce(\gz_\ast)=0,$$
which contradicts to the counter assumption \eqref{counter0}.

  Since $\Lambda_{pq} \ge (1-\ez)^2 4\rho^2 $, we can assume $\ce(\gz_j) \ge (1-2\ez)^2 4\rho^2$ for all $j \in \nn$. As a result,
  $$ \{\gz_j\}_{j \in \nn} \subset \ccc_{pq}^{\ge (1-2\ez)^2 4\rho^2} \subset \ccc_{pq} ^{\ge (1-5C_\star\ez)^2 4\rho^2}$$
  By the definition of $\ce_\ast$, we know
  \begin{equation}\label{counter}
    \liminf_{j \to \fz}\|\nabla\ce_\ast(\gz_j)\|_X =\liminf_{j \to \fz}\|\nabla\ce(\gz_j)\|_X= \dz >0.
  \end{equation}

 {\it Step 2.}
%
For each $j \in \nn$ and $(\xi ,s) \in \mathbb S^{n-2} \times [0,1]$, consider the gradient flow $\Phi_{j,\xi ,s}$ of $\ce_\ast$ starting from $H_j(\xi ,s)$, i.e.
  \begin{equation}\label{gflow2}
 \left \{ \begin{array}{l}\displaystyle \frac{d}{d\tau}\Phi_{j,\xi ,s}(\tau) = -\nabla \ce_\ast(\Phi_{j,\xi ,s}(\tau)),
  \\
  \displaystyle
  \Phi_{j,\xi ,s}(0)=H_j(\xi ,s).
\end{array}
\right.
\end{equation}
  Fix $\wz \tau>0$ and define
  $$ \wz H_j (\xi ,s) : = \Phi_{j,\xi ,s}(\wz \tau), \quad \forall j \in \nn \mbox{ and }(\xi ,s) \in \mathbb S^{n-2} \times [0,1].$$
  Choose
  $(\wz \xi _j, \wz s_j) \in \mathbb S^{n-2}\times [0,1]$  such that
 $$\ce(\wz H_j(\wz\xi_j ,\wz s_j)) = \max_{(\xi ,s)\in \mathbb S^{n-2}\times [0,1]}\ce(\wz H_j(\xi ,s)).$$
We claim that
  \begin{equation}\label{claim}
  \mbox{$\{\wz H_j\}_{j \in \nn}\subset \Seq(\cm_{pq})$ is also a minimising sequence,}
 \end{equation}
 and moreover,
 \begin{equation}\label{claim2}
 \mbox{both $\{ \wz H_j(\wz \xi _j, \wz s_j)\}$ and $\{H_j(\wz \xi _j, \wz s_j)\}$ are min-max sequences}.
  \end{equation}

 To prove this claim, thanks to $H_j \in \cm_{pq}$ for all $j \in \nn$, one has
  $$H_j(\mathbb S^{n-2},0)\cup H_j(\mathbb S^{n-2},1) \subset \cl_{p,q}.$$
   Recalling from \eqref{std3} that   $$\ell_g(\gz)  \le (1+C_\star \ez) 2\rho \sin(\frac{\pi}{2}\sin\az)\quad\mbox{$\forall \gz \in \cl_{p,q}$},$$
   we obtain
  \begin{equation}\label{elpq}
    \ce(\gz) \le (1+C_\star\ez)^2 4\rho^2 \sin^2(\frac{\pi}{2}\sin\az)\quad\forall \gz \in \cl_{p,q}.
  \end{equation}
 This implies that $\ce_\ast$ and $\nabla \ce_\ast$ vanishes at any small neighbourhood of each element in $H_j(\mathbb S^{n-2},0)\cup H_j(\mathbb S^{n-2},1)$.
   Thus for all $j \in \nn$ and $\xi  \in \mathbb S^{n-2}$,
  $$\wz H_j (\xi ,0)  = \Phi_{j,\xi ,0}(\wz \tau)=  H_j (\xi ,0) \mbox{ and } \wz H_j (\xi ,1)  = \Phi_{j,\xi ,1}(\wz \tau)=  H_j (\xi ,1).$$
By the continuity of $\Phi_{j,\xi ,s}$ with respect to $(\xi ,s)$, we deduce that $\{\wz H_j\}_{j \in \nn}\subset \Seq(\cm_{pq})$. 
The definition of $ \Lambda_{pq}$ then gives
$$\sup_{(\xi,s)\in \mathbb S^{n-2}\times[0,1]}\ce(\widetilde H_j(\wz \xi , \wz s))\ge \Lambda_{pq}, \quad \forall j \in \nn,$$
hence
 \begin{equation}\label{ge}\liminf_{j\to\infty}\sup_{(\xi,s)\in \mathbb S^{n-2}\times[0,1]}\ce(\widetilde H_j(\wz \xi , \wz s)) \ge\Lambda_{pq}.
  \end{equation}

On the other hand,
 since the energy is decreasing along the flow $\tau\mapsto\Phi_{j,\wz \xi _j, \wz s_j}(\tau)$, it holds that
$$
 \ce(\wz H_j(\wz \xi _j, \wz s_j)) = \ce(\Phi_{j,\wz \xi _j, \wz s_j}(\wz \tau)) \le \ce(\Phi_{j,\wz \xi _j, \wz s_j}(0))\le \ce(H_j(\wz \xi _j, \wz s_j)).  $$
Thanks to \eqref{seq1} we have
$$\ce(H_j(\wz \xi _j, \wz s_j)) \le \ce(H_j( \xi _j,  s_j)) $$
and hence
 \begin{equation}\label{le}
   \limsup_{j \to \fz} \ce(\wz H_j(\wz \xi _j, \wz s_j)) \le   \limsup_{j \to \fz} \ce(H_j(\wz \xi _j, \wz s_j)) \le \Lambda_{pq}.
 \end{equation}
 Combining \eqref{le} and \eqref{ge}, we conclude
 \begin{equation*}
   \limsup_{j \to \fz} \ce(\wz H_j(\wz \xi _j, \wz s_j))=   \limsup_{j \to \fz} \ce(H_j(\wz \xi _j, \wz s_j)) = \Lambda_{pq}.
 \end{equation*}
 This gives the claims \eqref{claim} and \eqref{claim2}
as desired.

\medskip
  {\it Step 3.}  
Recall from Lemma \ref{lbddlz} that $\Lambda_{pq} \ge (1-\ez)^2 4\rho^2$.
 Without loss of generality, we could assume $$\ce(H_j(\wz \xi _j, \wz s_j)) \ge (1-2\ez)^2 4\rho^2\quad \mbox{for all $j \in \nn$}.$$
  Thus
 $$ \|\nabla\ce_\ast(H_j(\wz \xi _j, \wz s_j))\|_{\mathcal C_{pq}} = \|\nabla\ce(H_j(\wz \xi _j, \wz s_j))\|_{\mathcal C_{pq}}  \quad \forall j\in \nn. $$
 Similar to the argument to get \eqref{minmax1} and \eqref{counter}, we know
 $$  \liminf_{j \to \fz}\|\nabla\ce_\ast(H_j(\wz \xi _j, \wz s_j))\|_{\mathcal C_{pq}} = \dz'  $$
 for some $\dz'>0$.
 Choose $\wz\dz \in (0, \dz')$ and let
 \begin{equation}\label{tauj}
    \tau_j:= \inf \{ \tau\in(0,\wz\tau): \|\nabla\ce_\ast(\Phi_{j,\wz \xi _j, \wz s_j})(\tau)\|_{\mathcal C_{pq}} < \wz\dz   \}
 \end{equation}
 where we make the convention that $\tau_j =\wz\tau$ if the set in the RHS of \eqref{tauj} is empty.

 We consider the following two cases of $\liminf_{j \to \fz} \{\tau_j\}$ separately to conclude  contradiction.

 \medskip
 \emph{Case 1. There exists $\wz \dz \in (0,\dz')$ such that $\liminf_{j \to \fz} \{\tau_j\}= \tau_\fz >0$.} This means that there exists a subsequence $\wz \nn \subset \nn$ such that for each $j \in \wz \nn$,
 $$ \|\nabla\ce_\ast(\Phi_{j,\wz \xi _j, \wz s_j}(\tau))\|_X \ge \wz\dz, \quad \forall \tau \in [0,\tau_j]. $$
 As a result, for each $j \in \wz \nn$,
 \begin{align*}
   \ce(\wz \gz_j) & = \ce_\ast(\wz \gz_j)= \ce_\ast(\wz H_j(\wz \xi _j, \wz s_j)) =\ce_\ast(\Phi_{j,\wz \xi _j, \wz s_j}(\wz \tau))\\
    & \le \ce_\ast(\Phi_{j,\wz \xi _j, \wz s_j}(\tau_j)) \\
    & \le \ce_\ast(\Phi_{j,\wz \xi _j, \wz s_j}(0)) - \tau_j \wz\dz \\
    & =\ce_\ast(H_j(\wz \xi _j, \wz s_j)) - \tau_j \wz\dz =\ce(\Phi_{j,\wz \xi _j, \wz s_j}) - \tau_j \wz\dz .
 \end{align*}
 Hence
 $$  \Lambda_{pq} = \liminf_{j \in \wz \nn} \ce(\wz \gz_j) \le \liminf_{j \in \wz \nn} \ce(\Phi_{j,\wz \xi _j, \wz s_j}) - \tau_j \wz\dz \le  \Lambda_{pq} - \tau_\fz \wz\dz, $$
 which is impossible. Thus this case cannot happen.

 \medskip
 \emph{Case 2. For all $\wz \dz \in (0,\dz')$, $\liminf_{j \to \fz} \tau_j =0$.}
 In this case, we can find a subsequence $\wz \nn \subset \nn$ such that for $j \in \wz \nn$,
 \begin{equation}\label{ps}
   \|\nabla\ce_\ast(\Phi_{j,\wz \xi _j, \wz s_j}(\tau_j))\|_X < \frac1j.
 \end{equation}
 Recall that
 $$\wz H_j(\wz \xi _j, \wz s_j)= \Phi_{j,\wz \xi _j, \wz s_j}(\wz\tau), \quad  H_j(\wz \xi _j, \wz s_j)= \Phi_{j,\wz \xi _j, \wz s_j}(0), \quad \forall j \in \wz \nn$$
 and
 notice that
 $$\ce(\wz H_j(\wz \xi _j, \wz s_j)) = \ce_\ast(\wz H_j(\wz \xi _j, \wz s_j)), \quad \ce( H_j(\wz \xi _j, \wz s_j)) = \ce_\ast( H_j(\wz \xi _j, \wz s_j)), \quad \forall j \in \wz \nn. $$
 Since both $\{\wz H_j(\wz \xi _j, \wz s_j)\}_{j \in \nn}$ and $\{ H_j(\wz \xi _j, \wz s_j)\}_{j \in \nn}$ are min-max sequences, by the monotonicity of $\ce_\ast$ along the flow $\Phi$ and $0 \le  \tau_j \le \wz \tau$, we deduce that
 $$ \Lambda_{pq} = \lim_{\wz \nn \ni j \to \fz} \ce_\ast(\Phi_{j,\wz \xi _j, \wz s_j}(\wz\tau)) \le \lim_{\wz \nn \ni j \to \fz}\ce_\ast(\Phi_{j,\wz \xi _j, \wz s_j}(\tau_j))\le \lim_{\wz \nn \ni j \to \fz}\ce_\ast(\Phi_{j,\wz \xi _j, \wz s_j}(0))=\Lambda_{pq}. $$
 As a consequence, $\{\Phi_{j,\wz \xi _j, \wz s_j}(\tau_j)\}_{j \in \wz \nn}$ is also a min-max sequence. Using \eqref{ps} and the Palais-Smale condition, we know that
 $\{\Phi_{j,\wz \xi _j, \wz s_j}(\tau_j)\}_{j \in \wz \nn}$ has a limit $\wz \gz \in \ccc$ which is a critical point of $\ce$
 and moreover $\ce(\wz \gz)=\Lambda_{pq}$.
 This violates the counter assumption \eqref{counter0}.

The proof is complete.
 \end{proof}

 \begin{prop} \label{index}
 If $$\Gz \in \{\gz \in \ccc_{pq}: \ce(\gz) = \Lambda_{pq}, \ \nabla\ce(\gz)=0   \}$$
 is obtained by Proposition \ref{p14},
 then
  $\Gz$ has the Morse index $\le n-1$.

 \end{prop}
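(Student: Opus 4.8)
The plan is to invoke the standard index bound for min-max critical points: since the admissible homotopies in $\cm_{pq}$ are parametrised by $\mathbb S^{n-2}\times[0,1]$, a space of dimension $(n-2)+1=n-1$, the critical point produced at the level $\Lambda_{pq}$ should carry Morse index at most $n-1$. First I would fix the meaning of the index: the Morse index of the geodesic segment $\Gz$ is the maximal dimension of a subspace of $\mathcal{T}_\Gz\ccc_{pq}$ on which the second variation
\begin{equation*}
  I_\Gz(X,X)=\int_0^1\Big(|\dot X|_g^2-g(R(X,\dot\Gz)\dot\Gz,X)\Big)\,dt,\qquad X(0)=X(1)=0,
\end{equation*}
is negative definite. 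By Theorem \ref{psthm} and the $C^2$-Hilbert structure of $\ccc_{pq}$, this index is finite and the associated negative subspace $V_\Gz^-$ is finite dimensional. Note also that, since $\Lambda_{pq}\ge(1-\ez)^2 4\rho^2$ lies in the range where the cut-off $\eta$ is the identity, one has $\ce_\ast=\ce$ and $\nabla\ce_\ast=\nabla\ce$ near the critical level, so the gradient flow of $\ce_\ast$ may be used throughout.

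I would argue by contradiction, assuming that every $\Gz$ with $\ce(\Gz)=\Lambda_{pq}$ and $\nabla\ce(\Gz)=0$ has index $\ge n$. Let $K$ be the set of all such critical points; by the Palais--Smale condition it is compact in $\ccc_{pq}$. Around $K$ I would carry out a Gromoll--Meyer type finite-dimensional reduction, producing on a uniform neighbourhood of $K$ a continuously varying ``negative'' subbundle of dimension $\ge n$ along which $\ce$ strictly decreases to second order; equivalently, the stable set of the negative gradient flow of $\ce_\ast$ near each $\Gz\in K$ has codimension equal to the index, hence codimension $\ge n$.

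Next I would splice this local picture into the global deformation already built in Proposition \ref{p14}. Take a minimising sequence $\{H_j\}\subset\cm_{pq}$ normalised as in \eqref{seq1}. Because the boundary values $H_j(\mathbb S^{n-2},\{0,1\})\subset\cl_{p,q}$ have energy strictly below $\Lambda_{pq}$ by \eqref{elpq}, the class $\cm_{pq}$ is preserved under the flow of $\ce_\ast$, exactly as in Step 2 of Proposition \ref{p14}. Outside a small neighbourhood of $K$ the flow strictly lowers the energy; inside the neighbourhood I would push each sheet of the $(n-1)$-parameter family $H_j$ off the codimension-$\ge n$ stable set by an arbitrarily small perturbation. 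This is possible precisely because $\dim(\mathbb S^{n-2}\times[0,1])=n-1<n$, so a generic $(n-1)$-dimensional family misses a set of codimension $\ge n$. After this adjustment no sheet remains trapped near $K$, and the flow then carries the whole competitor strictly below $\Lambda_{pq}$, contradicting the definition of $\Lambda_{pq}$ as the infimum of the maxima. Hence some critical point at the min-max level, namely the $\Gz$ furnished by Proposition \ref{p14}, must have index $\le n-1$.

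The main obstacle I expect is making the ``pushing off the stable set'' step rigorous in the infinite-dimensional manifold $\ccc_{pq}$: one must set up the finite-dimensional reduction uniformly over the compact set $K$, treat possibly degenerate critical points (where $I_\Gz$ has a nontrivial kernel) by folding the kernel directions into the reduced model, and check that the transversality count genuinely exploits $n-1<n$. One must also verify that the perturbed-and-flowed competitor still lies in $\cm_{pq}$, which is guaranteed by the vanishing of $\nabla\ce_\ast$ near $\cl_{p,q}$ established in Section 4.3.
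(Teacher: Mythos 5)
Your route is genuinely different from the paper's: the paper disposes of this proposition in a few lines by citing an abstract min-max index theorem of Ghoussoub (\cite[Theorem 4, p.~53]{n91}), checking only that $\cm_{pq}$ is a homotopy-stable class with parameter space $\mathscr{C}=\mathbb S^{n-2}\times[0,1]$ of dimension $n-1$, boundary data $\sz:D_0\to\cl_{p,q}$, and that $\ce_\ast$ restricted to $\ccc_{pq}^{\ge(1-5C_\star\ez)^2 4\rho^2}$ satisfies Palais--Smale; the theorem then directly outputs a critical point at level $\Lambda_{pq}$ of index $\le n-1$. You instead propose to re-prove the underlying deformation principle from scratch: assume every critical point at the level has index $\ge n$, build a Gromoll--Meyer negative bundle over the (compact, by Palais--Smale) critical set $K$, push the $(n-1)$-dimensional sheets off the codimension-$\ge n$ stable sets by a small perturbation preserving $\cm_{pq}$, and flow below $\Lambda_{pq}$ to contradict the definition of the min-max value. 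This is the correct mechanism and is exactly the argument hiding inside the cited theorem; your observations that $\ce_\ast=\ce$ near the critical level and that the boundary values in $\cl_{p,q}$ are equilibria of the $\ce_\ast$-flow (so the class is preserved) are the same verifications the paper makes.

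That said, as written your argument is an outline rather than a proof: the two steps you defer to the end --- a finite-dimensional reduction carried out \emph{uniformly} over a possibly degenerate compact critical set $K$ (where $I_\Gz$ may have nontrivial kernel, so the negative bundle need not vary continuously without a Marino--Prodi type perturbation or an augmented-index argument), and the transversality statement that a continuous $(n-1)$-parameter family in the Hilbert manifold $\ccc_{pq}$ can be displaced off a union of codimension-$\ge n$ stable sets while remaining in $\cm_{pq}$ --- are precisely the technical content of Ghoussoub's theorem, and neither is routine in infinite dimensions. If you intend a self-contained proof you must supply these; otherwise the efficient course is to do what the paper does and invoke the abstract result, in which case your verifications of the hypotheses (homotopy-stability of $\cm_{pq}$ under the truncated flow, Palais--Smale on the superlevel set, $\dim\mathscr{C}=n-1$) are exactly what is needed.
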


 \begin{proof}

    This is a direct application of \cite[Theorem 4 in page 53]{n91}. In the notation of \cite{n91}, the group $G$ is the trivial group in our case and
  one can verify that the homotopy class $\cm_{pq}$ in \eqref{homotopy} satisfies \cite[Definition (2) in page 52]{n91} by taking $X=\ccc_{pq}$, $B=\cl_{p,q}$, $\mathscr{C}=\mathbb S^{n-2} \times [0,1]$, $$D_0=[\mathbb S^{n-2}\times\{0\}] \cup [\mathbb S^{n-2}\times\{1\}],$$
  and $\sz:D_0 \to B$,
  $$\sz|_{\mathbb S^{n-2}\times\{0\}}= Id, \quad \sz|_{\mathbb S^{n-2}\times\{1\}}=I|_{\mathbb S^{n-2}}$$
   (where $I$ is as in \eqref{homotopy}) therein.
 Moreover, we can apply \cite[Theorem 4]{n91} by taking $c=\Lambda_{pq}$, $$K_{\Lambda_{pq}}=\{\gz \in \ccc_{pq}: \ce(\gz) = \Lambda_{pq}, \ \nabla\ce(\gz)=0   \},   \
 \phi=\ce_\ast|_{\ccc_{pq}^{\ge (1-5C_\star\ez)^2 4\rho^2}} \ \mbox{and} \ F=\ccc_{pq}^{\ge (1-5C_\star\ez)^2 4\rho^2}$$ therein. The output is a geodesic segment $\Gz \in \ccc_{pq}$ with Morse index $\le n-1$.
\end{proof}

\begin{rem} \label{flowast} \rm
  We remark that in the above proof, it is necessary to use the truncated energy functional $\ce_\ast$ instead of $\ce$ to define $\wz H_j$. This is because we need the homotopy $\wz H_j$ belong to the homotopy class $\cm_{pq}$ as shown in \eqref{claim} in the proof. One could check that by using the gradient flow of $\ce$ instead of the flow in \eqref{gflow2}, it
  is not possible to conclude the claim \eqref{claim} in the proof.
  This is one of the main difference from the paper \cite{cd19} for the $2$-dimensional case.
\end{rem}

\section{Existence of min-max geodesic lines}

%

In  this section, we assume that
 $(M,g,o)=(\mathbb R^n,g,o)$ be an asymptotically conical manifold with opening angle
 $$\az \in (0,\frac\pi 2) \setminus \{\arcsin \frac1{2k+1} \}_{k \in \nn}$$
 and asymptotic rate $\mu>0$.
We assume $\ez$ and $\kz$ satisfy \eqref{std1}, \eqref{std2} and \eqref{std3} throughout this section.
In the polar coordinate of $\rr^n$, given any $a=(1,\theta)\in S^{n-1}$, denote by $a^\star=(1,\theta^\star)$ where we recall $\theta^\star$ is the antipodal direction of $\tz$.
Write the line $G=G_a\cup G_{a^\star}$.

Given any $\rho>R_\kz$
write
$$\mbox{$p=\rho a=(\rho,\theta)$ and $q=\rho a^\star=(\rho,\theta^\star)$}.$$
Thus $p,q \in  M_{\le \kz}$.
We apply Proposition \ref{p14} to get at least one geodesic segment $\Gz_\rho \in \ccc_{pq}$ such that
\begin{equation}\label{length0}
  \ce(\Gz_\rho) \ge (1-\ez)^2 4\rho^2.
\end{equation}
In addition, Proposition \ref{index} implies that $\Gz_\rho$ has Morse index $\le n-1$.

\subsection{Uniform estimate for distances between $o$  and minmax geodesic segments}

\begin{lem}\label{unif}
We have
$$C_{\rm minmax}:=\sup_{\rho>R_\kz}\dist(\Gz_\rho,o)<\infty.$$
\end{lem}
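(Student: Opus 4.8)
The plan is to argue by contradiction. Suppose the supremum is infinite; then there exists a sequence $\rho_i \to \fz$ with $\dist(\Gz_{\rho_i},o) \to \fz$. The idea is to extract geometric information by blowing down along this sequence and to contradict the characterisation of geodesic segments of the standard cone $\ccc_\az^n$ obtained in Section 2. Concretely, set $d_i := \dist(\Gz_{\rho_i},o)$, so $d_i \to \fz$ and $d_i \le \rho_i$ (since the endpoints $p,q$ lie at Euclidean distance $\rho_i$ from $o$). I would rescale by the factor $d_i$, passing to the rescaled manifolds $M^{(d_i)} = (\rr^n, g^{(d_i)})$, and consider the rescaled curves $\bar \Gz_i := \mathcal D_{d_i^{-1}} \circ \Gz_{\rho_i}$. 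By Lemma \ref{convg}, $g^{(d_i)} \to g_{\ccc_\az^n}$ locally uniformly in $C^2$, so in the limit we are working in the standard cone.

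\textbf{Key steps.} First I would record the basic scaling facts: after rescaling, $\bar \Gz_i$ is a geodesic segment of $(M, g^{(d_i)})$ whose minimal distance to $o$ equals $1$ (by the definition of $d_i$), and whose endpoints $\mathcal D_{d_i^{-1}}(p_i), \mathcal D_{d_i^{-1}}(q_i)$ are still a pair of antipodal points at Euclidean distance $\rho_i/d_i \ge 1$ from $o$. Second, I would apply the uniform length bound: since the exponential maps of each $(M,g^{(d_i)})$ are proper (Remark \ref{gen}), Lemma \ref{geoflow} together with Remark \ref{gen} gives a uniform upper bound on the length of the portion of $\bar \Gz_i$ inside any fixed ball $\overline{B(o,r)}$. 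This equicontinuity and the tangency to $\pa B(o,1)$ let me invoke Arzelà--Ascoli and the $C^2$-convergence $g^{(d_i)} \to g_{\ccc_\az^n}$ to extract (up to subsequence) a limit geodesic segment (or geodesic line, after suitable reparametrisation) $\Gz_\fz$ of the standard cone $\ccc_\az^n$ that stays outside the open ball $B(o,1)$ but touches $\pa B(o,1)$. Third, I would analyse $\Gz_\fz$ using Corollary \ref{line3} and Lemma \ref{line2}: the limiting geodesic of the cone that is tangent to $\pa B(o,1)$ has a definite asymptotic angle $\mathbf K_\az$ between its two ends, and it corresponds to the immersed geodesic segments $\Gz_\std$ in the cone with positive winding number described in Remark \ref{az}. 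The energy (hence length) lower bound \eqref{length0}, namely $\ce(\Gz_{\rho_i}) \ge (1-\ez)^2 4\rho_i^2$, forces the length of $\bar \Gz_i$ to be essentially $2\rho_i/d_i$ after rescaling, which must be matched against the length the cone geometry actually permits for a geodesic joining two antipodal points at the given radius while staying tangent to $\pa B(o,1)$. Comparing these two lengths produces the contradiction, and the exclusion $\az \notin \{\arcsin \frac{1}{2k+1}\}$ is exactly what guarantees the comparison is strict (this is where the winding-number threshold enters).

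\textbf{Main obstacle.} The hard part will be the third step: correctly identifying which geodesic object of $\ccc_\az^n$ the blow-down limit $\Gz_\fz$ must be, and then extracting a quantitative length or angle inequality that is violated. The subtlety is that $\Gz_\fz$ need not be a genuine geodesic line — it could be a geodesic segment whose endpoints escape to infinity at a possibly different rate than $d_i$ — so I expect the proof to split into cases according to whether $\rho_i/d_i$ stays bounded or tends to infinity, mirroring the case analysis (Cases 1--2, Subcases 2-1/2-2) in the proof of Lemma \ref{geoflow}. In each case I must rule out the limit being consistent with the length lower bound \eqref{length0} and the geometry of antipodal geodesic segments in the cone characterised via winding number. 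Managing the immersed (self-intersecting) nature of the comparison geodesics $\Gz_\std$ — since we cannot reduce to embedded curves as in dimension $2$ — is precisely the difficulty flagged in Remark \ref{az}, and handling it carefully for all admissible $\az$ is where the bulk of the technical work lies.
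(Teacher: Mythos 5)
Your overall strategy --- argue by contradiction, set $\hat\rho_i:=\dist(\Gz_{\rho_i},o)$, blow down, and compare against the classification of antipodal geodesic segments of $\ccc_\az^n$ --- is the same as the paper's, and your proposed case split according to whether $\rho_i/\hat\rho_i$ stays bounded is exactly the right dichotomy. In the bounded case your length comparison is essentially the paper's Case 1: after rescaling the curves live in a fixed annulus, the limit is one of the segments $\Gz_k^\pm$ of Lemma \ref{antigeo}, whose normalised length is at most $2(1-2\ez)$ by \eqref{k13}, contradicting the lower bound \eqref{length0}.

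However, in the regime $\rho_i/\hat\rho_i\to\fz$ (the paper's Case 2) the length comparison you propose does not close the argument, and this is a genuine gap. After rescaling by $\hat\rho_i$ the endpoints escape to infinity, the blow-down limit is a full geodesic line of the cone, and an unbounded limiting length is perfectly consistent with $\ell_g(\Gz_{\rho_i})\approx 2\rho_i$; there is no finite ``permitted length'' to compare against. The paper's actual mechanism is an angle contradiction run at two different scales: blowing down the middle portion $\Gz_{\rho_i}|_{[t^-_{\rho_i,N_0\hat\rho_i},\,t^+_{\rho_i,N_0\hat\rho_i}]}$ by $\hat\rho_i$ and applying Corollary \ref{line3} shows the entering and exiting velocities make an angle close to $\mathbf{K}_\az<\pi$, while blowing down the two outer portions by $\rho_i$ shows they converge to the generatrix segments $G_a$ and $G_{a^\star}$ (this is where $N_0\hat\rho_i/\rho_i\to 0$ is used), which forces that same angle to tend to $\pi$. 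You correctly identify $\mathbf{K}_\az$ and Corollary \ref{line3} as relevant, but the second half of the comparison --- the convergence of the outer arcs to generatrices --- is absent from your outline, and without it no contradiction is produced. Relatedly, the exclusion $\az\notin\{\arcsin\frac{1}{2k+1}\}_{k\in\nn}$ is used precisely to guarantee the strict inequality $\mathbf{K}_\az<\pi$ in \eqref{lepi}; it is an angle condition, not a strictness condition in a length comparison as you suggest.
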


\begin{proof}
  We argue by contradiction.
  Assume that
  $$\sup_{\rho>R_\kz}\dist(\Gz_\rho,o)=\infty.$$
  Then there exists $\{\rho_j\}_{j\in\nn}$ such that $\rho_j\to\infty$
 and
 $ \lim_{j\to\infty}\dist(\Gz_{\rho_j},o)=\infty.$
  Write
  \begin{equation}\label{hat}
    \hat \rho_j:=\dist(\Gz_{\rho_j},o)= \sup \{r> 0 : B(o,r) \cap \Gz_{\rho_j} = \emptyset \}.
  \end{equation}
    Note that $\hat\rho_j \le \rho_j$ and
  \begin{equation}\label{ass1}
    \liminf_{j \to \fz}\hat \rho_j = +\fz.
  \end{equation}
  To conclude some contradiction, we consider  the following two cases:
  $$\liminf_{j\to\infty} \frac{\hat \rho_j}{\rho_j}>0
  \quad\mbox{and}\quad \liminf_{j\to\infty} \frac{\hat \rho_j}{\rho_j}=0.$$

  \medskip
  \emph{Case 1. $\liminf_{j\to\infty}  {\hat \rho_j}/{\rho_j}>0$.} There exists $D \ge 1$  such that
  \begin{equation}\label{case1}
    \hat\rho_j \ge \frac1{D} \rho_j \quad  \forall j \in \nn.
  \end{equation}
  For each $j \in \nn$, consider the curve
  $$ \wz \Gz_{\rho_j}:= \mathcal{D}_{1/\rho_j}\circ \Gz_{\rho_j},$$
  where we recall the scaling map  from \eqref{slz}.
  Since
  \begin{equation*}
      \mathcal{D}_{1/\rho_j}(p_j)=(\frac{\rho_j}{\rho_j}, \tz)=a\mbox{ and }   \mathcal{D}_{1/\rho_j}(q_j)=(\frac{\rho_j}{\rho_j}, \tz^\star)=a^\star,
  \end{equation*}
 we know that $\wz \Gz_{\rho_j}$ is a geodesic segment in $M^{(1/\rho_j)}= (M, g^{(1/\rho_j)})$ joining $a,a^\star$.
  The assumption \eqref{case1} implies that
  $$  \widetilde \Gz_{\rho_j} \subset \overline{B(o,1)} \setminus B(o,1/D).$$
   Thanks to Lemma \ref{convg}, there exists a subsequence $\nn_1 \subset \nn$ such that
  $ \{\wz \Gz_{\rho_j}\}_{j \in\nn_1}$ converges in $C^2$-norm to some geodesic segment $\Gz$  in $\mathcal C_\az^n$  joining $a,a^\star$
  and
  $$ \Gz \subset \overline{B(o,1)} \setminus B(o,1/D).$$
  Since $\Gz$ does not passing through the origin $o$ of $\mathcal C_\az^n$, from \eqref{nantigeo} (and also \eqref{gp2} in Lemma \ref{antigeo}) it follows that
  $\Gz$ is $ \Gz_k^\pm$ as given therein, and hence
 \begin{equation*}
    \ell_{\mathcal C_\az^n}(\Gz)  \in\left \{ 2 \sin\frac{(2k-1)\pi\sin\az}{2}: k=1, \cdots, \lfloor\frac{1}{2\sin\az}-\frac12\rfloor\right  \} .
  \end{equation*}
By our choice of $\ez$ as in \eqref{std1},
  we know that
   \begin{equation} \label{k13}
    \ell_{\mathcal C_\az^n}(\Gz) \le \max_{k=1, \cdots, \lfloor\frac{1}{2\sin\az}-\frac12\rfloor  }\left \{2 \sin\frac{(2k-1)\pi\sin\az}{2} \right\} \le 2(1-2\ez).
  \end{equation}

  On the other hand, recalling \eqref{length0}, we know that
  $$  \ell_{g^{1/\rho_j}}(\Gz_{\rho_j}) = \frac1{\rho_j} \ell_{g}(\Gz_{\rho_j}) \ge 2(1-\ez), \quad  j \in \nn.   $$
  Since the metric $g^{(1/\rho_j)}$ converges to $g_{\mathcal C_\az^n}$ in $C^2$-norm, we deduce that
  $$  \ell_{\mathcal C_\az^n}(\Gz) = \lim_{\nn_1 \ni j \to \fz} \ell_{\mathcal C_\az^n}(\Gz_{\rho_j})= \lim_{\nn_1 \ni j \to \fz}\ell_{g^{(1/\rho_j)}}(\Gz_{\rho_j})\ge 2(1-\ez). $$
  This contradicts with \eqref{k13}. 

\medskip
\emph{Case 2. $ \limsup_{j\to\infty} {\hat \rho_j}/{\rho_j}=0$.}
There exists a subsequence $\nn_2 \subset \nn$ such that
\begin{equation}\label{case2}
   \lim_{\nn_2 \ni j \to \fz} \frac{\hat\rho_j}{\rho_j} =0.
\end{equation}
Thus for any $N\ge 2$, there exists $m_N$ such that
\begin{equation}\label{case22}
  \hat\rho_j < \frac1N \rho_j , \quad \forall j \in \nn_2 \mbox{ and } j\ge m_N.
\end{equation}
  For each $j \in \nn_2$ and $ j\ge m_N$, by the definition of $\hat\rho_j$ in \eqref{hat},
   there exists $\hat t_{\rho_j} \in (0,1)$ such that $$|\Gz_{\rho_j}(\hat t_{\rho_j})|= \hat \rho_j.$$
Define
  $$   t_{\rho_j,N}^-:= \max \{t \in [0,\hat t_{\rho_j}]:|\Gz_{\rho_j}(t)|= N\hat\rho_j \}, \quad  t_{\rho_j,N}^+:= \min \{t \in [\hat t_{\rho_j},1]:|\Gz_{\rho_j}(t)|=N\hat\rho_j\}. $$
Since
$$ \hat\rho_j <\frac1N \rho_j \mbox{ and
 } |\Gz_{\rho_j}(0)|=|p_j|= |q_j|=|\Gz_{\rho_j}(1)|=\rho_j,$$
one has
  $$0<t_{\rho_j,N}^-<\hat t_{\rho_j}<t_{\rho_j,N}^+<1.$$
  Moreover, one has
  \begin{equation}\label{nball}
    \Gz_{\rho_j}([t_{\rho_j,N}^-,t_{\rho_j,N}^+]) \subset \overline{B(o,N\hat\rho_j)}, \quad \forall j \in \nn_2 \mbox{ and } j\ge m_N.
  \end{equation}
 Observe that
  our assumption
  $\sin\az \notin \{1/(2k+1)\}_{k \in \nn}$
implies
$(2k+1)\pi\sin\alpha \ne \pi$ for any $k\in\nn$. In particular, letting $k=K_\az$ and $k=K_\az-1$ respectively, we have
 $$ 2\pi K_\alpha\sin\alpha - \pi \ne \pm\pi\sin\alpha.$$
 Hence recalling ${\bf K}_\alpha \in [0,\pi]$ from Lemma \ref{line2} (also Lemma \ref{line1}), we have
 \begin{equation}\label{lepi}
   {\bf K}_\alpha=\frac{|2\pi K_\alpha \sin \az - \pi|}{\sin \az}<\pi.
 \end{equation}

  We will conclude the contradiction by showing the following two claims and thus complete the proof of Lemma \ref{unif}.

\medskip
\noindent{\it Claim I.}  There is a large ${N_0}\in\nn$ and a subsequence $\nn^\ast \subset \nn_2$ so that
\begin{align}\label{angleest1}
  \angle(-\dot\Gz_{\rho_j}(t^-_{\rho_j,N_0}), \dot \Gz_{\rho_j} (t^+_{\rho_j,N_0}))
 \in  [\mathbf{K}_\az -\frac1{10}(\pi-\mathbf{K}_\az),\mathbf{K}_\az +\frac1{10}(\pi-\mathbf{K}_\az)], \quad j\in\nn^\ast.
\end{align}

\noindent {\it Claim II.}
There is a subsequence $\nn^{\ast\ast}\subset\mathbb N^\ast$ so that
\begin{align}\label{angleest2} \lim_{\nn^{\ast\ast}\ni j\to\infty}\angle(-\dot\Gz_{\rho_j}(t^-_{\rho_j,N_0}) , \dot \Gz_{\rho_j} (t^+_{\rho_j,N_0})) =\pi.
\end{align}

Below, we prove the two claims.

\medskip {\it Proof of Claim I.}
For any $N \ge 2$, write
  $ \Gz_{\rho_j,N}:[0,1]\to M$ as the reparametrization of
  $\Gz_{\rho_j}|_{[t_{\rho_j,N}^-,t_{\rho_j,N}^+]}$ with a constant multiple of arc-length,
  that is,
   $$ \Gz_{\rho_j,N}(t):= \Gz_{\rho_j}
    ((t_{\rho_j,N}^+ - t_{\rho_j,N}^-) t + t_{\rho_j,N}^-) \quad t \in [0,1].$$
   For each $N\ge 2$ and $j \in \nn_2$ with $j \ge m_N$,  define
  $$ \Gz_{\rho_j,N}^\ast:= \mathcal{D}_{1/\hat\rho_j} \circ \Gz_{\rho_j,N}$$
  Since $\mathcal{D}_{1/\hat\rho_j}$ is conformal, one has
  \begin{equation}\label{conformal}
    \angle(-\dot\Gz_{\rho_j}(t^-_{\rho_j,N_0}), \dot \Gz_{\rho_j} (t^+_{\rho_j,N_0}))= \angle(-\dot\Gz_{\rho_j,N}(0),\dot\Gz_{\rho_j,N}(1))=
    \angle(-\dot\Gz^\ast_{\rho_j,N}(0),\dot\Gz^\ast_{\rho_j,N}(1)).
  \end{equation}
  To get the claim \eqref{angleest1}, it then suffices to find a large ${N_0}\in\nn$ and subsequence $\nn^\ast \subset \nn_2$ so that
\begin{align}\label{angleest} \angle(-\dot\Gz_{\rho_j,N_0}(0),\dot\Gz_{\rho_j,N_0}(1)) \in [\mathbf{K}_\az -\frac1{10}(\pi-\mathbf{K}_\az),\mathbf{K}_\az +\frac1{10}(\pi-\mathbf{K}_\az)]\quad j\in\nn^\ast.
\end{align}

 We prove \eqref{angleest} as below. See Figure \ref{ngamma} for an illustration of the following proof.
   \vspace*{1pt}
\begin{figure}[h]
\centering
\includegraphics[width=11cm]{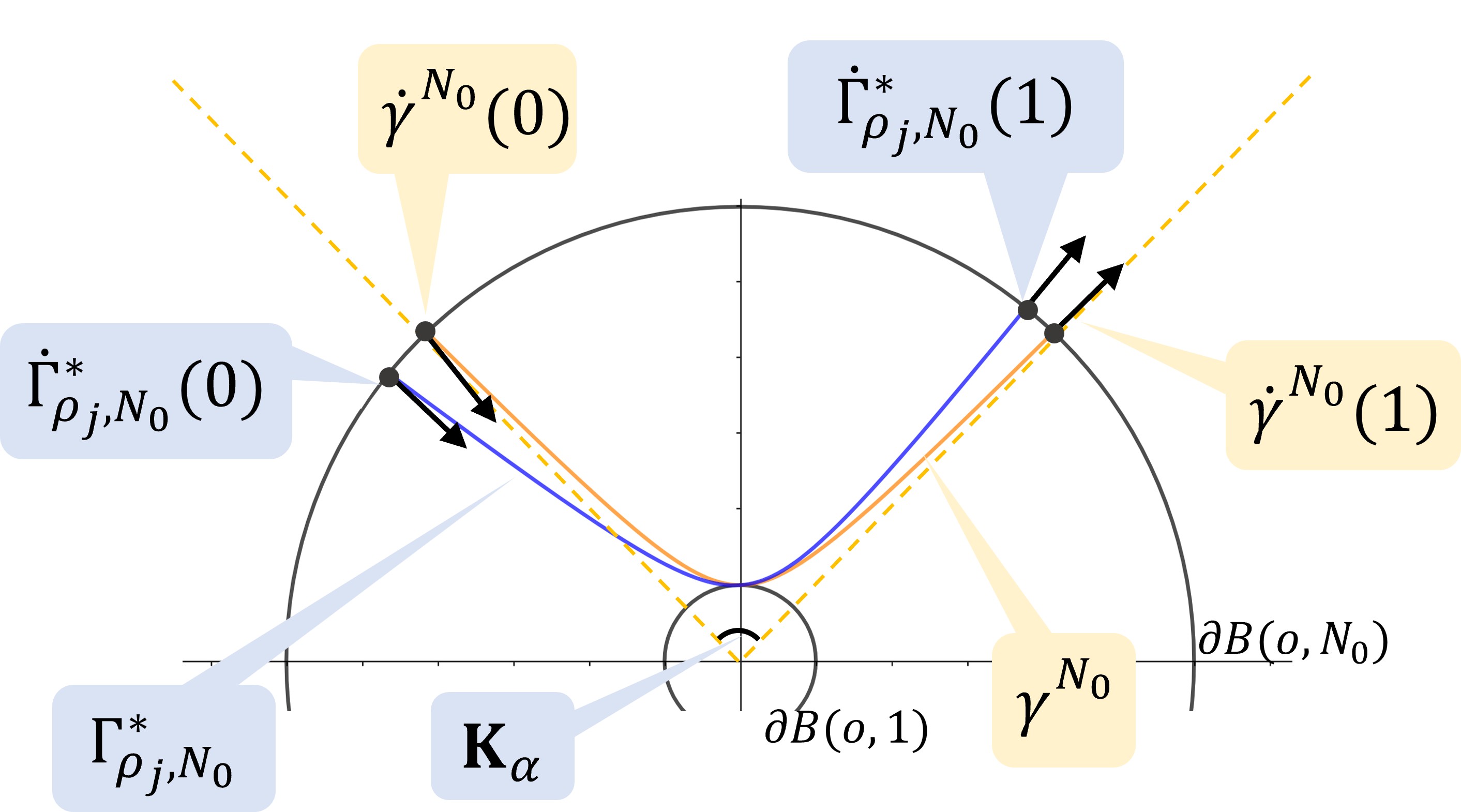}
\caption  {The estimate of angle $\angle(-\dot\Gz^\ast_{\rho_j}(0),\dot\Gz^\ast_{\rho_j}(1))$.}
\label{ngamma}
\end{figure}
Note  that $\Gz_{\rho_j,N}^\ast$ is a geodesic segment
 in $M^{(1/\hat\rho_j)}=(M, g^{(1/\hat\rho_j)})$
parameterised with a constant multiple of arc-length and by \eqref{nball},
\begin{equation}\label{cor-1}
   \Gz_{\rho_j,N}^\ast([0,1]) \subset \overline{B(o,N)} \setminus B(o,1)
\end{equation}
  with
\begin{equation}\label{cor-2}
    \Gz_{\rho_j,N}^\ast(\{0,1\})\subset \pa B(o,N) \ \mbox{and}\
   \Gz_{\rho_j,N}^\ast([0,1])\cap \partial B(0,1) \ne\emptyset.
\end{equation}
 There exists a subsequence $\nn_3^{(N)} \subset \{j\in \nn_2 : j\ge m_N\}$ such that
 $\{\Gz_{\rho_j,N}^\ast\}_{j \in \nn_3^{(N)}}$ converges in the $C^2$-norm to a geodesic segment $\gz^N:[0,1]\to\mathcal C_\az^n$ with a constant multiple of arc-length parmetrization and \eqref{cor-1} and \eqref{cor-2} imply that
 $$  \gz^N \subset \overline{B(o,N)} \setminus B(o,1) \mbox{ with } \gz^N(0),\gz^{N}(1) \in \pa B(o,N) \
 \mbox{and}\    \gz^N([0,1])\cap \partial B(0,1) \ne\emptyset.$$
We apply Corollary \ref{line3} to the family $\{\gz^{N}\}_{N \in \nn}$ to deduce that
 $$ \lim_{N \to \fz} \angle(-\dot\gz^{N}(0),\dot\gz^{N}(1)) = \mathbf{K}_\az< \pi$$
where for the last inequality we recall \eqref{lepi}.

 We fix $N_0$ large enough such that
 $$  \angle(-\dot\gz^{N_0}(0),\dot\gz^{N_0}(1)) \in [\mathbf{K}_\az -\frac1{20}(\pi-\mathbf{K}_\az),\mathbf{K}_\az +\frac1{20}(\pi-\mathbf{K}_\az)].$$
 Since $\{\Gz_{\rho_j,N_0}^\ast\}_{j \in \nn_3 ^{(N_0)}}$ converges in the $C^2$-norm to $\gz^{N_0}$, for all sufficiently large $j \in \nn_3^{(N_0)}$, denoted by $j\in \nn^\ast$, one has
 $$ \angle(-\dot\Gz^\ast_{\rho_j,N_0}(0),\dot\Gz^\ast_{\rho_j,N_0}(1)) \in [\mathbf{K}_\az -\frac1{10}(\pi-\mathbf{K}_\az),\mathbf{K}_\az +\frac1{10}(\pi-\mathbf{K}_\az)].$$

For $j\in\nn^\ast$, recalling the second equality in \eqref{conformal}, we arrive at
\eqref{angleest}, thus conclude Claim I as desired.

\medskip {\it Proof of Claim II.} For each $j \in \nn^\ast$,
   write   $ \Gz^+_{\rho_j,N_0}:[0,1]\to M$ as the reparametrization of
  $\Gz_{\rho_j}|_{[t_{\rho_j,N_0}^+,1]}$ with a constant multiple of arc-length,
  that is,
   $$ \Gz_{\rho_j,N_0}^+(t):= \Gz_{\rho_j}
    ((1-t_{\rho_j,N_0}^+ ) t + t_{\rho_j,N_0}^+), \quad t \in [0,1].$$
   Also,  write     $ \Gz^-_{\rho_j,N_0}:[0,1]\to M$ as the reparametrization of
  $\Gz_{\rho_j}|_{[0,t_{\rho_j,N_0}^-]}$ with a constant multiple of arc-length,
  that is,
   $$ \Gz_{\rho_j,N_0}^-(t):= \Gz_{\rho_j}
    (  t_{\rho_j,N_0}^- t ), \quad t \in [0,1].$$
   Observing that
   $$ \Gz_{\rho_j}(t^-_{\rho_j,N_0}) = \Gz^-_{\rho_j,N_0}(1) \mbox{ and }   \Gz_{\rho_j}(t^+_{\rho_j,N_0}) = \Gz^+_{\rho_j,N_0}(0),$$
   we have
    $$\angle(-\dot\Gz_{\rho_j}(t^-_{\rho_j,N_0}), \dot\Gz_{\rho_j} (t^+_{\rho_j,N_0}))=
     \angle (\dot \Gz^-_{\rho_j,N_0}(1), \dot \Gz^-_{\rho_j,N_0}(0)).$$
    To see \eqref{angleest2} in Clain II, it then suffices to find
     a subsequence $\nn^{\ast\ast}\subset\mathbb N^\ast$ so that
     \begin{align}\label{angleest3}
     \lim_{\nn^{\ast\ast}\ni j\to\infty}
    \angle (-\dot \Gz^-_{\rho_j,N_0}(1), \dot \Gz^+_{\rho_j,N_0}(0))=\pi.
    \end{align}
To see \eqref{angleest3},
 for $j \in \nn^\ast$, set
$$  \Gz_{j}^\pm:= \mathcal{D}_{\rho_j^{-1}}\circ \Gz_{\rho_j,N_0}^\pm.
$$
We first note that, for all $j \in \nn^\ast$,
$$  \Gz_{j}^-(0)= \mathcal{D}_{\rho_j^{-1}}\circ \Gz^-_{\rho_j,N_0}(0) = \mathcal{D}_{\rho_j^{-1}}\circ \Gz_{\rho_j}(0) = \mathcal{D}_{\rho_j^{-1}}(p_j) =a,$$
$$   \Gz_{j}^+(1)= \mathcal{D}_{\rho_j^{-1}}\circ \Gz^+_{\rho_j,N_0}(1) = \mathcal{D}_{\rho_j^{-1}}\circ \Gz_{\rho_j}(1) = \mathcal{D}_{\rho_j^{-1}}(q_j)=a^\star,$$
$$  \Gz_{j}^-(1)= \mathcal{D}_{\rho_j^{-1}}\circ \Gz^-_{\rho_j,N_0}(1)= \mathcal{D}_{\rho_j^{-1}}\circ \Gz_{\rho_j}(t_{\rho_j,N_0}^-) \in \pa B(o,\frac{\hat \rho_j N_0}{\rho_j}) $$
and
$$ \Gz_{j}^+(0)= \mathcal{D}_{\rho_j^{-1}}\circ \Gz^+_{\rho_j,N_0}(0)= \mathcal{D}_{\rho_j^{-1}}\circ \Gz_{\rho_j}(t_{\rho_j,N_0}^+)\in \pa B(o,\frac{\hat\rho_j N_0}{\rho_j}).$$
Recalling \eqref{case2} and \eqref{case22}, we know
$$ \lim_{\nn_2 \supset \nn^\ast \ni j \to \fz} \frac{\hat\rho_j N_0}{\rho_j} = 0.$$
As a consequence,
$$ \lim_{j \to \fz} \Gz_{j}^-(1) = \lim_{j \to \fz} \Gz_{j}^+(0) = o$$
and there exists a subsequence $\nn^{\ast\ast} \subset \nn^\ast$ such that
$\{\Gz_{j}^-\}_{j \in \nn^{\ast\ast}}$ (resp. $\{\Gz_{j}^+\}_{j \in \nn^{\ast\ast}}$) converges in $C^2$ to the geodesic segment in $\mathcal C_\az^n$ joining $a$ and $o$ (resp. $o$ and $a^\star$).
  See Figure \ref{ngamma2} for an illustration for the following proof.
  \vspace*{1pt}
\begin{figure}[h]
\centering
\includegraphics[width=11cm]{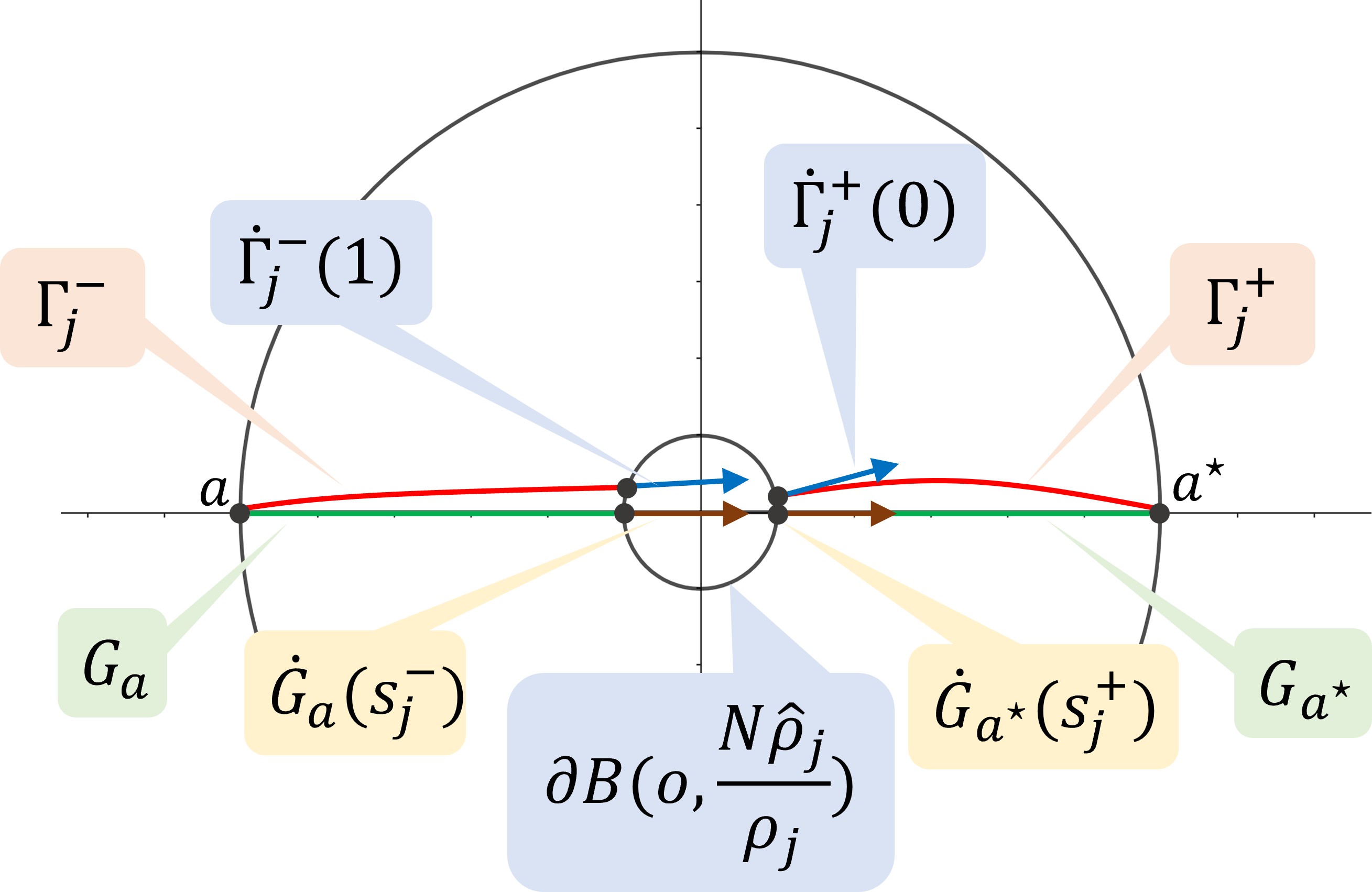}
\caption  {The estimate of angle $\angle(-\dot \Gz_j^-(1),p)$ and $\angle(\dot \Gz_j^+(0),q)$.}
\label{ngamma2}
\end{figure}
Note that the only geodesic segment in $\mathcal C_\az^n$ joining $a$ and $o$
is the part of the generatrix $G_a$ starting at $a$ and terminating at $o$.
Parameterising $G_a$ as
$$G_a:[0,1] \to \mathcal C_\az^n, \quad G_a(s) = (1-s)a.$$
Observe that
\begin{equation}\label{gp0}
  \angle (G_a(0),  G_a(s) )= \angle (G_a(s), -\dot G_a(s) )= 0  \quad \forall s \in [0,1].
\end{equation}
Let $s_j^- \in [0,1]$ be the point such that
$$  G_a(s_j^-) \in \pa B(o,\frac{\hat\rho_j N_0}{\rho_j}).  $$
Since $\{\Gz_{j}^-\}_{j \in \nn^{\ast\ast}}$  converge in $C^2$ to $G_a$, we deduce that
$$   |\dot \Gz_{j}^-(1)-\dot G_a(s_j^-)| \to 0, \quad \nn^{\ast\ast} \ni j \to \fz.  $$
Combining \eqref{gp0}, we have
 \begin{equation}\label{gpa}
  \angle(-\dot \Gz_{j}^-(1),a)= \angle(-\dot \Gz_{j}^-(1),G_a(0)) = \angle(-\dot \Gz_{j}^-(1),G_a(s_j^-)) \to 0, \quad \mbox{as $\nn^{\ast\ast} \ni  j \to \fz.$}
 \end{equation}
Similarly, let
$$G_{a^\star}:[0,1] \to \mathcal C_\az^n, \quad G_{a^\star}(s) = s a^\star$$
and $s_j^+ \in [0,1]$ such that
$$  G_a(s_j^+) \in \pa B(o,\frac{\hat\rho_j N_0}{\rho_j}).  $$
We have
\begin{equation}\label{gqa}
  \angle(\dot \Gz_{j}^+(0),a^\star)= \angle(\dot \Gz_{j}^+(0),G_{a^\star}(1)) = \angle(\dot \Gz_{j}^+(0),G_{a^\star}(s_j^+)) \to 0, \quad \mbox{as $\nn^{\ast\ast} \ni  j \to \fz.$}
 \end{equation}

 Recalling that $\mathcal{D}_{\rho_j^{-1}}$ is conformal for each $j$, we deduce from \eqref{gpa} and \eqref{gqa} that
 \begin{equation}\label{anglepi}
  \angle(-\dot \Gz^-_{\rho_j,N_0}(1),p_j)= \angle(-\dot \Gz_{j}^-(1),a)  \to 0,  \quad \mbox{as $\nn^{\ast\ast} \ni  j \to \fz.$}
   \end{equation}
  \begin{equation}\label{anglepi2}
 \angle(\dot \Gz^+_{\rho_j,N_0}(0),q_j)= \angle(\dot \Gz_{j}^+(0),a^\star)  \to 0, \quad \mbox{as $\nn^{\ast\ast} \ni  j \to \fz.$}
 \end{equation}
Since $a$ and $ a^\star$ lie in the same line $G=G_a\cup G_{a^\star}$, we have $\angle(a,a^\star)= \pi$. Thus we deduce from \eqref{anglepi} and \eqref{anglepi2} that
\begin{equation*}
  \angle(-\dot \Gz^-_{\rho_j,N_0}(1),\dot \Gz^+_{\rho_j,N_0}(0))  \to \pi,  \quad \mbox{as} \ \nn^{\ast\ast} \ni j \to \fz,
 \end{equation*}
 which gives \eqref{angleest3} and thus concludes Claim II as desired.


The proof is complete.
\end{proof}

\subsection{Non-twisting phenomenon (angles estimates) of min-max geodesic segments}

%

For  any  $r>C_\minm$ and $\rho>\max\{R_\kappa,r\}$, set
\begin{equation}\label{trhor}t^-_{\rho,r}:= \min \{t \in [0,\ell_{g}(\Gz_\rho)]:|\Gz_\rho(t)|= r \} \quad\mbox{and} \quad  t^+_{\rho,r}:= \max \{t \in [0,\ell_{g}(\Gz_\rho)]:|\Gz_\rho(t)|= r \}.
\end{equation}
Observe that the collection
    $$ \left\{  \frac{\Gz_{\rho}(t^\pm_{\rho,r})}{r} :\mbox{ $r>C_\minm$ and $\rho>\max\{R_\kappa,r\}$} \right\}\subset\mathbb S^{n-1}.
    $$
 Regards of the possible limits of this set when $r\to\infty$, we have the following result.

 \begin{lem} \label{nontwist}
For any  $\dz>0$, there exists $R_\dz\ge R_\kz$ such that
for all $  r>R_\dz$ and all $ \rho >r$,
\begin{equation}\label{ledz}
  \left |\frac{\Gz_{\rho } (t^-_{\rho,r})}{r} - a\right|+\left |\frac{\Gz_{\rho } (t^+_{\rho,r})}{r} - a^\star\right| \le \dz.
\end{equation}
 \end{lem}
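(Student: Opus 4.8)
The plan is to argue by contradiction: from a failure of \eqref{ledz} I extract a sequence of min-max segments whose behaviour near the sphere $\pa B(o,r)$ can be compared, after two different rescalings, with geodesics of the standard cone $\mathcal C_\az^n$, the non-twisting then being forced by cone geometry together with the uniform closest-approach bound of Lemma \ref{unif}. Concretely, if \eqref{ledz} were false there would be $\dz_0>0$ and sequences $r_j\to\fz$, $\rho_j>r_j$, with
$$\left|\frac{\Gz_{\rho_j}(t^-_{\rho_j,r_j})}{r_j}-a\right|+\left|\frac{\Gz_{\rho_j}(t^+_{\rho_j,r_j})}{r_j}-a^\star\right|>\dz_0 .$$
Passing to a subsequence and using the reflection symmetry interchanging $a$ and $a^\star$ (which exchanges the roles of $t^-$ and $t^+$), I may assume the first term exceeds $\dz_0/2$. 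Writing $w_j:=\Gz_{\rho_j}(t^-_{\rho_j,r_j})\in\pa B(o,r_j)$ and $u_j:=w_j/r_j\in\mathbb S^{n-1}$, after a further subsequence $u_j\to u_\fz$ with $|u_\fz-a|\ge\dz_0/2$, so $u_\fz\ne a$. Recall that $t^-_{\rho_j,r_j}$ is the \emph{first} time $\Gz_{\rho_j}$ meets $\pa B(o,r_j)$, so the incoming arc $\Gz_{\rho_j}|_{[0,t^-_{\rho_j,r_j}]}$ runs from $p_j=\rho_j a$ to $w_j$ inside $\{|x|\ge r_j\}$, while by Lemma \ref{unif} the closest-approach radius $\hat\rho_j:=\dist(\Gz_{\rho_j},o)$ stays bounded by $C_{\rm minmax}$. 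The goal is to prove $u_\fz=a$.

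I would first treat the case $\liminf_j r_j/\rho_j>0$. Here I blow down at scale $\rho_j$, setting $\Gz^\flat_j:=\mathcal{D}_{1/\rho_j}\circ\Gz_{\rho_j}$, a geodesic of $M^{(\rho_j)}$ joining $a$ and $a^\star$; by Lemma \ref{lbddlz} and \eqref{length0} its $g^{(\rho_j)}$-length is at most $2(1+C_\star\ez)$, and its minimal radius $\hat\rho_j/\rho_j\to0$. By Lemma \ref{convg} and Arzela--Ascoli a subsequence converges to a cone geodesic from $a$ to $a^\star$ through $o$ of length $\le 2(1+C_\star\ez)$, which by Lemma \ref{totalg} and the generatrix description must be $G_a\ast G_{a^\star}$, the incoming half converging to the generatrix $G_a$ in $C^2_{loc}$ away from $o$. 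Since $r_j/\rho_j$ is bounded below, the blown-down point $w_j/\rho_j$ sits at radius $r_j/\rho_j$ in this good region, hence converges to a point of $G_a$, whose direction is $a$; thus $u_j\to a$, contradicting $u_\fz\ne a$.

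The remaining case $r_j/\rho_j\to 0$ is the crux. Now the relevant point $w_j$ is at radius $r_j\ll\rho_j$, so it degenerates to the apex under the scale-$\rho_j$ blow-down and one must blow down at scale $r_j$: $\widetilde\Gz_j:=\mathcal{D}_{1/r_j}\circ\Gz_{\rho_j}$, with $\widetilde\Gz_j(t^-_{\rho_j,r_j})=u_j\in\pa B(o,1)$ and minimal radius $\hat\rho_j/r_j\to0$ by Lemma \ref{unif}. The middle arc of $\widetilde\Gz_j$ inside $\overline{B(o,1)}$ has uniformly bounded $g^{(r_j)}$-length, by the properness-based estimate of Lemma \ref{geoflow} in the form of Remark \ref{gen} (applied with $\lz_j=r_j\to\fz$), and therefore converges to a concatenation of two generatrices through $o$; since cone geodesics reaching the apex are generatrices, this forces the inward velocity of $\widetilde\Gz_j$ at $u_j$ to be radial, so the limiting incoming cone-geodesic ray from $u_\fz$ has closest-approach radius $0$, i.e.\ it is the generatrix $G_{u_\fz}$. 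To identify $u_\fz$ with $a$ I would compare with the far endpoint: because the normalized closest approach $\hat\rho_j/r_j\to0$, the cone turning estimate (the position direction of a planar cone geodesic at radius $r$ differs from its asymptotic direction by $\arcsin(\hat\rho_j/r)/\sin\az$, which tends to $0$ uniformly for $r\ge r_j$, and which is the quantitative content behind Lemma \ref{ab} and Corollary \ref{line3}) shows the position direction cannot rotate along the incoming arc between radius $r_j$ and radius $\rho_j$; as the direction at $\rho_j$ equals $a$, we conclude $u_\fz=a$, again a contradiction.

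The main obstacle is precisely this last bridging step in the case $r_j/\rho_j\to0$: the scale-$r_j$ blow-down controls the geodesic only near $\pa B(o,r_j)$ (giving radial, generatrix-like behaviour at $w_j$), whereas the direction $a$ is read off at the far scale $\rho_j$, and the two scales do not overlap since $\rho_j/r_j\to\fz$. The subtlety is that a single small metric perturbation can accumulate over the long radial range $[r_j,\rho_j]$, so the comparison with one fixed cone geodesic must be made quantitative rather than through a single $C^2_{loc}$-limit. The decisive input making the turning negligible is the uniform bound $\hat\rho_j\le C_{\rm minmax}$ of Lemma \ref{unif}, which guarantees $\hat\rho_j/r_j\to0$, together with the properness of the exponential map (through Lemma \ref{geoflow} and Remark \ref{gen}), which supplies the length bounds needed for every blow-down to converge; the hypothesis $\az\notin\{\arcsin\frac1{2k+1}\}$ enters only through the cone characterizations of Lemmas \ref{antigeo} and \ref{totalg} used above.
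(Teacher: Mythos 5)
Your overall strategy --- contradiction, blow-downs at the two scales $r_j$ and $\rho_j$, the inputs from Lemma \ref{unif}, Lemma \ref{geoflow}/Remark \ref{gen}, and cone geometry --- is the same as the paper's, and your Case 1 ($\liminf_j r_j/\rho_j>0$) together with the first half of your Case 2 (the scale-$r_j$ blow-down forcing the inward velocity $\dot\Gz_{\rho_j}(t^-_{\rho_j,r_j})$ to become radial, i.e.\ asymptotic to $-b$ where $b:=\lim_j\Gz_{\rho_j}(t^-_{\rho_j,r_j})/r_j$ is your $u_\fz$) are sound and coincide with the paper's ``Claim I''. The genuine gap is exactly where you locate ``the main obstacle'': in the case $r_j/\rho_j\to0$ you close the argument by asserting that the cone turning estimate $\arcsin(\hat\rho_j/r)/\sin\az$ controls the rotation of the \emph{position} direction of $\Gz_{\rho_j}$ between radius $r_j$ and radius $\rho_j$. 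That estimate is a statement about geodesics of $\mathcal C_\az^n$, whereas $\Gz_{\rho_j}$ is a geodesic of $g$; over a radial range whose ratio $\rho_j/r_j$ is unbounded there is no single rescaling under which the arc converges to one cone geodesic with control of the position direction at the \emph{inner} radius. Concretely, the scale-$\rho_j$ blow-down gives only $C^0$-closeness $\ez_j\to0$ to the generatrix $G_a$, which translates into an error of order $\ez_j\,\rho_j/r_j$ for the unit vector $\Gz_{\rho_j}(t^-_{\rho_j,r_j})/r_j$, and this need not vanish. Making your turning estimate rigorous would require a dyadic propagation argument (errors $O(R^{-\mu})$ per scale, plus a monotonicity statement of the type of Lemma \ref{delta}, which the paper proves \emph{after}, and using, the present lemma); you acknowledge the difficulty but supply no such argument.

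The paper's resolution is different and bypasses multi-scale propagation entirely: it never tracks the position direction along the arc, but instead derives a contradiction between two statements about the single, scale-invariant \emph{velocity} at $t^-_{\rho_j,r_j}$. The scale-$r_j$ blow-down (your step, the paper's Claim I) gives $\angle(\dot\Gz_{\rho_j}(t^-_{\rho_j,r_j}),-b)\to0$. On the other hand, the outer arc $\Gz_{\rho_j}|_{[0,t^-_{\rho_j,r_j}]}$ stays outside $B(o,r_j)$ by the definition of $t^-$, so its scale-$\rho_j$ blow-down converges in $C^1$ to a cone geodesic from $a$ to $\eta b$ (with $\eta=\lim_j r_j/\rho_j\in[0,1]$) avoiding $B(o,\eta)$, and Lemma \ref{ab} applied to this limit yields $\angle(\dot\Gz_{\rho_j}(t^-_{\rho_j,r_j}),-b)\ge\angle(a,b)-o(1)$ with $\angle(a,b)>0$ since $b\ne a$. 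These two facts about one velocity vector are incompatible. To repair your write-up, replace the ``turning estimate'' bridge by this second, endpoint-velocity computation via Lemma \ref{ab}.
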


 \begin{proof}
   We only show the first term in the left hand side of \eqref{ledz} is bounded by $\dz$; the proof for the second term is similar.
 We argue by contradiction. Under the counter assumption, we will find
a sequence $\{r_i\}_{i \in \nn}$ with $r_i \to \fz$ and
   a   subsequence  $\{\rho_{r_i}\}_{i\in\nn}$ with $\rho_{r_i}\ge r_i$
 such that
   \begin{equation}\label{b=}
     b:=\lim_{i \to \fz} \frac{\Gz_{\rho_{r_i}}(t^-_{\rho_{r_i},r_i})}{r_i} \ne a.
   \end{equation}
For simplicity, in the sequel
 of this
 proof, we write
$$\mbox{$ t^\pm_i= t^\pm_{\rho_{r_i},r_i}$ and
$\gz_i=\Gz_{\rho_{r_i}}$. }$$
   We will show the following two claims which contradict to each other, thus conclude the proof.

   \medskip
   \noindent
   \emph{Claim I.} There exists a subsequence $\nn^\ast \subset \nn$ such that
   $$ \lim_{\nn^\ast \ni i \to \fz} \angle(
\dot\gz_i(t^-_i) , -b) = 0.$$
   \emph{Claim II.}  There exists a subsequence $\nn^{\ast\ast} \subset \nn^\ast$ such that
   $$ \lim_{\nn^{\ast\ast} \ni i \to \fz} \angle(
\dot\gz_i(t^-_i), -b) \ge \angle(a,b)>0.$$

   \medskip
   \noindent
   \emph{Proof of Claim I.}
  For each $i \in  \nn$, recalling that $\gz_i \cap \overline{B(o, C_\minm) }\ne \emptyset$, we  find an $s_{i} \in (0,\ell_{g}(\gz_i))$ such that
   \begin{equation}\label{s=}
     |\gz_i(s_{ i})| \le C_\minm.
   \end{equation}
   By the definition of $t^\pm_{\rho,r}$, we see that
   $t^-_i < s_{i}< t^+_i
$. Let $X_{i} \in C^2([0,1],M)$ be the reparametrization of $\Gz_{ i}|_{[t^-_i
,s_{i}]}$ by a constant multiple of arc-length, that is
   $$ X_{i}(t):= \gz_i((s_{i}-t^-_i)t+ t^-_i
)  \quad\forall t \in [0,1]. $$
   Since $X_{i}$ is a geodesic segment of $(M,g)$, we know that $\mathcal{D}_{r_i^{-1}} \circ X_{i}$ is a geodesic segment of $M^{(r_i)}$.
   Thanks to \eqref{b=} and \eqref{s=}, one has
   $$  \lim_{i \to \fz} \mathcal{D}_{r_i^{-1}} \circ X_{i}(0) = \lim_{i \to \fz} \frac{\gz_i(t^-_i)}{r_i} = b \mbox{ and } \lim_{i \to \fz} \mathcal{D}_{r_i^{-1}} \circ X_{i}(1) = \lim_{i \to \fz} \frac{\gz_i(s_{i})}{r_i}=o.  $$
   Moreover, noting that
   $$  \ell_{g^{(r_i)}} (\mathcal{D}_{r_i^{-1}} \circ X_{i}) = \ell_{g^{(r_i)}} (\mathcal{D}_{r_i^{-1}} \circ \gz_i|_{[t^-_i
,s_{i}]}) \le \ell_{g^{(r_i)}} (\mathcal{D}_{r_i^{-1}} \circ \gz_i|_{[t^-_i,t^+_i]}),$$
   thanks to Lemma \ref{geoflow} and Remark \ref{gen}, we know
   $$ \sup_i \ell_{g^{(r_i)}} (\mathcal{D}_{r_i^{-1}} \circ X_{i}) < \fz.$$
   By the Arzela-Ascolli theorem,
there exists a subsequence $\nn^\ast \subset \nn$ such that $\{\mathcal{D}_{r_i^{-1}} \circ X_{i}\}_{i\in \nn^\ast}$ converges in $C^2$ to a geodesic segment of the standard cone $\ccc_\az^n$ joining $b$ and $o$, which is the unique generatrix segment $G_b: G_b(t)= (1-t)b, \ t \in [0,1]$. As a consequence, one has
   $$  \lim_{\nn^\ast \ni i \to \fz} \angle( (\mathcal{D}_{r_i^{-1}} \circ X_{i})'(0), \dot G_b(0))  = \lim_{\nn^\ast \ni i \to \fz} \angle ((\mathcal{D}_{r_i^{-1}} \circ X_{i})'(0), -b) =0.   $$
   Since $\mathcal{D}_{r_i^{-1}}$ is conformal for each $i$, we deduce that
   $$ \lim_{\nn^\ast \ni i \to \fz} \angle( \dot \gz_i(t^-_i) , -b) = \lim_{\nn^\ast \ni i \to \fz} \angle( \dot X_{i}(0), -b)  = \lim_{\nn^\ast \ni i \to \fz} \angle ((\mathcal{D}_{r_i^{-1}} \circ X_{i})'(0), -b) =0.  $$
   This shows Claim I.

   \medskip
   \noindent
   \emph{Proof of Claim II.} For each $i \in \nn^\ast$,
   let $Y_{i} \in C^2([0,1],M)$ be the reparametrization of $\gz_i|_
{[0,t^-_i]}$ by a constant multiple of arc-length, that is,
   $$ Y_{i}(t):= \gz_i(t^-_it) \quad\forall t \in [0,1] . $$
   Noting that $Y_{i}(0)= \gz_i(0) = \rho_{i}a$ and $ \rho_{i}> r_{i}$,
we   find a subsequence $\nn_1^\ast \subset \nn^\ast$ such that
   $$ \eta:= \lim_{\nn_1^\ast \ni i \to \fz}\frac{r_i}{\rho_{i}} \in [0,1] \mbox{ exists}.$$
   Recalling \eqref{b=}, one has
   $$  \lim_{\nn_1^\ast \ni i \to \fz} \mathcal{D}_{\rho_{i}^{-1}} \circ Y_{i}(0) = \lim_{\nn_1^\ast \ni i \to \fz} \frac{\rho_{i} a }{\rho_{i}} = a$$
and $$ \lim_{\nn_1^\ast \ni i \to \fz} \mathcal{D}_{\rho_{i}^{-1}} \circ Y_{i}(1) = \lim_{\nn_1^\ast \ni i \to \fz} \frac{\gz_i(t^-_i)}{\rho_i}   = \eta b.  $$
   Moreover, recalling $\ell_g(\gz_i) \le 2(1+\ez) \rho_{i}$, we deduce that
   $$ \ell_{g^{(\rho_{i})}}(\mathcal{D}_{\rho_{i}^{-1}} \circ Y_{i}) = \rho_{i}^{-1}\ell_g( Y_{i}) \le \rho_{i}^{-1} \ell_g( \gz_i)  \le  2(1+\ez).  $$
   By Arzela-Ascolli theorem,  there exists a subsequence $\nn^{\ast\ast} \subset \nn^\ast_1$ such that $\{\mathcal{D}_{\rho_{i}^{-1}} \circ Y_{i}\}_{i\in \nn^{\ast\ast}}$ converges in $C^2$ to a geodesic segment $\gz_\eta$ of the standard cone $\ccc_\az^n$ joining $a$ and $\eta b$.
   By the definition of $t^-_i$, we know that $Y_{i} \subset \rr^n \setminus B(o,r_i)$. Thus $\gz_\eta \subset \rr^n \setminus B(o,\eta)$ (if $\eta=0$, $\gz_\eta \subset \rr^n$).
   Moreover, one has
   $$  \lim_{\nn^{\ast\ast} \ni i \to \fz} \angle((\mathcal{D}_{\rho_{i}^{-1}} \circ Y_{i})'(1), \dot \gz_\eta(1))=0.   $$
   By Lemma \ref{ab}, we have $\angle(\dot \gz_\eta(1),-b) \ge \angle(a,b)$. Thus
   $$  \lim_{\nn^{\ast\ast} \ni i \to \fz} \angle((\mathcal{D}_{\rho_{i}^{-1}} \circ Y_{i})'(1), -b)\ge \angle(a,b).   $$
   Since $\mathcal{D}_{\rho_{i}^{-1}}$ is conformal for each $i$, we deduce that
   $$ \lim_{\nn^{\ast\ast} \ni i \to \fz} \angle(\dot\gz_i(t^-i
), -b)=\lim_{\nn^{\ast\ast} \ni i \to \fz} \angle((\mathcal{D}_{\rho_{i}^{-1}} \circ Y_{i})'(1), -b)\ge \angle(a,b),  $$
   which shows Claim II.

   The proof is complete.
 \end{proof}

 With the help of Lemma \ref{nontwist},  
 we investigate the sets
 $$\Delta_{\rho,r}:=\{t\in[0,\ell_{g}(\Gz_\rho)]: |\Gz_\rho (t)|=r\}\quad\mbox{for all  $
 r>C_\minm, \rho  >\max\{r,R_\kz\}$.}$$
We additionally need to consider the angle $\angle(\Gz_\rho (t), \dot \Gz_\rho (t))$ at each $ t\in\Delta_{\rho,r}$.

  For all possible  $ r $ and $\rho$ we further set
  $$ \Delta^\times_{\rho,r}:= \{t\in[0,\ell_{g}(\Gz_\rho)]: |\Gz_\rho (t)|=r, \ \angle(\Gz_\rho (t), \dot \Gz_\rho (t))= \frac{\pi}2\}.
$$
 Note that $\Delta^\times_{\rho,r}$ is  a closed subset of $\Delta_{\rho,r}$.
 For each $t \in \Delta^\times_{\rho,r}$, let $I_t$ be the connected component of $\Delta^\times_{\rho,r}$ that contains $t$.
 Recall that a closed connected subset of $\rr$ is either a single point or a closed interval.

 \medskip
$\bullet$ If $I_t=\{t\}$ is a single point, we have the following 3 types of $t$ (see Figure \ref{type1}):

  \vspace*{1pt}
\begin{figure}[h]
\centering
\includegraphics[width=16cm]{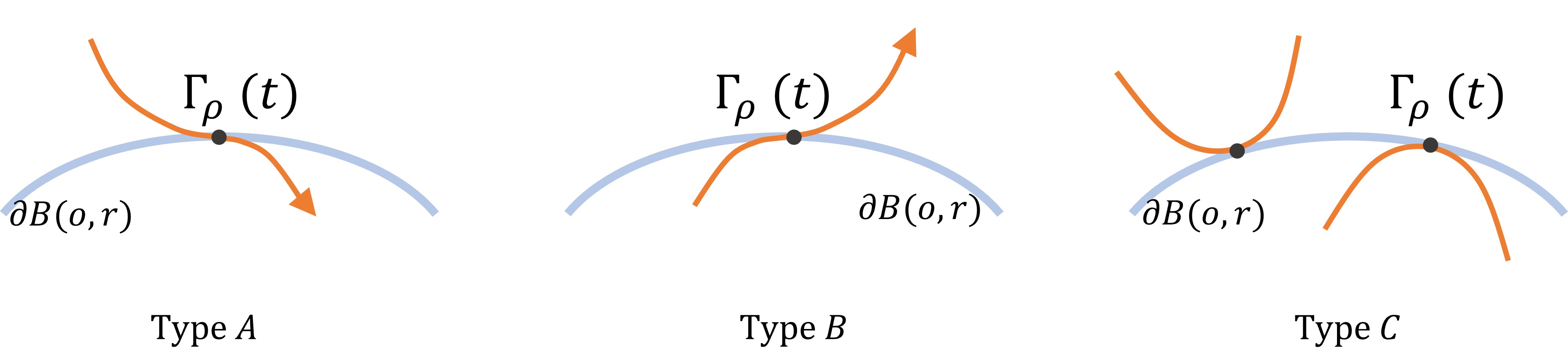}
\caption  {Types A, B and C.}
\label{type1}
\end{figure}
\begin{enumerate}
  \item[A.] $|\Gz_\rho (\cdot)|$ is strictly decreasing at $t$;
  \item[B.] $|\Gz_\rho (\cdot)|$ is strictly increasing at $t$;
  \item[C.] $|\Gz_\rho (\cdot)|$ is a local minimum/maximam at $t$.
\end{enumerate}

\medskip
$\bullet$
If $I_t=[t^-,t^+]$ is a closed interval, we also have the following 3 types of $t$ (see Figure \ref{type2}):
\vspace{6pt}
\begin{figure}[h]
\centering
\includegraphics[width=16cm]{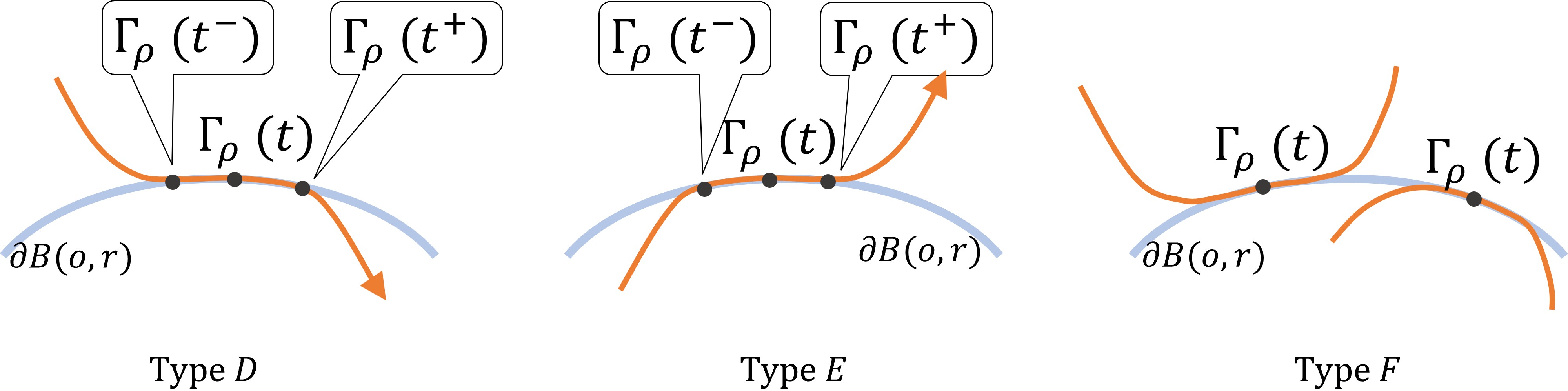}
\caption  {Types D, E and F.}
\label{type2}
\end{figure}

\begin{enumerate}
  \item[D.] $|\Gz_\rho (\cdot)|$ is decreasing at both $t^\pm$;
  \item[E.] $|\Gz_\rho (\cdot)|$ is increasing at both $t^\pm$;
  \item[F.] $t^\pm$ do not belong to D or E.
\end{enumerate}

Based on the above,    we partition $\Delta_{\rho ,r}$ into the following three sets:
 $$  \Delta_{\rho ,r}  =  \Delta^-_{\rho ,r} \cup \Delta^+_{\rho ,r} \cup \Delta^\circ_{\rho,r},$$
where
$$
  \Delta^-_{\rho ,r}: =  \{t \in [0,\ell_{g}(\Gz_\rho)]:|\Gz_\rho (t)|= r, \ \angle(\Gz_\rho (t), \dot \Gz_\rho (t)) > \frac{\pi}2 \}
   \ \bigcup \ \{t \in \Delta^\times_{\rho,r}: \mbox{ $t$ has type A or D}  \},
$$
$$
  \Delta^+_{\rho ,r}: = \{t \in [0,\ell_{g}(\Gz_\rho)]:|\Gz_\rho (t)|= r, \ \angle(\Gz_\rho (t), \dot \Gz_\rho (t)) < \frac{\pi}2 \}  \ \bigcup \ \{t \in \Delta^\times_{\rho,r}: \mbox{ $t$ has type B or E}  \},
$$
    and
    $$\Delta^\circ_{\rho,r}:= \{t \in \Delta^\times_{\rho,r}: \mbox{ $t$ has type C or F} \}. $$

    Geometrically, for any $t \in \Delta^-_{\rho ,r}$ (resp. $t \in \Delta^+_{\rho ,r}$), there exists a small neighbourhood $[t-\dz, t+\dz'])$ such that $\Gz_\rho |_{[t-\dz,t]}$ and $\Gz_\rho |_{[t,t+\dz']}$ lie outside and inside (resp. inside and outside) the ball $B(o,r)$ respectively. If in addition, $t$ has type $D$ or $E$, we know $\dz = t-t^-$ and $\dz' = t^+-t$.

    For the case $t \in \Delta^\circ_{\rho,r}$, note that the condition $|\Gz_\rho (\cdot)|$ is a local minimum/maximum at $t$ is equivalent to $\angle(\Gz_\rho (\cdot), \dot \Gz_\rho (\cdot))- \pi/2$ is either equal to $0$ or changing sign at $t$. Geometrically, it means that there exists a small neighbourhood $[t-\dz, t+\dz]$ such that $\Gz_\rho |_{[t-\dz,t+\dz]}$ lies in the same side of the ball $B(o,r)$. If moreover $t$ has type F, then
$$|\Gz_\rho ([t-\dz,t+\dz])| \equiv r \  \mbox{(i.e.}\quad
  \angle(\Gz_\rho (\cdot), \dot \Gz_\rho (\cdot))|_{[t-\dz,t+\dz]}\equiv \frac{\pi}2,\ \mbox{ or} \ \Gz_\rho |_{[t-\dz,t+\dz]} \subset \pa B(o,r) \mbox{)} $$


We have the following result.
    \begin{lem} \label{delta}
    \begin{enumerate}
      \item[(i)] We have $$ \max\{\sup_\rho  \diam \Gz_{\rho}(\Delta^-_{\rho ,r}), \ \sup_\rho  \diam \Gz_{\rho}(\Delta^+_{\rho ,r}),  \ \sup_\rho  \diam \Gz_{\rho}(\Delta^\circ_{\rho,r})\} = o(r)\quad \mbox{ as } r \to \fz. $$
      \item[(ii)] There exists ${\bf r}>\max\{2C_\minm,R_\kz\}$  such that
        $$\Delta^\circ_{\rho,r}=\emptyset \mbox{ whenever $\rho  > r\ge \bf r$}. $$
        \item[(iii)] For all $\rho>r>{\bf r}$,
         $$t_{\rho,r}^- \in \Delta^-_{\rho ,r} \mbox{ and } t_{\rho,r}^+ \in \Delta^+_{\rho ,r}.$$
         \item[(iv)] For all $\rho>r>{\bf r}$,
         $$ \Delta^-_{\rho ,r} \mbox{ and } \Delta^+_{\rho ,r} \mbox{ are connected sets}.$$
         \item[(v)] For any $\rho>r>{\bf r}$, let $t_\rho$ be any point in $[0,\ell_g(\Gz_\rho)]$ such that $\Gz_\rho(t_\rho) \in B(o,2C_\minm)$. Then,
         $$ \Delta^-_{\rho ,r} \subset [0, t_\rho) \mbox{ and } \Delta^+_{\rho ,r} \subset (t_\rho, \ell_g(\Gz_\rho)].$$
        Moreover, for any $t \in [0, t_\rho)$ and $t' \in (t_\rho, \ell_g(\Gz_\rho)]$ with $|\Gz_\rho(t)| = |\Gz_\rho(t')| = r$, one has
         $$  t\in \Delta^-_{\rho ,r} \mbox{ and }  t'\in \Delta^+_{\rho ,r}. $$
    \end{enumerate}
    \end{lem}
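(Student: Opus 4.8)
All five assertions will be read off from one rigidity phenomenon: after blowing down at the scale $r$, the curve $\Gz_\rho$ looks, near the sphere $\pa B(o,r)$, like the radial configuration $G_a\cup G_{a^\star}$ of two antipodal generatrices of the cone $\mathcal C_\az^n$. Concretely, for any putative bad sequence $\rho_i>r_i\to\fz$ I set $\bar\Gz_i:=\mathcal{D}_{r_i^{-1}}\circ\Gz_{\rho_i}$, which is a geodesic of $g^{(r_i)}$ joining $\tfrac{\rho_i}{r_i}a$ and $\tfrac{\rho_i}{r_i}a^\star$. By Lemma \ref{unif} it comes within distance $C_\minm/r_i\to 0$ of $o$; on any annulus $\overline{B(o,N)}\setminus B(o,\ez)$ the length bound of Lemma \ref{geoflow} and Remark \ref{gen} together with Lemma \ref{convg} let me extract, via Arzela--Ascoli, a $C^2$ subsequential limit which is a geodesic of $\mathcal C_\az^n$. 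Since the closest approach tends to $0$, the limit touches the apex, and the only cone geodesics through the apex are generatrices; the non-twisting Lemma \ref{nontwist} pins the incoming arm to $G_a$ and the outgoing arm to $G_{a^\star}$. Using the length bound to cap the number of sub-arcs crossing the annulus, the limit is \emph{exactly} $G_a\cup G_{a^\star}$ there, and for large $i$ the curve $\bar\Gz_i$ meets the annulus in precisely these two nearly-radial arms. This master statement is the engine for everything below.

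\textbf{Consequences (ii) and (i).} A radial generatrix meets $\pa B(o,1)$ transversally, i.e.\ $\tfrac{d}{dt}|\bar\Gz_i(t)|$ stays bounded away from $0$ there; by $C^2$-closeness the same holds for $\Gz_\rho$ at radius $r$ once $r$ is large. Hence $|\Gz_\rho(\cdot)|$ has no critical point and no constant stretch at the value $r$, ruling out types C and F, so $\Delta^\circ_{\rho,r}=\emptyset$: this yields (ii) and fixes a threshold $\mathbf r>\max\{2C_\minm,R_\kz\}$. For (i), the two arms remain within Euclidean distance $o(r)$ of the rays $\rr_+a$ and $\rr_+a^\star$, so $\Gz_\rho(\Delta^-_{\rho,r})$ clusters near $ra$ and $\Gz_\rho(\Delta^+_{\rho,r})$ near $ra^\star$, giving the two $o(r)$ diameter bounds; the third is immediate since $\Delta^\circ_{\rho,r}=\emptyset$ forces $\diam \Gz_\rho(\Delta^\circ_{\rho,r})=0$.

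\textbf{The crux: (v), and then (iii),(iv).} I prove that every inward crossing precedes any deep time $t_\rho$ (where $|\Gz_\rho(t_\rho)|<2C_\minm<r$) and every outward crossing follows it. Arguing by contradiction, an inward crossing $t_1>t_\rho$ would force $\Gz_\rho$, between the deep time $t_\rho$ and $t_1$, to leave $B(o,r)$ and return inward. Blowing down the piece of $\Gz_\rho$ issuing from the deep point (which, at the relevant scale, starts arbitrarily close to $o$), the limit is a geodesic emanating from the apex, hence a generatrix whose radius is \emph{strictly increasing}; such a limit can never cross back inward to the scaled radius, a contradiction. I split according to the size of the excursion $R^\ast:=\max_{[t_\rho,t_1]}|\Gz_\rho|$ relative to $r$ (and of $r/\rho$), performing the blow-down at the matching scale in each case; the symmetric statement for outward crossings before $t_\rho$ is identical. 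Combined with (ii) (no type C/F, so each radius-$r$ point is either in $\Delta^-_{\rho,r}$ or $\Delta^+_{\rho,r}$) this gives both inclusions of (v) and their converse. Then (iii) follows because the first crossing $t^-_{\rho,r}$ satisfies $t^-_{\rho,r}<t_\rho$ and the last $t^+_{\rho,r}>t_\rho$, so the converse part of (v) places them in $\Delta^-_{\rho,r}$ and $\Delta^+_{\rho,r}$; and (iv) follows since a disconnection of, say, $\Delta^-_{\rho,r}$ would, by the intermediate value theorem applied to $|\Gz_\rho(\cdot)|$, produce an outward crossing strictly between two inward ones, hence before $t_\rho$, contradicting (v).

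\textbf{Main obstacle.} The hard part is the excursion/arc-count control underlying both the master blow-down and step (v): ruling out that $\Gz_\rho$ oscillates across $\pa B(o,r)$, \emph{uniformly} over all $\rho>r$ and all scale ratios $r/\rho\in(0,1)$, and in particular taming large excursions $R^\ast\gg r$ where no single blow-down scale controls the whole picture. Because $\Gz_\rho$ is only immersed (self-intersections are unavoidable in dimension $\ge 3$, as stressed in Remark \ref{cl2014}), none of the Jordan-curve or embeddedness shortcuts available to Carlotto--De Lellis in the $2$-dimensional case apply here; the argument must rest entirely on the uniform length bounds of Lemma \ref{geoflow}/Remark \ref{gen}, the rigidity of apex-issuing generatrices, and a careful multi-scale blow-down.
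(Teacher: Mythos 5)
Your architecture inverts the paper's: you posit a ``master statement'' --- that the blow-down of $\Gz_\rho$ at scale $r$ meets the annulus in exactly two nearly-radial arms --- and derive (i) and (ii) from it, whereas in the paper that picture is the \emph{output} of the lemma, not an input. The gap is in the justification of the master statement. The length bound caps the \emph{number} of arcs of $\Gz_\rho$ crossing a fixed rescaled annulus, but it cannot reduce that number to two: since $\ell_g(\Gz_\rho)\le(1+C_\star\ez)2\rho$ while the passage near $o$ already forces length $\ge(1-\ez)2\rho$, the slack permits radial oscillations of height up to $O(\ez\rho)$, which dwarfs any fixed multiple of $r$ once $\rho\gg r/\ez$. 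An extra arc that turns around at an intermediate radius $r'\in(r,O(\ez\rho))$ never approaches the apex, so apex rigidity is silent about it, and its blow-down limit is a perfectly legitimate cone-geodesic chord; Lemma \ref{nontwist} pins only the \emph{first} and \emph{last} crossings $t^{\mp}_{\rho,r}$, not these interior ones. Ruling out such turning arcs is precisely the content of (ii) and (iv), so deriving (ii) from the master statement is circular. The paper's mechanism, absent from your proposal, is: a turning point at radius $r'$ is a type C/F point, which via the diameter estimate (i) \emph{at radius $r'$} glues $\Delta^+_{\rho,r'}$ and $\Delta^-_{\rho,r'}$ into a set of diameter $o(r')$, contradicting Lemma \ref{nontwist} at radius $r'$, which keeps $\Gz_\rho(t^-_{\rho,r'})$ and $\Gz_\rho(t^+_{\rho,r'})$ nearly antipodal.

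Two further points. First, (i) itself is not proved: your $o(r)$ bounds presuppose that $\Delta^{\pm}_{\rho,r}$ lie on the two radial arms, i.e.\ the master statement again. The paper proves (i) independently: two points of $\Delta^-_{\rho,r}$ at mutual distance comparable to $r$ bound a sub-arc whose blow-down is a cone geodesic joining two \emph{distinct} points of $\pa B(o,1)$, which by Remark \ref{pi/2} must exit with angle $<\pi/2$, incompatible with membership in $\Delta^-_{\rho,r}$; this chord-angle rigidity does not appear in your proposal. Second, your blow-down proof of the first half of (v) (a crossing on the wrong side of a deep time $t_\rho$ contradicts the angle-$0$ generatrix limit of the piece issuing from $\Gz_\rho(t_\rho)$) is sound in spirit and is in principle strong enough to give (ii) as well, since a \emph{tangential} point ($\angle=\pi/2$) at radius $r$ on either side of $t_\rho$ contradicts the same angle-$0$ limit; but you do not make that connection --- you route (ii) through the master statement and then feed (ii) back into the ``moreover'' part of (v), into (iii) and into (iv) --- and the multi-scale case $R^\ast\gg r$, where the excursion escapes every blow-down window at scale $r$, is only gestured at. As written, the proposal does not close.
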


    \begin{proof}
   We first show (i). We only prove
\begin{equation}\label{rhoor}\sup_\rho \diam \Gz_{\rho}(\Delta^-_{\rho,r}) =o(r)
\quad\mbox{as $r\to\infty$;}
\end{equation}
  the proofs of the other two are similar.
We argue by contradiction. Assume that \eqref{rhoor} is not true.
There is a constant $c_0>0$ and a sequence $\{r_i\}_{i \in \nn}$ with $r_i \to \fz$ such that
$$ \frac1{r_i}\sup_\rho \diam \Gz_{\rho}(\Delta^-_{\rho,r_i}) \ge c_0\quad\forall i\in\nn.$$
We can further get a sequence $\{\rho_i\}$ with $\rho_i>\max\{r_i,R_\kz\}$ such that
$$ \frac1{r_i}  \diam \Gz_{\rho_i}(\Delta^-_{\rho_i,r_i}) \ge \frac{c_0}2\quad\forall i\in\nn.$$
This  allows us to choose $\{s^1_{\rho_i},  s^2_{\rho_i}\}\subset \Delta^-_{\rho_i,r_i}  $
with  $0<s^1_{\rho_i}< s^2_{\rho_i}<\ell_{g}(\Gz_{\rho_i})$
   such that
   \begin{equation}\label{distc}
     \dist(\Gz_{\rho_i}(s^1_{\rho_i}), \Gz_{\rho_i}(s^2_{\rho_i})) \ge \frac{c_0}4 r_i\quad\forall i\in\nn.
   \end{equation}

For each $i\in\nn$, let $X_{\rho_i} \in C^2([0,1],M)$ be the reparametrization of $\Gz_{\rho_i}|_{[s^1_{\rho_i}, s^2_{\rho_i}]}$ by a constant multiple of arc-length, that is
   $$ X_{\rho_i}(t):= \Gz_{\rho_i}((s^2_{\rho_i}-s^1_{\rho_i})t+ s^1_{\rho_i}) \quad\forall t \in [0,1]. $$
   Note  that
 $$\left\{  \frac{X_{\rho_i}(0)}{r_i},
      \frac{X_{\rho_i}(1)}{r_i}\right\}\subset\partial B(0,1).$$
      Up to some subsequence, we may assume
   $$
\lim_{i\to\infty} \frac{X_{\rho_i}(0)}{r_i}=b^0 \in \partial B(0,1) \quad\mbox{and} \quad
\lim_{i\to\infty} \frac{X_{\rho_i}(1)}{r_i}=b^1 \in \partial B(0,1) .$$
%
%
  By \eqref{distc},
  $$  \dist(b^0,b^1) =\lim_{i\to\infty}\dist( \frac{X_{\rho_i}(0)}{r_i},
   \frac{X_{\rho_i}(1)}{r_i}) \ge \frac{ c_0}4. $$
 Moreover, since $X_{\rho_i}$ is a geodesic segment of $(M,g)$, we know that $\mathcal{D}_{r_i^{-1}} \circ X_{\rho_i}$ is a geodesic segment of $M^{(r_i)}$.
   Thanks to Lemma \ref{geoflow} and Remark \ref{gen}, we know
   $$ \sup_i \ell_{g^{(r_i)}} (\mathcal{D}_{r_i^{-1}} \circ X_{\rho_i}) < \fz.$$
   By the Arzela-Ascolli theorem,  there exists a subsequence $\nn^\ast \subset \nn$ such that converges in $C^2$ to a geodesic segment $\gz: [0,1] \to \ccc_\az^n$ of the standard cone joining $b_0=\gz(0),b_1=\gz(1) $.
Since  $b^1 \ne b^0$ and  $b_0,b_1 \in\pa B(0,1)$, by Remark \ref{pi/2}, we have $ \gz\subset B(0,1)$ and
   $$ \angle(\gz(1), \dot \gz(1)) < \frac{\pi}{2}.$$
   Thus there exists $N$ sufficiently large such that
   $$  \angle(\mathcal{D}_{r_i^{-1}} \circ X_{\rho_i}(1), (\mathcal{D}_{r_i^{-1}} \circ X_{\rho_i})'(1)) < \frac{\pi}{2}  \quad\forall i \in \nn^\ast \cap [N,\fz). $$
   Since $\mathcal{D}_{r_i^{-1}}$ is conformal, we deduce that
   $$ \angle( \Gz_{\rho_i}(s^2_{\rho_i}), \dot \Gz_{\rho_i}(s^2_{\rho_i}))= \angle(\mathcal{D}_{r_i^{-1}} \circ X_{\rho_i}(1), (\mathcal{D}_{r_i^{-1}} \circ X_{\rho_i})'(1)) < \frac{\pi}{2} \quad\forall i \in \nn^\ast \cap [N,\fz). $$
   This contradicts to the fact that $s^2_{\rho_i} \in \Delta^-_{\rho_i,r_i}$, which gives (i).

   \medskip
   We show (ii).
   By contradiction,
 we can find
 a sequence $\{r_i\}_{i \in \nn}$ with $r_i \to \fz$  and $\{\rho_i\}$
 with $\rho_i>r_i$ such  that $\Delta_{\rho_i,r_i}^\circ\ne\emptyset$.
 Under this counter assumption, we claim that
\begin{equation} \label{contra}\diam(\Gz_{\rho_i}(\Delta_{\rho_i,r_i}
) )= o(r_i) \mbox{ as $i \to \fz$}.
\end{equation}
 Obviously, this  contracts with Lemma \ref{nontwist} which gives
 $$\liminf_{i\to\infty}\frac1{r_i}\diam(\Gz_{\rho_i}(\Delta_{\rho_i,r_i}))
 \ge
 \lim_{i\to\infty} \frac1{r_i}| \Gz_{\rho_i}(t^+_{\rho_i,r_i})- \Gz_{\rho_i}(t^-_{\rho_i,r_i}) | =|a-a^\star|>0.$$

 To see the claim \eqref{contra}, note that the counter assumption guarantees that $\Delta^\circ_{\rho_i,r_i}$ contains some point $s_i$. Then
 \begin{align*}
 \diam(\Gz_{\rho_i}(\Delta_{\rho_i,r_i}))& \le\sup\{
 \dist(\Gamma_{\rho_i}(s), \Gamma_{\rho_i}(t)) :s,t\in\Delta_{\rho_i,r_i}\}\\
& \le 2 \sup\{
 \dist(\Gamma_{\rho_i}(t), \Gamma_{\rho_i}(s_i)) :t\in\Delta_{\rho_i,r_i}\}\\
 &\le 2 \sup\{
 \dist(\Gamma_{\rho_i}(t), \Gamma_{\rho_i}(s_i)) :t\in\Delta_{\rho_i,r_i}^\pm\}+ 2\diam(\Gz_{\rho_i}( \Delta^\circ_{\rho_i,r_i})).
 \end{align*}
 Since (i) gives $\diam(\Gz_{\rho_i}( \Delta^\circ_{\rho_i,r_i}))=o(r_i)$,
 to get the claim \eqref{contra}, we only need to show the bound
 $$
  \sup\{
 \dist(\Gamma_{\rho_i}(t), \Gamma_{\rho_i}(s_i)) : t\in\Delta^\pm_{\rho_i,r_i}\}=o(r_i).$$
Recall that $s_i $ is of either type C or type F.
We consider  the two cases separately.

\medskip
 {\it Case that $s_{ i}$ is of type $C$}.
  It is clear  that
   \begin{equation}\label{or3}
     \mbox{ $s_i \in \overline{\Delta^+_{\rho_i,r_i}}$  and $s_{\rho_i} \in \overline{\Delta^-_{\rho_i,r_i}}$}.
   \end{equation}
   For any     $t \in \Delta^\pm_{\rho_i,r_i}$ 
   we deduce that
   \begin{align}\label{or4}
     \dist(\Gz_{\rho_i}(t),\Gz_{\rho_i}(s_i)) 
      & \le \diam(\Gz_{\rho_i}(\overline{\Delta^\pm_{\rho_i,r_i}}))
  = \diam(\Gz_{\rho_i}( \Delta^\pm_{\rho_i,r_i})),
   \end{align}
 thanks to (i),
which gives the desired bound $o(r_i)$ as $i\to \infty$.

\medskip
{\it Case that $s_{i}$ is of type $F$.}
Recall that $I_{s_{i}}=[s_{i}^-,s_{i}^+]$ is the connected component in $\Delta^\circ_{\rho_i,r_i}$.
         We observe that
   \begin{equation}\label{or2}
     \mbox{either $s_{i}^\pm \in \overline{\Delta^\pm_{\rho_i,r_i}}$ or $s_{ i}^\pm \in \overline{\Delta^\mp_{\rho_i,r_i}}$},
   \end{equation}
  where due to the type F of $s^\pm_i$,  we rule out the cases that $s_{ i}^\pm \in \overline{\Delta^+_{\rho_i,r_i}}$  and $s_{ i}^\pm \in \overline{\Delta^-_{\rho_i,r_i}}$.
Without loss of generality, we may assume     $s_{i}^\pm \in \overline{\Delta^\pm_{\rho_i,r_i}}$.

   For any   point $t \in \Delta^\pm_{\rho_i,r_i}$, we have
    \begin{align*}\label{or5}
   \dist(\Gz_{\rho_i}(t),\Gz_{\rho_i}(s _i))&\le
     \dist(\Gz_{\rho_i}(t),\Gz_{\rho_i}(s^\pm_i))    + \dist(\Gz_{\rho_i}(s_i),\Gz_{\rho_i}(s^\pm_i)) \\
     & \le \diam (\Gamma_{\rho_i}(\Delta^\pm_{\rho_i,r_i}) )
     +
     \diam (\Gamma_{\rho_i}(\Delta^\circ_{\rho_i,r_i}) ) ,
   \end{align*}
   thanks to (i),
which also gives the desired bound $o(r_i)$ as $i\to \infty$.
 This completes the proof (ii).

\medskip
   To see (iii), by the definition of $t_{\rho,r}^-$, we know $|\Gz_{\rho}(t)| > r$ for all $t \in [0,t_{j,r}^-)$. Thus $|\Gz_j(\cdot)|$ is strictly decreasing from left at $t_{j,r}^-$. This implies that $t_{j,r}^- \in \Delta^-_{j_i,r_i} \cup \Delta^\circ_{j_i,r_i}$. Since $r > {\bf r}$, by (ii), $\Delta^\circ_{j_i,r_i}$ is empty. We deduce that $t_{j,r}^- \in \Delta^-_{j_i,r_i}$. The proof of $t_{j,r}^+ \in \Delta^+_{j_i,r_i}$ is similar.

   \medskip
   To see (iv), we only prove $\Delta^-_{\rho,r}$ is connected and the proof for $\Delta^+_{\rho,r}$ is similar.
   We argue by contradiction. Assume that $
 \Delta^-_{\rho,r}$ is not connected  for some $\rho,r$.
   Note that the set $\Delta_{\rho,r}$ is   closed, and by (ii),
   $$\Delta_{\rho,r} = \Delta^-_{\rho,r} \cup \Delta^+_{\rho,r}\quad\mbox{and} \quad \Delta^-_{\rho,r} \cap \Delta^+_{\rho,r} = \emptyset.$$
Then  the
sets  $\Delta^\pm_{\rho,r}$ are closed.
By the assumption,    there must be two connected components
   $$\mbox{$I_1= [t_1,t_1'],I_2 =[t_2,t_2'] \subset \Delta^-_{\rho,r}$ with $t_1' < t_2$}.$$
   By the definition of $\Delta^-_{\rho,r}$, we know that $|\Gz_\rho(\cdot)|$ is strictly decreasing from left at $t_2$.
   Thus, there exists $t_3 \in (t_1',t_2)$ such that
   $$r':= |\Gz_\rho(t_3)| =\max_{t_3 \in [t_1',t_2]}|\Gz_\rho(t)| > |\Gz_\rho(t_1')| =  |\Gz_\rho(t_2)| =r. $$
   Thus, $t_3$ has type C or F and consequently $t_3 \in \Delta^\circ_{\rho,r'}$. This contradicts to (ii).

   \medskip
   To see (v), we only show $\Delta^-_{\rho,r} \subset [0,t_\rho)$ and the other one is similar.
   Since $|\Gz_\rho(t_\rho)| < 2C_\minm < r$, one has $t_\rho \notin \Delta^-_{\rho,r}$. By (iv), the connectedness of $\Delta^-_{\rho,r}$ implies that
   $$ \mbox{ either $\Delta^-_{\rho,r} \subset [0,t_\rho)$ or $\Delta^-_{\rho,r} \subset (t_\rho,\ell_g(\Gz_\rho)]$}.$$
   Obviously, $t^-_{\rho,r} \in [0,t_\rho)$. Combining the fact from (iii) that $t^-_{\rho,r} \in \Delta^-_{\rho,r}$, we get the desire result.

   Finally, if $t \in [0,t_\rho)$ with $|\Gz(t)|=r$, we first know $t \in \Delta_{\rho,r}$. Since $\Delta_{\rho,r} = \Delta^-_{\rho,r} \cup \Delta^+_{\rho,r}$ and $\Delta^+_{\rho,r}\cap [0,t_\rho) = \emptyset$, we conclude $t \in \Delta^-_{\rho,r}$.
   The same proof also works for $t'$.

   The proof of Lemma \ref{delta} is complete.
    \end{proof}

A direct consequence of Lemma \ref{nontwist} and Lemma \ref{delta} is the following.

\begin{cor}\label{closure}
One has
    $$\lim_{r \to \fz} \sup \left\{|a-\frac{x}{r}|: x \in \bigcup_{\rho > r}\Gz_\rho(\Delta^-_{\rho,r}) \right\} = 0, \mbox{ and } \lim_{r \to \fz} \sup \left\{|a^\star-\frac{y}{r}|: y \in \bigcup_{\rho > r}\Gz_\rho(\Delta^+_{\rho,r}) \right\} = 0.$$
\end{cor}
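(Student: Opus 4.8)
The plan is to anchor each point of $\Gz_\rho(\Delta^-_{\rho,r})$ to the distinguished point $\Gz_\rho(t^-_{\rho,r})$, whose normalized position $\Gz_\rho(t^-_{\rho,r})/r$ is pinned near $a$ by Lemma \ref{nontwist}, and then to absorb the deviation of all other points using the diameter bound of Lemma \ref{delta}(i). The second assertion is entirely symmetric, anchoring instead at $\Gz_\rho(t^+_{\rho,r})$ and comparing to $a^\star$, so I only describe the first.

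First I would fix $\dz>0$. By Lemma \ref{nontwist} there is $R_\dz\ge R_\kz$ so that $|\Gz_\rho(t^-_{\rho,r})/r - a|\le\dz/2$ for all $r>R_\dz$ and all $\rho>r$; crucially this estimate is uniform in $\rho$. Next, Lemma \ref{delta}(i) gives $\sup_\rho\diam\Gz_\rho(\Delta^-_{\rho,r})=o(r)$ as $r\to\fz$, so there is $R'_\dz$ with $r^{-1}\sup_\rho\diam\Gz_\rho(\Delta^-_{\rho,r})\le\dz/2$ for $r>R'_\dz$. The key structural input is Lemma \ref{delta}(iii): for $\rho>r>{\bf r}$ the endpoint time $t^-_{\rho,r}$ itself lies in $\Delta^-_{\rho,r}$, so $\Gz_\rho(t^-_{\rho,r})$ is one of the points whose mutual spread is controlled by that diameter bound.

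Thus for $r>\max\{R_\dz,R'_\dz,{\bf r}\}$, any $\rho>r$, and any $x\in\Gz_\rho(\Delta^-_{\rho,r})$, a single triangle inequality gives
$$\left|\frac{x}{r}-a\right|\le\frac{|x-\Gz_\rho(t^-_{\rho,r})|}{r}+\left|\frac{\Gz_\rho(t^-_{\rho,r})}{r}-a\right|\le\frac{\diam\Gz_\rho(\Delta^-_{\rho,r})}{r}+\frac{\dz}{2}\le\dz.$$
Taking the supremum over $x\in\bigcup_{\rho>r}\Gz_\rho(\Delta^-_{\rho,r})$ and then letting $\dz\to0$ yields the first limit; the argument for $a^\star$ and $\Delta^+_{\rho,r}$ is identical.

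There is no genuine obstacle here, and indeed the statement is flagged as a direct consequence of the two preceding lemmas. The only point to watch is that both ingredients must come with uniformity in $\rho$: Lemma \ref{nontwist} holds simultaneously for every $\rho>r$, and Lemma \ref{delta}(i) bounds the \emph{supremum} over $\rho$ of the diameter. Because of this, after dividing the diameter estimate by $r$ the resulting bound is independent of $\rho$, so the passage to the supremum over the union $\bigcup_{\rho>r}$ is legitimate. It is precisely the combination of the non-twisting anchor at the first/last hitting time with the uniform vanishing of the relative diameter that forces the entire normalized level set $\Gz_\rho(\Delta^-_{\rho,r})/r$ (resp.\ $\Gz_\rho(\Delta^+_{\rho,r})/r$) to collapse to the single direction $a$ (resp.\ $a^\star$).
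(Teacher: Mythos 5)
Your proposal is correct and follows essentially the same route as the paper's proof: anchoring at $\Gz_\rho(t^-_{\rho,r})$ via Lemma \ref{delta}(iii), applying the triangle inequality, and controlling the two terms by Lemma \ref{nontwist} and Lemma \ref{delta}(i) respectively. The only difference is that you spell out the $\dz$-quantifiers explicitly, which the paper leaves implicit.
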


\begin{proof}
   We only show the first equality and the second one is similar.
   For any $r>\max\{R_\kz,{\bf r}\}$ and $\rho> r$, by Lemma \ref{delta} (iii), we know that $t^-_{\rho,r} \in \Delta^-_{\rho,r}$. For any $x \in \cup_{\rho > r}\Gz_\rho(\Delta^-_{\rho,r})$, we have $x \in \Gz_{\rho_x}(\Delta^-_{\rho_x,r})$ for some $\rho_x>r$.
    Using the triangle inequality, we deduce that
   $$ |ra-x| \le |ra-\Gz_{\rho_x}(t^-_{_{\rho_x},r})| + |\Gz_{\rho_x}(t^-_{\rho_x,r})-x| \le |ra-\Gz_{\rho_x}(t^-_{_{\rho_x},r})| + \diam \Gz_{\rho_x}(\Delta^-_{\rho_x,r}),$$
  thanks to Lemma \ref{nontwist} and Lemma \ref{delta} (i),
   $$ |a-\frac xr|   \le |a-\frac{\Gz_{\rho_x}(t^-_{_{\rho_x},r})} r| + \frac1r\diam \Gz_{\rho_x}(\Delta^-_{\rho_x,r}) \to 0 \mbox{ as $r\to\infty$.}$$
   The proof is complete.
\end{proof}

\subsection{Proof of Theorem \ref{ndimanti}}
In this subsection,
we prove Theorem \ref{ndimanti}.

\begin{proof}[Proof of Theorem \ref{ndimanti}]
Recall ${\bf r}> C_\minm$ in Lemma \ref{delta} (ii).
Let $\{\rho_j=2^j{\bf r}\}_{j\in\nn}$ and
 $\{\Gz_{\rho_j}\}_{j\in \nn}$ be the sequence of min-max geodesic segments as in the beginning of   Section 5.
Recall that each $\Gz_{\rho_j} $ is parameterised by the arc-length with respect to $g$.
 By Lemma \ref{unif},
$\Gz_{\rho_j} \cap \overline {B(o,C_\minm)}\ne\emptyset$ for all $j \in \nn$.
  For simple we write $ \Gamma_j$ as  $\Gamma_{\rho_j}$ and $T_j$ as its  length $\ell(\Gz_j)$.

 The idea to prove  Theorem \ref{ndimanti} is that:  For each $i \in \nn$, we will show that certain restrictions of $\{ \Gz_j\}_{j\in\nn}$ in $\overline {B(o,2^i{\bf r})}$ converge  (up to some subsequence) to some geodesic   segment $\Gz_i^\ast$.  The union of  $\Gz_i^\ast$ then gives the desired geodesic line $ \Gz$.

  We proceed by the following 4 steps.

     \medskip
   \emph{Step 1. A uniform length estimate for certain restriction of $\Gamma_j$.}

          For each $r\ge 2{\bf r}$, %
and
 each $j \ge r/2\bf r$,
thanks to Lemma \ref{unif}, we know that
   $ \Gz_j \cap \pa B(o,r) $
   contains at least two points.
   Write  $t_{j,r}^\pm$ as $ t_{\rho_j,r}^\pm$ given in \eqref{trhor}, that is,
   \begin{equation}\label{tjrpm}
      t_{j,r}^-:= \min \{t \in [0,T_j]:|\Gz_j(t)|= r \}, \quad  t_{j,r}^+:= \max \{t \in [0,T_j]:|\Gz_j(t)|=r\}.
   \end{equation}
Obviously,
 $\Gz_j \cap B(o,r) \subset \Gz_j |_{[t_{j,r}^-,t_{j,r}^+]}.$
 We claim that
   \begin{equation}\label{lengthb}
     \liminf_{j \to \fz}\ell_{g}(\Gz_j |_{[t_{j,r}^-,t_{j,r}^+]}) <\fz.
   \end{equation}

      To see \eqref{lengthb},
  set
   $$  p_{j,r}:= \Gz_j(t_{j,r}^-), \quad \xi_{j,r}:=\dot \Gz_j(t_{j,r}^-) $$
   and
   $$  q_{j,r}:= \Gz_j(t_{j,r}^+), \quad \zeta_{j,r}:=\dot \Gz_j(t_{j,r}^+). $$
   There is a subsequence $\nn_0 \subset \nn$ such that
   \begin{equation}\label{pjr}
     \lim_{\nn_0\ni j \to \fz}(p_{j,r}, \xi_{j,r} )=\lim_{\nn_0 \ni j \to \fz}(\Gz_j(t_{j,r}^-), \dot \Gz_j(t_{j,r}^-) ) = (p_{r}^\ast, \xi_r ^\ast) \in \pa B(o,r) \times \mathcal{S}_{p_r^\ast}M
   \end{equation}
    and
    \begin{equation}\label{qjr}
      \lim_{\nn_0 \ni j \to \fz}(q_{j,r}, \zeta_{j,r} )=\lim_{\nn_0 \ni j \to \fz}(\Gz_j(t_{j,r}^+), \dot \Gz_j(t_{j,r}^+) ) = (q_r
     ^\ast, \zeta_r
     ^\ast) \in \pa B(o,r) \times \mathcal{S}_{q_{r}^\ast}M.
    \end{equation}
   Consider geodesic rays $\Gz_{p_{j,r},\xi_{j,r}}$ of $(M,g)$ (i.e. $\Gz_{p_{j,r},\xi_{j,r}}(0) = p_{j,r}$ and $\dot\Gz_{p_{j,r},\xi_{j,r}}(0)=\xi_{j,r}$).  One has
    $$ \Gz_{p_{j,r},\xi_{j,r}}(t) =  \Gz_j(t+t_{j,r}^-) , \quad \forall t\in[0,T_j-t_{j,r}^-].  $$
             Thanks to \eqref{pjr}, we apply Lemma \ref{geoflow} to the sequence $\{ \Gz_{p_{j,r},\xi_{j,r}}\}_{j \in \nn_0}$ to deduce
           \begin{equation}\label{lengthb2}
     \liminf_{j \to \fz}\ell_{g}(\Gz_{j} |_{[t_{j,r}^+,t_{j,r}^-]}) =\liminf_{j \to \fz}\ell_{g}(\Gz_{p_{j,r},\xi_{j,r}} |_{[0,t_{j,r}^+-t_{j,r}^-]}) <\fz,
   \end{equation}
that is, \eqref{lengthb} holds.

   \medskip
   \emph{Step 2. The construction of increasing geodesic segments
      $\Gz_i^\ast$ with endpoints in $\partial B(o,2^i {\bf r})$.}

   Write
   $$r_i=2^i
    {\bf r} \mbox{ for all } i\ge 1.$$

   Letting $r=r_1$ in \eqref{lengthb}, thanks to \eqref{lengthb2}, one apply Arzela-Ascolli theorem to deduce that,
   there exists some subsequence $ \nn_1\subset \nn_0$ such that
   $$\{\Gz_{p_{j,r_1},\xi_{j,r_1}}|_{[0,t_{j,r_1}^+-t_{j,r_1}^-]}\}_{j\in \nn_1}  =
    \{\Gz_j(\cdot + t_{j,r_1}^-)|_{[0,t_{j,r_1}^+-t_{j,r_1}^-]} \}_{j\in \nn_1}$$
   converges in $C^1$ to a geodesic segment $\Gz^\ast_1:[0,T_1^\ast]\to M$ with
   \begin{equation*}
     (\Gz^\ast_1(0),\dot \Gz^\ast_1(0))=(p_{r_1}^\ast,\xi_{r_1}^\ast)  \mbox{ and } (\Gz^\ast_1(T_1^\ast),\dot\Gz^\ast_1(T_1^\ast))=(q_{r_1}^\ast,\zeta_{r_1}^\ast).
   \end{equation*}
  In other words,   $T_1^\ast=\lim_{j\to\infty}[t_{j,r_1}^+-t_{j,r_1}^-] $
  and
  $$
   \Gz^\ast_1(t)=\lim_{\nn_1\ni j\to\infty}\Gz_{p_{j,r_1},\xi_{j,r_1}}(t)= \lim_{j\to\infty}\Gz_j(t+t_{j,r_1}^-) $$
  for all $t\in[0,T^\ast_1]$ in $C^1$.
Since  $\Gz_j$ has
the Morse index $\le n-1$, we know that  $\Gz^\ast_1$ has the Morse index $\le n-1$.

In general, for  $i\ge2$, letting $r=r_i$ in \eqref{lengthb}, thanks to \eqref{lengthb2}, one  apply the Arzela-Ascolli theorem to deduce that,
   for some subsequence $ \nn_i\subset \nn_{i-1}$,
 $ t^{\pm}_{j,r_i}-  t^{\pm}_{j,r_ {i-1}}$ converges to $u^\pm_i$, and
   $$\{\Gz_{p_{j,r_i},\xi_{j,r_i}}|_{[0,t_{j,r_i}^+-t_{j,r_i}^-]}\}_{j\in \nn_i} = \{\Gz_j(\cdot + t_{j,r_i}^-)| _{[0,t_{j,r_i}^+-t_{j,r_i}^-]} \}_{j\in \nn_i}
$$
   converge in $C^1$ to a geodesic segment $\Gz^\ast_i:[0,T_i^\ast]\to M$ with
   \begin{equation}\label{Gz1}
     (\Gz^\ast_i(0),\dot \Gz^\ast_i(0))=(p_{r_i}^\ast,\xi_{r_i}^\ast)  \mbox{ and } (\Gz^\ast_i(T_i^\ast),\dot\Gz^\ast_i(T_i^\ast))=(q_{r_i}
     ^\ast,\zeta_{r_i}^\ast).
   \end{equation}
    In other words,   $T_i^\ast=\lim_{j\to\infty}[t_{j,r_i}^+-t_{j,r_i}^-]$
  and
  \begin{equation}\label{ri}
     \Gz^\ast_i(t)=\lim_{\nn_i\ni j\to\infty} \Gz_{p_{j,r_i},\xi_{j,r_i}}(t)= \lim_{j\to\infty}\Gz_j(t+t_{j,r_i}^-)
  \end{equation}
  for all $t\in[0,T^\ast_i]$  in $C^1$.
This also implies $\Gz^\ast_i
$ has the Morse index $\le n-1$.

Note that for each $i \in \nn$, $\nn_{i+1} \subset \nn_i$. Thus as subsets in $M$, $\Gz_i^\ast \subset \Gz_{i+1}^\ast$. In particular,
$$ \Gz_1^\ast \subset \Gz_{i}^\ast, \quad i \in \nn. $$
This implies that for each $i \in \nn$, there exists $0< u_i^-<u_i^+ < T_i^\ast$ (with $u_i^+-u_i^-=T_1^\ast$) such that
$$  \Gz_i^\ast(u_i^-) = \Gz_1^\ast(0), \mbox{ and } \Gz_i^\ast(u_i^+) = \Gz_1^\ast(T_1^\ast).   $$
We make the convention that $u_1^- = 0$ and $u_1^+ = T_1^\ast$. See Figure \ref{GZ} for an illustration.
\vspace{6pt}
\begin{figure}[h]
\centering
\includegraphics[width=16cm]{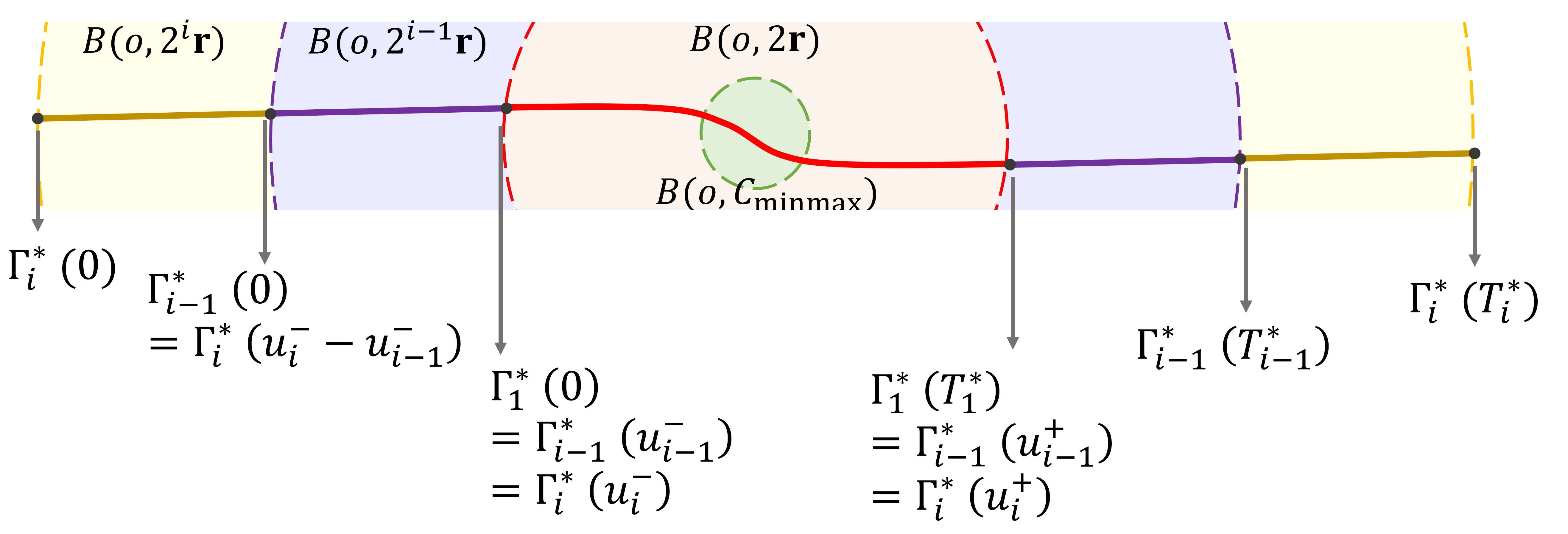}
\caption  {The definition of $u_i^\pm$.}
\label{GZ}
\end{figure}

Noting that as subsets
$\Gz_{i-1}^\ast|_{[0,u_{i-1}^-]} \subset \Gz_{i}^\ast|_{[0,u_{i}^-]}$, one has
$$  u_i^- = \ell_g(\Gz_{i}^\ast|_{[0,u_{i}^-]}) \ge \ell_g(\Gz_{i-1}^\ast|_{[0,u_{i-1}^-]}) =u_{i-1}^-, \quad i\ge 2.  $$
Moreover, one has
\begin{equation}\label{tl}
  \Gz_{i}^\ast(t+(u_i^- - u_{i-1}^-)) = \Gz_{i-1}^\ast(t), \quad t \in [0,T_{i-1}^\ast], \ i \ge 2.
\end{equation}


%

\medskip
\emph{Step 3.  Find a geodesic line $\Gz$ with   the Morse index $\le n-1$.}


First, note that
\begin{equation}\label{tl1}
  u_{i}^- \ge \ell_g(  \Gz^\ast_i|_{[0, u^-_i]}) \ge d_g(\Gz^\ast_i(0), \Gz^\ast_1(0)) \ge (1-\ez) (2^{i}-1){\bf r} \overset{i\to \fz}{\longrightarrow} \fz
\end{equation}
and
\begin{equation}\label{tl2}
  T_i^\ast-u_i^- \ge  T_i^\ast-u_i^+ \ge \ell_g(  \Gz^\ast_i|_{[u^+_i,T_i^\ast]}) \ge d_g(\Gz^\ast_1(T_1^\ast),\Gz^\ast_i(T_i^\ast)) \ge (1-\ez) (2^{i}-1){\bf r} \overset{i\to \fz}{\longrightarrow} \fz
\end{equation}
where we recall $\ez$ from \eqref{std1}.
Define $\Gz: (-\fz,+\fz) \to M$ by
\begin{equation*}
\Gamma(t):= \left \{ \begin{array}{lll} \Gamma^\ast_1(t)
& \quad  &  t \in [0,T_1^\ast] \\
\Gamma^\ast_i(t+u_i^- )
  & \quad & t \in [-u_i^-, - u_{i-1}^-) \cup (T_{i-1}^\ast-u_{i-1}^- ,T_i^\ast-u_i^-].
\end{array}
\right.
\end{equation*}
  Since all $ \Gamma_{i}^\ast \in C^2([0,T_i^\ast],M)$ are geodesic segments of $(M,g)$ with index $\le n-1$, thanks to \eqref{tl}, \eqref{tl1} and \eqref{tl2}, we deduce that
  $\Gz\in C^2((-\fz,\fz) ,M)$ is a well-defined geodesic line of $(M,g)$ with  Morse index $\le n-1$.

In addition, by the relation \eqref{tl}, one further observe that
$$  \Gz(t) = \Gamma^\ast_i(t+u_i^- ) , \quad  t \in [-u_i^-, T_i^\ast-u_i^-], \ i \in \nn. $$
In particular,
\begin{equation}\label{tl3}
  \Gz(t) = \Gamma^\ast_i(t+u_i^- ) , \quad  t \in [0, T_i^\ast-u_i^-] , \ i \in \nn.
\end{equation}

\medskip
\emph{Step 4.  Show $\pa_\fz \Gz=\{\tz,\tz^\star\}$.}

It suffices to show that for any sequence $s_k \to +\fz$ as $k \to +\fz$,
\begin{equation}\label{kfz}
  \lim_{k \to \fz}\frac{\Gz(-s_k)}{|\Gz(-s_k)|} = a =(1,\tz) \mbox{ and } \lim_{k \to \fz}\frac{\Gz(s_k)}{|\Gz(s_k)|} = a^\star = (1, \tz^\star).
\end{equation}
We only show the second equality in \eqref{kfz}; the proof for the first one is much similar.
Recalling that the exponential map of $M$ is proper, we know $\Gz^{-1}(B(o,r)) \subset \rr$ is compact. Thus
\begin{equation}\label{dk1}
  \lim_{k \to \fz}d_k:=\lim_{k \to \fz}|\Gz(s_k)| = \fz.
\end{equation}
Noting that $\Gz|_{[0,T^\ast_1]} = \Gz_1^\ast$ and $\Gz_1^\ast \cap \overline{B(o,C_\minm)} \ne \emptyset$, we know there exists $s_\ast \in [0,T^\ast_1]$ such that
\begin{equation}\label{sast}
  \Gz(s_\ast) \in \overline{B(o,C_\minm)}.
\end{equation}
In the following, we assume $s_k > T_1^\ast \ge  s_\ast$ for all $k \in \nn$.

For each $k \in \nn$, we know there exists $i_k \in \nn$ with $\lim_{k \to \fz}i_k =\fz$ such that
     $$  s_{k}  \in (T_{i_k-1}^\ast-u_{i_k-1}^- ,T_{i_k}^\ast-u_{i_k}^-].  $$
     By the definition of $\Gz$ and \eqref{tl3}, we know
\begin{equation}\label{dk2}
\Gz(t) =\Gz_{i_k}^\ast (t + u_{i_k}^-), \quad t \in [0,T_{i_k}^\ast-u_{i_k}^-].
\end{equation}
Recalling \eqref{ri} in Step 2, we further have,
$$ \Gz(t) =\Gz_{i_k}^\ast (t + u_{i_k}^-) = \lim_{j\in \nn_{i_k}:\rho_j > d_k}\Gz_j(t+u_i^-+ t_{j,r_{i_k}}^-), \quad t \in [0,T_{i_k}^\ast-u_{i_k}^-].  $$
In particular,
\begin{equation}\label{sk0}
  \Gz(s_k) = \lim_{j\in \nn_{i_k}:\rho_j > d_k}\Gz_j(s_k+u_i^-+ t_{j,r_{i_k}}^-)  \mbox{ and } \Gz(s_\ast) = \lim_{j\in \nn_{i_k}:\rho_j > d_k}\Gz_j(s_\ast+u_i^-+ t_{j,r_{i_k}}^-).
\end{equation}
From the first equality in \eqref{sk0}, for each $k \in \nn$, we can find some $j_k\in \nn_{i_k}$ with $\rho_{j_k} > d_k$ such that
$$ | \Gz(s_k) - \Gz_{j_k}(s_k+u_i^-+ t_{j_k,r_{i_k}}^-)| \le \frac1k.$$
Since \eqref{sast} gives   $ \Gz(s_\ast) \in \overline{B(o,C_\minm)}, $
from the second equality in \eqref{sk0} we deduce that,  for all    $j\in \nn_{i_k}$ with $\rho_j > d_k$,
$$  \Gz_{j}(s_\ast+u_i^-+ t_{j,r_{i_k}}^-) \in B(o, C_\ast C_\minm). $$
for some $C_\ast\ge 1$. Write
For $k\in\nn$,
$$y_k:= \Gz_{j_k}(s_k+u_i^-+ t_{j_k,r_{i_k}}^-)\quad\mbox{and}\quad \wz d_k|y_k|.$$
Recalling that $s_k > s_\ast$, applying Lemma \ref{delta} (v) with $t_\rho = s_\ast+u_i^-+ t_{j_k,r_{i_k}}^-$ and $\Gz_\rho = \Gz_{j_k}$ therein, we deduce that
$$s_k+u_i^-+ t_{j_k,r_{i_k}}^+ \in \Delta_{\rho_{j_k}, \wz d_k}^+, \quad k\in \nn.$$
Note that
$$|\frac{y_k}{d_k}-\frac{y_k}{\wz d_k}| \le |y_k|\frac{|\wz d_k-d_k|}{d_k\wz d_k} \le \frac{ | \Gz(s_k) - \Gz_{j_k}(s_k+u_i^-+ t_{j_k,r_{i_k}}^-)|}{d_k} \le \frac{1}{kd_k}.$$
Applying Corollary \ref{closure}, we conclude
\begin{align*}
 \lim_{k \to \fz} |\frac{\Gz(s_k)}{|\Gz(s_k)|}-a^\star|  & \le \lim_{k \to \fz} |\frac{\Gz(s_k)}{d_k}-\frac{y_k}{d_k}| +\lim_{k \to \fz} |\frac{y_k}{d_k}-\frac{y_k}{\wz d_k}|+ \lim_{k \to \fz}| \frac{y_k}{\wz d_k}-a^\star| \\
   & \le \lim_{k \to \fz} \frac{1}{kd_k} + \lim_{k \to \fz} \frac{1}{kd_k} + \lim_{k \to \fz} \sup \{|\frac{y}{r}-a^\star|: y \in \bigcup_{\rho > r}\Gz_\rho(\Delta^+_{\rho,r}) \} \\
& =0,
\end{align*}
which gives the second equality in \eqref{kfz} as desired.

This completes the proof of Step 4, also completes the whole proof.
\end{proof}

\noindent Jiayin Liu

\noindent Mathematic Area, SISSA,
Trieste, Italy

\noindent{\it E-mail }:  \texttt{jliu@sissa.it}

\bigskip

\noindent Shijin Zhang

\noindent School of Mathematical Science, Beihang University, Beijing, China

\noindent{\it E-mail }:  \texttt{shijinzhang@buaa.edu.cn}

\bigskip

\noindent  Yuan Zhou

\noindent
School of Mathematical Science, Beijing Normal University, Beijing, China

\noindent{\it E-mail }:  \texttt{yuan.zhou@bnu.edu.cn}


\end{document}